\documentclass[11pt,a4paper,oneside]{article}
\normalbaselines
\usepackage{amsmath}
\usepackage{amssymb}
\usepackage{appendix}
\usepackage{mathtools}
\usepackage{mathcomp}
\usepackage{textcomp}
\usepackage{amsthm}
\usepackage{enumerate}
\usepackage[auth-lg]{authblk}
\usepackage{cite}
\usepackage{tikz}
\usepackage{caption}
\usepackage{subcaption}
\usepackage{bm}
\usetikzlibrary{arrows}
\usepackage{multicol}
\usepackage{tikz-qtree}
\usepackage{rotating}
\usepackage{url}
\usepackage{array}
\newcommand{\tcolblue}{\textcolor{blue}}

\usepackage{blkarray}

\newtheorem{thm}{Theorem}[section]\theoremstyle{plain}
\newtheorem{theorem}[thm]{Theorem}\theoremstyle{plain}
\newtheorem{proposition}[thm]{Proposition}\theoremstyle{plain}
\newtheorem{lemma}[thm]{Lemma}\theoremstyle{plain}
\theoremstyle{plain}
\theoremstyle{plain}
\newtheorem{claim}[thm]{Claim}\theoremstyle{plain}
\theoremstyle{plain}
\theoremstyle{plain}
\newtheorem{conj}[thm]{Conjecture}\theoremstyle{plain}
\theoremstyle{plain}
\theoremstyle{plain}
\theoremstyle{plain}
\newtheorem{question}[thm]{Question}\theoremstyle{plain}

\theoremstyle{definition}
\theoremstyle{plain}

\theoremstyle{definition}
\theoremstyle{plain}

\DeclareMathOperator{\rank}{rank}

\DeclareMathOperator{\cl}{cl}

\newcommand{\sm}{\setminus}

\newcommand{\B}{{\cal B}}

\newcommand{\R}{{\mathbb R}}

\newcommand{\sym}{{\rm Sym}}
\newcommand{\lk}{{\rm lk}}

\newcommand{\bill}[1]{{\color{red} Bill: #1}}
\newcommand{\billn}[1]{{\color{purple} Billn: #1}}

\title{Symmetric Powers of Matroids}
\author{Bill Jackson\thanks{School of Mathematical Sciences, Queen Mary
University of London, Mile End Road, London E1 4NS, United Kingdom.
E-mail: b.jackson@qmul.ac.uk} and Shin-ichi Tanigawa\thanks{Department of Business Economics, School of Management, Tokyo University of Science, 
1-11-2 Fujimi, Chiyoda-ku,
Tokyo 102-0071,  Japan. Email: {tanigawa@rs.tus.ac.jp}}}

\date{\today}

\begin{document}

\maketitle

\begin{abstract}
The study of matroid products has become an active area of research, owing to their connections with tropical ideals and linear representability. In this paper, we study matroidal abstractions of the multilinearity of symmetric powers of vector spaces, using a duality between symmetric powers of matroids and abstract rigidity. 
These observations allow us to solve Mason’s conjecture concerning the equivalence of two definitions of a symmetric power of a matroid. 
We show that Mason’s conjecture holds for second symmetric powers of matroids whereas it fails for third symmetric powers. 
\end{abstract}

\section{Introduction}
Lov{\'a}sz~\cite{L}, Mason~\cite{M} and Las Vergnas~\cite{LV81} initiated the study of products of matroids over fifty years ago by abstracting the basic properties of tensor, symmetric, or skew-symmetric products of vector spaces. As noted in these classical references, there are no universal definitions which extend all the fundamental properties of these products
%tensor/symmetric/skew-symmetrics product of 
from vector spaces to matroids. In particular, matroid products may not be unique, and may not even exist when the matroid is not representable.

The study of matroid products has to a large extent been 
dormant since this early work 
%by Lov{\'a}sz, Mason, and Las Vergnas  
until recent results of  Draisma and Rinc{\'o}n~\cite{DR21},  and Anderson\cite{And}, which give connections between matroid products and the tropical ideals of Maclagan and Rinc{\'o}n~\cite{MR18},
%Tropical ideals are combinatorial analogues of ideals in the tropical polynomial semi-ring.
%Maclagan and Ric{\'o}n and Anderson proved a striking connection of tropical ideals to symmetric powers of matroids. (WILL MAKE MORE EXPLICIT.)
and B{\'e}rczi et al.~\cite{BGILMS25,BGILPS26}, which open a new research direction in matroid representability based on matroid products.
Despite these recent important applications, matroid products are still not well understand, even at a very basic level. Indeed, 
%the recent article of 
Anderson~\cite{And,AndThesis} recently pointed out that an old conjecture of Mason  (given below) concerning the equivalence of two definitions of a symmetric power of a matroid is still open. 

In this paper, we will study  matroid abstractions of the multilinearity property of symmetric powers of a vector space and use our results to solve Mason's conjecture.

\subsection{Symmetric Powers of Matroids and Mason's Conjecture}
Throughout this paper, $V$ will always denote a finite set.
%\bill{
For  an integer $k\geq 0$,
let $\sym_k(V)$ be the set of all unordered $k$-words over $V$. In particular, $\sym_0(V)=\{\emptyset\}$, where $\emptyset$ denotes the empty word, and  $\sym_1(V)=V$.
%Throughout the paper,  we distinguish between unordered words and multisets,
%and we identify $\sym_1(V)$ and $V$.
%We let $\sym_0(V)=\{\emptyset\}$. 
For $\alpha\in A\subseteq  \sym_i(V)$ and $\beta\in B\subseteq \sym_j(V)$,
let $\alpha\cdot \beta\in \sym_{i+j}(V)$ be the concatenation of $\alpha$ and $\beta$, and put 
$A\cdot B=\{\alpha\cdot \beta:\alpha\in A,\beta\in B \}$. When $A=\{\alpha\}$ we will often write 
$\alpha\cdot B$ for $A\cdot B$.
For example, 
when $V=\{1,2,3,4\}$ and $A=\{12\},B=\{12,13,22,24\}\subset\sym_2(V)$, we have 
$$A\cdot B=12\cdot B=\{1122, 1123, 1222, 1224\}\subset\sym_4(V).$$ 
%}
%For  an integer $k\geq 0$,
%let $\sym_k(V)$ be the set of all unordered $k$-tuples (equivalently, multisets of size $k$) which can be constructed using the elements of $V$. 
%%We let $\sym_0(V)=\{\emptyset\}$. 
%For $\alpha\in  \sym_i(V)$ and $\beta\in \sym_j(V)$,
%let $\alpha\cdot \beta\in \sym_{i+j}(V)$ be the union of $\alpha$ and $\beta$ as a multiset, %and let $\alpha\cdot \sym_j(V)=\{\alpha\cdot \beta:\beta\in \sym_j(V)\}$.

Suppose $M$ is  a matroid on $V$ with loop set $L_M$ and rank function $r_M$. 
%Let  $L_M$ be the set of loops of $M$. 
Following Anderson~\cite{And}, we define
a {\em $k$-th symmetric quasi-power} of $M$ to be a matroid $N$ on $\sym_k(V)$ which satisfies the following abstraction of multilinearity. 
%\bill{
\begin{description}
\item[Symmetric Q-power:] For all $1\leq i\leq k-1$ and all $\alpha_1,\alpha_2\in \sym_i(V\setminus L_M)$,
\[
\text{$N|_{\alpha_1\cdot \sym_{k-i}(V)}$ is isomorphic to $N|_{\alpha_2\cdot \sym_{k-i}(V)}$,}
\]
via the bijection $\alpha_1\cdot \beta\mapsto \alpha_2\cdot \beta$ for each $\beta\in \sym_{k-i}(V)$.
In addition, $N|_{\alpha\cdot V}$ is isomorphic to $M$ for all $\alpha\in \sym_{k-1}(V\setminus L_M)$, and $\alpha\cdot v$ is a loop in $N$ for all $\alpha\in \sym_{k-1}(V)$ and $v\in L_M$.
\end{description}
%}

%sequence of matroids $(N_1, N_2, \dots, N_k)$ with the following properties.
%\begin{enumerate}
%\item[S1:] $N_i$ is a matroid on $\sym_i(V)$ for $i=1,\dots, k$.
%\item[S2:] {$N_1=M$.}
%%$N_1$ is isomorphic to $M$ via the bijection $\{v\}\mapsto v$ between $\sym_1(V)$ and $V$.
%\item[S3:] for all $j\in \{1,\dots, k-1\}$  and all $\alpha\in V$,
% $N_{j+1}|_{\alpha \cdot \sym_j(V)}$ is isomorphic to $N_j$ via the bijection $\alpha \cdot \beta \mapsto \beta$ between $\alpha\cdot \sym_j(V)$ and $\sym_j(V)$ when $\alpha$ is not a loop in $M$, and
%otherwise $N_{j+1}|_{\alpha \cdot \sym_j(V)}$ is the rank zero matroid.
%\end{enumerate}
We say that a {$k$-th symmetric quasi-power} 
%$(N_1, N_2, \dots, N_k)$ $N$ 
is a {\em $k$-th symmetric power} 
of $M$ if it satisfies the following additional condition.

\begin{description}
\item[SP-rank Property:] The rank of $N$ is equal to $\binom{\rank M +k-1}{k}$.
\end{description}

Results of Mason~\cite{M} and Anderson~\cite{And} imply that the rank of any $k$-th symmetric quasi-power of $M$ is at most $\binom{\rank M +k-1}{k}$, and hence a symmetric power of $M$ is a symmetric quasi-power of maximum possible rank.

Lov\'asz~\cite{L} and Mason~\cite{M} consider slightly different families in their papers on matroid products, both of which use the following property of the 
matroid $N$.
%sequence $(N_1, N_2, \dots, N_k)$.

\begin{description}
\item[Flat Property:] For every flat $F$ of $M$, $F\cdot \sym_{k-1}(V)$
%$\{\alpha\in \sym_k(V): \text{$\alpha$ contains an element in $F$}\}$
is a flat of $N$.
\end{description}
%\bill{I THINK IT WOULD BE A GOOD IDEA TO REWRITE THIS AS $F\cdot \sym_{k-1}(V)$ AS IT WILL FIT BETTER WITH LATER SECTIONS.}

Lov\'asz~\cite{L} considered matroids $N$ on $\sym_k(V)$ satisfying the SP-Rank and Flat Properties, whereas Mason~\cite{M} considered  matroids satisfying Symmetric Q-power and Flat Properties. We have chosen to use Aderson's definition of symmetric power since we believe it gives a better abstraction of vector space symmetric powers to matroids. Anderson's definition  is more restrictive than the definitions of both Lov\'asz and Mason since \cite[Proposition 2.21]{And} implies that any matroid satisfying the Symmetric Q-power and SP-rank Properties will also satisfy the Flat Property.

%We will refer to any $k$-th product obtained in this way as a {\em geometric $k$-th tensor product} of $M$.

We may construct an example of a $k$-th symmetric power  of any representable matroid $M$ since, if $M$ is the column matroid of a matrix $A$, then we can take $N$ to be the column matroid of the $k$-th Macauly matrix of $A$, see \cite{And}. On the other hand, symmetric powers may not be unique, and may not even exist in the case when $M$ is not representable, see \cite{M,And}.
%However, Las Vergnas (for tensors) and Draisma and Rinc{\'o}n (for symmetric powers)
%showed that $k$-th symmetric powers may not exist for non-representable matroids.
%It is also well known that a given matroid may have several $k$-th symmetric powers. 

Mason~\cite{M} was very much aware of Lov\'asz's SP-rank property: he says on~\cite[Page 552]{M} that he suspects that SP-rank property will follow from Symmetric Q-power and Flat properties, and gives this as the first of two outstanding open questions for symmetric powers on~\cite[Page 554]{M}. This question is given as a formal conjecture by Anderson \cite[Conjecture 2.22]{And}.

\begin{conj}[Mason's Rank Conjecture] \label{conj:And}
Let $N$ be a $k$-th symmetric quasi-power of a matroid $M$.
Then the rank of $N$ is $\binom{\rank M +1}{k}$ if and only if 
%\[
%\{\alpha\in \sym_k(V): \text{$\alpha$ contains an element of $F$}\}
%\]
$F\cdot \sym_{k-1}(V)$ is a flat of $N_k$ for every flat $F$ of $M$.
\end{conj}

    We will verify Conjecture \ref{conj:And} for all matroids when $k=2$, and also for all uniform matroids other than the free matroid when $k=3$. In addition, we will show that Conjecture \ref{conj:And} does not hold for all matroids when  $k\geq 3$, and does not hold for uniform matroids when $k=4$. Our approach is to apply techniques from rigidity theory to the dual formulation of Conjecture \ref{conj:And}. It is inspired by a recent paper of Brakensiek et al.~\cite{brakensiek} which shows that the restriction of the geometric symmetric second power of a generic realisation of the uniform matroid $U_n^k$ in $\R^k$ to the edge set $E(K_n)$ of the complete graph $K_n$ is dual to the generic $(n-k-1)$-dimensional rigidity matroid on $E(K_n)$, and our paper \cite{JTfields} in which we consider the interplay between the family of symmetric second powers of $U_n^k$ restricted to $E(K_n)$ and the dual family of abstract $(n-k-1)$-rigidity matroids on $E(K_n)$.

%The rest of the paper is organized as follows.
The aim of this paper is not only to solve Mason's conjecture but also to clarify relationships between different abstractions of multilinearity.
We will describe these abstractions in Section~\ref{sec:Multilinearity}.
In Section~\ref{sec:rigidity}, we will describe the connection to rigidity theory.
In Section~\ref{sec:counterexample}, we will describe our counterexamples which show that Mason's Rank Conjecture does not hold in general.
In Section~\ref{sec:lemmas} and Section~\ref{sec:positive}, we will give the proofs of our positive results for $k=2,3$.

\subsection{Notation}
We will use the following notation throughout the paper.
Recall that, for a finite set $V$ and an integer $k\geq 0$,
$\sym_k(V)$ denotes the set of all unordered $k$-words over $V$ and that, for $\alpha
%=\{u_1,\dots, u_i\}
\in  \sym_i(V)$ and $\beta
%=\{v_1,\dots, v_j\}
\in \sym_j(V)$,
$\alpha\cdot \beta\in \sym_{i+j}(V)$ is the concatenation of $\alpha$ and $\beta$.

We will often 
refer to the  elements of $V$ as 
{\em letters} and the elements of $\sym_k(V)$ as {\em words}.
For $\alpha\in \sym_k(V)$, ${\rm supp}(\alpha)$ denotes the set  of distinct letters contained in $\alpha$.
%For $E\subseteq \sym_k(V)$, we will use $V(E)$ to denote the set of distinct elements appearing in $E$, i.e., $V(E)=\bigcup_{\alpha\in E}{\rm supp}(\alpha)$. \st{I guess this V( ) notation is not used.}

For $E\subseteq \sym_k(V)$ and $\alpha\in \sym_i(V)$, the {\em link of $\alpha$ in $E$} is $$\lk(\alpha,E)=\{\beta\in \sym_{k-i}(V): \alpha\cdot \beta\in E\}.$$
%, where $\{u,v\}$ is considered as a multiset.
For example, if $E=\{12,13,22,24\}$,
then $\lk(2,E)=\{1,2,4\}$.

%\bill{We will often simplify notation by associating the elements and subsets of $V$ with the corresponding elements and subsets of $\sym_1(V)$. For example, for $v\in V$, $X\subseteq V$ and $B\subseteq \sym_k(V)$ we will write $v\cdot B$ and $X\cdot B$ to mean $\{v\}\cdot B$ and $\sym_1(X)\cdot B$. }
%Similarly for $A\subset \sym_1(V)$ we will write $\cl_M(A)$ to mean 
%\cl_M(\{a:\{a\}\in A\})$.

%\bill{We will often simplify notation by associating the elements and subsets of $V$ with the corresponding elements and subsets of $\sym_1(V)$. For example, for $v\in V$, $X\subseteq V$ and $B\subseteq \sym_k(V)$ we will write $v\cdot B$ and $X\cdot B$ to mean $\{v\}\cdot B$ and $\sym_1(X)\cdot B$. }
%Similarly for $A\subset \sym_1(V)$ we will write $\cl_M(A)$ to mean 
%\cl_M(\{a:\{a\}\in A\})$.

We refer the reader to \cite{Oxley11} for  standard terminology and results on matroids which are not explicitly given in the text. 
For a matroid $M$ on $V$, we will use $r_M$ and ${\rm cl}_M$ to denote the rank function and the closure operator of $M$, respectively. 
%We also use $r_M$ to denote the rank of $M$ if it is clear from the context.
The dual matroid of $M$ is denoted by $M^*$.
The sets of loops and coloops in $M$ are denoted by $L_M$ and  $CL_M$, respectively. 
For $X\subseteq V$, we will use $M|_X$ to denote the restriction of $M$ to $X$ and we will say that  $X$ is {\em cyclic} in $M$ if $M|_X$ has no coloops. 
%In addition we will sometimes use our association between the elements of $V$ and $\sym_1(V)$ in statements about $M$. For example, for $A\subseteq \sym_1(V)$, we will write $\cl_M(A)$ to mean $\cl_M(\{a:\{a\}\in A\})$.

We next define two order relations on the set of all matroids with groundset $V$. The {\em weak order} $\preceq_{\rm w}$ is defined such that, 
for two matroids $M_1, M_2$ on  $V$, $M_1\preceq_{\rm w} M_2$ 
 if $r_{M_1}(X)\leq r_{M_2}(X)$ for all $X\subseteq V$.
 The {\em strong order} $\preceq_{\rm s}$ of matroids is defined such that, 
for two matroids $M_1, M_2$ on $V$, $M_1\preceq_{\rm s} M_2$ 
 if $r_{M_1}(Y)-r_{M_1}(X)\leq r_{M_2}(Y)-r_{M_2}(X)$ for all $X\subseteq Y\subseteq V$.
 By  taking $X=\emptyset$ in the definition of $\preceq_{\rm s}$, we see that 
 $M_1 \preceq_{\rm s} M_2$ implies $M_1 \preceq_{\rm w} M_2$.
 
 Our next two results give several equivalent definitions of the weak and strong orders. We refer the reader to \cite{Oxley11,Kun86s,KN86w} for their proofs.
 \begin{proposition}\label{prop:weak}
 Let $M_1$ and $M_2$ be matroids on a finite set $V$. Then
 the following statements are equivalent:
 \begin{enumerate}
     \item $M_1\preceq_{\rm w} M_2$;
     \item every independent set in $M_1$ is independent in $M_2$;
     %\item every spanning set in $M_2$ is spanning in $M_1$;
     \item every circuit in $M_2$ contains a circuit in $M_1$.
 \end{enumerate}
In addition,  when $M_1$ and $M_2$ have the same rank, we have $M_1\preceq_{\rm w} M_2$ if and only if $M_1^*\preceq_{\rm w} M_2^*$.
\end{proposition}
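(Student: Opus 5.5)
The plan is to prove the cycle (1)$\Rightarrow$(2)$\Rightarrow$(3)$\Rightarrow$(1), and then derive the duality statement from the dual rank formula.

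\emph{(1)$\Rightarrow$(2).} Let $I$ be independent in $M_1$. Then $|I|=r_{M_1}(I)\le r_{M_2}(I)\le |I|$, so $r_{M_2}(I)=|I|$ and $I$ is independent in $M_2$.

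\emph{(2)$\Rightarrow$(3).} Let $C$ be a circuit of $M_2$. Since $C$ is dependent in $M_2$, the contrapositive of (2) shows that $C$ is dependent in $M_1$. Hence $C$ contains a circuit of $M_1$.

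\emph{(3)$\Rightarrow$(1).} This is the only step needing a small argument. Fix $X\subseteq V$ and let $I$ be a maximal independent subset of $X$ in $M_1$, so $|I|=r_{M_1}(X)$. I claim $I$ is independent in $M_2$. Otherwise $I$ contains a circuit $C$ of $M_2$. By (3), $C$ contains a circuit of $M_1$, which lies inside $I$ and contradicts the independence of $I$ in $M_1$. Therefore $r_{M_2}(X)\ge |I|=r_{M_1}(X)$.

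\emph{Duality.} Suppose $r_{M_1}(V)=r_{M_2}(V)=r$. The dual rank function is
\[
r_{M_i^*}(X)=|X|-r+r_{M_i}(V\setminus X).
\]
So $r_{M_1^*}(X)\le r_{M_2^*}(X)$ for all $X$ if and only if $r_{M_1}(V\setminus X)\le r_{M_2}(V\setminus X)$ for all $X$. The map $X\mapsto V\setminus X$ is a bijection on subsets of $V$, so this holds if and only if $M_1\preceq_{\rm w} M_2$.

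It is the equal-rank hypothesis that makes the two $r$ terms cancel. For the converse direction one also uses that $M_1^*$ and $M_2^*$ have equal rank $|V|-r$, so the argument is symmetric.

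The only non-formal step is (3)$\Rightarrow$(1), and it is handled by the maximal independent set argument above.
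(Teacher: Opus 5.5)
Your proof is correct. The cycle (1)$\Rightarrow$(2)$\Rightarrow$(3)$\Rightarrow$(1), using a maximal $M_1$-independent subset of $X$ for the last step, and the dual rank formula $r_{M_i^*}(X)=|X|-r+r_{M_i}(V\setminus X)$ for the equal-rank duality claim, is the standard argument. The paper does not prove this proposition itself but refers to \cite{Oxley11,Kun86s,KN86w}, and it uses the same dual rank formula in the proof of Lemma~\ref{lem:duality}, so there is nothing to add.
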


 \begin{proposition}\label{prop:strong}
 Let $M_1$ and $M_2$ be matroids on a finite set $V$.
 Then
 the following statements are equivalent:
 \begin{enumerate}
     \item $M_1\preceq_{\rm s} M_2$;
     \item every flat in $M_1$ is a flat in $M_2$;
     \item ${\rm cl}_{M_2}(X)\subseteq {\rm cl}_{M_1}(X)$ for all $X\subseteq V$;
     \item every circuit in $M_2$ is the union of circuits in $M_1$;
     \item $M_2^*\preceq_{\rm s} M_1^*$. 
 \end{enumerate}
 In addition,  when $M_1$ and $M_2$ have the same rank, we have $M_1\preceq_{\rm s} M_2$ if and only if  $M_1=M_2$.
\end{proposition}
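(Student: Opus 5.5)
We prove the equivalences in Proposition~\ref{prop:strong} by the cycle (1)$\Rightarrow$(3)$\Rightarrow$(2)$\Rightarrow$(1). We then attach (4) and (5) through duality, and finish with the equal-rank statement.

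\emph{(1)$\Rightarrow$(3).} Let $e\in {\rm cl}_{M_2}(X)$. Taking the pair $X\subseteq X\cup\{e\}$ in the definition of $\preceq_{\rm s}$ gives
$$r_{M_1}(X\cup e)-r_{M_1}(X)\le r_{M_2}(X\cup e)-r_{M_2}(X)=0,$$
so $e\in{\rm cl}_{M_1}(X)$.

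\emph{(3)$\Rightarrow$(2).} If $F$ is a flat of $M_1$, then ${\rm cl}_{M_2}(F)\subseteq{\rm cl}_{M_1}(F)=F$, so $F$ is a flat of $M_2$.

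\emph{(2)$\Rightarrow$(1).} This is the main step. I would first reduce it to the case $|Y\setminus X|=1$ by telescoping along a chain $X=X_0\subset X_1\subset\dots\subset X_t=Y$. So it suffices to show: if $r_{M_1}(X\cup e)>r_{M_1}(X)$, then $r_{M_2}(X\cup e)>r_{M_2}(X)$. In that case $e\notin F:={\rm cl}_{M_1}(X)$. By (2), $F$ is a flat of $M_2$ containing $X$, so ${\rm cl}_{M_2}(X)\subseteq F$ and hence $e\notin{\rm cl}_{M_2}(X)$. Since each unit increment is $0$ or $1$, this gives the required inequality.

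\emph{Circuits and duality.} For (3)$\Leftrightarrow$(4), note that a set $C$ is a union of $M_1$-circuits exactly when every $e\in C$ lies in ${\rm cl}_{M_1}(C-e)$.
\begin{itemize}
\item Assume (3) and let $C$ be an $M_2$-circuit. Then each $e\in C$ lies in ${\rm cl}_{M_2}(C-e)\subseteq{\rm cl}_{M_1}(C-e)$, so $C$ is a union of $M_1$-circuits.
\item Conversely, assume (4) and let $e\in{\rm cl}_{M_2}(X)\setminus X$. Then there is an $M_2$-circuit $C$ with $e\in C\subseteq X\cup e$. By (4), $C$ is a union of $M_1$-circuits, so some $M_1$-circuit $C'$ satisfies $e\in C'\subseteq C$. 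Hence $e\in{\rm cl}_{M_1}(X)$.
\end{itemize}

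For (1)$\Leftrightarrow$(5), use $r_{M^*}(Z)=|Z|+r_M(V\setminus Z)-r(M)$. For $X\subseteq Y$ this gives
$$r_{M^*}(Y)-r_{M^*}(X)=|Y\setminus X|-\bigl(r_M(V\setminus X)-r_M(V\setminus Y)\bigr).$$
Since $V\setminus Y\subseteq V\setminus X$, the inequality in (1) applied to the pair $V\setminus Y\subseteq V\setminus X$ is equivalent to the reversed inequality for the duals. As complementation is a bijection on such pairs, (1) and (5) are equivalent.

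\emph{Equal ranks.} Suppose $M_1\preceq_{\rm s}M_2$ and $r(M_1)=r(M_2)$. Apply (1) to the pair $X\subseteq V$ to get
$$r(M_1)-r_{M_1}(X)\le r(M_2)-r_{M_2}(X),$$
which gives $r_{M_2}(X)\le r_{M_1}(X)$. Combined with the weak-order inequality $r_{M_1}(X)\le r_{M_2}(X)$ (obtained from $\emptyset\subseteq X$), this forces equal rank functions, so $M_1=M_2$. The converse is trivial.

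The only step needing care is (2)$\Rightarrow$(1), and the reduction to single-element increments handles it.
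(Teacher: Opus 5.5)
Your proof is correct and complete. The paper gives no proof of this proposition; it refers to \cite{Oxley11,Kun86s,KN86w}, where these conditions are developed within the general theory of strong maps and quotients. Your argument supplies a self-contained elementary proof instead, and each step holds:
\begin{itemize}
\item the cycle (1)$\Rightarrow$(3)$\Rightarrow$(2)$\Rightarrow$(1), where the reduction of (2)$\Rightarrow$(1) to single-element increments is legitimate because each increment is $0$ or $1$ and ${\rm cl}_{M_1}(X)$ is a flat of $M_2$ containing $X$;
\item the closure characterisation of unions of circuits for (3)$\Leftrightarrow$(4);
\item the dual rank formula together with complementation of pairs $X\subseteq Y$ for (1)$\Leftrightarrow$(5);
\item comparing the pairs $X\subseteq V$ and $\emptyset\subseteq X$ to force equal rank functions in the equal-rank case.
\end{itemize}
What your route gains is independence from the cited references: it uses only basic facts about rank, closure and circuits. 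The citation instead situates the statement within the broader quotient and strong-map theory, which the paper does not otherwise need.
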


%See, e.g., for the proofs.
%It should be also 
%Note that, under the assumption that $M_1$ and $M_2$ have the same rank,  
%\begin{itemize}
%    \item $M_1\preceq_{\rm s} M_2$ if and only if  $M_1=M_2$, and
%    \item $M_1\preceq_w M_2$ if and only if $M_1^*\preceq_w M_2^*$. 
%\end{itemize}

% JUST FOR OUR CONVENIENCE, LET ME GIVE THE PROOFS:
% \begin{proof}[Proof of Proposition~\ref{prop:weak}]
% 1 implies 2: For every independent $I$ in $M_1$, 1 implies $|I|=r_{M_1}(I)\leq r_{M_2}(I)$.

% 2 implies 1: If $r_{M_1}(X)>r_{M_2}(X)$ holds for some $X$, then 
% an $M_1$-base of $X$ is dependent in $M_2$.

% 2 and 4 are equivalent: To this this observe that, 3 holds if and only if every dependent set in $M_2$ is dependent in $M_1$.

% Thus, 1, 2, and 4 are equivalent.

% 2 implies 3: Pick any spanning set $S$ of $M_2$, and let $B$ be an $M_1$-base of $S$.
% If $X$ is not spanning in $M_1$, then $B+e$ is independent in $M_1$ for some $e\notin X$,

% \end{proof}
%\section{Abstract Multilinearity and Rigidity}

\section{Further Properties and Duality}
Throughout the section, $M$ will denote a matroid on $V$
and $N$ will denote a matroid on $\sym_k(V)$ for some positive integer $k$. We will describe various properties of  $N$ which are inspired by the operation of taking the  $k$-th symmetric  power of a vector space. We will state our main results, which give equivalences between these properties. We will then reformulate the properties and results in terms of the dual matroids $M^*$ and $N^*$.

\label{sec:Multilinearity}
\subsection{Abstract multilinearity}\label{subsec:Multilinearity}
%Given a matroid with groundset $V$, we shall describe various properties of %matroids on $\sym_k(V)$ which are inspired by the multilinearity of the map from %a vector space to its $k$-th symmetric  power.
%}.
%Recall that $L_M$ denotes the set of loops in $M$.

% \iffalse
% %\bill{
% We first recall the property 
% that $N$ must satisfy in order to be  a $k$-th  symmetric quasi-power of $M$.
% %power inspired by properties S2 and S3 in the definition of a $k$'th  symmetric quasi-power. 
% %}
% \begin{description}
% \item[Symmetric Q-power:] For all $1\leq i\leq k-1$ and all $\alpha_1,\alpha_2\in \sym_i(V\setminus L_M)$,
% \[
% \text{$N|_{\alpha_1\cdot \sym_{k-i}(V)}$ is isomorphic to $N|_{\alpha_2\cdot \sym_{k-i}(V)}$,}
% \]
% via the bijection $\alpha_1\cdot \beta\mapsto \alpha_2\cdot \beta$ for each $\beta\in \sym_{k-i}(V)$. In addition, $N|_{\alpha\cdot V}$ is isomorphic to $M$ for all $\alpha\in \sym_{k-1}(V\setminus L_M)$, and $\alpha\cdot v$ is a loop in $N$ for all $\alpha\in \sym_{k-1}(V)$ and $v\in L_M$. 
% \end{description}

% Our next property is a weaker abstraction of multilinearity.
% \begin{description}
% \item[Multilinearity:] 
% For all $\tau \in \sym_{k-1}(V\setminus L_M)$,
% \[
% \text{$N|_{\tau \cdot V}$ is isomorphic to $M$} 
% \]
% via the bijection $\tau \cdot v \mapsto v$ between $\tau \cdot V$ and $V$. And
% $\tau \cdot v$ is a loop in $N$ for all $\tau \in \sym_{k-1}(V)$ and $v\in L_M$.
% %for all $\tau \in L_M\cdot \sym_{k-2}(V)$,  $N|_{\tau \cdot \sym_1(V)}$ is the rank zero matroid on $\tau \cdot \sym_1(V)$.
% \end{description}
% \fi 
Our first property is a slight weakening of the Symmetric Q-power property given in the introduction. It is a natural analogue of the observation that a  multilinear $k$-form becomes a linear form if one fixes $k-1$ arguments. 
\begin{description}
\item[Multilinearity:] 
For all $\tau \in \sym_{k-1}(V\setminus L_M)$,
\[
\text{$N|_{\tau \cdot V}$ is isomorphic to $M$} 
\]
via the bijection $\tau \cdot v \mapsto v$ between $\tau \cdot V$ and $V$. And
$\tau \cdot v$ is a loop in $N$ for all $\tau \in \sym_{k-1}(V)$ and $v\in L_M$.
\end{description}

Note that, in order to be a $k$-th  symmetric quasi-power of $M$, 
$N$ must satisfy the additional symmetry condition given in the Symmetric Q-power property from the Introduction. 

%\begin{description}
%\item[Symmetric Q-power:] $N$ satisfies Multilinearity, and for all $1\leq i\leq %k-1$ and all $\alpha_1,\alpha_2\in \sym_i(V\setminus L_M)$,
%\[
%\text{$N|_{\alpha_1\cdot \sym_{k-i}(V)}$ is isomorphic to $N|_{\alpha_2\cdot %\sym_{k-i}(V)}$,}
%\]
%via the bijection $\alpha_1\cdot \beta\mapsto \alpha_2\cdot \beta$ for each %$\beta\in \sym_{k-i}(V)$. 
% In addition, $N|_{\alpha\cdot V}$ is isomorphic to $M$ for all $\alpha\in %\sym_{k-1}(V\setminus L_M)$, and $\alpha\cdot v$ is a loop in $N$ for all %$\alpha\in \sym_{k-1}(V)$ and $v\in L_M$. 
%\end{description}

% Multilinearity is a weaker property than having a quasi-symmetric power sequence since 
% for the latter property the isomorphism between 
% $N|(\tau_1\cdot \sym_{k-i}(V))$
% and $N|(\tau_2\cdot \sym_{k-i}(V))$
% is necessary for all $1\leq i\leq k$ and all $\tau_1,\tau_2\in \sym_i(V)$ containing no loops.

By relaxing the isomorphism requirement in Multilinearity  to a strong or weak order requirement, we may obtain  two weaker properties. To do this we will need to extend our matroid order relations to allow relabelling of the elements of a matroid. Given two matroids $M_1=(V_1,r_1)$ and $M_2=(V_2,r_2)$ and a bijection $f:V_2\to V_1$ we say that {\em $M_1 \preceq_{\rm w} M_2$ through $f$} if $r_1(f(X))\leq r_2(X)$ for all $X\subseteq V_2$. The corresponding extension of the strong order is defined analogously.
\begin{description}
\item[Strong (resp. Weak) Order Multilinearity:] For all $\tau \in \sym_{k-1}(V\setminus L_M)$,
\[
 M^*\preceq_{\rm s} (N|_{\tau \cdot V})^* \quad \text{(resp., $M^*\preceq_{\rm w} (N|_{\tau \cdot V})^*$)}
 \]
 via the bijection $\tau \cdot v \mapsto v$ between $\tau \cdot V$ and $V$. And
 $\tau\cdot v$ is a loop in $N$ for all $\tau\in \sym_{k-1}(V)$ and $v\in L_M$.
 %for all $\tau \in (L_M\cdot \sym_{k-2}(V))$,  $N|_{\tau \cdot V}$ is the rank zero matroid on $\tau \cdot V$.
\end{description}

Since the strong order for matroids  is compatible with matroid duality (by reversing the order), 
Strong Order Multilinearity can be defined equivalently  by the condition that $M\succeq_{\rm s} N|_{\tau \cdot V}$.
This is not the case for the weak order in general.
The primal version of Weak Order Multilinearity can be  defined similarly, but it is too weak for our purposes. 
%We will give an example later.

The isomorphism given by the Multilinearity property and the equivalence between the fourth and fifth  items in Proposition~\ref{prop:strong} imply that
our next property lies between Multilinearity and Strong Order Multilinearity.
\begin{description}
\item[Circuit Multilinearity:] For all $\tau \in \sym_{k-1}(V\setminus L_M)$
and every circuit $X$ of $M$, 
$\tau\cdot X$ is a circuit of $N$. And
 $\tau\cdot v$ is a loop in $N$ for all $\tau\in \sym_{k-1}(V)$ and $v\in L_M$.
 %for all $\tau \in L_M\cdot \sym_{k-2}(V)$,  $N|_{\tau \cdot \sym_1(V)}$ is the rank zero matroid on $\tau \cdot \sym_1(V)$.
\end{description}
%By Propositions~\ref{prop:weak} and \ref{prop:strong},
In summary, we have {\small 
\begin{equation}\label{eq:biliearity_implication}
\text{Sym.~Q-power} \Rightarrow 
\text{Multilinearity} \Rightarrow
\text{Circuit M.} \Rightarrow
\text{Strong Order M.}\Rightarrow
\text{Weak Order M.}
\end{equation}
}
% Since weak order is not compatible with duality, we may also consider the following:
% \begin{description}
% \item[Dual Weak Order Multilinearity:] for all $\tau \in \sym_{k-1}(V)$,
%  $M^*$ has a weak map to $(N|_{\tau \cdot \sym_1(V)})^*$ via the bijection $\tau \cdot v \mapsto v$ between $\tau \cdot \sym_1(V)$ and $V$ when $v$ is not a loop in $M$.
%  When $v$ is a loop in $M$,  $N|_{\tau \cdot \sym_1(V)}$ is the rank zero matroid.
% \end{description}

\subsection{Subspace properties}

We also consider three properties of $N$ obtained by abstracting the properties of subspaces of the symmetric power of a vector space. The first was given in the Introduction.
\begin{description}
\item[Flat Property:]
For every flat $F$ of $M$, $F\cdot \sym_{k-1}(V)$ is a flat of $N$.
%\item[Hyperplane Property:]
%For every hyperplane $X$ in $M$, $X\cdot \sym_{k-1}(V)$ is a hyperplane in $N$. 
\item[Closure Property:]
For all $X\subseteq V$, ${\rm cl}_M(X)\cdot \sym_{k-1}(V)\subseteq {\rm cl}_N(X\cdot \sym_{k-1}(V))$.
\item[Strong  SP-rank Property:] For all $X\subseteq V$, 
$$r_N(X\cdot \sym_{k-1}(V))={\rank M+k-1\choose k}-{\rank M-r_M(X)+k-1\choose k}.$$
\end{description}
We also recall the following special case of the Strong SP-rank Property 
%when $X=V$  
from the Introduction.

\begin{description}
\item[SP-rank Property:] The rank of $N$ is ${\rank M+k-1\choose k}$.
\end{description}

% \subsection{Connection to Symmetric Power}
% Let $N$ be a matroid on $\sym_k(V)$.
% We say that $N$ is {\em symmetric} if, for any integer $0\leq i\leq k$ and any $\tau_1,\tau_2\in \sym_i(V)$, 
% $N|(\tau_1\cdot \sym_{k-i}(V))$ is isomorphic to 
% $N|(\tau_2\cdot \sym_{k-i}(V))$ though the bijection
% $\tau_1 \cdot \alpha \mapsto \tau_2\cdot \alpha$ for $\alpha\in \sym_{k-i}(V)$.
% \begin{lemma}\label{lem:connection}
% Let $M$ be a matroid on $V$ 
% and $N$ be a matroid on $\sym_k(V)$.
% Then, there is a quasi-symmetric power (resp., symmetric power) $(N_1,\dots, N_k)$ with $N_1=M$ and $N_k=N$
% if and only if $N$ is symmetric and satisfies 
% Multilinearity (resp., Multilinearity and Rank Property) with respect to $M$.
% \end{lemma}
% \begin{proof}
% Clearly, the existence of a quasi-symmetric power sequence implies the symmetry and Multilinearity of $N$.

% We prove the converse direction by induction on $k$.
% The base case when $k=1$ is obvious.
% Suppose $N$ is symmetric and satisfies Multilinearity.
% Then, by symmetry, $N_i$ is defined such that $N_i$ is isomorphic to $\tau\cdot \sym_{k-i}(V)$
% for all $\tau\in \sym_i(V)$.
% Then, $N_{k-1}$ also satisfies symmetry and Multilinearity.
% Indeed, 

% \end{proof}
\subsection{Statements of main results}\label{subsec:results}
%We show Multilinearity of matroids behaves in a very different way depending on $k=2$ or $k>2$.
We will prove the following equivalences in the case when $k=2$.
\begin{theorem}\label{thm:2}
Let $M$ be a matroid on a finite set $V$ and $N$ be a matroid on $\sym_2(V)$.
Then the following statements are equivalent.
\begin{itemize}
\item $N$ is a second symmetric power of $M$.
\item $N$ satisfies SP-rank Property and Multilinearity.
\item $N$ satisfies SP-rank Property and Circuit Multilinearity.
\item $N$ satisfies SP-rank Property and Strong Order Multilinearity.
\item $N$ satisfies Flat Property and Strong Order Multilinearity.
\end{itemize}
\end{theorem}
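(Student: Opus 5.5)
For $k=2$ the Symmetric Q-power condition only involves $i=1$. There it asks that $N|_{a\cdot V}\cong N|_{b\cdot V}$ via $a\cdot v\mapsto b\cdot v$ for nonloops $a,b$, and this already follows from Multilinearity, since both restrictions are isomorphic to $M$ through the canonical bijections. So the first two items are equivalent. The implications (2)$\Rightarrow$(3)$\Rightarrow$(4) are the chain (\ref{eq:biliearity_implication}). For (1)$\Rightarrow$(5) we use the fact quoted in the Introduction from \cite[Proposition 2.21]{And}: Symmetric Q-power together with SP-rank gives the Flat Property, and (1) also gives Strong Order Multilinearity via (\ref{eq:biliearity_implication}). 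It therefore remains to prove (4)$\Rightarrow$(2) and (5)$\Rightarrow$(4).

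\emph{(4)$\Rightarrow$(2).} Let $r=\rank M$ and fix a base $B=\{b_1,\dots,b_r\}$ of $M$. Strong Order Multilinearity is equivalent to $N|_{a\cdot V}\preceq_{\rm s} M$ for every nonloop $a$. By Proposition~\ref{prop:strong}(3), this gives $a\cdot {\rm cl}_M(X)\subseteq {\rm cl}_N(a\cdot X)$ for all $X\subseteq V$.
\begin{itemize}
\item Take a word $uv$ with $u,v$ nonloops (words containing a loop are loops of $N$). Applying the inclusion with $a=v$ and $X=B$ gives $uv\in{\rm cl}_N(v\cdot B)$.
\item Applying it again with $a=b_i$ gives $b_iv\in {\rm cl}_N(b_i\cdot B)$ for each $i$.
\end{itemize}
Hence $B\cdot B$, which has $\binom{r+1}{2}$ elements, spans $N$. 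The SP-rank Property then forces $B\cdot B$ to be a base of $N$, so $b\cdot B$ is independent in $N$ for each $b\in B$. Any nonloop $a$ of $M$ lies in some base, so $r_N(a\cdot V)=r$. Now $N|_{a\cdot V}\preceq_{\rm s} M$ and the two matroids have equal rank, so the last clause of Proposition~\ref{prop:strong} gives $N|_{a\cdot V}=M$ under the canonical bijection. This is Multilinearity.

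\emph{(5)$\Rightarrow$(4).} By the argument above, $B\cdot B$ spans $N$, so it suffices to show that $B\cdot B$ is independent in $N$. Put $H_j={\rm cl}_M(\{b_1,\dots,b_{j-1}\})$. By the Flat Property each $H_j\cdot V$ is a flat of $N$, and these flats form a chain $H_1\cdot V\subseteq H_2\cdot V\subseteq\cdots$. The words of $B\cdot B$ lying in $H_{j+1}\cdot V\setminus H_j\cdot V$ are exactly $b_jb_l$ for $l\ge j$. It therefore suffices to prove
\[
r_N(H_{j+1}\cdot V)-r_N(H_j\cdot V)\ \ge\ r-j+1 ,
\]
that is, that $\{b_jb_j,\dots,b_jb_r\}$ is independent in $N/(H_j\cdot V)$.

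To prove this I would argue by contradiction. Suppose the set is dependent modulo $H_j\cdot V$. Then there is a circuit $C$ of $N$ with $C\subseteq (H_j\cdot V)\cup b_j\cdot\{b_j,\dots,b_r\}$ that meets $b_j\cdot\{b_j,\dots,b_r\}$, say in $b_jb_l$. I would then use the Flat Property with two further flats:
\begin{itemize}
\item $G_l={\rm cl}_M(B\setminus\{b_l\})$, which is a hyperplane of $M$ when $l\ne j$;
\item ${\rm cl}_M(H_j\cup\{b_j\})$.
\end{itemize}
Intersecting $G_l\cdot V$ with $(H_j\cdot V)\cup(b_j\cdot V)$ should force the part of $C$ outside $H_j\cdot V$ into $b_j\cdot V$. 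The circuit elimination with the flat $H_j\cdot V$ should then produce a dependence inside $b_j\cdot V$ modulo $b_j\cdot H_j$. Strong Order Multilinearity, via $b_j\cdot{\rm cl}_M(X)\subseteq{\rm cl}_N(b_j\cdot X)$, transfers such a dependence back to a dependence in $M$ among $\{b_j,\dots,b_r\}$ modulo $H_j$, contradicting that $B$ is a base.

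I expect this last step to be the main obstacle. The point that needs care is controlling how $C$ meets $H_j\cdot V$, because words $uv$ with $u\in H_j$ and $v$ arbitrary are not confined to a single $a\cdot V$. If the direct approach fails, I would instead induct on $r$ by contracting $b_1$: the goal would be to show that the restriction of $N$ to $\sym_2(V\setminus {\rm cl}_M(b_1))$, suitably contracted, satisfies (5) for $M/b_1$. The Flat Property makes $({\rm cl}_M(b_1))\cdot V$ a flat of $N$, which is what this reduction needs.
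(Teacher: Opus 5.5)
Your reductions (1)$\Leftrightarrow$(2), (2)$\Rightarrow$(3)$\Rightarrow$(4) and (1)$\Rightarrow$(5) are fine. Your proof of (4)$\Rightarrow$(2) is correct and takes a genuinely different route from the paper. You show directly that $\sym_2(B)$ spans $N$, so SP-rank makes it a base and the equal-rank clause of Proposition~\ref{prop:strong} finishes. The paper instead dualises and obtains this step from Lemma~\ref{lem:ext_star} via the Canonical Base Property. Your argument is much shorter, but as written it uses $k=2$ (a single nonloop lies in a base), whereas the paper's route covers all $k$.

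The gap is in (5)$\Rightarrow$(4). You only prove $r(N)\le\binom{r+1}{2}$. The reverse inequality is the whole content of Mason's conjecture for $k=2$, and your sketch for it does not work. Strong Order Multilinearity cannot supply it: the rank-zero matroid on $\sym_2(V)$ satisfies it, so it only ever gives upper bounds on $N$-ranks. Concretely, $b_j\cdot{\rm cl}_M(X)\subseteq{\rm cl}_N(b_j\cdot X)$ pushes dependences of $M$ into $N$, never $N$-dependences back into $M$, so your final transfer step runs in the wrong direction. The preceding step turns a dependence modulo $H_j\cdot V$ into one modulo $b_j\cdot H_j$ inside $b_j\cdot V$. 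That is an unjustified modularity claim, since dependence modulo a larger set does not give dependence modulo a smaller one. Also, intersecting with $G_l\cdot V$ achieves nothing, because $C\setminus(H_j\cdot V)\subseteq b_j\cdot V$ already by your choice of $C$. Your fallback induction has the same hole. Even granting $r\bigl(N/({\rm cl}_M(b_1)\cdot V)\bigr)=\binom{r}{2}$, you still need $r_N({\rm cl}_M(b_1)\cdot V)\ge r$, which you do not address.

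The missing idea is that the lower bound must come from the Flat Property, via strictly increasing chains of flats. For your inequality, let $A=(H_j\cdot V)\cup(b_j\cdot V)$ and $G_i={\rm cl}_M(H_j\cup\{b_{j+1},\dots,b_{j+i}\})$ for $0\le i\le r-j$. Each $G_i$ contains $H_j$ and avoids $b_j$, so $(G_i\cdot V)\cap A=(H_j\cdot V)\cup(b_j\cdot G_i)$ is a flat of $N|_A$. These flats, followed by $A$ itself, form a strictly increasing chain starting at $H_j\cdot V$; the new words at each step are $b_jb_{j+i}$, and finally $b_jb_j$. Hence
$r_N(H_{j+1}\cdot V)-r_N(H_j\cdot V)\ge r_N(A)-r_N(H_j\cdot V)\ge r-j+1$,
and summing over $j$ gives $r(N)\ge\binom{r+1}{2}$. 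With $j=1$ this is exactly the bound $r_N({\rm cl}_M(b_1)\cdot V)\ge r$ missing from your fallback. In the paper this lower bound is obtained dually, from Lemma~\ref{lem:Mason_k=2} (CycP forces every canonical subset to span) together with Lemma~\ref{lem:canonical_size}.
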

Theorem~\ref{thm:2} verifies Mason's Rank Conjecture for second symmetric powers. We have already seen that necessity follows from \cite{And}. Sufficiency follows since, if $N$ is a   second symmetric quasi-power of $M$  which satisfies the Flat Property, then $N$ also satisfies Multilinearity, and hence $N$  satisfies the SP-rank Property by the equivalence of the second and fourth items in Theorem ~\ref{thm:2}.
%since $N_2$ in a quasi-symmetric power sequence satisfies (Strong Order) Multilinearity with respect to $N_1$.

An example given later shows that we cannot replace Strong Order Multilinearity with Weak Order Multilinearity in Theorem~\ref{thm:2}.
We will show, however,  that the combination of the SP-rank Property and Weak Order Multilinearity gives the class of matroids considered by Lov\'asz by proving the following.
\begin{theorem}\label{thm:2_weak}
Let $M$ be a matroid on a finite set $V$ and $N$ be a matroid on $\sym_2(V)$.\\[1mm]
%Then 
(a) The following statements are equivalent.
\begin{itemize}
\item $N$ satisfies SP-rank Property and Flat Property.
\item $N$ satisfies Flat Property and Weak Order Multilinearity.
\item $N$ satisfies Strong  SP-rank Property and Weak Order Multilinearity.
\item $N$ satisfies SP-rank Property, Closure Property, and Weak Order Multilinearity.
\end{itemize}
(b)
%In addition, 
There exists a matroid on $\sym_2(V)$ that satisfies SP-rank Property and Flat Property,
but does not satisfy Multilinearity.
\end{theorem}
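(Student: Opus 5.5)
We would prove (a) by establishing the cycle (i)$\Rightarrow$(ii)$\Rightarrow$(iii)$\Rightarrow$(iv)$\Rightarrow$(i), with $r=\rank M$ and $k=2$ throughout. Two reformulations do most of the work.

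First, by Proposition~\ref{prop:weak}, the condition $M^*\preceq_{\rm w}(N|_{\tau\cdot V})^*$ says exactly that for every spanning set $S$ of $M$, the set $\tau\cdot S$ spans $\tau\cdot V$ in $N$. In particular $r_N(\tau\cdot V)\le r$.

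Second, the Flat Property restricts to links. If $F$ is a flat of $M$ and $\tau\notin F$, then $(F\cdot V)\cap(\tau\cdot V)=\tau\cdot F$. So $\tau\cdot F$ is a flat of $N|_{\tau\cdot V}$ for every flat $F$ of $M$ not containing $\tau$.

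\textbf{Spanning upper bound.} This step uses only Weak Order Multilinearity. Let $F$ be a flat of rank $s$, and extend a basis $b_1,\dots,b_s$ of $F$ to a basis $b_1,\dots,b_r$ of $M$. We claim $\sym_2(V)$ is spanned by $F\cdot V\cup\{b_jb_l: s<j\le l\}$, which gives
\[
r_N(\sym_2(V))-r_N(F\cdot V)\le\binom{r-s+1}{2}.
\]
To see this, take $uv$ with $u,v\notin F$ and apply the first reformulation twice. With $\tau=v$ and the spanning set $F\cup\{b_{s+1},\dots,b_r\}$, we get $uv\in\cl_N(v\cdot F\cup v\cdot\{b_j\})$. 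Then, with $\tau=b_j$ and the same spanning set, $vb_j\in\cl_N(b_j\cdot F\cup b_j\cdot\{b_l\})$.

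The same argument with $F$ replaced by the whole chain gives a matching inequality for each flat. Fix a maximal chain of flats $\emptyset\subseteq F_1\subset\cdots\subset F_r=V$ with $b_i\in F_i\setminus F_{i-1}$. Then $F_i\cdot V$ is spanned by $F_{i-1}\cdot V$ together with $b_i\cdot\{b_i,\dots,b_r\}$, so each step of the chain raises the rank by at most $r-i+1$. Summing, $r_N(F\cdot V)\le\binom{r+1}{2}-\binom{r-s+1}{2}$.

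\textbf{The implications.}
\begin{itemize}
\item[(i)$\Rightarrow$(ii).] With SP-rank and the Flat Property, we get Weak Order Multilinearity by a counting argument along the chain. If some spanning set $S$ had $\tau\cdot S$ not spanning $\tau\cdot V$, one of the chain increments above would drop strictly below $r-i+1$. But SP-rank forces every increment to be exactly $r-i+1$, a contradiction.
\item[(ii)$\Rightarrow$(iii).] Combine the upper bounds above with a matching lower bound; together they force the Strong SP-rank formula for every flat. For an arbitrary $X$, use $r_N(X\cdot V)=r_N(\cl_M(X)\cdot V)$, which follows from the spanning argument applied inside each link.
\item[(iii)$\Rightarrow$(iv).] SP-rank is the case $X=V$. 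For the Closure Property, note that $X\cdot V$ and $\cl_M(X)\cdot V$ have the same rank by the Strong SP-rank formula, so the latter lies in the closure of the former.
\item[(iv)$\Rightarrow$(i).] Let $F$ be a flat and $e\notin F$. The Closure Property and the upper bounds give
\[
r_N((F+e)\cdot V)\ge r_N(F\cdot V)+(r-s),
\]
by comparing the two ends of the chain through $F$ and $\cl_M(F+e)$ against the total rank $\binom{r+1}{2}$. In particular some word $ev'$ lies outside $\cl_N(F\cdot V)$. Applying the upper-bound argument in the link of $v'$ then upgrades this to every word $ev$ lying outside $\cl_N(F\cdot V)$, so $F\cdot V$ is closed.
\end{itemize}

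\textbf{Main obstacle: the lower bound in (ii)$\Rightarrow$(iii).} Here we must use the Flat Property with no rank hypothesis. Order the words $b_ib_j$ ($i\le j$) lexicographically, and show that $b_ib_j\notin\cl_N\bigl(F_{i-1}\cdot V\cup b_i\cdot\{b_i,\dots,b_{j-1}\}\bigr)$. The idea is to work inside the link of $b_j$: the flat $b_j\cdot F_{i-1}$ (available since $b_j\notin F_{i-1}$) combines with the global flat $F_{i-1}\cdot V$, and the spanning reduction then moves the relevant words between the links $b_i\cdot V$ and $b_j\cdot V$. If this direct route fails, we would instead prove $r_N(F\cdot V)\ge\binom{r+1}{2}-\binom{r-s+1}{2}$ by induction on $r$, contracting $b_1\cdot V$ and checking that $N/(b_1\cdot V)$, restricted to $\sym_2(V\setminus\cl_M(b_1))$, still satisfies the Flat Property and Weak Order Multilinearity with respect to $M/b_1$.

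For (b), we would give an explicit small example, say with $M=U_{2,4}$, so that $N$ has rank $3$ on $10$ words. Choose $N$ so that every set $v\cdot V$ is a rank-$2$ flat whose elements pair up as in the Flat Property, but so that some link $N|_{v\cdot V}$ contains a parallel pair and hence is not isomorphic to $U_{2,4}$. Then verify the Flat Property and the rank directly. We expect this example to need a short case check or a small computer search.
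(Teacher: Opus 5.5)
\textbf{Verdict.} The proposal has genuine gaps. Your implications (iii)$\Rightarrow$(iv) and (iv)$\Rightarrow$(i) are sound. However, (i)$\Rightarrow$(ii) and (ii)$\Rightarrow$(iii) rest on a false lemma and do not address the real difficulty, and part (b) cannot work as designed.

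\textbf{The chain bound is false under Weak Order Multilinearity alone.} You claim $r_N(F\cdot V)\le\binom{r+1}{2}-\binom{r-s+1}{2}$ using only Weak Order Multilinearity. To place $ub_m$ (with $u\in F_i\setminus F_{i-1}$, $u\neq b_i$, $m>i$) in $\cl_N(F_{i-1}\cdot V\cup b_i\cdot\{b_i,\dots,b_r\})$, you need closures, not merely spanning sets, to be respected in the link of $b_m$. Concretely, let $V=\{1,2,3\}$ and let $M$ have rank $2$ with $1,2$ parallel. Let $N$ be the linear matroid with $11=e_1$, $13=e_2$, $12=e_1+e_2$, $22=e_3$, $23=e_1+e_2+e_3$, $33=e_1+e_3$. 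Then $\tau\cdot S$ spans $\tau\cdot V$ for every $\tau\in V$ and every spanning set $S$ of $M$, and $\rank N=3$. Yet the rank-one flat $F=\{1,2\}$ has $r_N(F\cdot V)=3>2$. Taking $X=\{1\}$, the same example refutes your claim that $r_N(X\cdot V)=r_N(\cl_M(X)\cdot V)$ follows from the link-wise spanning argument.

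\textbf{Where this leaves the cycle.} The chain step is valid once the Closure Property is available, since then $F_i\cdot V\subseteq\cl_N((F_{i-1}+b_i)\cdot V)$; this is why (iv)$\Rightarrow$(i) survives. In (i)$\Rightarrow$(ii), however, you have neither Closure nor Weak Order Multilinearity. The increment bounds you invoke were derived from the very property you are trying to prove, so the argument is circular. In (ii)$\Rightarrow$(iii), both the upper bound and the lower bound you flag as the main obstacle remain unproved.

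\textbf{The missing idea.} What is missing is a mechanism that converts the Flat Property into rank upper bounds. The paper supplies it in the dual, where the Flat Property becomes CycP. Lemma~\ref{lem:Mason_k=2} shows that CycP makes every canonical subset of $\sym_2(X)$ span $N|_{\sym_2(X)}$, using cyclicity of $\sym_2(C)$ and $\sym_2(C_w\cup C)$ for suitable $M$-circuits. Independence of canonical subsets then comes either from 0-ExtP via Lemma~\ref{lem:canonical_construction}, or from RRP via the nested lex-min canonical subsets $B_X\cdot X\subseteq B_V\cdot V$. This gives WCBP, which for $k=2$ is CBP and implies all four bullets.

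\textbf{Part (b) cannot succeed with $M=U_{2,4}$.} A parallel pair $\{va,vb\}$ in a link is ruled out by the Flat Property alone. If $v\neq a$, then $\{a\}$ is a flat of $M$, and $a\cdot V$ contains $va$ but not $vb$; if $v=a$, use $\{b\}$ instead. More generally, let $M$ be any uniform matroid of rank $r$. Then part (a) gives $r_N(\tau\cdot V)=r$ for each non-loop $\tau$, with every $r$-subset of $\tau\cdot V$ spanning. Hence $N|_{\tau\cdot V}\cong M$, and Multilinearity holds automatically.

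A counterexample therefore needs a non-uniform matroid. The paper takes $M'$ dual to the affine matroid of six points in $\R^3$ with $p_1,p_2,p_3$ collinear. It builds a matroid on $\binom{V}{2}$ (a $C^1_2$-cofactor matroid) satisfying the two conditions of Lemma~\ref{lem:weak_construction}. It then shows that $(\hat{N}\vee\B)^*$ fails Multilinearity, because the base $B\cup\{11,\dots,66\}$ of $\hat{N}\vee\B$ would violate ExtP at the word $46$.
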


%As noted in the Introduction, Lov{\'a}sz' and Mason considered sequences of matroids of length $k$ which satisfy properties S1,S4,S5 and S1,S2,S3,S5, respectively, in their original papers.
%he defined symmetric powers of matroids only by using Rank Property and Flat Property.
Theorem~\ref{thm:2} 
%and \ref{thm:2_weak} 
implies that, when $k=2$, the matroids considered by Mason are precisely the same as Anderson's second symmetric powers. On the other hand, Theorem \ref{thm:2_weak} tells us that, when $k=2$, the family of matroids considered by Lov\'asz is a strictly larger family than Anderson's second symmetric powers.

\medskip

Our next result shows that the situation differs significantly when $k>2$, and tells us, in particular, that Mason's Rank Conjecture is false when $k\geq 3$.
\begin{theorem}\label{thm:k}
Let $M$ be a matroid on a finite set $V$ and $N$ be a matroid on $\sym_k(V)$
for some $k\geq 2$.\\[1mm]
(a) If  $N$ satisfies SP-rank Property and Strong Order Multilinearity with respect to $M$,
then $N$ satisfies Multilinearity, Flat Property, Strong SP-rank Property, and Closure Property.
\\[1mm]
(b) If $k\geq 3$, then there exists a matroid on $\sym_k(V)$ that satisfies Flat Property and Symmetric Q-power, but does not satisfy SP-rank Property.
\end{theorem}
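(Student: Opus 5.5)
The plan for part~(a) is to first bound the rank of $N$ from above via a ``letter replacement'' argument, and then let the SP-rank Property force every inequality in that argument to be tight. Write $r=\rank M$. Strong Order Multilinearity says $M\succeq_{\rm s} N|_{\tau\cdot V}$. By Proposition~\ref{prop:strong}(3) this gives
\[
\tau\cdot \cl_M(X)\subseteq \cl_N(\tau\cdot X)\qquad\text{for all }\tau\in\sym_{k-1}(V\sm L_M)\text{ and }X\subseteq V .
\]
For $\tau$ containing a loop, $\tau\cdot V$ consists of loops of $N$, because $N$ restricted to $\tau\cdot V$ is dominated in the strong order by $M$ via any loop letter, so the inclusion holds trivially there as well.

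\textbf{Spanning and bases.} Fix a basis $B$ of $M$. I will show $\sym_k(V)\subseteq\cl_N(\sym_k(B))$ by induction on the number of letters of a word $w$ lying outside $B$. Write $w=\tau\cdot v$ with $v\notin B$. Then $w\in\tau\cdot\cl_M(B)\subseteq \cl_N(\tau\cdot B)$, and every word of $\tau\cdot B$ has fewer letters outside $B$. Hence $r(N)\le|\sym_k(B)|=\binom{r+k-1}{k}$. Under the SP-rank Property, $\sym_k(B)$ is therefore a basis of $N$ for \emph{every} basis $B$ of $M$.

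\textbf{Subspace properties.} The same replacement argument, restricted to words containing a letter of $X$, gives the rest of the subspace statements. Take a basis $B_X$ of $X$ and extend it to a basis $B$ of $M$. Replacing only letters other than one fixed letter from $X$ shows that $\cl_M(X)\cdot\sym_{k-1}(V)$ lies in the closure of
\[
W_X=\{w\in\sym_k(B):{\rm supp}(w)\cap B_X\neq\emptyset\}.
\]
First, $W_X\subseteq X\cdot\sym_{k-1}(V)$. Second, $W_X$ is independent, being a subset of the basis $\sym_k(B)$. These two facts give the Closure Property, and also the Strong SP-rank Property, since
\[
|W_X|=\binom{r+k-1}{k}-\binom{r-r_M(X)+k-1}{k}.
\]
For the Flat Property, let $F$ be a flat of $M$ and $w\notin F\cdot\sym_{k-1}(V)$, so ${\rm supp}(w)\cap F=\emptyset$. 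Extend $B_F$ by an independent set $I\subseteq V\sm F$ containing a letter of $w$, with $I$ independent modulo $F$. Replacing letters of $w$ inside $V\sm F$, I aim to reduce to showing that some word of $\sym_k(B\sm B_F)$ lies outside $\cl_N(W_F)$. That holds because $\sym_k(B)$ is independent. The delicate point is that replacement must not introduce letters of $F$; I will handle this by contracting $F\cdot\sym_{k-1}(V)$ and using a basis $B\supseteq B_F$.

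\textbf{Multilinearity (main obstacle).} By Proposition~\ref{prop:strong}, it suffices to show $r_N(\tau\cdot V)=r$ for every loopless $\tau$, since equal rank plus the strong order forces equality. When ${\rm supp}(\tau)$ is independent in $M$, $\tau\cdot B$ lies in some $\sym_k(B')$ with $B'$ a basis, so it is independent and we are done. For general $\tau$, I would argue by contradiction. Suppose $r_N(\tau\cdot V)<r$. Then modify the spanning argument: choose an ordering of replacements in which the block $\tau\cdot V$ is spanned by only $r_N(\tau\cdot V)$ words, together with $\sym_k(B)$ minus one word. This would produce a spanning set of size $<\binom{r+k-1}{k}$, contradicting the SP-rank Property. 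The hard part is arranging the induction so that $\tau\cdot B$ really occurs as a block in the replacement scheme, i.e. choosing $B$ and the replacement order compatibly with ${\rm supp}(\tau)$.

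\textbf{Part (b).} The plan is to start from a geometric $k$-th symmetric power $N_0$ of a suitable representable $M$, for example $M=U_{2,n}$ or a small rank-3 matroid. I would then take an elementary quotient (a generic-like modular cut) that lowers the total rank by one. The restrictions to each $\alpha\cdot\sym_{k-i}(V)$ must stay unchanged, and every set $F\cdot\sym_{k-1}(V)$ must remain a flat. Plain truncation fails: $H\cdot\sym_{k-1}(V)$ for a hyperplane $H$ has corank exactly $1$, so truncation would make it spanning. So the modular cut must be chosen to avoid all these sets and all the sets $\alpha\cdot\sym_{k-i}(V)$. The reason $k\ge3$ is needed is that then these ``local'' sets are small compared with $\sym_k(V)$, leaving room for such a cut. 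The dual rigidity viewpoint, as in~\cite{brakensiek,JTfields}, would be my tool for finding it explicitly.
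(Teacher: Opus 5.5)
Your spanning argument is sound. With the SP-rank Property it shows that $\sym_k(B)$ is a basis of $N$ for every basis $B$ of $M$. Your derivations of the Closure and Strong SP-rank Properties from this are correct, and more direct than the paper's route through $N^*$, RRP, ExtP and canonical subsets. (For $\tau$ containing a loop, the reason $\tau\cdot V$ consists of loops is the loop clause in the definition, not the strong order.)

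The gap is Multilinearity, which you leave open, and the Flat Property, which hides the same problem. If ${\rm supp}(\tau)$ is $M$-dependent, then no word $\tau\cdot b$ lies in any $\sym_k(B)$. So a replacement scheme built on one fixed basis can never place $\tau\cdot B_0$ inside a spanning set of size $\binom{r+k-1}{k}$. In your Flat sketch, replacement only gives $w\in\cl_N(W_F\cup R)$ with $R\subseteq\sym_k(B)$, which is the wrong direction for proving $w\notin\cl_N(W_F)$. After contracting $F\cdot\sym_{k-1}(V)$ you need that no word of $\sym_k(V\sm F)$ is a loop. For words whose support is dependent in $M/F$, this is the Multilinearity question again.

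The missing idea is to let the underlying matroid change during the recursion. Take a non-loop $v$.
\begin{itemize}
\item $N|_{v\cdot\sym_{k-1}(V)}$, read on $\sym_{k-1}(V)$, satisfies your closure inclusion for $M$ at level $k-1$.
\item $N/(v\cdot\sym_{k-1}(V))$ on $\sym_k(V-v)$ satisfies it for $M/v$, because $\tau\cdot\cl_{M/v}(Y)\subseteq\cl_N(\tau\cdot Y\cup\{\tau\cdot v\})$ and $\tau\cdot v$ is contracted.
\end{itemize}
Your spanning bound caps their ranks at $\binom{r+k-2}{k-1}$ and $\binom{r+k-2}{k}$. These sum to $\binom{r+k-1}{k}$, so SP-rank forces both to be tight. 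Induction on $k$ along the letters of $\tau$ then gives $r_N(\tau\cdot V)=r$; the base case is $N\preceq_{\rm s}M$ with equal rank, so $N=M$. Applying this to $N/(F\cdot\sym_{k-1}(V))$ and the loopless $M/F$, whose rank is correct by Strong SP-rank, yields the Flat Property.

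This is the primal form of the paper's proof. In Lemma~\ref{lem:ext_star}, a canonical subset with respect to $M^*$, with the letters of $\tau$ as pivots along the rightmost branch, is a base of $N^*$ by Lemma~\ref{lem:R_CBP}. Its complement is exactly a base of $N$ containing $\tau\cdot B_0$ for a basis $B_0$ of $M$.

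Part (b) contains no construction, and your proposed starting point cannot work when $k=3$. For $M=U_{2,n}$ with $n\geq 3$ (indeed any non-free uniform $M$), Theorem~\ref{thm:mason_k_3}(a) shows that every matroid on $\sym_3(V)$ with the Flat Property and Multilinearity already has the SP-rank Property. In particular this covers every symmetric quasi-power with the Flat Property.

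The paper's example is the dual of Theorem~\ref{thm:rank_zero}. Take $M$ free with $|V|\geq 3$, and let $N$ have every $v^k$ as a coloop and the set $C$ of words with at least two distinct letters as its unique circuit.
\begin{itemize}
\item Each $\alpha\cdot\sym_{k-i}(V)$ misses a word of $C$, so all restrictions in the Symmetric Q-power property are free.
\item Each $F\cdot\sym_{k-1}(V)$ misses either no word of $C$, or at least the two words $a^{k-1}b$ and $ab^{k-1}$ with $a,b\notin F$. So it is a flat, and these two words are distinct precisely because $k\geq 3$.
\end{itemize}
This $N$ is a rank-one quotient of the free (geometric) power, as you suggested. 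However, it uses the modular cut generated by $C$, which contains the hyperplanes $(V-v)\cdot\sym_{k-1}(V)$. So your requirement that the cut avoid all the sets $F\cdot\sym_{k-1}(V)$ is stronger than necessary.
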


%\st{THE FOLLOWING SENTENCE PARTIALLY OVERLAPS WITH THE NEW THEOREM, THEOREM 2.5.}
%More specifically, we will see that: Flat Property and Strong Order Multilinearity
%imply SP-rank Property  when $k=3$ and $M$ is a uniform matroid other than the free matroid (and hence Mason's Rank Conjecture holds in this case); Flat Property and Symmetric Q-power do not imply  SP-rank Property
%when $k\geq 4$ even if $M$ is uniform  (and hence Mason's Rank Conjecture does not hold in this case).

 We will also verify the following equivalence involving Weak Order Multilinearity.
In particular, it extends the result of Anderson~\cite{And} that every $k$'th symmetric power $N$ of $M$ has the Flat Property by showing that the same conclusion holds for all matroids $N$ on $\sym_k(V)$ which satisfy SP-rank Property and Weak Order Multilinearity.

\begin{theorem}\label{thm:k_weak}
Let $M$ be a matroid on a finite set $V$ and $N$ be a matroid on $\sym_k(V)$ for some $k\geq 2$.
%\\[1mm]
%(a) 
Then the following statements are equivalent.
\begin{itemize}
\item $N$ satisfies Strong  SP-rank Property and Weak Order Multilinearity.
\item $N$ satisfies SP-rank Property, Closure Property, and Weak Order Multilinearity.
\end{itemize}
In addition, if $N$ satisfies Strong  SP-rank Property and Weak Order Multilinearity, then $N$ satisfies Flat Property.
%\\[1mm]
%On the other hand, when 
%(b) If $k\geq 3$, then there exists a matroid on $\sym_k(V)$ that satisfies 
%SP-rank Property, Flat Property, and Weak Order Multilinearity, but does not %satisfy Strong SP-rank Property or Closure Property.
\end{theorem}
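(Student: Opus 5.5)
The plan is to prove the two implications between the bulleted statements directly, and then to derive the Flat Property from the Strong SP-rank Property together with Weak Order Multilinearity. Throughout, write $r=\rank M$ and $S=\sym_{k-1}(V)$.

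\textbf{First bullet implies second.} Taking $X=V$ in the Strong SP-rank Property gives $r_N(\sym_k(V))=\binom{r+k-1}{k}$, which is the SP-rank Property. For the Closure Property, fix $X\subseteq V$. Since $r_M(\mathrm{cl}_M(X))=r_M(X)$, the Strong SP-rank formula gives $r_N(\mathrm{cl}_M(X)\cdot S)=r_N(X\cdot S)$. Because $X\cdot S\subseteq \mathrm{cl}_M(X)\cdot S$, equality of ranks gives $\mathrm{cl}_M(X)\cdot S\subseteq \mathrm{cl}_N(X\cdot S)$. Weak Order Multilinearity is common to both statements.

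\textbf{Second bullet implies first.} Here I need both inequalities in the Strong SP-rank formula for every $X$.
\begin{itemize}
\item By the Closure Property, $r_N(X\cdot S)=r_N(\mathrm{cl}_M(X)\cdot S)$, so I may assume $X=F$ is a flat of rank $s$.
\item \emph{Upper bound.} The complement of $F\cdot S$ is $\sym_k(V\setminus F)$. I would bound $r_N(\sym_k(V))-r_N(F\cdot S)$ from below by $\binom{r-s+k-1}{k}$. Choose a basis $b_1,\dots,b_r$ of $M$ with $b_1,\dots,b_s$ spanning $F$. Let $B'=\mathrm{cl}_M(b_{s+1},\dots,b_r)$. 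The Closure Property applied to $F\cup B'$ shows that $F\cdot S\cup B'\cdot S$ spans $N$. This reduces the bound to an upper bound for $r_N(B'\cdot S)$ of the form $\binom{r+k-1}{k}-\binom{s+k-1}{k}$, which I would prove by induction on $r_M(X)$.
\item \emph{Induction step.} Add one letter $v$ at a time and bound $r_N((X+v)\cdot S)-r_N(X\cdot S)$ by $\binom{r-r_M(X)+k-2}{k-1}$ when $v\notin\mathrm{cl}_M(X)$, and by $0$ otherwise; the latter is the Closure Property. Only the words $v\cdot\beta$ with $\beta\in\sym_{k-1}(V\setminus \mathrm{cl}_M(X))$ are new modulo the closure. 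Applying the Closure Property letter-by-letter inside $v\cdot S$ shows that $v\cdot\beta$ is spanned once the letters of $\beta$ are replaced by basis letters. This reduces the count to monomials of degree $k-1$ in the $r-r_M(X)$ remaining basis elements.
\item Summing the increments telescopes to $r_N(X\cdot S)\le \binom{r+k-1}{k}-\binom{r-r_M(X)+k-1}{k}$ for every $X$.
\item \emph{Matching.} The SP-rank Property fixes the total rank. Applying the upper bound both to $X$ and to a complementary flat, and using submodularity of $r_N$, gives equality everywhere.
\end{itemize}

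\textbf{Flat Property.} Assume the first bullet, and let $F$ be a flat of rank $s$ and $e=v\cdot\beta\notin F\cdot S$, so that all letters of $e$ lie outside $F$. I must show $e\notin\mathrm{cl}_N(F\cdot S)$.
\begin{itemize}
\item The Strong SP-rank formula gives $r_N(N/(F\cdot S))=\binom{r-s+k-1}{k}$. So the contraction $N/(F\cdot S)$ restricted to $\sym_k(V\setminus F)$ has the rank of a $k$-th symmetric power of $M/F$.
\item I would show that this contraction satisfies Weak Order Multilinearity with respect to $M/F$. The tool is the dual characterisation in Proposition~\ref{prop:weak}: a cocircuit-type statement for $N|_{\tau\cdot V}$ with $\tau\in\sym_{k-1}(V\setminus F)$ should persist after contracting $F\cdot S$.
\item It would then suffice to show that no word over $V\setminus F$ is a loop of the contraction. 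Weak Order Multilinearity makes $\tau\cdot v$ a non-loop whenever $v$ is not a loop of $M/F$. Since $v\notin F=\mathrm{cl}_M(F)$, the letter $v$ is not a loop of $M/F$, and taking $\tau=\beta$ gives the conclusion.
\end{itemize}

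\textbf{Main obstacle.} The hardest step is the Flat Property, specifically transferring Weak Order Multilinearity to the contraction. Weak order is not preserved under contraction in general. If that step fails, I would fall back on a direct exchange argument:
\begin{itemize}
\item Show that the $\binom{r+k-1}{k}$ words in the basis letters $b_i$ form a basis of $N$. The SP-rank count plus the Closure Property should give spanning, hence independence.
\item Choose the $b_i$ with $b_{s+1}=v$ and with the letters of $\beta$ expressed through $b_{s+1},\dots,b_r$.
\item Show that $e$ cannot be spanned by $F\cdot S$. Writing $\mathcal{B}$ for this basis, the words of $\mathcal{B}$ lying in $F\cdot S$ are exactly the $\binom{r+k-1}{k}-\binom{r-s+k-1}{k}$ words involving some letter $b_1,\dots,b_s$.
\end{itemize}
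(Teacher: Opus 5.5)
Your first implication (Strong SP-rank $\Rightarrow$ SP-rank and Closure) is correct. Working in the primal instead of dualising to the Canonical Base Property, as the paper does, is a legitimate alternative. However, the converse and the Flat Property both have genuine gaps.

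\textbf{The converse.} You never use Weak Order Multilinearity in this direction. The letter-by-letter replacement inside $v\cdot\sym_{k-1}(V)$ that you attribute to the Closure Property does not follow from it: Closure only controls whole sets $X\cdot\sym_{k-1}(V)$, never a single slice $\tau\cdot V$. In fact SP-rank and Closure alone do not give Strong SP-rank. For $M=U_3^2$ on $V=\{a,b,c\}$, $k=2$ and $N=U_6^3$ on $\sym_2(V)$, both properties hold, yet $r_N(a\cdot V)=3\neq 2$.

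Your matching step also fails. Submodularity with a complementary flat $F'$ leaves the rank of the nonempty intersection $F\cdot F'\cdot\sym_{k-2}(V)$ uncontrolled. Discarding it yields only $r_N(F\cdot\sym_{k-1}(V))\geq\binom{s+k-1}{k}$, and your reduction via spanning by $F\cdot\sym_{k-1}(V)\cup B'\cdot\sym_{k-1}(V)$ gives the same bound. This is too small whenever $0<s<r$: for $r=2$, $s=1$, $k=2$ it gives $1$ instead of $2$.

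The missing idea is to read Weak Order Multilinearity through Proposition~\ref{prop:weak}. It says that for $\tau\in\sym_{k-1}(V\setminus L_M)$ and every $M$-spanning $Y$, the set $\tau\cdot Y$ spans $N|_{\tau\cdot V}$. Replacing letters one at a time, using the loop clause for loop letters, shows that $\sym_k(B)$ spans $N$ for every base $B$ of $M$. By the SP-rank Property it is then a base of $N$. Now let $B$ extend a base $B_X$ of $M|_X$:
\begin{itemize}
\item $B_X\cdot\sym_{k-1}(B)\subseteq X\cdot\sym_{k-1}(V)$ is independent and has exactly the required size, giving the lower bound.
\item Closure applied to $B_X$, plus replacement of every letter except the first, gives $X\cdot\sym_{k-1}(V)\subseteq\cl_N(B_X\cdot\sym_{k-1}(B))$, giving the upper bound.
\end{itemize}
Together these yield the Strong SP-rank Property. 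This is a considerably shorter route to the equivalence than Lemma~\ref{lem:R_CBP}, which proves the stronger CBP.

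\textbf{The Flat Property.} The transfer to $N/(F\cdot\sym_{k-1}(V))$ that worries you is harmless in the spanning form. If $Y$ spans $M/F$, then $Y\cup F$ spans $M$, so $\tau\cdot(V\setminus F)$ lies in the closure of $\tau\cdot Y\cup(F\cdot\sym_{k-1}(V))$.

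What fails is your last step: Weak Order Multilinearity can never force a non-loop, because $M^*\preceq_{\rm w}L^*$ holds trivially when $L$ has rank zero. Combining the spanning form with the rank $\binom{r-s+k-1}{k}$ of the contraction does show that $\sym_k(B')$ is a base of the contraction for every base $B'$ of $M/F$. That settles words whose letters form an independent set in $M/F$. It does not cover a word such as $u_1u_2\gamma$ with $u_1\neq u_2$ parallel in $M/F$. Your fallback, expressing the letters of $\beta$ through $b_{s+1},\dots,b_r$, runs into exactly the same obstruction.

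An extra argument is needed. One option is an induction on $k$ proving that every slice $\tau\cdot(V\setminus F)$ of the contraction has rank $r-s$. For the inductive step, contract $\sym_k(B'\setminus\{u\})$ for a base $B'\ni u$ of $M/F$ and restrict to $u\cdot\sym_{k-1}(V\setminus F)$; this gives an instance of degree $k-1$.

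The paper avoids this issue by duality. There the Flat Property becomes CycP for $N^*$, which it derives from WCBP using a canonical subset that avoids any prescribed word (Lemma~\ref{lem:canonical_avoiding}).
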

%\st{MAYBE GOOD TO ADD THE FOLLOWING EXTRA COMMENT?: Anderson~\cite{And} shows that any symmetric power $N$ satisfies the Flat Property. Theorem~\ref{thm:k_weak} shows that the Flat Property follows from a much weaker condition.} 

Our final result implies that Mason's Rank Conjecture holds in the special case when $M$ is a uniform matroid distinct from the free matroid  and $k=3$ , but is false even in this special case when $k=4$.

\begin{theorem}\label{thm:mason_k_3}
Let $M$ be a uniform matroid distinct from the free matroid. \\
(a) If $N$ is a matroid on $\sym_3(V)$ which satisfies Flat Property
and Multilinearity, then $N$ satisfies 
SP-rank Property.\\
(b) There exists a matroid $N$ on $\sym_4(V)$ which satisfies Flat Property
and Multilinearity, but does not satisfy  SP-rank Property.
\end{theorem}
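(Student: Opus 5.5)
The plan for (a) is to match the upper bound $\binom{r+2}{3}$ (with $r=\rank M$) coming from \cite{M,And} by a lower bound. The lower bound comes from summing rank increments along a maximal chain of flats of $M$. Write $M=U_{r,n}$ with $r<n$. Its flats are exactly the sets of size less than $r$, together with $V$. Fix an ordering $v_1,\dots,v_n$ of $V$ and put $F_i=\{v_1,\dots,v_i\}$ for $0\le i\le r-1$ and $F_r=V$. By the Flat Property each $F_i\cdot\sym_2(V)$ is a flat of $N$, so
\[
\rank N=\sum_{i=1}^{r}\Bigl(r_N(F_i\cdot\sym_2(V))-r_N(F_{i-1}\cdot\sym_2(V))\Bigr).
\]
Since $\binom{r+2}{3}=\sum_{i=1}^{r}\binom{r-i+2}{2}$, it suffices to show that the $i$-th increment is at least $\binom{r-i+2}{2}$. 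This is the rank of a second symmetric power of the rank $r-i+1$ uniform matroid $M/F_{i-1}\cong U_{r-i+1,n-i+1}$.

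To bound the $i$-th increment, I would look at the matroid $N_i$ on $\sym_2(V\setminus F_{i-1})$. It is obtained from $N/(F_{i-1}\cdot\sym_2(V))$ by restricting to $v_i\cdot\sym_2(V\setminus F_{i-1})$ and forgetting the letter $v_i$; its rank is at most the increment. I would then check that $N_i$ satisfies Strong Order Multilinearity and the Flat Property with respect to $M_i=M/F_{i-1}$, and invoke Theorem~\ref{thm:2} (fifth item $\Rightarrow$ SP-rank) to conclude $\rank N_i=\binom{r-i+2}{2}$.

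\begin{itemize}
\item For the multilinearity part: fixing a letter $u\notin F_{i-1}$, Multilinearity of $N$ gives $N|_{v_iu\cdot V}\cong M$. Contracting the flat $F_{i-1}\cdot\sym_2(V)$, which contains $v_iu\cdot F_{i-1}$, can only decrease rank increments. This should give $M_i\succeq_{\rm s}N_i|_{u\cdot(V\setminus F_{i-1})}$.
\item For the flat part: a flat of $M_i$ has the form $G\setminus F_{i-1}$ with $G\supseteq F_{i-1}$ a flat of $M$. The set $G\cdot\sym_2(V)$ is a flat of $N$ containing $F_{i-1}\cdot\sym_2(V)$, so it remains a flat after the contraction. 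Intersecting with the $v_i$-link then gives a flat of $N_i$ when $v_i\in G$.
\end{itemize}

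The main obstacle is the case $v_i\notin G$. Here I need flats of the link of $v_i$ that do not come directly from flats of $N$ containing $v_i\cdot\sym_2(V)$. I would try to use the uniformity of $M$: $G\cup\{v_i\}$ is again a flat (or $V$) whenever $|G|<r-1$, and the remaining cases are handled by comparing ranks, using Multilinearity on the lines $v_iu\cdot V$. I expect the hypothesis that $M$ is not free to enter exactly here, or in the base case $i=r$, where $M_r$ must have a nontrivial circuit for the argument to close.

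For (b), I would construct $N$ as an elementary quotient of the geometric fourth symmetric power $N_0$ of a generic realisation of a small non-free uniform matroid, for example $U_{2,3}$ or $U_{3,4}$. Concretely, extend $N_0$ by a point $p$ using the modular cut generated by a suitably chosen flat $H$, contract $p$, and obtain $N$ with $\rank N=\rank N_0-1$, so the SP-rank Property fails. The choice of $H$ must meet two conditions:
\begin{itemize}
\item No closure $\cl_{N_0}(\tau\cdot V)$, and no restriction relevant to Multilinearity, lies in the modular cut, so that each $N|_{\tau\cdot V}$ is still isomorphic to $M$.
\item Every flat $F\cdot\sym_3(V)$ either lies in the modular cut or is not covered badly, so that it remains a flat of $N$.
\end{itemize}
Finding such an $H$ is a finite verification, which I would guide through the rigidity-dual description of $N_0^*$ from Section~\ref{sec:rigidity}: $H$ should be a flat of dimension-type "rank $2$ in degree $4$" that is invisible from the restrictions to single letters.
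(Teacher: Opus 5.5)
\textbf{Part (a).} Several steps are fine: the telescoping sum, the hockey-stick identity, the Strong Order Multilinearity bullet (submodularity works because $v_iu\cdot F_{i-1}\subseteq F_{i-1}\cdot\sym_2(V)$), and the last increment (it is at least $1$ because $F_{r-1}\cdot\sym_2(V)$ is a proper flat of $N$).

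The Flat Property for $N_i$, however, is not established, and your case split is reversed. Put $V'=V\setminus F_{i-1}$ and $G'=G\setminus F_{i-1}$. Intersecting the flat $G\cdot\sym_2(V)$ with the link $v_i\cdot\sym_2(V')$ gives $\{v_i\beta:\ v_i\beta\text{ has a letter in }G'\}$. This is the required set $v_i\cdot(G'\cdot V')$ exactly when $v_i\notin G'$; when $v_i\in G'$ it is the whole link. So the case you call the obstacle is automatic. For $v_i\in G'$ you must show that $v_i\cdot G'\cdot V'$, which contains all of $v_iv_i\cdot V'$, is closed in the link. No flat $H\cdot\sym_2(V)$ cuts this set out, Multilinearity only sees single lines $\tau\cdot V$, and you give no argument.

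This step carries the whole theorem. Dualise Theorem~\ref{thm:rank_zero} with $k=3$ and $M$ free on $n\geq 3$ letters. You get a matroid $N$ with Multilinearity and the Flat Property in which the words $vvv$ are coloops and all other words form one circuit. Every other step of your scheme goes through for this $N$ (Strong Order Multilinearity is automatic when $M_i$ is free). Yet in $N_{n-1}$ the elements $v_{n-1}v_n$ and $v_nv_n$ are parallel. Hence $\{v_{n-1}\}\cdot\{v_{n-1},v_n\}$ is not closed, which is precisely the case $v_i\in G'$, and the $(n-1)$-st increment is $2$, not $3$. Non-freeness must be used exactly here, not at $i=r$, and the proposal has no mechanism for it.

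The paper avoids any reduction to $k=2$. Dually, $N^*$ has CycP and DM, hence ExtP and 0-ExtP, with respect to the uniform matroid $M^*$ of positive rank. So canonical subsets of $\sym_3(V)$ are $N^*$-independent by Lemma~\ref{lem:canonical_construction}. Lemma~\ref{lem:CycP3} then applies CycP to the circuits $B_X+u$ and the cyclic sets $B_X+u_1+u_2$ of $M^*$, choosing the pivot $v$ inside $B_X$ so that the doubled words $vvu$ are already present. This shows that $S_{X-v}\sqcup(v\cdot B_X\cdot X)$ spans, and Lemma~\ref{lem:canonical_size} gives RRP.

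\textbf{Part (b).} This is a strategy rather than a proof. The existence of a suitable $H$ is exactly the claim, and no $H$ is named or checked.

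The overall shape is right, though. The paper takes $M=U_n^{n-1}$ with $n\geq 4$ and $N=M(I'_{n,4})^*$ (Theorem~\ref{thm:counterexample}). Appending the column $\spadesuit$, with generic entries in the rows of support at least $3$, is an elementary lift of $M(B_{n,4})$. So $N$ is an elementary quotient of the fourth symmetric power $M(B_{n,4})^*$ from Theorem~\ref{thm:transversal}.

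The work lies in the verification. Since $I'_{n-1,4}$ has linearly independent columns once $n-1\geq 3$, the new direction is absorbed when $\sym_4(V-u)$ is contracted in $M(I'_{n,4})$. This gives the Dual Symmetric Q-power property. CycP follows from Claims~\ref{claim:counter1} and~\ref{claim:counter2} together with induction through the block structure.

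Your suggestion $U_{2,3}$ is doubtful. For $n=3$ the same perturbation is visible on the line $uvw\cdot V$: its restriction becomes $U_3^1$ instead of $U_3^2$, so Multilinearity fails there.
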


\subsection{Dual Formulations}\label{subsec:dual}
%\bill{
We will verify the results in Subsection~\ref{subsec:results} by proving their dual statements.
As a preliminary step, we will formulate the dual properties to those described in Subsection~\ref{subsec:Multilinearity} and then state the   dual forms of the results of Subsection~\ref{subsec:results} in this subsection. Several of the dual properties are reminiscent of fundamental properties from combinatorial rigidity theory. This link will be described  in more detail in this subsection and the next section. 

We will continue to use $M$ to denote a matroid on $V$ and 
$N$ to denote a matroid on $\sym_k(V)$.  Let $n=|V|$. 
We will consider the following properties of $N$ with respect to $M$.
\begin{description}
\item[Dual Symmetric Q-power:] For all $1\leq i\leq k-1$ and all $\alpha_1,\alpha_2\in \sym_i(V\setminus CL_M)$, 
$N/(\sym_k(V)\setminus (\alpha_1\cdot  \sym_{k-i}(V))$ is isomorphic to 
$N/(\sym_k(V)\setminus (\alpha_2\cdot  \sym_{k-i}(V))$. In addition, $N/(\sym_k(V)\setminus (\alpha\cdot  V))$  is isomorphic to $M$ for all $\alpha\in \sym_{k-1}(V\setminus CL_M)$
and $\alpha\cdot v$ is a coloop in $N$ for all $\alpha\in \sym_{k-1}(V)$ and $v\in CL_M$.
%$N/(\sym_k(V)\setminus (\alpha\cdot \sym_{k-i}(V)))$ is a free matroid for all $\alpha\in (CL_M\cdot \sym_{i-1}(V))$ and all $1\leq i\leq k-1$.
\item[Dual Multilinearity (DM):] For all $\tau\in \sym_{k-1}(V\setminus CL_M)$, 
$N/(\sym_k(V)\setminus (\tau\cdot  V))$ is isomorphic to $M$ via the bijection $\tau\cdot v \mapsto v$ between  $\tau\cdot V$ and $V$. And, 
$\tau\cdot v$ is a coloop in $N$ for all $\tau\in \sym_{k-1}(V)$ and $v\in CL_M$.
%for all $\alpha\in (CL_M\cdot \sym_{i-1}(V))$,
%$N/(\sym_k(V)\setminus (\alpha\cdot \sym_{k-i}(V)))$ is a free matroid . \bill{SHOULD $i$ BE $k-1$?}
\item[Rigidity Rank Property (RRP):] 
The rank of $N$ is 
\[{n+k-1\choose k}-{n-\rank M+k-1\choose k}.\]
\item[Strong Rigidity Rank Property (StrongRRP):] For all $X\subseteq V$, 
\[r_N(\sym_k(X))= {|X|+k-1\choose k}-{|X|-r_M(X)+k-1\choose k}.\] 
\newpage
\item[Coning Property (ConingP):] 
For all $X\subseteq V$ and $v\in V\setminus {\rm cl}_M(X)$, $v\cdot \sym_{k-1}(X+v)$ is a set of coloops in $N|_{\sym_k(X+v)}$.
\item[Weak Coning Property (WConingP):] 
For all $v\in CL_M$, $v\cdot \sym_{k-1}(V)$ is a set of coloops in $N$.
%\item[Circuit Property (CP)] 
%For every circuit $X$ of $M$, $\sym_k(X)$ is a circuit of $N$.
\item[Cyclic Property (CycP):]
For every cyclic set $X$ of $M$, $\sym_k(X)$ is a cyclic set of $N$.
\item[Cocircuit Property (CoCP):] 
For every 
$\tau\in \sym_{k-1}(V\setminus CL_M)$ and every cocircuit $X$ of $M$, 
$\tau \cdot X$ is a cocircuit of $N$.
And $\tau\cdot v$ is a coloop in $N$ for all $\tau\in \sym_{k-1}(V)$ and $v\in CL_M$.
%for every 
%$\tau\in \sym_{k-1}(V)$ and every coloop $v\in CL_M$, we have
%$\tau \cdot v$ is a coloop of $N$.
% \item[Loop Property (LP)]
% For every 
% $\tau\in \sym_{k-1}(V\setminus CL_M)$ and every loop $v\in L_M$,
% $\tau \cdot v$ is a loop of $N$.
% \item[One-point Gluing Property (1GP)] 
%  Let $X\subseteq V$, $v\in {\rm cl}_M(X)$, and $Y\subseteq X+v$ such that
%   ${\rm cl}_M(X)={\rm cl}_M(Y)$. Then \bill{$$\sym_k(X+v)\subseteq  {\rm cl}_N(\sym_k(X)\cup (v\cdot \sym_1(Y)\cdot \sym_{k-2}(X+v))).$$}
\item[Extension Property (ExtP):]
For every $N$-independent set $I\subseteq\sym_k(V)$,  every $\tau\in \sym_{k-1}(V)$, and every $v\in V$ such that $v\notin {\rm cl}_M(\lk(\tau,I))$, we have
%if $I$ is , then 
$I+v\cdot \tau$ is $N$-independent.
\item[0-Extension Property (0-ExtP):]
For every  $M$-independent set $X\subseteq V$, every  $N$-independent set $I\subseteq\sym_k(V)$, and every $\tau\in \sym_{k-1}(V)$ with $\lk (\tau,I)=\emptyset$, we have
%if $X$ is , then
$I\cup (\tau\cdot X)$ is $N$-independent. 
%\st{$I\cup (\tau\cdot \sym_1(X))$??}
\end{description}

\medskip

When $k=2$ and $M$ is a matroid 
%on $V$ 
of rank $r$, the rank condition of RRP becomes
\[
\rank N={n+2-1\choose 2}-{n-r+2-1\choose 2}=rn-{r\choose 2}.
\]
This  is precisely the rank formula for the rank-$r$ symmetric completability matroid on the looped complete graph $K_n^{\circ}$, see for example~\cite{SC,JacksonJordanTanigawa2014}. We can obtain 
the $(r-1)$-dimensional rigidity matroid $\mathcal{R}_{d-1}$ on $K_n$ from this matroid by contracting the $n$ loop edges of $K_n^{\circ}$. This connection will be used in Section~\ref{sec:rigidity}.
%By setting $d=r-1$ and subtracting $n$, we obtain $dn-{d+1\choose 2}$,
%which is the well-known rank formula for the generic $d$-dimensional rigidity matroid $\mathcal{R}_d$. 
%\st{HOW ABOUT THIS?: This $rn-{r\choose 2}$ is precisely the rank formula for the rank-$r$ symmetric completability matroid on $K_n^{\circ}$ and that we can obtain 
%the $(r-1)$-dimensional rigidity matroid on $K_n$ from this matroid by contracting $n$ loops, see, e.g.~\cite{SC,JacksonJordanTanigawa2014}. This connection will be used in Section~\ref{sec:rigidity}.}

The 0-Extension  Property is also inspired by the 0-extension property in rigidity theory: 
%A well-known 0-extension property for rigidity matroids states that, 
if $I$ is independent in 
%the generic $d$-dimensional rigidity matroid 
$\mathcal{R}_d$, $v$ is a vertex not incident to any edge in $I$, and $F$ is a set of at most $d$ edges incident to $v$, then $I\cup F$ is independent in $\mathcal{R}_d$. The 0-ExtP property gives a natural extension of this property to $k$-uniform hypergraphs (by considering the words in $\sym_k(V)$ as hyperedges on the vertex set $V$).

ExtP is an incremental, edge-wise 
strengthening of 0-ExtP. The restriction of the two properties  to generic rigidity matroids are equivalent but  they are substantially different for duals of arbitrary symmetric powers. We will show in a forthcoming paper that, even their restrictions to non-generic rigidity matroids can be very different. 
%\bill{SHOULD WE GIVE AN EXAMPLE?}\tcolblue{Yes, but in the next paper.}

Further comments on the connection to rigidity theory will be given in Section~\ref{sec:rigidity}.
 
\medskip

%\bill{
Our aim is to show that the above properties are dual to the properties described in Section~\ref{subsec:Multilinearity}.  We will need the following link between ExtP and ConingP to verify the special case that ExtP is the dual property to Strong Order Multilinearity.
%}

\begin{lemma}\label{lem:extension_coning}
Suppose $N$ satisfies ExtP. Then $N$ satisfies ConingP.
\end{lemma}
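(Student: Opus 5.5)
Fix $X\subseteq V$ and $v\in V\setminus{\rm cl}_M(X)$, and write $E=\sym_k(X+v)$ and $C=v\cdot\sym_{k-1}(X+v)$. Showing that every element of $C$ is a coloop of $N|_E$ is the same as showing that
\[ r_N(E)=r_N(E\setminus C)+|C|. \]
So the plan is to take a maximal $N$-independent subset $I_0$ of $E\setminus C=\sym_k(X)$, and then add the words of $C$ to it one at a time using ExtP. At each step independence must be preserved. When all of $C$ has been added we have an independent set $I_0\cup C$ of size $r_N(\sym_k(X))+|C|$ inside $E$, which gives the rank equality.

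The key choice is the order in which the elements of $C$ are added. Every element of $C$ has the form $v\cdot\tau$ with $\tau\in\sym_{k-1}(X+v)$. I will order them by increasing multiplicity of $v$ in $\tau$: first the words $v\cdot\tau$ with $\tau\in\sym_{k-1}(X)$, then those with $\tau$ containing exactly one copy of $v$, and so on up to $v^k$. Suppose $I$ is the current independent set and we want to add $v\cdot\tau$. ExtP allows this provided $v\notin{\rm cl}_M(\lk(\tau,I))$. Now $\lk(\tau,I)$ is the set of letters $u$ with $\tau\cdot u\in I$, and since $I\subseteq\sym_k(X+v)$ this is a subset of $X+v$. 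It therefore suffices to show that $v\notin\lk(\tau,I)$. Then $\lk(\tau,I)\subseteq X$, so ${\rm cl}_M(\lk(\tau,I))\subseteq{\rm cl}_M(X)$, which does not contain $v$.

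The main point is to check that $v\notin\lk(\tau,I)$, i.e.\ that $\tau\cdot v\notin I$. The word $\tau\cdot v$ is exactly $v\cdot\tau$, the element we are about to add, and each word of $C$ is added exactly once. Also $v\cdot\tau\notin I_0$, because it contains $v$. Hence $\tau\cdot v\notin I$ at the moment we add it, whatever order we use. So the ordering is not actually needed, and any enumeration of $C$ works. Note that $\lk(\tau,I)$ may well contain letters of $X$ coming from earlier words $\tau\cdot u$ with $u\in X$, which lie in $I_0$ or were added earlier. This is harmless, since those letters lie in $X$.

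By induction, $I_0\cup C$ is $N$-independent. Since $I_0$ spans $\sym_k(X)$, we get $r_N(E)=|I_0|+|C|$. Therefore every element of $C$ is a coloop of $N|_E$, which is ConingP. The only subtlety I expect is bookkeeping: $\lk(\tau,I)$ is taken in $\sym_1(V)=V$ and uses multiset concatenation, so when $\tau$ itself contains $v$ one must confirm that the identification $\tau\cdot u=v\cdot\tau$ holds exactly when $u=v$. That is immediate for unordered words.
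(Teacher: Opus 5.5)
Your proof is correct and is essentially the paper's argument. You start from a base of $N|_{\sym_k(X)}$ and add the words $v\cdot\tau$ of $v\cdot\sym_{k-1}(X+v)$ one by one via ExtP, using that $\lk(\tau,\cdot)\subseteq X$ because the only word that could put $v$ into the link is $v\cdot\tau$ itself. The paper instead adds these words in layers of increasing multiplicity of $v$, and you are right that this ordering is inessential.
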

\begin{proof}
To verify that ConingP holds for $N$, choose $X\subset V$
%, $I\subseteq \sym_k(X)$ 
and 
$v\in V\setminus  {\rm cl}_M(X)$. We need to show that 
%$I\cup (v\cdot \sym_{k-1}(X+v))$ is independent in $N$ if  $I$ is independent in $N$. 
$v\cdot \sym_{k-1}(X+v)$ is a set of coloops of $N|_{\sym_{k}(X+v)}$.

Let $I$ be a base of $N|_{\sym_{k}(X)}$ and, for each integer $j\geq 1$, let  $v^j$ denote the word in $\sym_j(V)$ consisting of $j$ copies of $v$.
We will show that 
$$I_i:=I\cup \bigcup_{1\leq j\leq i}v^j\cdot \sym_{k-j}(X)$$ is 
$N$-independent
%a set of coloops of $N|\sym_{k}(X+v)$ 
for all $0\leq i\leq k$ by induction on $i$.
The base case when $i=0$ and $I_i=I$ holds by the choice of $I$.

Suppose $I_{i-1}$ is $N$-independent for some $i\geq 1$. To show that $I_i$ is $N$-independent it will suffice to
%is $N$-independent.
 show 
that $v^i\cdot \beta$ is a coloop in $N|_{I_i}$  for each $\beta\in \sym_{k-i}(X)$.
Observe that, for each $e\in I_{i}\setminus \{v^i\cdot \beta\}$, 
either the multiplicity of $v$ in $e$ is less than $i$
or $e$ is of the form $e=v^i\cdot \beta'$ for some $\beta'\in \sym_{k-i}(X)$ with $\beta'\neq \beta$.
Hence, if $e$ contains $v^{i-1}\cdot \beta$,
then $e=v^{i-1}\cdot \beta\cdot u$ for some $u\in X$.
This implies that $\lk(v^{i-1}\cdot \beta, I_i\setminus \{v^i\cdot \beta\})\subseteq X$.
Since  $v\notin {\rm cl}_M(X)$, this tells us that $v\notin {\rm cl}_M(\lk(v^{i-1}\cdot \beta, I_i\setminus \{v^i\cdot \beta\}))$.
ExtP now implies that adding $v\cdot v^{i-1}\cdot \beta$ to any base of $N|_{I_i\setminus \{v^i\cdot \beta\}}$ preserves $N$-independence. Hence $v^{i}\cdot \beta$ is a coloop in $N|_{I_i}$.

Thus, $I_i$ is $N$-independent for all $i=0,\dots, k$. 
Since $I$ is a base of  $N|_{\sym_k(X)}$, 
$$r_N(\sym_k(X+v))\geq |I_k|=r_N(\sym_k(X))+|v\cdot \sym_{k-1}(X+v)|.$$
Since $\sym_k(X+v)\setminus \sym_k(X)=v\cdot \sym_{k-1}(X+v)$, 
we have  equality in the above inequality,
and every edge in $v\cdot \sym_{k-1}(X+v)$ is a coloop in $N|_{\sym_{k}(X+v)}$.
% Since $I_k$ spans $N|_{\sym_k(X+v)}$, $I_k$ is a base of $N|_{\sym_k(X+v)}$. This gives
% $$r_N(\sym_k(X+v)=|I_k|=r_N(\sym_k(X))+|v\cdot \sym_{k-1}(X+v)|$$
% and hence every edge in $v\cdot \sym_{k-1}(X+v)$ is a coloop in $N|_{\sym_{k}(X+v)}$.
\end{proof}

\begin{lemma}\label{lem:duality}
Let $M$ be a matroid on $V$ and $N$ be a matroid on $\sym_k(V)$.
%\bill{
Then the equivalences between the specified properties of $N$  (with respect to $M$) and $N^*$ (with respect to $M^*$) given by matroid duality are as shown in the rows of the table below.
%}
%With respect to $M$, there is an equivalence between a property of $N$
%and that of $N^*$ as shown below:
\medskip
\begin{center}
\begin{tabular}{c|c}
    \hline 
$N$, $M$ & $N^*$, $M^*$ \\\hline
Rigidity Rank Property & SP-rank Property \\
Strong Rigidity Rank Property & Strong SP-rank Property \\
Coning Property & Closure Property \\
Cyclic Property & Flat Property \\
Dual Multilinearity & Multilinearity\\
Cocircuit Property & Circuit Multilinearity \\
Extension Property & Strong Order Multilinearity\\
0-Extension Property and Weak Coning Property & Weak Order Multilinearity \\\hline
  \end{tabular}
  \end{center}
\end{lemma}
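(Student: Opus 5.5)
The plan is to verify each row of the table separately using standard duality facts. For $A\subseteq E$ we have $r_{N^*}(A)=|A|-\rank N+r_N(E\setminus A)$. Flats of $N^*$ are exactly the complements of cyclic sets of $N$. Circuits of $N^*$ are the cocircuits of $N$. We also have $(N^*|_A)^*=N/(E\setminus A)$, $L_{M^*}=CL_M$ and $L_{N^*}=CL_N$. Finally, for $e\notin A$, $e\in{\rm cl}_{N^*}(A)$ if and only if $e$ is a coloop of $N|_{E\setminus A}$. Throughout, $E=\sym_k(V)$. The combinatorial identity driving everything is $\sym_k(V)\setminus (X\cdot\sym_{k-1}(V))=\sym_k(V\setminus X)$, and for $\tau\in\sym_{k-1}(V)$ we write $C_\tau=E\setminus(\tau\cdot V)$.

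\emph{The numerical and closure rows.} For the first row, $\rank N^*=\binom{n+k-1}{k}-\rank N$. So RRP for $N$ is equivalent to $\rank N^*=\binom{n-\rank M+k-1}{k}$, and $n-\rank M=\rank M^*$.

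For the second row, put $Y=V\setminus X$. Expand $r_{N^*}(X\cdot\sym_{k-1}(V))$ using $|X\cdot\sym_{k-1}(V)|=\binom{n+k-1}{k}-\binom{|Y|+k-1}{k}$, RRP, and StrongRRP applied to $Y$. The terms $\binom{n+k-1}{k}$ and $\binom{|Y|+k-1}{k}$ cancel. Then $|Y|-r_M(Y)=\rank M^*-r_{M^*}(X)$ gives exactly the Strong SP-rank formula for $X$ relative to $M^*$. Since $X\leftrightarrow Y$ is a bijection of subsets, and RRP is the case $X=V$, the two properties are equivalent.

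For the third row, use the closure criterion above. The only words of ${\rm cl}_{M^*}(X)\cdot\sym_{k-1}(V)$ not already in $X\cdot\sym_{k-1}(V)$ are words in $\sym_k(Y)$ containing some $v\in Y$ that is a coloop of $M|_Y$, i.e.\ $v\notin{\rm cl}_M(Y-v)$. The Closure Property for $N^*$ says precisely that such words are coloops of $N|_{\sym_k(Y)}$. Setting $Z=Y-v$, this is exactly ConingP for $Z$ and $v$.

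\emph{The structural rows.} The fourth row follows by complementation: $F$ is a flat of $M^*$ if and only if $V\setminus F$ is cyclic in $M$, and the complement of $F\cdot\sym_{k-1}(V)$ is $\sym_k(V\setminus F)$. The fifth and sixth rows are immediate from $(N^*|_{\tau\cdot V})^*=N/C_\tau$, from circuits of $N^*$ being cocircuits of $N$, and from the loop clauses matching the coloop clauses via $L_{M^*}=CL_M$.

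For the last row, Weak Order Multilinearity of $N^*$ says two things.
\begin{itemize}
\item The loop clause: $\tau\cdot v$ is a coloop of $N$ for $v\in CL_M$. This is exactly WConingP.
\item For $\tau$ avoiding $CL_M$, every $M$-independent $X$ makes $\tau\cdot X$ independent in $N/C_\tau$, i.e.\ $B\cup\tau\cdot X$ is $N$-independent for a base $B$ of $N|_{C_\tau}$.
\end{itemize}
The condition $\lk(\tau,I)=\emptyset$ means exactly $I\subseteq C_\tau$. So 0-ExtP gives the second item by taking $I=B$. Conversely, extend $I$ to a base $B$ of $N|_{C_\tau}$ and pass to the subset $I\cup\tau\cdot X$. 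If $\tau$ contains a coloop letter, WConingP makes every $\tau\cdot x$ a coloop, so 0-ExtP holds trivially in that case.

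\emph{The main obstacle: ExtP versus Strong Order Multilinearity.} For $N^*$ with respect to $M^*$, Strong Order Multilinearity means $M\preceq_{\rm s}N/C_\tau$ for all $\tau$ avoiding $CL_M$, together with the coloop clause for $v\in CL_M$. By Proposition~\ref{prop:strong}, the order condition is equivalent to
\[
v\notin{\rm cl}_M(Z)\implies \tau\cdot v\notin{\rm cl}_N(C_\tau\cup\tau\cdot Z)\quad\text{for all } Z\subseteq V.
\]

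Suppose Strong Order Multilinearity holds and we want ExtP. Given an independent $I$ with $v\notin{\rm cl}_M(Z)$, where $Z=\lk(\tau,I)$, we have $I\subseteq C_\tau\cup\tau\cdot Z$. Hence $\tau\cdot v\notin{\rm cl}_N(I)$ and $I+v\cdot\tau$ is independent. If $\tau$ contains a coloop letter, the coloop clause makes $v\cdot\tau$ a coloop, so the conclusion is again immediate.

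Conversely, suppose ExtP holds. Take $I$ a base of $N|_{C_\tau\cup\tau\cdot Z}$. Then $\lk(\tau,I)\subseteq Z$, so ExtP gives $\tau\cdot v\notin{\rm cl}_N(I)={\rm cl}_N(C_\tau\cup\tau\cdot Z)$. The coloop clause is the one delicate point, and it is where Lemma~\ref{lem:extension_coning} is used. ExtP implies ConingP, and applying ConingP with $X=V-v$ for $v\in CL_M$ (so $v\notin{\rm cl}_M(V-v)$) shows that $v\cdot\sym_{k-1}(V)$ consists of coloops of $N$.
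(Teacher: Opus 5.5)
Your proposal is correct and follows essentially the same route as the paper. You verify the table row by row via the standard duality identities. For the Extension Property row you use Proposition~\ref{prop:strong} for the order condition and Lemma~\ref{lem:extension_coning} (via ConingP with $X=V-v$) for the coloop clause, and for the last row you use the independent-set characterisation of $\preceq_{\rm w}$ together with WConingP for words $\tau$ containing a coloop letter. The differences are cosmetic: you use the closure form of $\preceq_{\rm s}$ in both directions, and you explicitly pass to a base of $C_\tau\cup\tau\cdot Z$ before invoking ExtP, which makes precise a step the paper's flat-based argument leaves implicit.
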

\begin{proof}
The equivalence between Rigidity Rank Property and SP-rank Property follows directly from the rank formulae $\rank M^*=n-\rank M$ and $\rank N^*=|\sym_k(V)|-\rank N=\binom{n+k-1}{k}-\rank N$.

\medskip

{The equivalence between Strong Rigidity Rank Property and Strong SP-rank Property follows 
from the bijection $X\mapsto V\setminus X$,  the observation that $X\cdot \sym_{k-1}(V)=\sym_k(V)\setminus \sym_k(V\setminus X)$, and the rank formulae  $r_{M^*}(X)=|X|-\rank M+r_M(V\setminus X)$ and 
$r_{N^*}(\sym_k(V)\setminus \sym_k(V\setminus X))=|\sym_k(V)|-|\sym_k(V\setminus X)|-\rank N+r_{N}(\sym_k(V\setminus X))$. }
%\st{COMMENT: Is the last equation correct?} \bill{IS IT OK NOW?}

\medskip

To see the equivalence between Coning Property and Closure Property,
we use the general fact that, for a matroid $L$ and a partition 
$A\sqcup B\sqcup C$ of the ground set of $L$, $C$ is a set of coloops in $L|_{A\cup C}$ if and only 
if $C\subseteq {\rm cl}_{L^*}(B)$.
Now, in our problem, 
consider any partition $X\sqcup Y\sqcup \{v\}$ of $V$.
Then, $v\notin {\rm cl}_M(X)$ if and only if $v\in {\rm cl}_{M^*}(Y)$.
Similarly, since  
\[
\sym_k(X)\sqcup(Y\cdot \sym_{k-1}(V))\sqcup (v\cdot \sym_{k-1}(X+v)) \text{ is a partition of $\sym_k(V)$},
\]
 we have that
$v\cdot \sym_{k-1}(X+v)$  is a set of coloops in $N|_{\sym_k(X+v)}$ if and only if 
$v\cdot \sym_{k-1}(X+v)\subseteq {\rm cl}_{N^*}(Y\cdot \sym_{k-1}(V))$.
Since $(v\cdot \sym_{k-1}(V))\setminus (v\cdot \sym_{k-1}(X+v))\subseteq Y\cdot \sym_{k-1}(V)$, the latter property is equivalent to
$v\cdot \sym_{k-1}(V)\subseteq {\rm cl}_{N^*}(Y\cdot \sym_{k-1}(V))$.
Hence, Coning Property for $N$ is equivalent to the property that 
$v\cdot \sym_{k-1}(V)\subseteq {\rm cl}_{N^*}(Y\cdot \sym_{k-1}(V))$
for all $v\in {\rm cl}_{M^*}(Y)$.
And the latter property is equivalent to Closure Property for $N^*$.

\medskip

The equivalence between Cyclic Property and Flat Property is straightforward since 
$X$ is a cyclic set in $M$ if and only if $V\setminus X$ is a flat in $M^*$.

\medskip

The equivalences between Multilinearity and Dual Multilinearity, and between Cocircuit Property and Circuit Multilinearity are also straightforward. %\bill{IT IS NOT SO STRAIGHTFORWARD FOR THE LAST PART OF CIRCUIT Multi-linearITY. CAN WE REWRITE THIS IN THE SAME FORMAT AS COCIRCUIT PROPERTY ie for every $\tau\in \sym_{k-1}(V)$ and $v\in L_M$, $\tau\cdot v$ is a loop of $N$?}

%\medskip
% The equivalence between Cocircuit-Loop Property and Multilinearity follows since a matroid is determined by the family of cocircuits. 
% Indeed, for each $\tau\in \sym_{k-1}(V\setminus CL_M)$, define
% \[
% {\cal C}_{\tau}:=\{\tau\cdot \sym_1(X)\mid  X: \text{circuit in } M^*\}.
% \]
% If $N$ satisfies Cocircuit Property, each $\tau\cdot \sym_1(X)$ in ${\cal C}_{\tau}$ is a circuit in 
% $N^*|(\tau\cdot \sym_{1}(V))$. 
% Since ${\cal C}_{\tau}$ satisfies the circuit axiom, 
% ${\cal C}_{\tau}$ is the collection of all circuits of $N^*|(\tau\cdot \sym_{1}(V))$.

\medskip
%\newpage
We next prove that 
\begin{equation}\label{eq:ExtP}
\text{$N$ satisfies ExtP if and only if $N^*$ satisfies Strong Order Multilinearity.}
\end{equation}
%By Proposition~\ref{prop:strong},
%\begin{align}\nonumber
%\text{$N^*$ satisfies Strong Order Multilinearity} 
%&\Leftrightarrow N^*|\tau\cdot \sym_{1}(V)\preceq_{\rm s} M^* \quad  (\tau \in \sym_{k-1}(V)) \\
%&\Leftrightarrow N/ (\tau\cdot \sym_{1}(V))^c\succeq_{\rm s} M \quad  (\tau \in \sym_{k-1}(V)) %\label{eq:ext_strong1}
%%&\Leftrightarrow {\rm cl}_{N/(\tau\cdot \sym_{1}(V))^c}(X)\subseteq {\rm cl}_M(X) \quad (X\subseteq V).
%\end{align}
%\bill{
%I DONT SEE WHY WE NEED TO USE Proposition~\ref{prop:strong}, AND I AM WORRIED ABOUT THE %DELIMINATOR OF $\tau$. IT SEEMS TO ME THAT 
By definition,
$N^*$ satisfies Strong Order Multilinearity with respect to $M^*$ if and only if
\begin{align}
&M \preceq_{\rm s}    (N^*|_{\tau\cdot V})^* =N/ (\sym_k(V)\setminus (\tau\cdot V)) \mbox{ for all $\tau \in \sym_{k-1}(V\setminus CL_{M})$}\nonumber\\
&\mbox{via the bijection  $v\mapsto \tau\cdot v$}\label{eq:ext_strong1},
\end{align}
and $N$ satisfies WConingP with respect to $M$.
%\begin{equation}\label{eq:ext_strong2}
%\mbox{$v\cdot \tau$ is a coloop of $N$
%for all $v\in CL_{M}$ and $\tau \in \sym_{k-1}(V)$.} 
%\end{equation}
%\st{THIS (4) IS NOW EXACTLY WConingP.} \bill{SHOULD WE REPLACE IT BY WConingP?}
We will show that (\ref{eq:ext_strong1}) and 
%(\ref{eq:ext_strong2}) 
WConingP are together equivalent to the statement that ExtP holds for $N$ with respect to $M$.
%The key fact to see this is that 
%$\lk(\tau,F)=\lk(\tau,F\cup (\tau\cdot\sym_1(V)^c)$ for any $F\subseteq \sym_k(V)$.

Suppose  ExtP holds for $N$ with respect to $M$. Then ConingP holds for $N$ by Lemma~\ref{lem:extension_coning}. This immediately gives WConingP for $N$ (by taking $v\in CL_M$ and $X=V-v$). 
%(\ref{eq:ext_strong2}). 
We will use the equivalence between the first and second items of Proposition~\ref{prop:strong} to show that (\ref{eq:ext_strong1}) also holds. To this end, choose $\tau\in \sym_{k-1}(V\setminus CL_M)$  and put $(\tau\cdot V)^c=\sym_k(V)\setminus \tau\cdot V$. Let  $F$ be a flat of  $M$. We need to show that $\tau\cdot F$ is a flat of  $N/ (\tau\cdot V)^c$. This holds trivially if $F=V$, so we may assume that $F\neq V$.
Choose  $v\in V\setminus F$.
 We have $$\lk(\tau, (\tau\cdot F)\cup (\tau\cdot V)^c)=\lk(\tau,\tau\cdot F)=F.$$
 Since $F$ is a flat in $M$ and $v\not\in F$, this gives $v\notin {\rm cl}_M(\lk(\tau, (\tau\cdot F)\cup (\tau\cdot V)^c))$.
 Together with the assumption that ExtP holds for $N$, this in turn implies that 
 $(\tau\cdot F)\cup (\tau\cdot V)^c$ does not span $\tau \cdot v$ in $N$.
 Since $v$ is an arbitrary element of $V\setminus F$, we have $(\tau\cdot F)\cup (\tau\cdot V)^c$ is a flat in $N$, and hence $\tau\cdot F$ is a flat of  $N/ (\tau\cdot V)^c$, as required.
 %and (\ref{eq:ext_strong1}) follows by Proposition~\ref{prop:strong}.

 Conversely, suppose (\ref{eq:ext_strong1}) and 
 %(\ref{eq:ext_strong2}) 
 WConingP hold for $N$. 
 Choose an $N$-independent set $I\subseteq \sym_k(V)$,  $\tau\in \sym_{k-1}(V)$ and $v\in V\setminus {\rm cl}_M(\lk(\tau, I))$. If $\tau\in \sym_{k-1}(V\setminus CL_M)$ then, by (\ref{eq:ext_strong1}),
 $\tau\cdot v\notin {\rm cl}_{N/(\tau\cdot V)^c}(\tau\cdot \lk(\tau, I ))$, 
 %\st{To understand this notation, it might be better to insert "through the bijetion $v\leftrightarrow v\cdot \tau$" immediately after (3).}
 and hence 
 $\tau\cdot v \notin {\rm cl}_N(I\cup (\tau\cdot V)^c)$. On the other hand, if $\tau\not\in \sym_{k-1}(V\setminus CL_M)$ then $v\cdot \tau=u\cdot \beta$ for some $u\in CL_M$ and $\beta\in \sym_{k-1}(V)$, and hence $v\cdot \tau$ is a coloop of $N$ by 
 %(\ref{eq:ext_strong2})
 WConingP. In both cases, $I+v\cdot \tau$ is $N$-independent.
 Thus, ExtP holds for $N$.
This completes the proof of (\ref{eq:ExtP}).

\medskip

%\tcolblue{
We next prove that 
$N$ satisfies 0-ExtP and WConingP with respect to $M$ if and only if $N^*$ satisfies Weak Order Multilinearity with respect to $M^*$.
By Proposition~\ref{prop:weak},
$N^*$ satisfies Weak Order Multilinearity  with respect to $M^*$ if and only if 
\begin{equation}
 N/ (\tau\cdot V)^c\succeq_{\rm w} M \quad  \text{for all }\tau \in \sym_{k-1}(V\setminus CL_M) \label{eq:ext_weak0}
\end{equation}
and WConingP holds for $N$ with respect to $M$.
Note that WConingP implies $N/ (\tau\cdot V)^c$ is a free matroid for each $
\tau\in \sym_{k-1}(V)\setminus \sym_{k-1}(V\setminus CL_M)$.
Hence, the combination of  WConingP and (\ref{eq:ext_weak0})  is equivalent to
the combination of WConingP and 
\begin{equation}
 N/ (\tau\cdot V)^c\succeq_{\rm w} M \quad  \text{for all }\tau \in \sym_{k-1}(V). \label{eq:ext_weak1}
\end{equation}
Hence, it suffices to prove that  (\ref{eq:ext_weak1}) is equivalent to 0-ExtP in $N$.

Suppose $N$ has 0-ExtP.
Then, for any $M$-independent set $X\subseteq V$ and $\tau\in \sym_{k-1}(V)$,
$\tau\cdot X$ can be added to any $N$-independent set in $(\tau\cdot V)^c$ keeping $N$-independence by 0-ExtP. 
Equivalently, $\tau\cdot X$ is independent in $N/(\tau\cdot V)^c$ and (\ref{eq:ext_weak1}) holds.

 Conversely, suppose (\ref{eq:ext_weak1}) holds. 
 For any $\tau\in \sym_{k-1}(V)$ and any $M$-independent set $X\subseteq V$,
 $\tau\cdot X$ is independent in $N/(\tau\cdot V)^c$.
 This implies 0-ExtP.
 This completes the proof.
\end{proof}

Applying Lemma \ref{lem:duality} to (\ref{eq:biliearity_implication}) immediately gives:

%\begin{lemma}\label{lem:ExtPto0ExtP}
%Suppose $N$ satisfies ExtP. Then $N$ satisfies 0-ExtP.
%\end{lemma}

\begin{equation}\label{eq:dual_biliearity_implication}
\text{Dual Symmetric Q-power} \Rightarrow 
\text{DM} \Rightarrow
\text{CoCP} \Rightarrow
\text{ExtP}\Rightarrow
\text{0-ExtP}
\end{equation}

We close this subsection by using Lemma~\ref{lem:duality} to derive the dual forms of the theorems stated in Section~\ref{subsec:results}.
%have the dual counterparts.
%Since our proof is written in terms of the dual formulations,
%below we shall give them explicitly.
\begin{theorem}[Dual of Theorem~\ref{thm:2}]\label{thm:dual_2}
Let $M$ be a matroid on a finite set $V$ and $N$ be a matroid on $\sym_2(V)$.
Then the following statements are equivalent.
\begin{itemize}
\item $N^*$ is a second symmetric power of $M^*$.
\item $N$ satisfies  RRP and DM.
\item $N$ satisfies RRP and CoCP.
\item $N$ satisfies RRP and ExtP.
\item $N$ satisfies CycP and ExtP.
\end{itemize}
\end{theorem}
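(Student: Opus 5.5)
The statement is the dual, via Lemma~\ref{lem:duality}, of Theorem~\ref{thm:2}. Since that theorem has not yet been proved, I will prove the five equivalences directly for $N$ and $M$; Theorem~\ref{thm:2} then follows by applying the rows of Lemma~\ref{lem:duality}. The first item must also be translated. $N^*$ is a second symmetric quasi-power of $M^*$ exactly when $N$ satisfies Dual Symmetric Q-power with respect to $M$. For $k=2$, Dual Symmetric Q-power with $i=1$ is just DM, because $\sym_{k-i}(V)=V$. Together with the RRP/SP-rank row, the first item is therefore equivalent to ``RRP and DM''. The chain (\ref{eq:dual_biliearity_implication}) gives DM $\Rightarrow$ CoCP $\Rightarrow$ ExtP, so items 2 $\Rightarrow$ 3 $\Rightarrow$ 4 are immediate. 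It remains to prove (4) $\Rightarrow$ (2), and the equivalence of (4) and (5).

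For (4) $\Rightarrow$ (2) and (4) $\Rightarrow$ (5), I would first show that ExtP gives the upper bound $r_N(\sym_2(X))\le |X|r_M(X)-\binom{r_M(X)}{2}+\dots$, or rather the StrongRRP count, by a greedy construction. Order $X$ as a base $B=\{b_1,\dots,b_r\}$ of $M|_X$ followed by the remaining elements. Lemma~\ref{lem:extension_coning} gives ConingP, and using coning one vertex at a time builds an independent set of size $\binom{|X|+1}{2}-\binom{|X|-r+1}{2}$ inside $\sym_2(X)$. So ExtP alone gives the lower bound of StrongRRP.

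For the matching upper bound I would use rank submodularity. The intended mechanism is as follows: for $v\notin B$ with $v\in\cl_M(B)$, and any $u$, the word $u\cdot v$ should lie in the $N$-closure of the words $u\cdot B$ together with suitable other words. This is a circuit-propagation property, and it is exactly what CoCP or DM should supply. Under RRP, the global rank equals the lower bound at $X=V$. Submodularity then shows that every closed-up piece attains exactly its lower bound, giving StrongRRP. From this I would deduce that each contraction $N/(\sym_2(V)\setminus v\cdot V)$ has rank exactly $\rank M$. This rank count follows from RRP, ConingP, and the rank of $\sym_2(V-v)$. Combined with the strong-order relation $M\preceq_{\rm s} N/(\sym_2(V)\setminus v\cdot V)$ from (\ref{eq:ext_strong1}), the final clause of Proposition~\ref{prop:strong} forces equality, which is DM. So the key computation is that $r_N(\sym_2(V))-r_N(\sym_2(V)\setminus v\cdot V)=\rank M$. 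This should follow from StrongRRP applied to $V-v$ together with the global RRP formula, after first reducing to the case $v$ is not a coloop.

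For (4) $\Rightarrow$ (5) (CycP) I would use StrongRRP. If $X$ is cyclic, deleting any $e\in\sym_2(X)$ should not drop the rank. I would show this via ExtP-independent sets built avoiding $e$, using the fact that each $x\in X$ lies in $\cl_M(X-x)$.

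The main obstacle is (5) $\Rightarrow$ (4), namely deriving RRP from CycP and ExtP. The plan is induction on $|V|$. First delete coloops of $M$: WConingP makes their words coloops of $N$, and the formula adjusts correctly. With no coloops, $V$ is cyclic, so $\sym_2(V)$ is cyclic in $N$ by CycP. For $v\in V$, ExtP gives the lower bound, and the key is the upper bound $r_N(\sym_2(V))\le r_N(\sym_2(V-v))+\rank M$. I would obtain it by showing that $v\cdot v$ and the words $v\cdot u$ with $u\notin B$ are spanned. For this I would take a circuit $C$ of $M$ through $v$ and apply CycP to cyclic sets such as $C$ and $C\cup\{u\}$-closures. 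Then, inductively along a decomposition of $V$ into circuits, I would count how many words of $v\cdot V$ are forced into the span. I expect this counting, namely arranging cyclic sets so that CycP yields enough spanned words, to be the delicate step. If it fails, I would fall back on a rigidity-style argument via the connection to symmetric completability matroids noted after the list of properties.
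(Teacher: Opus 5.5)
Your reductions (1)$\Leftrightarrow$(2) and (2)$\Rightarrow$(3)$\Rightarrow$(4) agree with the paper. Your way of getting DM from (4) is also correct: you use the strong-order relation (\ref{eq:ext_strong1}), show the ranks are equal, and apply the last clause of Proposition~\ref{prop:strong}. This is a cleaner packaging of the counting contradiction in Lemma~\ref{lem:ext_star}. It only needs $\rank\bigl(N/\sym_2(V-v)\bigr)\le \rank M$, that is, RRP together with the \emph{lower} bound $r_N(\sym_2(V-v))\ge g(V-v)$, where $g(X)=|X|\,r_M(X)-\binom{r_M(X)}{2}$. That lower bound needs 0-extensions, not just coning: ConingP only yields $\sym_2(B)$, and the words $u\cdot B$ with $u\in X\setminus B$ must be added by 0-ExtP.

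The genuine gap in this half is the \emph{upper} bound in StrongRRP. You take it from a circuit-propagation property that ``CoCP or DM should supply''. Under (4) you only have RRP and ExtP, and DM is what you are proving, so this is circular. Moreover, the relations given by DM live in the contraction $N/\sym_2(V-u)$ and do not directly bound $r_N(\sym_2(X))$. Submodularity alone also does not turn the equality at $X=V$ into upper bounds on smaller pieces.

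This bound is indispensable for (4)$\Rightarrow$(5). An independent set of size $g(X)$ avoiding $e$ shows that $e$ is not a coloop of $N|_{\sym_2(X)}$ only if $g(X)=r_N(\sym_2(X))$. What you need is the increment inequality $r_N(\sym_2(V))-r_N(\sym_2(X))\ge g(V)-g(X)$, which is an independence statement coming from ExtP. Start from an \emph{arbitrary} base of $N|_{\sym_2(X)}$ and cone in elements outside the current $M$-closure using ConingP. Then, for each remaining $v$, add $v\cdot B$ for a base $B$ of $M$ inside the current set using 0-ExtP; at that moment $\lk(v,\cdot)=\emptyset$. RRP then forces $r_N(\sym_2(X))\le g(X)$. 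This is exactly what Claims~\ref{claim:R_CBP1} and~\ref{claim:R_CBP2} do.

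The second gap is (5)$\Rightarrow$(4), which you leave open at precisely the step that carries the content. The sets you suggest are also not the right ones: $C\cup\{u\}$ need not be $M$-cyclic, and its closure brings in many words not yet known to be spanned. No counting over a decomposition into circuits is needed.

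After removing coloops as you describe, choose a base $B$ of $M$ with $v\notin B$, and let $C$ be the fundamental circuit of $B+v$. The only word of $\sym_2(C)$ outside $\sym_2(V-v)\cup(v\cdot B)$ is $vv$, so CycP puts $vv$ in the $N$-closure of that set. For $u\notin B+v$, let $C_u$ be the fundamental circuit of $B+u$. Then $C\cup C_u$ is $M$-cyclic, and the only word of $\sym_2(C\cup C_u)$ outside $\sym_2(V-v)\cup(v\cdot B)\cup\{vv\}$ is $vu$. Hence $v\cdot V\subseteq{\rm cl}_N\bigl(\sym_2(V-v)\cup(v\cdot B)\bigr)$, giving $r_N(\sym_2(V))\le r_N(\sym_2(V-v))+\rank M$. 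CycP and ExtP pass to $N|_{\sym_2(V-v)}$ with respect to $M|_{V-v}$, and $r_M(V-v)=\rank M$, so your induction then closes.

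This is the argument of Lemma~\ref{lem:Mason_k=2}. The paper runs it along a construction tree to show that every canonical subset spans. It then combines this with 0-ExtP, which makes canonical subsets independent, to obtain RRP.
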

\begin{theorem}[Dual of  Theorem~\ref{thm:2_weak}(a)]\label{thm:dual_2_weak}
Let $M$ be a matroid on a finite set $V$ and $N$ be a matroid on $\sym_2(V)$.
Then the following statements are equivalent.
\begin{itemize}
\item $N$ satisfies RRP and CycP.
\item $N$ satisfies CycP and 0-ExtP.
\item $N$ satisfies StrongRRP and 0-ExtP.
\item $N$ satisfies RRP, ConingP, and 0-ExtP.
\end{itemize}
%Moreover, there is a matroid on $\sym_2(V)$ that satisfies RRP and CycP, but does not satisfy DB.
\end{theorem}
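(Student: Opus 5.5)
We prove the cycle of implications (2)$\Rightarrow$(3)$\Rightarrow$(4)$\Rightarrow$(3)$\Rightarrow$(1)$\Rightarrow$(2), numbering the four items of the theorem in order. Throughout we work on the looped complete graph, viewing $\sym_2(V)$ as its edge set. For $Y\subseteq V$ we write $f(Y)=\binom{|Y|+1}{2}-\binom{|Y|-r_M(Y)+1}{2}$. A direct calculation gives the increments of $f$. If $v\notin {\rm cl}_M(Y)$ then $f(Y+v)-f(Y)=|Y|+1=|v\cdot \sym_1(Y+v)|$. If $v\in{\rm cl}_M(Y)$ then $f(Y+v)-f(Y)=r_M(Y)$. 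The whole proof consists of comparing the increments $\Delta_N(Y,v):=r_N(\sym_2(Y+v))-r_N(\sym_2(Y))$ with these two numbers.

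\emph{Lower bounds, and (3)$\Leftrightarrow$(4).} We first show that 0-ExtP gives $\Delta_N(Y,v)\ge f(Y+v)-f(Y)$ whenever $v\in {\rm cl}_M(Y)$. Take a base $I$ of $N|_{\sym_2(Y)}$ and an $M$-base $B$ of $Y$. Since $\lk(v,I)=\emptyset$, 0-ExtP shows that $I\cup v\cdot B$ is independent. Next we show that $\Delta_N(Y,v)\ge |Y|+1$ whenever $Y+v$ is $M$-independent. Here $Y+v$ is independent, so 0-ExtP applied with $X=Y+v$ adds all of $v\cdot(Y+v)$. We now list $V$ so that an $M$-base comes first and sum these increments. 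This gives $r_N(\sym_2(X))\ge f(X)$ for every $X$ using 0-ExtP alone.

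For (4)$\Rightarrow$(3), ConingP supplies the increment $|Y|+1$ for every $v\notin{\rm cl}_M(Y)$, not only when $Y+v$ is independent. Hence, for any $X$, we can extend from $X$ to $V$ and obtain $r_N(\sym_2(V))\ge r_N(\sym_2(X))+f(V)-f(X)$. RRP then forces $r_N(\sym_2(X))\le f(X)$, and together with the lower bound this is StrongRRP. For (3)$\Rightarrow$(4), RRP is the case $X=V$. ConingP holds because StrongRRP gives $\Delta_N(X,v)=|X|+1$ for $v\notin{\rm cl}_M(X)$, so every new edge is a coloop.

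\emph{Upper bounds from CycP, giving (2)$\Rightarrow$(3).} This is the step I expect to be the main obstacle. We must show that CycP implies $\Delta_N(Y,v)\le r_M(Y)$ whenever $v\in{\rm cl}_M(Y)$. Equivalently, $\sym_2(Y)\cup v\cdot B$ must span $vv$ and every $vu$ with $u\in Y\setminus B$. The plan is induction on $|Y|$, using a circuit $C\subseteq B+v$ through $v$. Since $\sym_2(C)$ is cyclic in $N$ and $C-v$ is $M$-independent, the lower bound already proved gives $r_N(\sym_2(C))\ge |\sym_2(C)|-1$. Cyclicity then forces equality, so $\sym_2(C)$ is a single circuit of $N$. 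This spans $vv$ from the remaining edges. For an edge $vu$ with $u\in Y\setminus B$, we apply the same argument to the circuit $C'\subseteq B+u$ through $u$ together with the circuits on $C\cup C'$. This uses submodularity of $r_N$ and the induction hypothesis on smaller sets to pass the span from $uv$-type edges to the required ones. Summing increments along a base-first ordering then gives $r_N(\sym_2(X))\le f(X)$, and with the lower bound this is StrongRRP.

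\emph{Closing the cycle.} For (3)$\Rightarrow$(1), take a cyclic set $X$. Each $x\in X$ lies in ${\rm cl}_M(X-x)$, so StrongRRP gives $\Delta_N(X-x,x)=r_M(X)<|X|$. Deleting any single edge $e$ of $\sym_2(X)$ does not lower the rank: we pick $x$ in the support of $e$ and compare the tight rank formulas for $X-x$ and $X$. Hence no edge of $\sym_2(X)$ is a coloop, which is CycP. For (1)$\Rightarrow$(2), the CycP upper bounds together with RRP force every increment along every ordering to be tight. In particular, taking $u$ last, the contraction $N/\sym_2(V-u)$ restricted to $u\cdot V$ has rank $r_M(V)$. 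Moreover, CycP applied to circuits of $M$ through $u$ shows that every $M$-circuit maps to a dependent set there. A counting argument then shows that $M$-independent sets map to independent sets, which is (\ref{eq:ext_weak1}). The case where $\lk(\tau,I)=\emptyset$ for a general $N$-independent $I$ reduces to this via the contraction. This yields 0-ExtP.
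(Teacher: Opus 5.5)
Your increment bookkeeping is correct for the lower bound $r_N(\sym_2(X))\ge f(X)$ from 0-ExtP and for (3)$\Leftrightarrow$(4). For $k=2$ it is a more direct substitute for the paper's Claims~\ref{claim:R_CBP1}--\ref{claim:R_CBP2}.

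\textbf{The gap in (1)$\Rightarrow$(2).} Put $L=N/\sym_2(V-u)$ for a non-coloop $u$. Knowing that $\rank L=r_M(V)$ and that images of $M$-circuits are $L$-dependent only bounds $L$-ranks from above. So no counting argument can force $u\cdot X$ to be $L$-independent; nothing you state excludes, say, $uu$ being a loop of $L$.

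Read literally for all circuits, your claim is false. Combined with the conclusion you want, it would give $L\cong M$, i.e.\ DM. But the matroid $N'$ of Theorem~\ref{thm:example} satisfies (1) and not DM. Concretely, $\{1,2,3\}$ is an $M$-circuit, yet $6\cdot\{1,2,3\}\subseteq B'$ is independent in $N'/\sym_2(V-6)$.

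What you need is the reverse direction: for every $M$-base $X$, $u\cdot X$ spans $L$. Then $|X|=\rank L$ makes $u\cdot X$ a base of $L$, which is (\ref{eq:ext_weak1}); coloops $u$ are handled by ConingP.
\begin{itemize}
\item For bases $X$ avoiding $u$, this is your own upper-bound lemma with $Y=V-u$.
\item Bases containing $u$ are needed already to show $uu$ is not a loop of $L$. They require the other case of Lemma~\ref{lem:Mason_k=2}. Take a fundamental circuit $C\ni u$ of $X$ with $C\setminus X=\{w_0\}$. CycP on $C$ puts $uw_0$ in the span, and CycP on $C\cup C_w$ then gives each $uw$.
\end{itemize}
The paper packages exactly this as ``canonical subsets span''. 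From there it gets WCBP, then CBP via Lemma~\ref{lem:weakstrongCBP}(b), then 0-ExtP via Lemma~\ref{lem:strongCBP}.

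Under (1) you do not have 0-ExtP, so your upper-bound lemma must not use the lower bound to make $\sym_2(C)$ a circuit. Cyclicity alone gives $vv\in{\rm cl}_N(\sym_2(C)-vv)$. For $vu$ it suffices that $C\cup C'$ is $M$-cyclic and $vu$ is the only edge of $\sym_2(C\cup C')$ outside $\sym_2(Y)\cup(v\cdot B)\cup\{vv\}$. No induction or submodularity is needed.

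\textbf{The gap in (3)$\Rightarrow$(1).} Comparing $f(X)$ with $f(X-x)$ only shows that $x\cdot X$ is dependent in $N/\sym_2(X-x)$. It does not show that a prescribed edge $e$ is a non-coloop. In fact StrongRRP alone does not imply CycP. Take $M=U_3^1$ on $\{1,2,3\}$ and represent $N$ by $11\mapsto e_1$, $22\mapsto e_2$, $33\mapsto e_3$, $12\mapsto 0$, $13\mapsto e_1+e_3$, $23\mapsto e_2+e_3$. StrongRRP holds, yet $11$ is a coloop of $N|_{\sym_2(\{1,2\})}$.

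You must invoke 0-ExtP here. For $e=xu\in\sym_2(X)$, choose a base $I$ of $N|_{\sym_2(X-x)}$ and a base $Z$ of $M|_X$ avoiding $u$; $Z$ exists because $X$ is cyclic. Then $I\cup(x\cdot Z)$ is $N$-independent and misses $e$. Its size is $f(X-x)+r_M(X)=f(X)=r_N(\sym_2(X))$, so $e$ is not a coloop. This is the $k=2$ case of the paper's Lemma~\ref{lem:canonical_avoiding} combined with WCBP.

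With these repairs your route becomes a valid, more elementary alternative to the paper's canonical-subset machinery.
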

\begin{theorem}[Dual of Theorem~\ref{thm:k}(a)]\label{thm:dual_k}
Let $M$ be a matroid on a finite set $V$ and $N$ be a matroid on $\sym_k(V)$
with $k\geq 2$.
If $N$ satisfies RRP and ExtP with respect to $M$, then $N$ satisfies DM, CycP, StrongRRP and ConingP.
%Then the following statements are equivalent.
%\begin{itemize}
%\item $N$ satisfies RRP and DM.
%\item $N$ satisfies RRP and CoCP.
%\item $N$ satisfies RRP and ExtP.
%\end{itemize}
%In addition, if  $N$ satisfies RRP and DM, 
%then $N$ satisfies CycP, StrongRRP, ConingP, and 0-Ext.
% On the other hand, there is a matroid on $\sym_k(V)$ that satisfies CycP and Dual Symmetric Q-power, but does not satisfy RRP.
\end{theorem}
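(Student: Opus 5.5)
The plan is to derive everything from one counting argument, in which RRP forces a chain of local lower bounds to be tight.

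\emph{ConingP.} This is immediate from Lemma~\ref{lem:extension_coning}, since $N$ satisfies ExtP.

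\emph{Local lower bound.} For $X\subseteq V$ and $v\in V\setminus X$, let $B_X$ be an $M$-basis of $X$. Put
$$g(X,v)=\left|\{\tau\in\sym_{k-1}(X+v):\tau \text{ contains a letter of } B_X \text{ or } v\notin{\rm cl}_M(X)\}\right|.$$
So $g(X,v)=|\sym_{k-1}(X+v)|$ when $v\notin {\rm cl}_M(X)$. When $v\in{\rm cl}_M(X)$ it is the number of $\tau\in\sym_{k-1}(X+v)$ meeting $B_X$. The key step is the inequality
$$r_N(\sym_k(X+v))-r_N(\sym_k(X))\ \ge\ g(X,v).$$
For $v\notin{\rm cl}_M(X)$ this is ConingP. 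For $v\in{\rm cl}_M(X)$, start from a base $I$ of $N|_{\sym_k(X)}$ and add the words $v\cdot\tau$, with $\tau$ meeting $B_X$, one at a time. The order is by increasing multiplicity of $v$ in $\tau$ and then by a fixed order on $B_X$, as in the proof of Lemma~\ref{lem:extension_coning}. Each addition is justified by ExtP: choose the order so that when $v\cdot\tau$ is added, $\lk(\tau,\cdot)$ of the current set is contained in $X$ minus some letter of $B_X$ (or a flat not containing $v$). The idea is to peel off the basis letter of $\tau$ and apply ExtP to the word written as $b\cdot(\tau-b+v)$, so that the new letter is the basis element $b$, which lies outside the closure of the link. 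Getting this bookkeeping right is the step I expect to be the main obstacle. If peeling a basis letter fails, I would first try replacing ExtP by its equivalent form (\ref{eq:ext_strong1}) together with Proposition~\ref{prop:strong}.

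\emph{StrongRRP by tightness.} Fix an ordering $v_1,\dots,v_n$ of $V$ and write $X_i=\{v_1,\dots,v_i\}$. Summing the local bounds gives $r(N)\ge\sum_i g(X_{i-1},v_i)$. A direct count shows that this sum is $\binom{n+k-1}{k}-\binom{n-\rank M+k-1}{k}$, namely the number of words meeting a fixed basis of $M$. Hence RRP forces every local inequality to be an equality. Now take $X$ arbitrary and choose the ordering so that $X$ is an initial segment. The prefix sum then equals the number of words of $\sym_k(X)$ meeting $B_X$, which is exactly the StrongRRP formula. As a by-product, every local bound is tight for every pair $(X,v)$.

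\emph{CycP.} Let $X$ be cyclic in $M$ and let $w\in\sym_k(X)$ contain some letter $v$. Since $X$ is cyclic, $v\in{\rm cl}_M(X-v)$. Tightness of the step from $\sym_k(X-v)$ to $\sym_k(X)$ shows that not all of $v\cdot\sym_{k-1}(X)$ can be coloops. Combining this with the explicit ExtP construction, which shows that every $v\cdot\tau$ is interchangeable with the added ones, should show that $w$ is not a coloop of $N|_{\sym_k(X)}$.

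\emph{DM.} Fix $\tau\in\sym_{k-1}(V\setminus CL_M)$ and set $N_\tau=N/(\sym_k(V)\setminus\tau\cdot V)$. ExtP and (\ref{eq:ext_strong1}) give $M\preceq_{\rm s}N_\tau$. By Proposition~\ref{prop:strong} it then suffices to show $\rank N_\tau=\rank M$, i.e.
$$r(N)-r_N(\sym_k(V)\setminus\tau\cdot V)=\rank M.$$
For this, apply the exact (tight) counting to an ordering in which the letters of $\tau$ come last, or compute the rank of the complement via the tight local increments. WConingP comes from ConingP.
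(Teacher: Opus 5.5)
Your ConingP step is fine. Your tightness argument for StrongRRP is correct and neater than Claims~\ref{claim:R_CBP1}--\ref{claim:R_CBP2}. The local bound you were worried about is routine: the words of $\sym_{k-1}(X+v)$ meeting $B_X$ form the lex-min canonical subset $B_X\cdot\sym_{k-2}(X+v)$ (Lemma~\ref{lem:lex_min}). That set is built by $M$-valid 0-extensions (Lemma~\ref{lem:canonical_construction}), prefixing each base by $v$ keeps all links empty because $I$ contains no $v$, and ExtP implies 0-ExtP.

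The genuine gap is in CycP and DM. The only independent sets your argument certifies have the form $I\sqcup v\cdot T$, with $T$ the words meeting a basis of $M|_{X-v}$, iterated along an ordering. This family is too rigid. Take $M=U_3^2$ on $V=\{a,b,c\}$ and $k=3$: every base it produces is $\sym_3(V)\setminus\{vvv\}$ for some $v\in V$.
\begin{itemize}
\item CycP: every certified base contains $abc$, so nothing shows that $abc$ is not a coloop of $N|_{\sym_3(V)}$. The claimed ``interchangeability'' is asserted, not proved.
\item DM: every certified base contains all of $ab\cdot V=\{aab,abb,abc\}$. So for any ordering it meets $E=\sym_3(V)\setminus ab\cdot V$ in only $6<7=r(N)-\rank M$ elements. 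Since $E=\sym_3(V-a)\cup\sym_3(V-b)$ is not of the form $\sym_k(X)$, the tight increments cannot compute $r_N(E)$, and submodularity only bounds it from above.
\end{itemize}

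The missing idea is to use bases whose links are themselves recursively pivoted, i.e.\ arbitrary canonical subsets. Given your StrongRRP, every canonical subset of $\sym_k(X)$ is a base of $N|_{\sym_k(X)}$: it is independent by 0-ExtP and Lemma~\ref{lem:canonical_construction}, and has the right size by Lemma~\ref{lem:canonical_size}.
\begin{itemize}
\item CycP then follows from Lemma~\ref{lem:canonical_avoiding}. In the example, $\{aaa,aab,abb,bbb,aac,acc,bbc,bcc,ccc\}$ is canonical (pivot $c$, then $b$ in the link) and avoids $abc$.
\item DM follows from a canonical subset $B'\sqcup(\tau\cdot Z)$ that uses the letters of $\tau$ as successive pivots on $X=V$, as in Lemma~\ref{lem:ext_star}. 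Then $B'\subseteq E$ is independent of size $r(N)-\rank M$.
\end{itemize}

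Alternatively, you can stay within your framework and induct on $k$. Take $v\in X$ not a coloop of $M|_X$, and read the contraction $N|_{\sym_k(X)}/\sym_k(X-v)$ on $\sym_{k-1}(X)$.
\begin{itemize}
\item It inherits ExtP with respect to $M|_X$, because $\lk(v\cdot\sigma,I\sqcup v\cdot J)=\lk(\sigma,J)$ for $I\subseteq\sym_k(X-v)$.
\item Your tightness result gives it the RRP rank at level $k-1$. At level one, these two properties force it to equal $M|_X$, which serves as the base case.
\item CycP lifts, because a coloop $v\cdot\sigma$ of $N|_{\sym_k(X)}$ stays a coloop after the contraction.
\item DM lifts via $\sym_k(V)\setminus\tau\cdot V=\sym_k(V-t)\sqcup t\cdot(\sym_{k-1}(V)\setminus\tau'\cdot V)$ for $\tau=t\cdot\tau'$.
\end{itemize}
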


%We close this subsection by stating a dual form of   Theorem~\ref{thm:k_weak}(a). 
Our next statement also introduces a new property, the Canonical Base Property, which will be defined in Section~\ref{sec:contree}. Informally, it tells us that we can construct a base of $N$ by recursively applying 0-ExtP. This property will play a key role in our proofs. 

\begin{theorem}
%[Dual of Theorem~\ref{thm:k_weak}(a)]
\label{thm:dual_k_weak}
Let $M$ be a matroid on a finite set $V$ and $N$ be a matroid on $\sym_k(V)$
with $k\geq 2$.
Then the following statements are equivalent.
\begin{itemize}
\item $N$ satisfies StrongRRP and 0-ExtP.
\item $N$ satisfies RRP, ConingP, and 0-ExtP.
\item $N$ satisfies the Canonical Base Property.
%(defined in Section~\ref{sec:contree}). \st{I added this to promote CBP, but may be deleted.}
\end{itemize}
In addition, if  $N$ satisfies StrongRRP and 0-ExtP, 
then $N$ satisfies CycP.
%On the other hand, there is a matroid on $\sym_k(V)$ that satisfies 
%RRP, CycP, and 0-ExtP,  but does not satisfy StrongRRP or ConingP.
\end{theorem}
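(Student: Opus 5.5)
The plan is to prove the cycle of implications (first) $\Rightarrow$ (second) $\Rightarrow$ (Canonical Base Property) $\Rightarrow$ (first), and then deduce CycP from StrongRRP and 0-ExtP. Throughout, write
$$f(X)={|X|+k-1\choose k}-{|X|-r_M(X)+k-1\choose k}$$
for the target rank of $\sym_k(X)$.

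\emph{(first) $\Rightarrow$ (second).} Taking $X=V$ in StrongRRP gives RRP. For ConingP, let $v\notin {\rm cl}_M(X)$. Then $r_M(X+v)=r_M(X)+1$, so $|X+v|-r_M(X+v)=|X|-r_M(X)$ and the second binomial terms of $f$ cancel. Hence
$$f(X+v)-f(X)={|X|+k\choose k}-{|X|+k-1\choose k}={|X|+k-1\choose k-1}=|v\cdot\sym_{k-1}(X+v)|.$$
Since $\sym_k(X+v)\setminus\sym_k(X)=v\cdot\sym_{k-1}(X+v)$, StrongRRP forces every word of $v\cdot\sym_{k-1}(X+v)$ to be a coloop of $N|_{\sym_k(X+v)}$.

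\emph{(second) $\Rightarrow$ (Canonical Base Property).} I expect the Canonical Base Property to say the following. For each $X$ and each base $B$ of $M|_X$, the set $\sym_k(X)\setminus \sym_k(X\setminus B)$ of words of $\sym_k(X)$ meeting $B$ is a base of $N|_{\sym_k(X)}$, and it is obtained by repeated applications of 0-ExtP. The cardinality of this set is exactly $f(X)$.

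\emph{Independence.} Fix a linear order on $B$ and assign each such word $w$ to the pair $(\tau,b)$, where $b$ is the largest $B$-letter of $w$ and $w=\tau\cdot b$. Then the words are grouped into blocks $\tau\cdot B_\tau$, with $B_\tau\subseteq B$ an initial segment, hence $M$-independent. I will add these blocks in an order, for instance decreasing in the number of $B$-letters of $\tau$ and then reverse lexicographic, chosen so that when the block for $\tau$ is added, no previously added word has the form $\tau\cdot u$. This gives $\lk(\tau,I)=\emptyset$, so 0-ExtP applies at each step and the whole set is independent. This yields the lower bound $r_N(\sym_k(X))\ge f(X)$.

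\emph{Spanning.} I will use RRP together with ConingP. First extend $B$ to a base $B\cup B'$ of $M$. By ConingP, adding the letters of $B'$ one at a time raises the rank by exactly $|v\cdot\sym_{k-1}(\cdot)|$, which equals the increment of $f$. Then adding the remaining letters of $V$ (all in the closure) raises the rank by at least the increment of $f$, by the lower-bound construction applied relative to a fixed base. If $r_N(\sym_k(X))>f(X)$, these steps would give $r_N(\sym_k(V))>f(V)$, contradicting RRP.

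\emph{(Canonical Base Property) $\Rightarrow$ (first).} Counting the canonical base gives StrongRRP directly. 0-ExtP should follow because the canonical base construction can be started from any independent $I$ with $\lk(\tau,I)=\emptyset$, or equivalently because the definition of the property explicitly includes the 0-ExtP steps.

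\emph{CycP.} Let $X$ be cyclic and $w\in\sym_k(X)$. I will show $w$ is not a coloop by exhibiting a base of $N|_{\sym_k(X)}$ avoiding it. Because every letter of $X$ lies in a circuit of $M|_X$, every letter $v$ of $w$ can be excluded from some base $B$ of $M|_X$. If some base $B$ avoids all of ${\rm supp}(w)$, then $w\in\sym_k(X\setminus B)$, so the canonical base for $B$ misses $w$. Otherwise a short exchange argument is needed: compare two canonical bases and use $r_N(\sym_k(X))=f(X)$ to move $w$ out.

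The main obstacle is the ordering in the independence step: the order on the $\tau$'s must guarantee $\lk(\tau,I)=\emptyset$ at every stage. This is the place where the precise definition of the Canonical Base Property in Section~\ref{sec:contree} must be matched. The exchange step for CycP, when ${\rm supp}(w)$ meets every base of $M|_X$, is the second point needing care.
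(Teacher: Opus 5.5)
Your step (first)$\Rightarrow$(second) is the paper's Lemma~\ref{lem:coning}. Your other arguments also work:
\begin{itemize}
\item The 0-extension ordering on the words meeting $B$ is sound. Note that $B_\tau=\{b\in B: b\ge\max_B\tau\}$ is a final rather than an initial segment, which is harmless.
\item The upward chain $X\to X\cup B'\to V$ is sound once the step for $w\in{\rm cl}_M(Y)$ is spelled out: 0-extend a base of $N|_{\sym_k(Y)}$ by $w\cdot(B\cdot\sym_{k-2}(Y+w))$.
\end{itemize}
These show that RRP, ConingP and 0-ExtP give StrongRRP, so the first two bullets are equivalent.

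The genuine gap is that everything involving the third bullet rests on a guessed Canonical Base Property that is much weaker than the real one. In the paper, canonical subsets are defined recursively:
\begin{itemize}
\item there is a pivot $v$ that is not a coloop of $M|_X$;
\item $S\cap\sym_k(X-v)$ is a canonical subset of $\sym_k(X-v)$;
\item $\lk(v,S)$ is a canonical subset of $\sym_{k-1}(X)$.
\end{itemize}
Your sets $B\cdot\sym_{k-1}(X)$ are only the lex-min canonical subsets. Moreover, CBP requires, for every construction tree $T$ of every canonical subset and \emph{every} leaf $\lambda$, that the prefix $S_{\le\lambda}$ be a base of $N|_{E_{\le\lambda}}$. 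It is not enough that $S$ be a base of $N|_{\sym_k(X)}$. So (second)$\Rightarrow$CBP is not proved. Two ingredients are missing:
\begin{enumerate}
\item Every canonical subset, not just the lex-min ones, is built from $\emptyset$ by $M$-valid 0-extensions (Lemma~\ref{lem:canonical_construction}). Each is therefore independent and, by StrongRRP, a base.
\item For prefixes you need (\ref{eq:leaf1.5}): $\tau_{\lambda''}$ lies in no word of $E_{\le\lambda}$ for any later leaf $\lambda''$. So if $S_{\le\lambda}+e$ were independent for some $e\in E_{\le\lambda}\setminus S_{\le\lambda}$, you could 0-extend it by the remaining blocks $S_{\lambda''}$ (Lemma~\ref{lem:canonical_construction_finer}). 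This gives an independent set $S+e$ of size $f(X)+1$ in $\sym_k(X)$, contradicting StrongRRP.
\end{enumerate}

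The converse CBP$\Rightarrow$0-ExtP is also unproved. The definition does not ``include the 0-ExtP steps'', and starting a construction from an arbitrary $I$ is exactly what must be justified. The argument needs the prefix clause you lack:
\begin{itemize}
\item Take $X$ to be a base of $M$.
\item If $\tau$ contains a coloop $u$ of $M$, ConingP (which follows from StrongRRP) makes all of $u\cdot\sym_{k-1}(V)$ coloops.
\item Otherwise, build a construction tree for a canonical subset $S$ of $\sym_k(V)$ whose rightmost path uses the letters of $\tau$ as pivots, with $Y_{\lambda^*}=X$ at the rightmost leaf. Then $E_{\lambda^*}=\tau\cdot V$ and $S_{\lambda^*}=\tau\cdot X$.
\item Since $\lk(\tau,I)=\emptyset$, we get $I\subseteq E_{\le\lambda'}\subseteq{\rm cl}_N(S_{\le\lambda'})$ for the preceding leaf $\lambda'$.
\item Independence of $S=S_{\le\lambda'}\sqcup(\tau\cdot X)$ then gives independence of $I\sqcup(\tau\cdot X)$.
\end{itemize}

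Finally, your CycP step cannot be completed with lex-min sets. Take $M=U_3^2$ on $X=\{a,b,c\}$ and $k=3$. The word $abc$ meets every base of $M|_X$, so it lies in every $B\cdot\sym_2(X)$. The missing idea is Lemma~\ref{lem:canonical_avoiding}. For $w=u_1\cdots u_k$, use $u_k$ as pivot and set $S=(u_k\cdot S')\sqcup S''$, where:
\begin{itemize}
\item $S'$ is a canonical subset of $\sym_{k-1}(X)$ avoiding $u_1\cdots u_{k-1}$, obtained by induction;
\item $S''$ is any canonical subset of $\sym_k(X-u_k)$.
\end{itemize}
This $S$ avoids $w$, and it is a base of $N|_{\sym_k(X)}$ once you know that all canonical subsets are bases.
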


\begin{theorem}[Dual of Theorem~\ref{thm:mason_k_3}(a)] \label{thm:dual_mason_k_3}
Let $M$ be a uniform matroid on a finite set $V$ of rank at least one. 
Suppose $N$ is a matroid on $\sym_3(V)$ which satisfies CycP
and DM. Then $N$ satisfies RRP.
\end{theorem}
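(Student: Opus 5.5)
The plan is to prove two matching bounds on $r_N(\sym_3(X))$ by induction on $|X|$. Write $r=\rank M\geq 1$ and fix an ordering $v_1,\dots,v_n$ of $V$, with $R=\{v_1,\dots,v_r\}$. A word lies outside $R\cdot\sym_2(V)$ exactly when all three of its letters lie in $V\setminus R$. So
$$|R\cdot \sym_2(V)|=\binom{n+2}{3}-\binom{n-r+2}{3},$$
and it suffices to show two things: $R\cdot\sym_2(V)$ is $N$-independent, and every word in $\sym_3(V\setminus R)$ lies in ${\rm cl}_N(R\cdot\sym_2(V))$. The statement is trivial if $r=n$, since then every element of $M$ is a coloop and WConingP makes $N$ free. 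So I assume $r<n$. Then $M$ has no coloops, and every $(r+1)$-subset of $V$ is a circuit of $M$.

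\emph{Lower bound.} By (\ref{eq:dual_biliearity_implication}), DM implies ExtP, and then Lemma~\ref{lem:extension_coning} gives ConingP. Since $M$ is uniform, $v\notin{\rm cl}_M(X)$ whenever $|X|<r$ and $v\notin X$. Adding $v_1,\dots,v_r$ one at a time with ConingP shows that $\sym_3(R)$ is independent. To add a letter $u\notin R$ to a current set $X\supseteq R$, I will add the words $u\cdot\beta$ for $\beta\in\sym_2(X+u)$ with ${\rm supp}(\beta)\cap R\neq\emptyset$. I order them by decreasing multiplicity of $u$, as in the proof of Lemma~\ref{lem:extension_coning}, and apply ExtP with a suitable choice of $\tau$ and of the new letter. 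The aim is to arrange each step so that the new letter $w$ satisfies $|\lk(\tau,I)|<r$, hence $w\notin{\rm cl}_M(\lk(\tau,I))$. The natural choice is to let the distinguished letter $w$ be an element of $R$ in the word, and $\tau$ the remaining pair. Checking that the relevant links really have fewer than $r$ elements needs a careful count, and I expect this to go through.

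\emph{Upper bound.} This is the main obstacle. I must show that each $\gamma\in\sym_3(V\setminus R)$ is spanned by $R\cdot\sym_2(V)$. There are three shapes: $u^3$, $u^2w$ and $uwx$, with $u,w,x\notin R$. For $u^3$, CycP applied to the cyclic set $R+u$ gives $u^3\in{\rm cl}_N(\sym_3(R+u)-u^3)$, and the only other word of $\sym_3(R+u)$ avoiding $R$ is $u^3$ itself, so this case is immediate. For $u^2w$ I will apply CycP to $Y=R+u+w$. Since $\sym_3(Y)$ is cyclic, and by induction on the number of letters outside $R$ all words of $\sym_3(Y)$ except $u^2w$ and $uw^2$ are already spanned, I still need one extra relation to separate $u^2w$ from $uw^2$. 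For that I will use DM with $\tau=uw$. The contraction $N/(\sym_3(V)\setminus uw\cdot V)$ is isomorphic to $M$, so its rank is $r$. This implies that the $r+2$ words $uw\cdot(R+u+w)$ span at most $r$ dimensions modulo the complement. I will try to combine this with the cyclic sets $\sym_3(R'+u+w)$ for the various $r$-subsets $R'$ of $R+u$ and $R+w$, using submodularity, to force $r_N(\sym_3(Y))\leq\binom{r+4}{3}-4$. Taken together with the lower bound, that inequality gives exactly what is needed.

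The word $uwx$ with three distinct letters outside $R$ is where the restriction $k=3$ should enter. I will handle it the same way, with $Y=R+u+w+x$: CycP on the subsets $R'+\{u,w,x\}$ where $|R'|=r-2$ (these are cyclic since $|R'+\{u,w,x\}|=r+1$), together with DM at $\tau\in\{uw,ux,wx\}$. I must get a rank deficit of exactly $\binom{5}{3}-\binom{4}{3}$ new dependencies beyond the previous cases, and this count is the step I expect to be delicate.
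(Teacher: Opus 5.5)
Your upper bound, which carries all the content of the theorem, is not established, and the route you sketch for the words $u^2w$ cannot work as described. CycP by itself can never separate $uuw$ from $uww$. Every $M$-cyclic set containing $u$ and $w$ has both words in its third power, so CycP at best makes them parallel over the span of the words already handled. For example, on $V=\{a,u,w\}$ with $M=U_3^1$ and $R=\{a\}$, the linear matroid sending $uuw,uww$ to $e_2$ and every other word to $e_1$ satisfies CycP, yet $a\cdot\sym_2(V)$ does not span.

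So DM has to do real work here, but DM at $\tau=uw$ only controls $uw\cdot V$ relative to the \emph{entire} complement $\sym_3(V)\setminus uw\cdot V$. That complement contains words such as $uxx$, $wxx$, $xxy$ with $x,y\notin R$, which you have not yet shown to be spanned, so invoking it is circular. Submodularity does not rescue this: it transfers increment bounds from smaller sets to larger ones, never down to $\sym_3(Y)\setminus uw\cdot Y$. Hence the inequality $r_N(\sym_3(R+u+w))\le\binom{r+4}{3}-4$ is not obtained. By contrast, the case $uwx$ that you expect to be delicate is immediate once the two-letter words are done: CycP on $\sym_3(R+u+w+x)$ gives it directly.

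The missing idea is to stop trying to show that the particular set $R\cdot\sym_2(V)$ spans. Instead, show by induction on $|X|$, using CycP alone, that \emph{some} canonical subset of $\sym_3(X)$ spans $N|_{\sym_3(X)}$; this is Lemma~\ref{lem:CycP3}. Take $S_X=S_{X-v}\sqcup(v\cdot B_X\cdot X)$ with the pivot $v$ lying in the base $B_X$. Then $vv\cdot X\subseteq S_X$, and the only missing words through $v$ are $vuu$ and $vu_1u_2$ with $u,u_i\in X\setminus B_X$. CycP applied to the circuit $B_X+u$ and to the cyclic set $B_X+u_1+u_2$ recovers them, and this is exactly where $k=3$ enters. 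For $Y=R+u+w$ with pivots $w$ then $u$, this gives a spanning set of size $\binom{r+4}{3}-4$ that contains $uww$ but not $uuw$, which is precisely the bound you wanted.

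RRP then follows at once. Every canonical subset has the size given by Lemma~\ref{lem:canonical_size}, and every canonical subset is independent by 0-ExtP and Lemma~\ref{lem:canonical_construction}. Your set $R\cdot\sym_2(V)$ is a canonical subset by Lemma~\ref{lem:lex_min}, so it is then automatically a base.

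There is also a smaller slip in your lower bound. Lemma~\ref{lem:extension_coning} adds words in \emph{increasing} multiplicity of the new letter, and the decreasing order you state breaks down. For $r=1$, once $uuv_1$ is present, $uv_1v_1$ cannot be added by ExtP: $\lk(uv_1,I)\ni u$ already spans $M$, and $\lk(v_1v_1,I)\supseteq X$. Use instead the order $u\cdot\sym_2(R)$, then $uw\cdot R$ for each $w\in X\setminus R$, then $uu\cdot R$. Every step is then an $M$-valid 0-extension, with bases $(\{v_1,\dots,v_i\},uv_i)$, $(R,uw)$ and $(R,uu)$ respectively.
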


%\st{The Canonical Base Property is the property that a space admits bases constructed inductively in a manner inspired by rigidity theory.}

\subsection{Examples}\label{subsec:example}
We first give a concrete example of the dual of a symmetric power of a matroid.

Let $V=\{1,2,\ldots,n\}$ and consider the matroid ${\cal B}$ on $\sym_2(V)$ in which
$F\subset \sym_2(V)$ is independent  if and only if, regarding $F$ as the edge set of a graph with vertex set $V$, each connected component in this graph contains at most one cycle.
(Thus $\B$ is the {\em bicircular matroid} on the looped complete graph with vertex set $V$.)
%\bill{
Then  $\B$ satisfies DM and RRP with respect to $U_n^1$, and hence ${\cal B}^*$ is a second symmetric power of $U_n^{n-1}$ by Theorem \ref{thm:dual_2}.
%x}

We can use the  
observation that bicircular matroids are  transversal matroids to define  an analogue of bicircular matroids on $\sym_k(V)$ for each $k\geq 3$.
%as follows.
%Let $V=\{1,\dots, n\}$.
The {\em $k$-th bipartite incidence graph} $B_{n,k}$ for $V$
is the bipartite graph with vertex bipartition $(\sym_{k-1}(V), \sym_k(V))$
in which two vertices $\alpha\in \sym_{k-1}(V)$ and $\beta\in \sym_k(V)$ are adjacent if %$\alpha|\beta$ 
$\alpha$ is a subword of $\beta$.
For $k\geq 1$, the {\em $k$-th incidence matroid} $M(B_{n,k})$ is the transversal matroid on $\sym_k(V)$ represented by $B_{n,k}$. In particular,
since $\sym_0(V)=\{\emptyset\}$ and $\emptyset$ is a subword of $\alpha$ for every $\alpha\in V$,    $B_{n,1}$ is isomorphic to $K_{n,1}$, and  $M(B_{n,1})$ is a copy of $U_n^1$ with groundset $V$. 

% Observe first that, when $M=U_n^1$, Dual Rank Condition with respect to  for a quasi-rigidity sequence $(N_1,N_2,\ldots,N_k)$ for $M$ to be a rigidity sequence is 

% \begin{equation}\label{eq:rank_1}
% r_{N_k}={n+k-1 \choose k}-{n-1+k-1\choose k}={n+(k-1)-1\choose k-1}=|\sym_{k-1}(V)|.
% \end{equation}
%When $k=2$, this value is $n$, and the bicircular matroid on the edge set of the looped complete graph on $n$ vertices is an example of a $2$-nd looped rigidity matroid of $U_n^1$.
%A bicircular matroid is an example of transversal matroids, and one can consider a hypergraph extension of bicircular matroids within the class of transversal matroids as follows.

\begin{theorem}\label{thm:transversal}
%\bill{
The $k$-th incidence matroid $M(B_{n,k})$
 satisfies the Dual Symmetric Q-power and Rigidity Rank Properties with respect to $U_n^1$ for all $k\geq 1$, and hence $M(B_{n,k})$ is the dual of a $k$-th symmetric  power of $U_n^{n-1}$. 
 %}
 \end{theorem}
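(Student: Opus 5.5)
The plan is to verify the two properties directly for $N=M(B_{n,k})$ with respect to $M=U_n^1$, and then pass to the dual statement. By Lemma~\ref{lem:duality}, RRP for $N$ is SP-rank for $N^*$. The same straightforward duality argument used there for Multilinearity/Dual Multilinearity shows that Dual Symmetric Q-power for $N$ is equivalent to Symmetric Q-power for $N^*$. Since $(U_n^1)^*=U_n^{n-1}$, this gives the final clause.

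The degenerate case $n=1$ is handled separately. Here $\sym_k(V)$ is a single word, $M(B_{1,k})$ has rank one, and $U_1^1$ has a coloop, so everything is checked by hand. From now on $n\ge 2$, so $CL_M=\emptyset$.

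\textbf{RRP.} We need
$$\rank N={n+k-1\choose k}-{n+k-2\choose k}={n+k-2\choose k-1}=|\sym_{k-1}(V)|.$$
The rank of a transversal matroid is the maximum matching size, and this is at most $|\sym_{k-1}(V)|$. Equality is witnessed by the matching $\gamma\mapsto \gamma\cdot n$, which is injective and sends each $\gamma$ to a word containing $\gamma$.

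\textbf{Dual Symmetric Q-power.} Fix $\alpha\in\sym_i(V)$ with $1\le i\le k-1$. Put $S_\alpha=\alpha\cdot\sym_{k-i}(V)$ and $D_\alpha=\sym_k(V)\setminus S_\alpha$, the $k$-words not containing $\alpha$ as a subword. Let $L_\alpha\subseteq\sym_{k-1}(V)$ be the $(k-1)$-words not containing $\alpha$. Two facts are needed.
\begin{enumerate}
\item[(i)] Every neighbour in $B_{n,k}$ of a word in $D_\alpha$ lies in $L_\alpha$, because a subword of a word avoiding $\alpha$ also avoids $\alpha$.
\item[(ii)] $L_\alpha$ can be matched entirely into $D_\alpha$.
\end{enumerate}
Given (i) and (ii), the standard contraction lemma for transversal matroids applies. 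It says that $N/D_\alpha$ is the transversal matroid on $S_\alpha$ whose left side is $\sym_{k-1}(V)\setminus L_\alpha=\alpha\cdot\sym_{k-i-1}(V)$, with the same adjacency. To prove the lemma, note that $r_N(D_\alpha)=|L_\alpha|$, and by a Hall/König deficiency argument a set $J\subseteq S_\alpha$ satisfies $r_N(J\cup D_\alpha)=|J|+|L_\alpha|$ iff $J$ is matchable into the remaining left vertices. Under the bijection $\alpha\cdot\beta\mapsto\beta$, adjacency $\alpha\cdot\beta'\subset\alpha\cdot\beta$ becomes $\beta'\subset\beta$. Hence $N/D_\alpha\cong M(B_{n,k-i})$, which is independent of $\alpha$. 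This gives the required isomorphisms. For $i=k-1$ it gives $M(B_{n,1})\cong U_n^1$, as required.

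\textbf{Main obstacle: step (ii).} For $\gamma\in L_\alpha$ some letter $u$ satisfies ${\rm mult}_\gamma(u)<{\rm mult}_\alpha(u)$. Then $\gamma\cdot w$ still avoids $\alpha$ for every $w\ne u$, which is possible since $n\ge2$. So each $\gamma$ has at least $n-1$ neighbours in $D_\alpha$, but the naive choice of $w$ depends on $\gamma$ and may fail to be injective.

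I would first try a canonical rule. Order $V$, and map $\gamma$ to $\gamma\cdot w$, where $w$ is the largest letter with $\gamma\cdot w\in D_\alpha$. Injectivity would then be shown by recovering $\gamma$ from $\gamma\cdot w$: remove one copy of the largest letter whose removal gives a word that still avoids $\alpha$ and maps back.

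If this fails, I would fall back on Hall's condition. This can be proved by induction on $k$, splitting $L_\alpha$ according to the multiplicity of a fixed letter of $\alpha$, and using the count $|D_\alpha|\ge|L_\alpha|$ on each piece.
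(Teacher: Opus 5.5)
\paragraph{Where the proof breaks.} Most of your outline is correct: the $n=1$ case, the RRP count via $\gamma\mapsto\gamma\cdot n$, fact (i), the contraction lemma for transversal matroids, and the identification of the induced bipartite graph on $(\alpha\cdot\sym_{k-i-1}(V),\alpha\cdot\sym_{k-i}(V))$ with $B_{n,k-i}$. The gap is step (ii), which carries the whole argument and is not proved.

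Your canonical rule is false. Take $n=2$, $k=3$, $\alpha=22$, so $L_\alpha=\{11,12\}$. For $\gamma=11$ the largest admissible letter is $2$, and for $\gamma=12$ it is $1$ (since $122$ contains $22$). Both words are sent to $112$.

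The Hall fallback, as sketched, does not address the real difficulty. The neighbourhoods of your pieces overlap, because appending the fixed letter moves a word into the next multiplicity class. A cardinality comparison such as $|D_\alpha|\ge|L_\alpha|$, even piece by piece, does not give Hall's condition for arbitrary subsets.

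\paragraph{Two ways to close the gap.} The paper avoids (ii) for general $\alpha$. It treats only $\alpha=v$, where $L_v=\sym_{k-1}(V\setminus\{v\})$ and $D_v=\sym_k(V\setminus\{v\})$, and $\gamma\mapsto\gamma\cdot w$ for a fixed $w\neq v$ is a matching. This gives $M(B_{n,k})/\sym_k(V\setminus\{v\})\cong M(B_{n,k-1})$ via $v\cdot\beta\mapsto\beta$. For $\alpha=v\cdot\alpha'$ it uses
\[
\sym_k(V)\setminus(\alpha\cdot\sym_{k-i}(V))=\sym_k(V\setminus\{v\})\sqcup v\cdot\bigl(\sym_{k-1}(V)\setminus(\alpha'\cdot\sym_{k-i}(V))\bigr),
\]
so one contracts in two stages and inducts on $n+k$.

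Alternatively, you can prove (ii) directly. Fix an ordering $u_1,\dots,u_s$ of ${\rm supp}(\alpha)$. For $\gamma\in L_\alpha$, let $j(\gamma)$ be the least $j$ with ${\rm mult}_\gamma(u_j)<{\rm mult}_\alpha(u_j)$. Fix letters $w_j\neq u_j$ (possible since $n\ge 2$) and send $\gamma\mapsto\gamma\cdot w_{j(\gamma)}$. Appending $w_j$ decreases no multiplicity and leaves that of $u_j$ unchanged. Hence $\gamma\cdot w_{j(\gamma)}\in D_\alpha$ and $j(\gamma\cdot w_{j(\gamma)})=j(\gamma)$. So the class index is recoverable from the image, and within a class the map is multiplication by a fixed letter, which is injective.
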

\begin{proof}
We proceed by induction on $k+n$. The case when $k=1$ holds since $M(B_{n,1})$ is a copy of $U_n^1$.
%with groundset $\sym_1(V)$. 
Hence we may assume that $k\geq 2$.
The statement is also trivially true when $n=1$ since 
$M(B_{1,i})$ is a copy of $U_1^1$ on $\sym_i(V)$ for all $1\leq i\leq k$.
Hence, we may also assume that $n\geq 2$.

It is not difficult to see that 
$B_{n,k}$ has a matching that covers $\sym_{k-1}(V)$.
(For example, consider the matching which matches $\alpha\in \sym_{k-1}(V)$ with $v\cdot \alpha\in \sym_{k}(V)$ for some fixed $v\in V$.)
This shows that   
\begin{equation}\label{eq:rank_1}
r_{M(B_{n,k})}=|\sym_{k-1}(V)|={n+(k-1)-1\choose k-1}={n+k-1 \choose k}-{n-1+k-1\choose k}, 
\end{equation}
implying that $M(B_{n,k})$ satisfies RRP with respect to $U_n^1$.

It remains to check that the Dual Symmetric Q-power property holds for $M(B_{n,k})$ with respect to $U_n^1$.
By induction on $n+k$, it will suffice to prove that, for each $v\in V$, $M(B_{n,k})/\sym_{k}(V\setminus \{v\})$ is isomorphic to $M(B_{n,k-1})$ via the correspondence $v\cdot \beta \mapsto \beta$.

To see this, recall that 
 the vertex set  of $B_{n,k}$ is $\sym_{k-1}(V)\sqcup\sym_k(V)$.
 Consider the bipartitions:
\[
\begin{split}
\sym_{k-1}(V)&=(v\cdot \sym_{k-2}(V))\sqcup \sym_{k-1}(V\setminus \{v\});  \\
\sym_{k}(V)&=(v\cdot \sym_{k-1}(V))\sqcup \sym_{k}(V\setminus \{v\}).
\end{split}
\]

We have already seen in (\ref{eq:rank_1}) that the rank of $M(B_{n-1,k})$ is equal to $|\sym_{k-1}(V\setminus\{v\})|$. Hence 
there is a matching on $\sym_{k-1}(V\setminus \{v\}) \times \sym_{k}(V\setminus \{v\})$ that covers $\sym_{k-1}(V\setminus \{v\})$.
In addition, there are no edges in $B_{n,k}$  between $v\cdot \sym_{k-2}(V)$ and 
$\sym_{k}(V\setminus \{v\})$.
Therefore, 
$M(B_{n,k})/\sym_{k}(V\setminus \{v\})$ is the  transversal matroid on $v\cdot \sym_{k-1}(V)$ represented by the bipartite subgraph of $B_{n,k}$ induced by $(v\cdot \sym_{k-2}(V))\sqcup (v\cdot \sym_{k-1}(V))$.
The graph isomorphism between this induced subgraph
and $B_{n,k-1}$ now gives the required isomorphism between 
$M(B_{n,k})/\sym_{k}(V\setminus \{v\})$ and $M(B_{n,k-1})$.
This completes the proof of the theorem.
\end{proof}

\section{Connection to Rigidity}\label{sec:rigidity}

Brakensiek et al.~\cite{brakensiek} showed that generic rigidity matroids are connected to second symmetric tensor powers of generic realisations of uniform matroids by matroid duality. This connection was extended to abstract rigidity matroids and second symmetric powers of uniform matroids in \cite{JTfields}. We will describe a further generalisation which relates second symmetric powers of an arbitray matroid to abstract rigidity matroids with respect to the dual matroid.  As an application we use an abstract rigidity matroid of a non-generic realisation of $K_6$ to construct an example  which verifies Theorem \ref{thm:2_weak}(b).

%connection between $k$'th symmetric powers of matroids and rigidity theory is most apparent  when $k=2$.
%Let ${V\choose 2}$ be the set of all unordered pairs of distinct elements in $V$.
One of the major topics in graph rigidity theory is the analysis of generic rigidity matroids, which are linear matroids on the edge set of a complete graph.
%${V\choose 2}$.
Motivated by a desire to obtain a better understanding of  generic rigidity matroids, Graver~\cite{G91} used the gluing properties of these matroids to define a larger class of matroids, called {\em abstract rigidity matroids}.
%, by extracting gluing properties of graph rigidity as properties of matroids. 
Several equivalent definitions of abstract rigidity matroids are known, see ~\cite{GSS93,N10}. We will see that the following definition is linked to that of a second symmetric power of a uniform matroid by  matroid duality.

For an integer $d\geq 1$ and a finite set $V$ of $n$ elements with  $n\geq d+1$,
a matroid $N$ on ${V\choose 2}$ is said to be an {\em abstract $d$-rigidity matroid} if
\begin{itemize}
    \item the rank of $N$ is $dn-{d+1\choose 2}$, and
    \item the edge set of each copy of $K_{1,n-d}$ is a cocircuit in $N$,
    where $K_{1,n-d}$ denotes  the complete bipartite graph with partite sets of sizes $1$ and $n-d$.
\end{itemize}

The class of abstract $d$-rigidity matroids includes the  rigidity matroid of 
%every set of at least $d+1$  affinely independent points in $\R^d$
a generic point configuration in $\R^d$, as well as cofactor matroids from the theory of bivariate splines~\cite{W}, and Kalai's hyperconnectivity matroids~\cite{K} which feature in the theory of low-rank symmetric matrix completion~\cite{SC}.
%, see, e.g.,~\cite{cruickshank2025rigidity} for more details.

By considering the elements of $\binom{V}{2}$ as unordered words of length two, the second bullet point in the definition of abstract rigidity can be restated as:
%has a matroidal interpretation  as follows:
\begin{itemize}
\item For every $v\in V$ and every $X\subseteq V\setminus \{v\}$ such that 
$X$ is a cocircuit of the uniform matroid on $V$ of rank $d+1$,
$v\cdot X$ is a cocircuit of $N$.
\end{itemize}
The above condition can be generalized  by replacing the uniform matroid with an arbitrary matroid  of rank $d+1$.
This observation will allow us to extend the notion of abstract $d$-rigidity to abstract $M$-rigidity for an arbitrary matroid $M$.

Let $M$ be a 
%nontrivial\footnote{A matroid is said to be nontrivial if the rank is not zero.} 
matroid on $V$ having neither loops nor coloops,
and let 
\[
d_M=\rank M-1.
\]
A matroid $N$ on ${V\choose 2}$ is said to be an {\em abstract $M$-rigidity matroid} if
\begin{itemize}
    \item[\bf A1] the rank of $N$ is $d_M|V|-{d_M+1\choose 2}$, and
    \item[\bf A2] for every $v\in V$ and every $M$-cocircuit $X\subseteq V\setminus \{v\}$, $v\cdot X$ is an $N$-cocircuit.
\end{itemize}

Graver's abstract $d$-rigidity matroid corresponds to the case when $M$ is the uniform matroid on $V$ with rank $d+1$.
When $M$ is a rank $d+1$ affine matroid  of any point configuration $P=\{p_1,\dots, p_n\}$ with $p_i\in \mathbb{R}^d$, the rigidity matroid on $P$ is an abstract $M$-rigidity matroid. 
Hence we can use the extension of abstract $d$-rigidity  to abstract $M$-rigidity to study the rigidity of point configurations which are not affinely independent in $\R^d$.
%extends Graver's idea to non-generic rigidity. 
%More details on this aspect of abstract $M$-rigidity will be given in a forthcoming paper.

%\bill{
We will delay a detailed study of abstract $M$-rigidity to a forthcoming paper and will concentrate on its links to symmetric powers in this section. Our first result describes how we can use the second symmetric power of a matroid to construct an abstract rigidity matroid of its dual. To accomplish this we will continue to consider each element of ${V\choose 2}$ to be an unordered word of length two, so that ${V\choose 2}\subset \sym_2(V)$.
%}
%One can construct abstract $M$-rigidity matroids from symmetric powers of $M^*$ as follows.
\begin{lemma}\label{lem:power_rigidity}
Let $M$ be a matroid on $V$ having neither loops nor coloops,
and let $N$ be a second symmetric power of $M$.
Then, $N^*/(\sym_2(V)\setminus {V\choose 2})$
is an abstract $M^*$-rigidity matroid.
\end{lemma}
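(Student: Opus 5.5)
Set $N'=N^*$ and $L=N'/D$, where $D=\{vv:v\in V\}=\sym_2(V)\setminus{V\choose 2}$ is the set of loop words. By the duality table in Lemma~\ref{lem:duality}, and applying Theorem~\ref{thm:dual_2} to the pair $(M^*,N^*)$ in place of $(M,N)$, the matroid $N'$ satisfies RRP and DM (and hence CoCP) with respect to $M^*$. Because $M$ has no loops or coloops, the same holds for $M^*$, so $CL_{M^*}=\emptyset$. Put $r=\rank M^*$, so $d_{M^*}=r-1$. I will check A1 and A2 for $L$ in turn.

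\emph{A1.} RRP for $k=2$ gives $\rank N'=rn-\binom r2$. Since $\rank L=\rank N'-r_{N'}(D)$, it suffices to show that $D$ is independent in $N'$. Then $\rank L=rn-\binom r2-n=(r-1)n-\binom r2=d_{M^*}n-\binom{d_{M^*}+1}{2}$, as required. To prove independence of $D$, I would use the ConingP/StrongRRP consequences from Theorem~\ref{thm:dual_k}, or argue directly. The cleanest direct route is to show that no $vv$ lies in the closure of the other loop words. Dualising, $D$ is independent in $N'$ exactly when $\sym_2(V)\setminus D={V\choose 2}$ spans $N=(N')^*$. So I would show that every $vv$ lies in ${\rm cl}_N({V\choose 2})$. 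By Multilinearity, $N|_{v\cdot V}\cong M$ via $v\cdot u\mapsto u$. Since $v$ is not a coloop of $M$, $v\in{\rm cl}_M(V-v)$, hence $vv\in{\rm cl}_N(v\cdot(V-v))\subseteq{\rm cl}_N({V\choose 2})$. This step is clean, and I expect it to be the easy part.

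\emph{A2.} Fix $v\in V$ and an $M^*$-cocircuit $X\subseteq V\setminus\{v\}$. CoCP for $N'$ (with $\tau=v$, which is allowed since $CL_{M^*}=\emptyset$) says $v\cdot X$ is a cocircuit of $N'$. I need $v\cdot X$ to be a cocircuit of the contraction $L=N'/D$. Cocircuits of $N'/D$ are exactly the cocircuits of $N'$ disjoint from $D$, since $(N'/D)^*=N'^*\setminus D$ and circuits of a deletion are the circuits avoiding the deleted set. So it suffices that $vv\notin v\cdot X$, which holds because $v\notin X$. Equivalently, $v\cdot X$ is a circuit of $N$ contained in ${V\choose 2}$, and $N|_{V\choose 2}=L^*$.

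The main potential obstacle is making sure the duality bookkeeping is right: that $N^*$ really satisfies CoCP with respect to $M^*$, and that contraction by $D$ corresponds to restricting $N$ to ${V\choose 2}$. Both are direct from Lemma~\ref{lem:duality} and standard matroid duality. One could also bypass the dual formulation altogether. Working primally, $L^*=N|_{V\choose 2}$, so A2 is the statement that $v\cdot X$ is a circuit of $N$ whenever $X$ is a circuit of $M$ avoiding $v$, which is exactly Circuit Multilinearity. A1 follows from $r_N({V\choose 2})=\rank N$ (shown above) together with the SP-rank formula.
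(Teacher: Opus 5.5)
Your proof is correct and follows the paper's argument: A1 comes from RRP for $N^*$ together with the independence of the loop words $\sym_2(V)\setminus{V\choose 2}$ in $N^*$, and A2 comes from CoCP for $N^*$ together with the fact that the cocircuits of $N^*/D$ are exactly the cocircuits of $N^*$ avoiding $D$. The only difference is cosmetic: the paper derives independence of the loop words from ExtP for $N^*$, while you prove the equivalent dual statement that ${V\choose 2}$ spans $N$, using Multilinearity and the absence of coloops in $M$, which is the same underlying fact.
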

\begin{proof}
Let $|V|=n$ and let $\tilde N=N^*/(\sym_2(V)\setminus {V\choose 2})$. By Theorem~\ref{thm:dual_2}, $N^*$ satisfies ExtP, RRP and CoCP with respect to $M^*$ with $k=2$.

We first show that $\tilde N$ satisfies (A1). Since $N^*$ satisfies ExtP, 
 $\sym_2(V)\setminus {V\choose 2}$ is independent in $N^*$ and hence $\rank \tilde N=\rank N^*-n$.
Since $N^*$ satisfies RRP, this gives  
\begin{align*}
\rank \tilde N&=
{n+1\choose 2}-{n-\rank M^* +1\choose 2}-n\\
&=(\rank M^*-1)n-{\rank M^*\choose 2}
=d_{M^*}n-{d_{M^*}+1\choose 2}.
\end{align*}
%Hence, the rank condition to be abstract $M^*$-rigidity holds.

It remains to show that $\tilde N$ satisfies (A2). Since $N^*$ satisfies CoCP,
for every $v\in V$ and every $M^*$-cocircuit $X\subseteq V\setminus \{v\}$,
$v\cdot X$ is a circuit in $N$.
This implies that $v\cdot X$  is a circuit in $N\setminus (\sym_2(V)\setminus {V\choose 2})$,
and hence is a cocircuit of $\tilde N$.
%$N^*/ (\sym_2(V)\setminus {V\choose 2})$.
\end{proof}

It is an  open problem to decide if a reverse construction exists, i.e.~for any matroid $M$, can we  always  construct a second symmetric power of $M^*$ from an abstract $M$-rigidity matroid? The following construction for the special case 
when $M$ is a uniform matroid is taken from \cite{cruickshank2025rigidity}.
Given a matroid $N$ on ${V\choose 2}$,
let $\hat{N}$ be the matroid on $\sym_2(V)$ obtained from $N$ by appending 
each element in $\sym_2(V)\setminus {V\choose 2}$ as a loop.
Also, let ${\cal B}$ be the bicircular matroid on $\sym_2(V)$.
Let $\hat{N}\vee {\cal B}$ be the matroid union of $\hat{N}$ and ${\cal B}$.
Then, $N$ is an abstract $U_{n}^{d+1}$-rigidity matroid if and only if 
$(\hat{N}\vee {\cal B})^*$ is a second symmetric power of $U_{n}^{n-(d+1)}$.

This simple construction can be used to  construct a matroid belonging to the  family of matroids described in Theorem~\ref{thm:2_weak} from a matroid which satisfies (A1) and a modified version of (A2).
%from an abstract $M^*$-rigidity matroid i.e. constructing a matroid in )Lov{\'a}sz' matroid family (the family in Theorem~\ref{thm:2_weak}) as follows.
\begin{lemma}\label{lem:weak_construction}
Let $M$ be a matroid on $V$ having neither loops nor coloops, and with $|V|=n$ and $\rank M= d+1$.
Consider the following two properties for a matroid  $N$ on ${V\choose 2}$:
\begin{itemize}
\item[(i)] $\rank N=dn-{d+1\choose 2}$;
\item[(ii)] for every $M$-cyclic set $X$, ${X\choose 2}$ is $N$-cyclic.
\end{itemize}
Then $N$ satisfies (i) and (ii)
if and only if 
$(\hat{N}\vee {\cal B})^*$ 
satisfies SP-rank Property and Flat Property with respect to $M^*$.
%(c.f.~Theorem~\ref{thm:2_weak}).
\end{lemma}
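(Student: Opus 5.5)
Set $K=\hat N\vee{\cal B}$. By Lemma~\ref{lem:duality} (rows RRP/SP-rank and CycP/Flat), applied to $K$ and $M$, the matroid $K^*$ satisfies the SP-rank and Flat Properties with respect to $M^*$ if and only if $K$ satisfies RRP and CycP with respect to $M$ (with $k=2$). So I would prove two equivalences: (i) holds if and only if $K$ satisfies RRP, and (ii) holds if and only if $K$ satisfies CycP.

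Let $D=\{vv: v\in V\}$ and $D_X=\{vv:v\in X\}$. The key computation is
$$r_K(S)=r_N\bigl(S\cap\tbinom{V}{2}\bigr)+|X|\qquad\text{whenever } D_X\subseteq S\subseteq \sym_2(X).$$
For the lower bound, take an $N$-base $I$ of $S\cap\binom V2$. Then $I$ is independent in $\hat N$, and $D_X$ is independent in ${\cal B}$ because each component contains one loop. These sets are disjoint, so $I\cup D_X$ is $K$-independent.

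For the upper bound I would use the matroid union rank formula $r_K(S)\le r_{\hat N}(T)+r_{\cal B}(T)+|S\setminus T|$ with $T=S$. Since every element of $D$ is a loop of $\hat N$, we have $r_{\hat N}(S)=r_N(S\cap\binom V2)$. Also $r_{\cal B}(S)=|X|$, since a bicircular matroid has rank equal to the number of vertices on vertex set $X$ when every vertex carries a loop.

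Taking $S=\sym_2(V)$ gives $\rank K=\rank N+n$. RRP for $K$ asks that
$$\rank K=\binom{n+1}{2}-\binom{n-d}{2}=(d+1)n-\binom{d+1}{2},$$
which is equivalent to $\rank N=dn-\binom{d+1}{2}$. This is exactly (i).

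For CycP, fix an $M$-cyclic set $X$. The case $X=\emptyset$ is trivial. Otherwise $|X|\ge 2$, since $M$ has no loops.

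Applying the formula with $S=\sym_2(X)$ and with $S=\sym_2(X)-e$ for an edge $e\in\binom X2$ shows that $e$ is a coloop of $K|_{\sym_2(X)}$ if and only if $e$ is a coloop of $N|_{\binom X2}$. This already gives CycP $\Rightarrow$ (ii).

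For the converse, it remains to show that no loop $vv$ with $v\in X$ is a coloop of $K|_{\sym_2(X)}$ once $\binom X2$ is $N$-cyclic. Here is the argument I would use:

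\begin{enumerate}
\item Pick $u\in X-v$.
\item Since $uv$ is not a coloop of $N|_{\binom X2}$, some $N$-base $I$ of $\binom X2$ avoids $uv$.
\item The set $D_X-vv+uv$ is ${\cal B}$-independent: the component on $\{u,v\}$ has only the single cycle given by the loop at $u$.
\item Hence $I\cup(D_X-vv+uv)$ is $K$-independent, of size $r_N(\binom X2)+|X|=r_K(\sym_2(X))$, and it avoids $vv$.
\end{enumerate}

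So $vv$ is not a coloop, and (ii) $\Rightarrow$ CycP.

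The only delicate step is this treatment of the loops $vv$. There the pure rank formula does not suffice, and I need the explicit exchange that uses the $N$-cyclicity of $\binom X2$.
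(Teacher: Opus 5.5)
Your proposal is correct and follows essentially the paper's route. The paper likewise reduces via Lemma~\ref{lem:duality} to RRP and CycP for $\hat N\vee{\cal B}$, gets the rank from additivity and the ${X\choose 2}$ case from the fact that contracting the loops recovers $N$, and handles each loop $vv$ by the same exchange $D_X-vv+uv$ against a base or spanning set of $N|_{{X\choose 2}}$ avoiding $uv$. Your localized formula $r_K(S)=r_N\bigl(S\cap{V\choose 2}\bigr)+|X|$ for $D_X\subseteq S\subseteq\sym_2(X)$ also justifies explicitly the step where the paper applies $N'/\bigl(\sym_2(V)\setminus{V\choose 2}\bigr)=N$ inside $\sym_2(X)$ without comment.
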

\begin{proof}
Let $N'=\hat{N}\vee {\cal B}$.
By Lemma \ref{lem:duality}, it will suffice to  show that $N$ satisfies (i) and (ii) if and only if 
$N'$ satisfies RRP and CycP with respect to $M$.

The equivalence between the rank condition (i) for $N$ and RRP for $N'$ follows easily from the fact that $\rank N'=\rank N+\rank {\cal B}=\rank N+n$.

Moreover, the fact that $\sym_2(V)\setminus {V\choose 2}$ is a base of ${\cal B}$ gives
\begin{equation}\label{eq:weak_construction}
N'/ \left(\sym_2(V)\setminus {V\choose 2}\right)=N.
\end{equation}
Hence, 
if $\sym_2(X)$ is cyclic in $N'$ for some $X\subseteq V$,
then ${X\choose 2}$ is cyclic in $N$.
Thus, CycP for $N'$ implies (ii) for $N$.

It remains to show that property (ii) for $N$ implies CycP for $N'$. To this end, we assume that (ii) holds and choose a {nonempty} $X\subseteq V$ such that $X$ is $M$-cyclic. Then 
%\bill{
$|X|\geq 2$ since $M$ is loopless
%} 
and 
${X\choose 2}$ is $N$-cyclic by (ii). Equation  (\ref{eq:weak_construction}) now implies that
no element of ${X\choose 2}$ is a coloop in 
$N'|_{\sym_2(X)}$.

To conclude that $\sym_2(X)$ is cyclic in $N'$, it remains to check that 
no element $e=xx\in \sym_2(X)\setminus {X\choose 2}$
is a coloop in $N'|_{\sym_2(X)}$. Choose  $f=xy\in {X\choose 2}$.
%$e\cap f\neq \emptyset$.
Then, $(\sym_2(X)\setminus {X\choose 2})-e+f$ forms a base of 
${\cal B}|_{\sym_2(X)}$.
In addition, since ${X\choose 2}$ is cyclic in $N$, ${X\choose 2}-f$ is a spanning set of $N|_{X\choose 2}$. Thus, 
\[
\sym_2(X)-e=\mbox{$\left({X\choose 2}-f\right)\cup \left((\sym_2(X)\setminus {X\choose 2})-e+f\right)$}
\]
is a spanning set in $N'|_{\sym_2(X)}=\hat{N}|_{\sym_2(X)}\vee {\cal B}|_{\sym_2(X)}$.
Hence, $e$ is not a coloop in $N'|_{\sym_2(X)}$, and $\sym_2(X)$ is cyclic in $N'$.
\end{proof}

We can use Lemma~\ref{lem:weak_construction} to construct an example which verifies Theorem \ref{thm:2_weak}(b) (and hence distinguishes the matroids considered by Lov{\'a}sz from Anderson's second symmetric powers).

\begin{theorem}\label{thm:example}
There exists a set $V$ and a pair of matroids $M'$ on $V$ and $N'$ on $\sym_2(V)$ such that $N'$ satisfies
SP-rank Property and Flat Property with respect to $M'$, but does not satisfy Multilinearity with respect to $M'$.
\end{theorem}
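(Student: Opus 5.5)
We will use Lemma~\ref{lem:weak_construction} together with the duality of Lemma~\ref{lem:duality}. The paper points to an abstract rigidity matroid of a non-generic realisation of $K_6$. So the plan is to take $V$ with $|V|=6$ and a realisation $P=\{p_1,\dots,p_6\}$ in $\R^d$ (we would first try $d=2$) that is special but still affinely spanning. Let $M$ be the affine matroid of $P$, so that $\rank M=d+1$. Take $N$ to be the usual rigidity matroid $\mathcal{R}(K_6,P)$ on ${V\choose 2}$. Put $N'=(\hat N\vee\mathcal B)^*$ and $M'=M^*$.

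The key steps, in order, are as follows.

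First, choose $P$ so that $M$ has neither loops nor coloops, and so that the framework $(K_6,P)$ is infinitesimally rigid. Then $\rank N=dn-{d+1\choose 2}$, which gives (i). Any configuration that affinely spans $\R^d$ makes the complete graph infinitesimally rigid, so (i) is automatic.

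Second, verify (ii): for every $M$-cyclic set $X$, the set ${X\choose 2}$ is $N$-cyclic. Let $X$ be $M$-cyclic and put $e=ij\in{X\choose 2}$. Since $X$ is cyclic, $p_i$ lies in the affine span of $X-i$. The complete framework on $X$ is rigid in its affine span, so removing $e$ should keep $p_i$ attached rigidly: the remaining edges at $i$ go to points whose affine hull contains $p_i$. Hence $e$ is not a coloop of $N|_{X\choose 2}$. This requires a short argument that $K_X-e$ has the same rank as $K_X$ in the rigidity matroid restricted to the span; this is where the cyclic hypothesis is used. Once (i) and (ii) hold, Lemma~\ref{lem:weak_construction} gives that $N'$ satisfies the SP-rank Property and the Flat Property with respect to $M'=M^*$.

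Third, show that Multilinearity fails. Dually (Lemma~\ref{lem:duality}), we must show that $\hat N\vee\mathcal B$ fails DM with respect to $M$. It is enough to show that it fails CoCP, by the chain (\ref{eq:dual_biliearity_implication}) and Theorem~\ref{thm:dual_2}, or to exhibit directly some $v$ and $M$-cocircuit $X\subseteq V-v$ such that $v\cdot X$ is not a cocircuit. By the contraction identity (\ref{eq:weak_construction}), this reduces to finding $v$ and an $M$-cocircuit $X$ for which the star $v\cdot X$ is not an $N$-cocircuit, that is, condition (A2) fails. The non-generic choice of $P$ is designed to produce exactly this. Take $P$ with three collinear points $p_1,p_2,p_3$ together with $p_4,p_5,p_6$ in general position in the plane, and compare stars against the cocircuits of $M$, namely complements of lines. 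For example, $X=\{4,5,6\}$ is the complement of the line through $p_1,p_2,p_3$. Then take $v=1$ and test whether deleting the edges $14,15,16$ drops the rank of $N$ by exactly one and whether the star is minimal with this property. In the non-generic configuration, the collinearity of $1,2,3$ makes the edges $12,13$ fail to pin $p_1$ transversally. Deleting the star therefore frees $p_1$ by one degree of freedom while some proper sub-star already does so, or the star may fail to be a cocircuit at all. We would compute the rigidity matrix ranks explicitly for this configuration.

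The main obstacle is this third step. We must pick a concrete configuration and carry out an explicit rank computation showing that some star over an $M$-cocircuit is not an $N$-cocircuit. At the same time, the configuration must still satisfy (ii), which itself needs checking on all cyclic sets, including those containing the collinear triple. If the planar collinear example fails, we would next try $d=3$ with four coplanar points, or a conic configuration.
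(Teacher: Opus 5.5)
\textbf{Main gap: step three cannot succeed as designed.}

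Steps one and two are essentially fine. Condition (ii) does hold for the rigidity matroid of $P$, but your heuristic for it is not a proof: after deleting $ij$, the remaining neighbours of $i$ need not have $p_i$ in their affine hull (e.g.\ $X=\{1,2,3\}$, $e=12$).

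In step three you look for $v$ and an $M$-cocircuit $X\subseteq V-v$ with $v\cdot X$ not an $N$-cocircuit, i.e.\ a violation of (A2). But if $N=\mathcal{R}(K_6,P)$ is the rigidity matroid of the \emph{same} configuration that defines $M$, then (A2) always holds; Section~\ref{sec:rigidity} states exactly this.

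To see it directly: $V\setminus X$ affinely spans a hyperplane $A\ni p_v$. After deleting $v\cdot X$, every bar at $v$ lies in $A$, so exactly one degree of freedom is lost; the rest is rigid because $v$ is not an $M$-coloop. Any proper sub-star keeps a bar transverse to $A$. Your own test case confirms this. Deleting $14,15,16$ lowers the rank from $9$ to $8$, since $12$ and $13$ are parallel at $p_1$. Deleting only two of them leaves a transverse bar and rank $9$, so $\{14,15,16\}$ \emph{is} a cocircuit. Your fallback configurations have the same defect.

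Let $D=\sym_2(V)\setminus\binom{V}{2}$. The cocircuits of $(\hat N\vee\B)/D$ are exactly the cocircuits of $\hat N\vee\B$ avoiding $D$. So your reduction via (\ref{eq:weak_construction}) discards precisely the cases $v\in X$, where $v\cdot X$ contains $vv$. For your choice of $N$, that is where the failure actually lives.

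\textbf{Two ways to repair this.}

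The paper keeps the (A2)-violation idea but changes $N$. It takes $P\subset\R^3$ with $p_1,p_2,p_3$ collinear (so $\rank M=4$), and lets $N$ be the $C^1_2$-cofactor matroid of a generic planar projection, specified by its circuits. In that matroid vertex $6$ is attached by $16,26,36$ alone. This gives the base $B$ of (\ref{eq:bad_base}), which avoids $46$ and $56$. Then $B'=B\cup\{11,22,33,44,55,66\}$ is a base of $\hat N\vee\B$ with $\lk(6,B')=\{1,2,3,6\}$. Since $4\notin\cl_M(\{1,2,3,6\})$, this contradicts ExtP.

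Alternatively, your planar $N$ does work if you use a cocircuit containing $v$.
\begin{itemize}
\item Let $B_N$ be a base of $N|_{\binom{\{1,\dots,5\}}{2}}$ together with $16,26$. This is a $0$-extension, since $p_6$ is off the line through $p_1,p_2$.
\item Let $B_{\B}=\{11,22,33,44,55,36\}$.
\item Then $B'=B_N\sqcup B_{\B}$ is a base of $\hat N\vee\B$ of size $9+6$, with $\lk(6,B')=\{1,2,3\}$.
\item Since $4\notin\cl_M(\{1,2,3\})$, ExtP would make $B'+46$ independent, which is impossible.
\end{itemize}
Equivalently, $\{46,56,66\}=6\cdot\{4,5,6\}$ misses a base and so is not a cocircuit. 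An explicit witness of this kind is the ingredient your proposal is missing.
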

\begin{proof}
We proceed by first constructing the dual matroid $M=(M')^*$, then constructing $N$ to satisfy properties 
%The proof is done by constructing a matroid $N$ on ${V\choose 2}$ satisfying 
(i) and (ii) of Lemma~\ref{lem:weak_construction}.
%and then showing that $(\hat{N}\vee {\cal B})^*$ is a matroid required in the statement.

Let $P$ be a set of six points $p_1, p_2, p_3, p_4, p_5, p_6$ in $\mathbb{R}^3$
such that  $p_1, p_2, p_3$ are collinear and the remaining points are generically located in $\mathbb{R}^3$.
We define  $M=A_{P}$ to be  the affine matroid  on $V=\{1,2,3,4,5,6\}$ defined by $P$.

%\bill{
We next define $N$ by describing its circuits. 
Let $X=\{1,2,3\}$, 
\begin{align*}
{\cal C}_1&=\left\{{X\choose 2}\right\},\\
{\cal C}_2&=\left\{ {Z\choose 2}: Z\subset V, |Z|=5, X\not\subseteq Z\right\}, \\ 
{\cal C}_3&=\left\{ (C_1\cup C_2)\sm (C_1\cap C_2): C_1\in {\cal C}_1,\, C_2\in {\cal C}_2 \right\}, \\
{\cal C}_4&=\left\{ C\subseteq {V\choose 2}: \mbox{$|C|=13$, and $C$ does not contain an element of  ${\cal C}_1\cup {\cal C}_2\cup {\cal C}_3$} \right\},
\end{align*}
and put ${\cal C}={\cal C}_1\cup {\cal C}_2\cup {\cal C}_3\cup {\cal C}_4$. 

% Let $X=\{1,2,3\}$, ${\cal C}_1=\{{X\choose 2}\}$,
% ${\cal C}_2=\{ {Z\choose 2}: Z\subset V, |Z|=5, X\not\subseteq Z\}$, 
% $${\cal C}_3=\left\{ (C_1\cup C_2)\sm (C_1\cap C_2): C_1\in {\cal C}_1,\, C_2\in {\cal C}_2, 
% %\,|C_1\cap C_2|=1
% \right\},$$ 
% $${\cal C}_4=\mbox{$\{ C\subseteq {V\choose 2}: |C|=13$, and $C$ does not contain an element of  ${\cal C}_1\cup {\cal C}_2\cup {\cal C}_3 \}$},$$
% and put ${\cal C}={\cal C}_1\cup {\cal C}_2\cup {\cal C}_3\cup {\cal C}_4$. 
%}
%Also, define a family ${\cal C}$ of sets in ${V\choose 2}$ by
%\begin{align*}
%{\cal C}&=\left\{{X\choose 2}\right\} \cup \left\{ {Z\choose 2}: Z\subset V, |Z|=5, X\not\subseteq Z\right\}\\
%&\cup \left\{{X\choose 2}\cup {Z\choose 2}-uv: Z\subset V,|Z|=5, X\cap Z=\{u,v\}\right\} \\
%&\cup \left\{ F\subseteq {V\choose 2}: |F|=13, \text{$F$  is not contained in any of the previous sets}\right\}.
%\end{align*}
\begin{claim}
${\cal C}$ satisfies the matroid circuit axiom.
\end{claim}
\begin{proof}
This can be checked directly by case analysis.
Alternatively we can check that ${\cal C}$ is the family of circuits of the $C_2^1$-cofactor matroid 
of a two-dimensional point configuration $P'$ 
obtained by projecting $P$  onto a generic plane. 
%See~\cite{W,CJT0} for the definition of the $C_2^1$-cofactor matroid.
\end{proof}

Let $N$ be the matroid on $\binom{V}{2}$ defined by ${\cal C}$. Then $\rank M=4$, $\rank N=12$, and hence $N$ satisfies property (i) with $d=3$ and $|V|=6$.
In addition,  we can use the definition of  ${\cal C}$ to verify that 
\begin{equation}\label{eq:bad_base}
B:=\{12,13,14,15,16,24,25,26,34,35,36,45\} \text{ is a base of $N$}.
\end{equation}
%Hence, $N$ has rank $12=d|V|-{d+1\choose 2}$ for $d=3$ and $|V|=6$,
%and $N$ satisfies (i) with respect to $M$.

In the affine matroid $M$, a {nonempty} cyclic set $Z$ can be one of the three different types:  $Z=X$;
 $|Z|=5$ and $X\not\subseteq Z$; 
$Z=V$.
We can use the definition of ${\cal C}$ to verify that 
${Z\choose 2}$ is a cyclic set in $N$ in each case.
Therefore, $N$ satisfies (ii) with respect to $M$.
Lemma~\ref{lem:weak_construction} now implies that
$(\hat{N}\vee {\cal B})^*$ satisfies SP-rank Property and Flat Property.
It remains to show that $(\hat{N}\vee {\cal B})^*$ does not satisfy  Multilinearity.

Suppose, for a contradiction,  that $(\hat{N}\vee {\cal B})^*$ satisfies Multilinearity.
Then $\hat{N}\vee {\cal B}$ satisfies ExtP  {by (\ref{eq:dual_biliearity_implication})}. %by Theorem~\ref{thm:dual_2}.
Let $B'=B\cup \{11,22,33,44,55,66\}$.
Since $B$ is a base of $N$ by (\ref{eq:bad_base}),
$B'$ is a base of $\hat{N}\vee {\cal B}$.
However, since $\lk(6,B')=\{1,2,3,6\}$, 
we have $4\notin {\rm cl}_M(\lk(6,B'))$ by the definition of $M$,
and hence $B'+46$ is independent in $\hat{N}\vee {\cal B}$ by ExtP.
This contradiction completes the proof.
\end{proof}

\section{Counterexamples to Mason's Conjecture}\label{sec:counterexample}
We will describe our counterexamples to Mason's conjecture.
By duality and Lemma \ref{lem:duality}, it will suffice to construct matroids $M$ on $V$ and $N$ on $\sym_k(V)$ such that $N$ 
satisfies Dual Symmetric Q-power Property and CycP with respect to $M$, but does not satisfy RRP.
We will do this for  the case when 
$k\geq 3$ and $M$ is a rank zero matroid in Subsection~\ref{subsec:counterexample3},
and for the case when 
$k=4$ and $M$ is a  uniform matroid of positive rank
in Subsection~\ref{subsec:counterexample4}.

\subsection{\boldmath Counterexample for $k\geq 3$}\label{subsec:counterexample3}
We will verify Theorem \ref{thm:k}(b) (and hence show that Mason' conjecture is false 
when $k\geq 3$).
%and $M$ is a free  matroid. 
This will follow immediately from our next result by 
matroid duality and Lemma~\ref{lem:duality}.

%\newpage
\begin{theorem}\label{thm:rank_zero}
Let $k\geq 3$ be an integer and $M$ be the rank-zero matroid with groundset $V$ with $|V|\geq 3$. 
%Let $N_i$ be the rank-zero matroid on $\sym_i(V)$ for $1\leq i\leq k$ and 
Let $N$ be the matroid on $\sym_k(V)$ obtained from 
the rank-one uniform matroid on $\{e\in \sym_k(V): |{\rm supp}(e)|\geq 2\}$ by
adding all elements $e\in \sym_k(V)$ with $|{\rm supp}(e)|=1$ as loops.
Then, $N$ satisfies the
Dual Symmetric Q-power property and CycP with respect to $M$, but does not satisfy RRP.
\end{theorem}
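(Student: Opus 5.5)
Since $M$ has rank zero, every element of $V$ is a loop of $M$, so $CL_M=\emptyset$ and the only $M$-cyclic sets are the subsets of $V$. I will check the three claims directly from the definitions: RRP fails, CycP holds, and the Dual Symmetric Q-power property holds. Throughout, call a word $e$ \emph{mixed} if $|{\rm supp}(e)|\ge 2$ and \emph{pure} otherwise. Then $N$ has rank one, its non-loop elements are exactly the mixed words, and all mixed words are pairwise parallel.

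\textbf{RRP fails.} With $\rank M=0$ the rank required by RRP is $\binom{n+k-1}{k}-\binom{n+k-1}{k}=0$. Since $|V|\ge 2$, a mixed word exists, so $\rank N=1\neq 0$.

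\textbf{CycP holds.} Take $X\subseteq V$; we must show $N|_{\sym_k(X)}$ has no coloops.
\begin{itemize}
\item If $|X|\le 1$, then $\sym_k(X)$ consists only of loops, so there are no coloops.
\item If $|X|\ge 2$, then $\sym_k(X)$ contains at least two mixed words, since $k\ge 2$ gives $x^{k-1}y$ and $xy^{k-1}$ for distinct $x,y\in X$. Each mixed word is parallel to another element, and the pure words are loops. So again there are no coloops.
\end{itemize}

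\textbf{Dual Symmetric Q-power holds.} This is the main step. Fix $1\le i\le k-1$ and $\alpha\in\sym_i(V)$, and let $C=\sym_k(V)\setminus(\alpha\cdot\sym_{k-i}(V))$. We need to understand $N/C$ restricted to $\alpha\cdot\sym_{k-i}(V)$. I claim $C$ always contains a mixed word, so $C$ spans $N$ and $N/C$ has rank zero, i.e.\ every element is a loop.

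To find the mixed word, look for $e\in\sym_k(V)$ with $|{\rm supp}(e)|\ge 2$ that does not contain $\alpha$ as a subword.
\begin{itemize}
\item If $\alpha=a^i$, pick distinct $b,c\ne a$ (possible since $|V|\ge3$) and take $e=b^{k-1}c$. It contains no $a$, hence does not contain $\alpha$.
\item If $\alpha$ involves two distinct letters $a,b$, pick $c\notin\{a,b\}$ and take $e=ac^{k-1}$, which has $k-1\ge 2$ copies of $c$. This $e$ omits $b$, so it does not contain $\alpha$.
\end{itemize}
The hypotheses $|V|\ge 3$ and $k\ge 3$ are used exactly here. This also explains why the construction needs $k\ge3$ rather than $k\ge2$.

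With this claim, every contraction $N/C$ is the rank-zero matroid. Hence for any $\alpha_1,\alpha_2\in\sym_i(V)$ the two contractions are isomorphic via $\alpha_1\cdot\beta\mapsto\alpha_2\cdot\beta$. For $i=k-1$, the contraction $N/(\sym_k(V)\setminus\alpha\cdot V)$ is rank zero on $\alpha\cdot V$, which is isomorphic to $M$ via $\alpha\cdot v\mapsto v$. The coloop condition is vacuous because $CL_M=\emptyset$. So $N$ satisfies the Dual Symmetric Q-power property.

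Finally, by Lemma~\ref{lem:duality}, $N^*$ satisfies Symmetric Q-power and the Flat Property with respect to $M^*$ but not the SP-rank Property. This gives Theorem~\ref{thm:k}(b).
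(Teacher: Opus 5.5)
Your proof is correct and follows the paper's argument: a mixed word outside $\alpha\cdot\sym_{k-i}(V)$ makes every relevant contraction rank zero, the mixed words of $\sym_k(X)$ are pairwise parallel so $\sym_k(X)$ is cyclic, and $\rank N=1\neq 0$ rules out RRP. You also usefully exhibit the mixed words $b^{k-1}c$ and $ac^{k-1}$, where the paper just says ``since $|V|\geq 3$''.

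One correction to your commentary on where $k\geq 3$ is used. The Dual Symmetric Q-power step does not need it: for $k=2$ the words $bc$ and $ac$ do the job. The CycP step is where it matters, since $x^{k-1}y$ and $xy^{k-1}$ coincide when $k=2$, so your phrase ``since $k\ge 2$'' there should read $k\geq 3$. Indeed, for $k=2$ the word $xy$ is a coloop of $N|_{\sym_2(\{x,y\})}$ and CycP fails, as it must by Theorem~\ref{thm:dual_2}.
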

\begin{proof}
We first check that $N$ satisfies the
Dual Symmetric Q-power property with respect to $M$.
Note that $CL_M=\emptyset$ since $M$ is the rank-zero matroid. Choose
$\alpha \in \sym_i(V)$ for some $1\leq i\leq k-1$.
%Let $Z_i$ be the rank zero matroid on $\sym_i(V)$.
Since $|V|\geq 3$,
$\sym_k(V)\setminus (\alpha\cdot \sym_{k-i}(V))$ contains an element $e$ with $|{\rm supp}(e)|\geq 2$.
This implies that $\sym_k(V)\setminus (\alpha\cdot \sym_{k-i}(V))$ is a spanning set in $N$
and hence $N/(\sym_k(V)\setminus (\alpha\cdot \sym_{k-i}(V)))$ is a rank-zero matroid.
This, in turn, implies the Dual Symmetric Q-power property holds for $N$.

\medskip

We next check that $N$ satisfies CycP  with respect to $M$.
Since $M$ has rank zero, every 
%nonempty 
%\st{nonempty?}
$Y\subseteq V$ is $M$-cyclic. 
Choose $Y\subseteq V$.  If $Y=\emptyset$ then $\sym_k(Y)=\emptyset$ which is cyclic in $N$. And,
if $|Y|=1$, then $\sym_k(Y)$ consists of a single word $e$ with $|{\rm supp}(e)|=1$. Then $e$ is a loop in $N$ by the definition of $N$, and hence $\sym_k(Y)$ is cyclic in $N$.
It remains to consider the case when $|Y|\geq 2$. Let $S=\{e\in \sym_k(Y):|{\rm supp}(e)|\geq 2\}$. Then $|S|\geq 2$ since $|Y|\geq 2$ and $k\geq 3$. 
%(Here, we need  $k\geq 3$.)
Since $N|_S$ is a rank-one uniform matroid,
$S$ is cyclic in $N$. And since every element of $\sym_k(Y)\setminus S$ is a loop in $N$, $\sym_k(Y)$ is cyclic in $N$.

\medskip

To complete the proof, observe that $N$ does not satisfy RRP with respect to $M$
since $M$ has rank zero and $N$ has rank one.
%This completes the proof.
\end{proof}

\subsection{\boldmath Counterexample for uniform matroids when $k=4$}
\label{subsec:counterexample4}
We will construct our counterexample using the $k$-th incidence matroid $M(B_{n,k})$
defined in Section~\ref{subsec:example}.
Recall that $M(B_{n,k})$ is the transversal matroid represented by 
the incidence bipartite graph $B_{n,k}$ between $\sym_{k-1}(V)$
and $\sym_k(V)$, when $V=\{1,2,\ldots,n\}$. 

We first give a linear representation of $M(B_{n,k})$.
We associate  an indeterminate $x_{\alpha,\beta}$ with 
 each pair $(\alpha,\beta)$ for $\alpha\in \sym_k(V)$ and $\beta\in \sym_{k-1}(V)$.
We will be concerned with matrices defined over the field 
of rational functions in $x_{\alpha,\beta}$.
%(or equivalently over  $\mathbb{R}$ if we regard the set of $x_{\alpha,\beta}$ as a set of real numbers which is algebraically independent over $\mathbb{Q}$).

We define the 
%$k$-th {\em generic incidence matrix} $I_{n,k}$ on $n$ vertices 
%\bill{\em 
generic adjacency matrix of $B_{n,k}$
%} 
to be the  $|\sym_k(V)|\times |\sym_{k-1}(V)|$ matrix $I_{n,k}$ in which each row is indexed by an element of $\sym_k(V)$,
each column is indexed by an element of $\sym_{k-1}(V)$,
and the entry in row $\alpha\in \sym_k(V)$ and column $\beta\in \sym_{k-1}(V)$ is $x_{\alpha,\beta}$ if $\beta$ is a subword of $\alpha$,
and is zero, otherwise.
For example, $I_{3,3}$ is
\[
\begin{blockarray}{ccccccc}
  &% \{1,1\} & \{1,2\} & \{1,3\} & \{2,2\} & \{2,3\} & \{3,3\} \\
   11 & 12 & 13 & 22 & 23 & 33 \\
\begin{block}{c(cccccc)}
111 & x_{111,11} & 0 & 0 & 0 & 0 & 0 \\
112 & x_{112,11} & x_{112,12} & 0 & 0 & 0 & 0 \\
113 & x_{113,11} & 0 & x_{113,13} & 0 & 0 & 0 \\
122 & 0 & x_{122,12} & 0 & x_{122,22} & 0 & 0 \\
123 & 0 & x_{123,12} & x_{123,13} & 0 & x_{123,23} & 0 \\
133 & 0 & 0 & x_{133,13} & 0 & 0 & x_{133,33} \\
222 & 0 & 0 & 0 & x_{222,22} & 0 & 0 \\
223 & 0 & 0 & 0 & x_{223,22} & x_{223,23} & 0 \\
233 & 0 & 0 & 0 & 0 & x_{233,23} & x_{233,33} \\
333 & 0 & 0 & 0 & 0 & 0 & x_{333,33} \\
\end{block}
\end{blockarray}
\]

A classical observation of Edmonds~\cite{edmonds1967systems} implies the following.

\begin{proposition}\label{prop:edmonds}
$M(B_{n,k})$ is the row matroid of $I_{n,k}$.
\end{proposition}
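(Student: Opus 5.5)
The plan is to reduce Proposition~\ref{prop:edmonds} to the standard fact, due to Edmonds, that a transversal matroid is represented by the generic bipartite adjacency matrix of its presentation. The key identity is that a set of rows of $I_{n,k}$ is linearly independent over the field of rational functions if and only if the corresponding set of vertices in $\sym_k(V)$ can be matched into $\sym_{k-1}(V)$ in $B_{n,k}$. The proof therefore splits into two directions for an arbitrary set $S\subseteq\sym_k(V)$.

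\emph{Matchable rows are independent.} Suppose $S$ has a matching $\mu:S\to\sym_{k-1}(V)$ in $B_{n,k}$. Consider the $|S|\times|S|$ minor of $I_{n,k}$ with rows $S$ and columns $\mu(S)$. Expand its determinant as a signed sum over bijections $S\to\mu(S)$. Each nonzero term is a monomial $\prod_{\alpha\in S}x_{\alpha,\pi(\alpha)}$ for a bijection $\pi$ that is itself a matching in $B_{n,k}$. Distinct bijections give distinct monomials, because each monomial determines the pairs $(\alpha,\pi(\alpha))$ from which it is built. Hence there is no cancellation, and the term for $\pi=\mu$ is present. So the determinant is a nonzero polynomial, and the rows indexed by $S$ are linearly independent.

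\emph{Independent rows are matchable.} Conversely, suppose the rows indexed by $S$ are linearly independent. Then the $S$-rows have rank $|S|$, so some $|S|\times|S|$ minor on these rows is nonzero. Its determinant expansion then contains a nonzero term $\prod_{\alpha\in S} x_{\alpha,\pi(\alpha)}$, and every factor $x_{\alpha,\pi(\alpha)}$ is nonzero only when $\pi(\alpha)$ is a subword of $\alpha$. So $\pi$ is a matching of $S$ into $\sym_{k-1}(V)$ in $B_{n,k}$, and $S$ is independent in $M(B_{n,k})$.

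Combining the two directions, the independent sets of $M(B_{n,k})$ coincide with the sets of linearly independent rows of $I_{n,k}$, which is the statement. There is no real obstacle. The only point needing care is the no-cancellation argument in the first direction, which rests on the distinct indeterminates $x_{\alpha,\beta}$ making the monomials of distinct bijections distinct. This is also why the representation is taken over the field of rational functions, or equivalently over $\mathbb{R}$ with algebraically independent entries.
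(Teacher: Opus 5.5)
Your proof is correct. It is the standard argument behind the classical observation of Edmonds, which the paper cites without proof: the zero pattern of $I_{n,k}$ is the adjacency pattern of $B_{n,k}$, and because the indeterminates $x_{\alpha,\beta}$ are distinct, distinct bijections give distinct monomials, so a square minor is nonzero exactly when its rows can be matched into its columns in $B_{n,k}$.
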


%This proposition implies, in particular, that $M(B_{n,2})$ is the row matroid of a weighted edge/vertex incidence matrix of $K_n^{\circ}$. It is well known that this gives a representation of the bicycle matroid of $K_n^{\circ}$.

We will focus on the case when $k=4$ for the remainder of this section.
We construct a matrix $I_{n,4}'$ from $I_{n,4}$ by appending a new column indexed by a special symbol $\spadesuit$.
The entries of this new column are defined as follows.
\[
I_{n,4}'[\alpha,\spadesuit]=\begin{cases}
0 & \text{if }|{\rm supp}(\alpha)|\leq 2\\
y_{\alpha} & \text{if }|{\rm supp}(\alpha)|\geq 3\\
\end{cases} \qquad (\alpha\in \sym_4(V)),
\]
where $y_{\alpha}$ is a new indeterminate associated to each $\alpha\in \sym_4(V)$
with $|{\rm supp}(\alpha)|\geq 3$.
For example, $I_{3,4}'$ is 
\[\tiny
\begin{blockarray}{c@{\hspace{7pt}}ccccccccccc}
 & 111 & 112 & 113 & 122 & 123 & 133 & 222 & 223 & 233 & 333 & \spadesuit\\
\begin{block}{c@{\hspace{7pt}}(ccccccccccc)}
1111 & x_{1111,111} & 0 & 0 & 0 & 0 & 0 & 0 & 0 & 0 & 0 & 0 \\
1112 & x_{1112,111} & x_{1112,112} & 0 & 0 & 0 & 0 & 0 & 0 & 0 & 0 & 0 \\
1113 & x_{1113,111} & 0 & x_{1113,113} & 0 & 0 & 0 & 0 & 0 & 0 & 0 & 0 \\
1122 & 0 & x_{1122,112} & 0 & x_{1122,122} & 0 & 0 & 0 & 0 & 0 & 0 & 0\\
1123 & 0 & x_{1123,112} & x_{1123,113} & 0 & x_{1123,123} & 0 & 0 & 0 & 0 & 0 & y_{1123}\\
1133 & 0 & 0 & x_{1133,113} & 0 & 0 & x_{1133,133} & 0 & 0 & 0 & 0 & 0 \\
1222 & 0 & 0 & 0 & x_{1222,122} & 0 & 0 & x_{1222,222} & 0 & 0 & 0  & 0\\
1223 & 0 & 0 & 0 & x_{1223,122} & x_{1223,123} & 0 & 0 & x_{1223,223} & 0 & 0 & y_{1223}\\
1233 & 0 & 0 & 0 & 0 & x_{1233,123} & x_{1233,133} & 0 & 0 & x_{1233,233} & 0 & y_{1233}\\
1333 & 0 & 0 & 0 & 0 & 0 & x_{1333,133} & 0 & 0 & 0 & x_{1333,333} & 0\\
2222 & 0 & 0 & 0 & 0 & 0 & 0 & x_{2222,222} & 0 & 0 & 0 & 0\\
2223 & 0 & 0 & 0 & 0 & 0 & 0 & x_{2223,222} & x_{2223,223} & 0 & 0 & 0\\
2233 & 0 & 0 & 0 & 0 & 0 & 0 & 0 & x_{2233,223} & x_{2233,233} & 0 & 0\\
2333 & 0 & 0 & 0 & 0 & 0 & 0 & 0 & 0 & x_{2333,233} & x_{2333,333} & 0\\
3333 & 0 & 0 & 0 & 0 & 0 & 0 & 0 & 0 & 0 & x_{3333,333} & 0\\
\end{block}
\end{blockarray}
\]
The row matroid of $I_{n,4}'$ on $\sym_4(V)$ is denoted by $M(I_{n,4}')$.

We can now show that Mason' conjecture is false 
when $k=4$ and $M$ is a non-free uniform matroid. This will follow immediately from our next result by 
matroid duality and Lemma \ref{lem:duality}.

%We will show that the row matroid  of $I_{n,4}'$, $M(I_{n,4}')$, gives a counterexample to Mason's Conjecture.

\begin{theorem}\label{thm:counterexample}
Let $M$ be a copy of $U_n^1$ with groundset $V=\{1,2,\dots,n\}$ and $n\geq 4$. 
Then, $M(I_{n,4}')$ satisfies the Dual Symmetric Q-power property and CycP with respect to $M$,
but does not satisfy RRP.
% \begin{itemize}
% \item[(a)] For every $\alpha\in \sym_i(V)$ with $1\leq i\leq k-1$, 
% $M(I'_{n,4})/ (\sym_k(V)\setminus (\alpha\cdot \sym_{k-i}(V))$ is 
% isomorphic to a quasi-rigidity sequence for $M$;
% \item[(b)] the rank of $M(I_{n,4}')$ is one larger than that of $M(B_{n,4})$.
% \item[(c)] for any cyclic set $Y$ of $M$, 
% %(i.e., for any $Y$ with $|Y|\geq 2$), 
% $\sym_4(Y)$ is a cyclic set in $M(I_{n,4}')$;
% \end{itemize}
\end{theorem}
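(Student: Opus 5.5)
The plan has three parts: the rank of $M(I_{n,4}')$, the Dual Symmetric Q-power property, and CycP. Throughout, write $N'=M(I_{n,4}')$ and $N=M(B_{n,4})$. By Proposition~\ref{prop:edmonds}, $N$ is the row matroid of $I_{n,4}$, and $I_{n,4}'$ is obtained from it by appending the single column $\spadesuit$.

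\emph{Rank.} Since we only add one column, $\rank N'\le \rank N+1$. By (\ref{eq:rank_1}), $\rank N=|\sym_3(V)|$, which is exactly the RRP value for $U_n^1$. So it suffices to show $\rank N'=|\sym_3(V)|+1$. The entries are distinct indeterminates in the nonzero positions, so $I'_{n,4}$ is a generic matrix with prescribed zero pattern, and its rank equals the maximum matching size of the bipartite graph $B'$ obtained from $B_{n,4}$ by adding a vertex $\spadesuit$ adjacent to every $\alpha$ with $|{\rm supp}(\alpha)|\ge 3$. We then exhibit a matching of size $|\sym_3(V)|+1$. A natural choice is $\beta\mapsto 1\cdot\beta$ for each $\beta\in\sym_3(V)$, which uses exactly the words containing the letter $1$; then match $\spadesuit$ to $2344$, which has support size $3$ (here $n\ge4$ is used) and does not contain $1$. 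This shows RRP fails.

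\emph{Dual Symmetric Q-power.} Since $CL_M=\emptyset$, we must show, for each $\alpha\in\sym_i(V)$ with $1\le i\le 3$, that $N'/(\sym_4(V)\setminus \alpha\cdot\sym_{4-i}(V))$ agrees with the corresponding contraction of $N$, via $\alpha\cdot\beta\mapsto\beta$. The latter is isomorphic to $M(B_{n,4-i})$ by the proof of Theorem~\ref{thm:transversal}, iterated. It suffices to show that $C:=\sym_4(V)\setminus\alpha\cdot\sym_{4-i}(V)$ spans the $\spadesuit$ direction. Precisely, we want $r_{N'}(C)=r_N(C)+1$ together with $\rank N'=\rank N+1$. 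Contracting $C$ in $N'$ then has the same rank and circuits as contracting $C$ in $N$: the column $\spadesuit$ is absorbed and the remaining columns behave as in $N$. In matching language, we need a maximum matching of $B_{n,4}[C]$ that can be augmented by $\spadesuit$ within $C$. Take $v\in{\rm supp}(\alpha)$, and choose the row set $\sym_4(V\setminus\{v\})\subseteq C$. As in the proof of Theorem~\ref{thm:transversal}, $B_{n,4}[C]$ has a maximum matching covering $\sym_3(V\setminus\{v\})$ inside $\sym_4(V\setminus\{v\})$, and $r_N(C)=|\sym_3(V\setminus\{v\})|$ should follow since the $v$-containing part of $C$ is handled recursively. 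Then choose $u\in V\setminus\{v\}$ and match $\beta\mapsto u\cdot\beta$. The unused words in $\sym_4(V\setminus\{v\})$ are those avoiding $u$, and since $|V\setminus\{v\}|\ge3$ one of them has support size at least $3$, so $\spadesuit$ can be matched to it. The main obstacle is this step: making rigorous that contraction by $C$ kills exactly the extra column. This needs a careful statement of how contractions of generic matrices correspond to deleting matched columns. The cleanest route is a rank computation on unions $C\cup Y$, which shows $r_{N'}(C\cup Y)-r_{N'}(C)=r_N(C\cup Y)-r_N(C)$ for all $Y$, via the same matching augmentation (the $\spadesuit$ match already exists inside $C$).

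\emph{CycP.} The $M$-cyclic sets are $\emptyset$ and the sets $Y$ with $|Y|\ge2$. Fix such a $Y$ and $\gamma\in\sym_4(Y)$; we show $\gamma$ is not a coloop of $N'|_{\sym_4(Y)}$, i.e.\ $r_{N'}(\sym_4(Y)-\gamma)=r_{N'}(\sym_4(Y))$. Theorem~\ref{thm:transversal} with CycP for $N$ (known from Theorem~\ref{thm:dual_k}, since $N$ satisfies RRP and ExtP) gives a maximum matching of $B_{n,4}[\sym_4(Y)]$ avoiding $\gamma$. If $|Y|=2$, the $\spadesuit$ column vanishes on $\sym_4(Y)$ and we are done. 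If $|Y|\ge3$, the maximum is $|\sym_3(Y)|+1$, and we must find a matching of that size avoiding $\gamma$. Use the pattern from the rank step: pick $u\in Y$ with $u\notin{\rm supp}(\gamma)$ if possible, or else $u\in{\rm supp}(\gamma)$, match $\beta\mapsto u\beta$, and match $\spadesuit$ to a support-$\ge3$ word avoiding $u$. If $\gamma$ is hit, swap $u$ for another letter. A few small cases with $|Y|=3$ and $\gamma$ of full support may need an explicit alternating-path adjustment.
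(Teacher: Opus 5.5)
\textbf{What works.} Your rank computation is correct, and it is simpler than the paper's block-matrix induction. The matching $\beta\mapsto 1\cdot\beta$ together with $\spadesuit\mapsto 2344$ saturates every column of $I'_{n,4}$, so RRP fails.

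Your reduction of the Dual Symmetric Q-power property to showing $r_{N'}(\sym_4(V\setminus\{v\}))=r_N(\sym_4(V\setminus\{v\}))+1$ is also sound. However, the transfer to $C$ and to every $C\cup Y$ should be justified linearly, not by ``the same matching augmentation''. The difference $r_{N'}(S)-r_N(S)$ lies in $\{0,1\}$, and it equals $1$ exactly when the unit vector $e_\spadesuit$ lies in the row space of $S$ in $I'_{n,4}$. This condition is monotone in $S$, so it gives $N'/C=N/C$. (Your side remark $r_N(C)=|\sym_3(V\setminus\{v\})|$ is false for $i\ge 2$, but you do not need it.)

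\textbf{The gap: an off-by-one in the augmentation step.} After matching $\beta\mapsto u\cdot\beta$ for all $\beta\in\sym_3(V\setminus\{v\})$, the unused rows are the words of $\sym_4(V\setminus\{u,v\})$. These use an alphabet of $n-2$ letters, not $n-1$. For $n=4$ that is two letters, so every unused word has support at most $2$ and is not adjacent to $\spadesuit$. Your argument therefore proves the Dual Symmetric Q-power property only for $n\ge 5$.

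The same defect breaks CycP when $|Y|=3$. There your pattern fails for every $\gamma$, not just full-support ones, because every word avoiding $u$ lies in $\sym_4(Y\setminus\{u\})$, which has only two letters. Even your claim that the maximum matching of $B'[\sym_4(Y)]$ has size $|\sym_3(Y)|+1$ is unproved when $|Y|=3$.

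Since 3-element cyclic sets $Y$ exist for every $n\ge 4$, this is not a small-case nuisance. It is exactly the missing core of the counterexample: $\sym_4(\{a,b,c\})$ has $N'$-rank $|\sym_3(\{a,b,c\})|+1=11$ and is cyclic in $N'$. This is the paper's Claim~\ref{claim:counter2}, the base of its induction.

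\textbf{How to fix it in your matching language.} Take $Y=\{a,b,c\}$.
\begin{itemize}
\item \emph{Rank.} Send each $\beta\in\sym_3(Y)$ of support at most $2$ to $\beta$ times its letter of largest multiplicity (for example $aab\mapsto aaab$ and $aaa\mapsto aaaa$). This map is injective, since removing one copy of the letter of multiplicity at least $3$ recovers $\beta$. Add $abc\mapsto abbc$ and $\spadesuit\mapsto aabc$. This gives a matching of size $11$. It also settles the $n=4$ case of the Dual Symmetric Q-power property, with $Y=V\setminus\{v\}$.
\item \emph{Cyclicity.} The spare rows $aabb,aacc,bbcc,abcc$ let you reroute around any $\gamma$. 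For example, if $\gamma=aaaa$, use $aaa\mapsto aaab$ and $aab\mapsto aabb$; if $\gamma=aabc$, use $\spadesuit\mapsto abcc$. All $15$ choices of $\gamma$ are handled this way.
\end{itemize}
Alternatively, use the paper's observation: contracting the support-$\le 2$ rows of $\sym_4(Y)$ leaves $U^2_3$ on $\{aabc,abbc,abcc\}$ in $N'$, versus $U^1_3$ in $N$.
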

\begin{proof}
%It is not difficult to check that 
We first note that  $M(B_{n,k})$ satisfies Dual Symmetric Q-power and RRP with respect to $U_n^1$ by Theorem~\ref{thm:transversal}. We can now use (\ref{eq:dual_biliearity_implication}) to deduce that $M(B_{n,k})$ satisfies ExtP with respect to $U_n^1$. Hence $M(B_{n,k})$ satisfies CycP with respect to $U_n^1$ by Theorem~\ref{thm:dual_k}.
%$M(B_{n,k})$ satisfies CycP with respect to $U_n^1$ for all $k\geq 1$. \bill{THIS DOES NOT SEEM TO FOLLOW FROM THE NEW VERSION OF THEOREM 2.9.}
This tells us that
\begin{equation}\label{eq:counter1}
\text{$\sym_k(Y)$ is cyclic in $M(B_{n,k})$ for all $k\geq 1$ and all $Y\subseteq V$ with $|Y|\geq 2$.} 
\end{equation}
% This follows by dualizing a result of Anderson \cite[Proposition 2.21]{And} that 
% %$Y\cdot \sym_i(V)$ is a flat in any $k$-th symmetric power of $M$ if $Y$ is a flat in $M$, 
% \bill{every $k$-th symmetric tensor power of a matroid is a $k$-th symmetric power, and then applying Theorem \ref{thm:transversal}}. (It can also be easily verified directly.)

For simplicity let  $\sym_k(v_1,v_2,\ldots,v_t)=\sym_k(\{v_1,v_2,\ldots,v_t\})$ for all $v_1,v_2,\ldots,v_t\in V$, and put $N=M(B_{n,4})$ and  $N'=M(I_{n,4}')$.
We first analyse the restriction of $N'$ to 
$\sym_4(u,v)$ for two distinct elements $u,v$ in $V$.

\begin{claim}\label{claim:counter1}
Let $u,v$ be distinct elements in $V$.
Then $N'|_{\sym_4(u,v)}=N|_{\sym_4(u,v)}$ and  $\sym_4(u,v)$ is cyclic in $N'$.
%and  $$\rank N'|_{\sym_4(\{u,v\})}=\rank N|_{\sym_4(\{u,v\})}.$$ 
\end{claim}
\begin{proof}
We have $|{\rm supp}(\alpha)|\leq 2$ for each $\alpha\in \sym_4(u,v)$, 
and hence the row of $I_{n,4}'$ indexed by $\alpha$   is obtained from the row of $I_{n,4}$ indexed by $\alpha$  by adding a zero as the last component
%are identical 
(by the definition of $I_{n,4}')$.
This implies that $N|_{\sym_4(u,v)}=N'|_{\sym_4(u,v)}$.
The final part of the claim now follows from (\ref{eq:counter1}).
\end{proof}

We next analyse the restriction of $N'$ to three distinct elements in $V$.

\begin{claim}\label{claim:counter2}
Let $u,v,w$ be distinct elements in $V$.
Then 
%$N'|_{\sym_4(u,v,w)}$ 
$\sym_4(u,v,w)$ is cyclic in $N'$ and
the rank of $N'|_{\sym_4(u,v,w)}$ is  one greater than that of $N|_{\sym_4(u,v,w)}$.
\end{claim}
\begin{proof}
Consider the following partition of $\sym_4(u,v,w)$ into 
the four sets.
\[
%\begin{split}
\sym_4(u,v,w)=
%&
%\{uuvw, uvvw,uvww\} 
(uvw\cdot \{u,v,w\})
%\\&
\sqcup
(uv\cdot \mathrm{Sym}_2(u,v))\sqcup
(u\cdot \mathrm{Sym}_3(u,w))\sqcup
\mathrm{Sym}_4(v,w). 
%\end{split}
\]
Note that the first set in the partition  is the set of all $\alpha\in \sym_4(u,v,w)$ containing each of $u,v,w$
and the last three sets consist of those $\alpha$ missing at least one of
 $u, v, w$.
Then,  $I_{n,4}'$ restricted to the rows indexed by $\sym_4(u,v,w)$ and columns indexed by $\sym_3(u,v,w)\cup\{\spadesuit\}$ has the following block structure:

\begin{equation}\label{eq:block}\small
\begin{blockarray}{c@{\hspace{15pt}}ccccc}
    & uvw & uv\cdot \mathrm{Sym}_1(u,v) & u\cdot \mathrm{Sym}_2(u,w) & \mathrm{Sym}_3(v,w) & \spadesuit \\
\begin{block}{c@{\hspace{15pt}}(ccccc)}
%\mathrm{Sym}     &   \cong I_{3,1}   &      &      &      &      \\
uuvw &   x_{uuvw,uvw}  &  \ast    & \ast     & \ast     & y_{uuvw} \\
uvvw &   x_{uvvw,uvw}  &  \ast    &  \ast    &   \ast   & y_{uvvw} \\
uvww &   x_{uvww,uvw}  &  \ast    &  \ast    & \ast     & y_{uvww} \\
uv\cdot \mathrm{Sym}_2(u,v) 
    & 0    &  I_{2,2} &  \ast    & \ast     &0      \\
u\cdot \mathrm{Sym}_3(u,w) 
    & 0     & 0    &  I_{2,3} &  \ast    & 0     \\
\mathrm{Sym}_4(v,w) 
    & 0     &0      & 0    &  I_{2,4} & 0    \\
\end{block}
\end{blockarray}\,.
\end{equation}
Note that, in the above matrix, the column $\spadesuit$ has zero entries in the last three blocks
since we have $|{\rm supp}(\alpha)|\leq 2$ for all  $\alpha\in uv\cdot \mathrm{Sym}_2(u,v)\cup u\cdot \mathrm{Sym}_3(u,w)\cup \mathrm{Sym}_4(v,w)$. 

By Theorem~\ref{thm:transversal} and Proposition~\ref{prop:edmonds},
$I_{n,k}$ has rank $|\sym_{k-1}(V)|$ 
and hence $I_{n,k}$ has linearly independent columns for all $k\geq 1$.
In particular, each of the diagonal blocks $I_{2,2}$, $I_{2,3}$ and $I_{2,4}$ in the above matrix has linearly independent columns. This implies that the matroids obtained from $N|_{\sym_4(u,v,w)}$ and $N'|_{\sym_4(u,v,w)}$ by contracting  $(uv\cdot \mathrm{Sym}_2(u,v))\sqcup
(u\cdot \mathrm{Sym}_3(u,w))\sqcup
\mathrm{Sym}_4(v,w))$ (corresponding to the last three row blocks in the above block-diagonalized form) are linearly represented by 
\[
\begin{blockarray}{c@{\hspace{15pt}}c}
    & uvw  \\
\begin{block}{c@{\hspace{15pt}}(c)}
uuvw &   x_{uuvw,uvw}    \\
uvvw &   x_{uvvw,uvw}    \\
uvww &   x_{uvww,uvw}   \\
\end{block}
\end{blockarray}
\text{\;\; and \;\;}
\begin{blockarray}{c@{\hspace{15pt}}cc}
    & uvw & \spadesuit \\
\begin{block}{c@{\hspace{15pt}}(cc)}
uuvw &   x_{uuvw,uvw}      & y_{uuvw} \\
uvvw &   x_{uvvw,uvw}     & y_{uvvw} \\
uvww &   x_{uvww,uvw}   & y_{uvww} \\
\end{block}
\end{blockarray},
%\text{\,, respectively. }
\]
respectively. This gives
\begin{align}
%\begin{equation}\label{eq:45}
%\begin{split}
N|_{\sym_4(u,v,w)}/((u v\cdot \mathrm{Sym}_2(u,v))\sqcup
(u\cdot \mathrm{Sym}_3(u,w))\sqcup
\mathrm{Sym}_4(v,w))&\simeq U_3^1, \\
N'|_{\sym_4(u,v,w)}/((u v\cdot \mathrm{Sym}_2(u,v))\sqcup
(u\cdot \mathrm{Sym}_3(u,w))\sqcup
\mathrm{Sym}_4(v,w))&\simeq U_3^2,\label{eq:45}
%\end{split}
%\end{equation}
\end{align}
and hence $\rank N'|_{\sym_4(u,v,w)}=\rank N|_{\sym_4(u,v,w)}+1$.

\smallskip
The assertion that ${\sym_4(u,v,w)}$ is cyclic in $N'$ follows from the following argument.
%from (\ref{eq:counter1}) and (\ref{eq:45}). 
By  (\ref{eq:counter1}) and the structure of the block matrix given in (\ref{eq:block}),
$uv\cdot \mathrm{Sym}_2(u,v)\cup u\cdot \mathrm{Sym}_3(u,w)\cup \mathrm{Sym}_4(v,w)$ is cyclic in $N'$. And $\{uuvw,uvvw,uvww \}$ is contained in a circuit of $N'$ by  (\ref{eq:45}).
%Claim~\ref{claim:counter2}.
\end{proof}

We can now complete the proof of Theorem~\ref{thm:counterexample} by induction on $n$.
For $u\in V$,  the partition of $\sym_4(V)$ into 
two sets given by
\[
\begin{split}
\sym_4(V)=&
(u\cdot  \sym_3(V))\sqcup 
\mathrm{Sym}_4(V-u),
\end{split}
\]
gives rise to the following block diagonalized structure for $I_{n,4}'$:   
\[
%I_{n,4}'=
\begin{blockarray}{c@{\hspace{15pt}}cc}
    &  u\cdot \mathrm{Sym}_2(V) & \mathrm{Sym}_3(V-u)\cup\{\spadesuit\}  \\
\begin{block}{c@{\hspace{15pt}}(cc)}
u\cdot  \sym_3(V)
        &  I_{n,3} & \ast      \\
\mathrm{Sym}_4(V-u)
    & 0        &  I_{n-1,4}'    \\
\end{block}
\end{blockarray}\,.
\]
%Note that the bottom-right block is $I_{n-1,4}'$.
By Claim~\ref{claim:counter2} when $n= 4$, 
and induction when $n\geq 5$, 
$I'_{n-1,4}$ has linearly independent columns. 
%(This is the place we need $n\geq 4$ to construct a counterexample.)
The above block-diagonalized form for $I'_{n,4}$ now implies that  
\[
N'/\sym_4(V-u)=N/\sym_4(V-u)\simeq M(B_{n,3}). 
\]
This and the fact that the Dual Symmetric Q-power property holds for $M(B_{n,3})$ with respect to $M$,  imply that the  
Dual Symmetric Q-power property holds for $N'$ with respect to $M$.
Also, since the rank of $I_{n-1,4}'$ is one greater than that of $I_{n-1,4}$, 
$N'$ has rank strictly greater than the value given by RRP, so RRP does not hold for $N'$.

It remains to verify that $N'$ satisfies CycP with respect to $M=U_n^1$. Choose  $Y\subseteq V$ with $|Y|\geq 2$.
Claims~\ref{claim:counter1} and~\ref{claim:counter2} imply that $\sym_4(Y)$ is cyclic in $N'$ when  $|Y|\leq 3$, so we may assume that $|Y|\geq 4$. Choose any $\alpha=uvwx\in \sym_4(Y)$
and consider the following block form of $I_{|Y|,4}'$.
\[
%I_{|Y|,4}'=
\begin{blockarray}{c@{\hspace{15pt}}cc}
    &  u\cdot \mathrm{Sym}_2(Y) & \mathrm{Sym}_3(Y-u)\cup\{\spadesuit\}  \\
\begin{block}{c@{\hspace{15pt}}(cc)}
u\cdot  \sym_3(Y)
        &  I_{|Y|,3} &   \ast    \\
\mathrm{Sym}_4(Y-u)
    & 0        &  I_{|Y|-1,4}'    \\
\end{block}
\end{blockarray}\,.
\]
Note that $I_{|Y|,3}$ represents $M(B_{|Y|,3})$ while 
$I_{|Y|-1,4}'$ represents $M(I'_{|Y|-1,4})$.
We can apply Claim~\ref{claim:counter2} when $|Y|=4$ 
and induction when $|Y|\geq 5$ to deduce that $I_{|Y|-1,4}'$ has linearly independent columns.
This and (\ref{eq:counter1}) imply that 
$u\cdot  \sym_3(Y)$ is contained in a cyclic set in $N'$.
Similarly, by Claim~\ref{claim:counter2} when $|Y|=4$ and by induction when $|Y|\geq 5$, 
$\mathrm{Sym}_4(Y-u)$ is cyclic in $N'$.
Hence,  $N'$ satisfies CycP.
% Also, since $M(B_{|Y|,3})$ is cyclic by (\ref{eq:counter1}), 
% the row of $I_{|Y|,3}$ indexed by $\{v,w,x\}$ is linearly dependent on the other rows of  $I_{|Y|,3}$.
% This implies that the row of $I_{|Y|,4}'$ indexed by $\alpha=\{u,v,w,z\}$ is linearly dependent on the other rows of  $I_{|Y|,4}'$.  Since $\alpha$  was chosen to be an arbitrary element of $\sym_4(Y)$,   $\sym_4(Y)$ is a cyclic set 
% in $N'$. This completes the proof of both (c) and the theorem.
\end{proof}

% Theorem \ref{thm:counterexample} immediately implies that $U_n^1$ is a counterexample to Conjecture \ref{conj:dual} when $k=4$ and hence, by matroid duality, $U_n^{n-1}$ is a counterexample to Conjecture \ref{conj:And} when $k=4$.

\section{Relations between the dual properties, canonical subsets and the 0-extension operation}\label{sec:lemmas}
We will derive relations linking the properties 
given in Section~\ref{subsec:dual}.  These relations will be used to verify our main results
%Theorems~\ref{thm:2} to \ref{thm:dual_k_weak} 
in the next section.
Our proofs in this section use two key tools which are both motivated by rigidity theory:   canonical subsets of $\sym_k(V)$ with respect to a given matroid $M$ on $V$, and the 0-extension operation. 
%\bill{When $M\cong U_n^{d+1}$, $k=2$ and we %associate $\sym_2(V)$ with the edges of a looped complete graph $K_n^{\circ}$ on %$V$, the 0-extension operation adds a vertex of degree $d$ to a subgraph of %$K_n^{\circ}$, and  the edge sets of every subgraph of $K_n^{\circ}$ which can be %obtained from a copy of $K_{d+1}^{\circ}$ by recursively applying the 0-extension %operation are canonical subsets of $E(K_n^{\circ})$.}\tcolblue{Thanks for %revising my sentences here, but I feel now that this is not easy to understand %for most readers. Maybe we can delete this paragraph?}

%We now move on to the proofs of Theorems~\ref{thm:2}-\ref{thm:dual_k_weak}.
%We first collect relations among properties 
%given in Section~\ref{subsec:dual}
%and then integrate them in the next section.

%Our key tools in the analysis of matroids on $\sym_k(V)$ are canonical subsets of $\sym_k(V)$ and %0-extension operations, which are motivated from rigidity theory. 
%We first introduce canonical subsets and 0-extension operations,
%and then analyse relations among properties via those tools.

Throughout this section, $k$ will denote a fixed positive integer,
$M$ will be  a matroid on 
%a finite set 
$V$ with rank function $r_M$,
and $N$ will be a matroid on $\sym_k(V)$ with rank function $r_N$. 
%We will obtain relationships between various properties for $N$, many of which are direct analogues of properties which are known to hold for the generic $d$-dimensional rigidity matroid on $E(K_n)$.

\subsection{Canonical subsets and the weak canonical base property}\label{subsec:canonical}
%The following generalization of the concept of a canonical subset is key in the extension of our %result from graphs to hypergraphs.
%The key object in the analysis of matroids on $\sym_k(V)$ is a canonical subset of $\sym_k(V)$. We will show that there is a well-structured recursive flat structure in a matroid of  \ref{thm:dual_k_weak}.

Let $X\subseteq V$. We say that
a set $S\subseteq \sym_k(X)$ is  a {\em canonical subset  of $\sym_k(X)$ with respect to $M$} if it satisfies the following conditions. 
\begin{itemize}
\item When $k=1$, $S$ is a base of $M|_X$.
\item When $X$ is $M$-independent, $S=\sym_k(X)$.
\item When $k\geq 2$ and $X$ is $M$-dependent, there is a vertex $v\in X$, called a {\em pivot vertex for $S$},  such that
\begin{itemize}
\item $v$ is not a coloop in $M|_X$,
\item $S\cap \sym_k(X-v)$ is a canonical subset of $\sym_k(X-v)$, and
\item $\lk(v,S)$ is a canonical subset of $\sym_{k-1}(X)$.
\end{itemize}
\end{itemize}

\paragraph{Remark.} This recursive definition ensures that $\sym_k(X)$ has at least one canonical subset $S$. When $k=1$ or $X$ is $M$-independent, this follows directly from the definition. In the case when  $k\geq 2$ and $X$ is $M$-dependent,  we can choose a vertex $v\in X$ which is not a coloop in $M|_X$, and  canonical subsets $S'$ and $S''$ of $\sym_{k}(X-v)$ and $\sym_{k-1}(X)$, respectively, and then put $S=S'\sqcup (v\cdot S'')$.

%When $k=2$, the definition of a canonical subset coincides with the definition of a canonical subset for looped graphs.

%\noindent
\paragraph{Example.}
Consider the case when $M\cong U_5^3$, $V=\{1,2,3,4,5\}$, $X=\{1,2,3,4\}$ and $k=3$.
Then
\[S=\sym_3(\{1,2,3\})\sqcup \{114,124,134,144,234,244,334, 344, 444\}\]
%\[\sym_3(\{1,2,3\})\sqcup \{114,124,224,134,234,244, 344\}\]
is a canonical subset of $\sym_3(X)$ with pivot vertex 4. This follows 
%by choosing 4 to be a pivot vertex for  $X$,
since $\sym_3(\{1,2,3\})$ is a canonical subset of  $\sym_3(\{1,2,3\})$ 
and 
we can show that 
$$\lk(4,S)=\{11,12,13,14,23,24,33,34,44\}$$
is a canonical subset of $\sym_2(X)$ by choosing 2 as its pivot vertex. 
\qed

\medskip

We first show that the cardinality of each canonical subset of $\sym_k(X)$ is equal to $r_N(\sym_k(X))$ for a matroid $N$ with StrongRRP with respect to $M$.

\begin{lemma}\label{lem:canonical_size}
Let $X\subseteq V$ with $n_X=|X|$ and $k$ be a positive integer.
Put $r_X=r_M(X)$.
Then every canonical subset $S$ of $\sym_k(X)$ has size ${n_X+k-1\choose k}-{n_X-r_X+k-1\choose k}$.
\end{lemma}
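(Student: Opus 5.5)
We argue by induction on $n_X+k$, following the recursive definition of a canonical subset. Throughout, write
\[
f(n,r,k)={n+k-1\choose k}-{n-r+k-1\choose k}
\]
for the target value. We use the conventions ${m\choose k}=0$ when $0\le m<k$, and ${k-1\choose k}=0$ (this covers $n=r$).

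The base cases come straight from the first two clauses of the definition. When $k=1$, $S$ is a base of $M|_X$, so $|S|=r_X$. This agrees with $f(n_X,r_X,1)=n_X-(n_X-r_X)$. When $X$ is $M$-independent, $r_X=n_X$ and $S=\sym_k(X)$, so $|S|={n_X+k-1\choose k}$. This agrees with $f(n_X,n_X,k)$, because ${k-1\choose k}=0$. These two clauses agree whenever both apply, so there is no ambiguity.

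For the inductive step, let $k\ge 2$ with $X$ $M$-dependent, and let $v$ be a pivot vertex. Then
\[
S=\bigl(S\cap\sym_k(X-v)\bigr)\sqcup\bigl(v\cdot\lk(v,S)\bigr),
\]
and $|v\cdot \lk(v,S)|=|\lk(v,S)|$. Since $v$ is not a coloop of $M|_X$, we have $r_M(X-v)=r_X$.

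By induction, $S\cap\sym_k(X-v)$ is canonical in $\sym_k(X-v)$ and so has size $f(n_X-1,r_X,k)$. Likewise $\lk(v,S)$ is canonical in $\sym_{k-1}(X)$ and has size $f(n_X,r_X,k-1)$. Here $k-1\ge1$, and the induction parameter has dropped by one in each case.

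It remains to check the identity
\[
f(n-1,r,k)+f(n,r,k-1)=f(n,r,k).
\]
This is Pascal's rule ${m-1\choose k}+{m-1\choose k-1}={m\choose k}$ applied twice. Apply it once with $m=n+k-1$ to the first terms, and once with $m=n-r+k-1$ to the subtracted terms.

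The only delicate point is the boundary behaviour of the subtracted binomials when $n-r$ is small. In particular, when $n-1=r$ the term ${k-1\choose k}=0$ must still satisfy Pascal's rule. It does: ${k-1\choose k}+{k-1\choose k-1}={k\choose k}$ holds with the convention above.

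We also need $n_X-1\ge r_X$, so that the first recursive piece is well defined. This holds because $X$ is dependent. For the same reason the term $n-r+k-1\ge k\ge 1$ in $f(n,r,k)$ stays nonnegative throughout.

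With these checks the induction closes. The main obstacle is just this bookkeeping of binomial conventions; nothing structural beyond the recursive definition is needed.
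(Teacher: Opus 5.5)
Your proof is correct and follows the paper's argument. Both use induction on $n_X+k$ with the same two base cases, split $S$ at a pivot vertex $v$ (using $r_M(X-v)=r_X$ since $v$ is not a coloop of $M|_X$), and finish with Pascal's rule applied to both binomial terms. Your explicit check of the convention $\binom{k-1}{k}=0$ at the boundary $n_X-r_X=1$ is a harmless detail that the paper leaves implicit.
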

\begin{proof}
For any three integers $n$, $k$ and $r$, put $f(n,k,r)={n+k-1\choose k}-{n-r+k-1\choose k}$.

The proof proceeds by induction on $n_X+k$.
The base cases are when $k=1$ and when  $X$ is independent in $M$.
If $k=1$, the claim follows from the definition of a canonical subset since we have $S$ is a base of $M|_X$ and $|S|=r_X=f(n_X,1,r_X)$.
If $X$ is independent in $M$, then $r_X=n_X$,  $S=\sym_k(X)$ and we have
$|S|=|\sym_k(X)|={n_X+k-1\choose k}=f(n_X,k,n_X)$.

Hence we may assume that $k\geq 2$ and $X$ is $M$-dependent.
Let $v$ be a pivot vertex for $S$. Then 
$S\cap \sym_k(X-v)$ is a canonical subset of  $\sym_k(X-v)$
and $\lk(v,S)$ is a canonical subset of  $\sym_{k-1}(X)$.
Since $v$ is not a coloop in $M|_X$,
we have $r_X=r_{X-v}$.
Hence, by induction,
$|S\cap \sym_k(X-v)|=f(n_X-1,k,r_X)$ and 
$|\lk(v,S)|=f(n_X,k-1,r_X)$.
By Pascal's formula, we obtain
\begin{align*}
|S|=f(n_X-1,k,r_X)+f(n_X,k-1,r_X)=f(n_X,k,r_X).
\end{align*}
% Suppose $v$ is an $M|_X$-coloop.
% Then, by induction and $r_X=r_{X-v}+1$,
% $|I\cap \sym_k(X-v)|=f(n_X-1,k,r_X-1)$
% and $|\lk(v,I)|=|\sym_{k-1}(X)|={n_X+k-2\choose k}$.
% \begin{align*}
% |I|={n_X-1+k-1\choose k}-{n_X-1-(r_X-1)+k-1\choose k}+{n_X+k-2\choose k}=f(n_X,k,r_X).
% \end{align*}
\end{proof}

Lemma \ref{lem:canonical_size} suggests that the following property for $N$ will be closely related to the Strong
Rigidity Rank Property for $N$.
\begin{description}
\item[The Weak Canonical Base Property (WCBP):]
%A matroid $N$ on $\sym_k(V)$  has the WCBP if, 
For all $X\subseteq V$,
every canonical subset of $\sym_k(X)$ is a base of $N|_{\sym_k(X)}$.
\end{description}

\paragraph{Example.} 
In Section~\ref{subsec:example}, we saw that
the bicircular matroid ${\cal B}$ on $\sym_2(V)$ is the dual of a second symmetric power of $M^*=U_n^{n-1}$.
We will show that ${\cal B}$ satisfies WCBP with respect to $M=U_n^1$. 
Choose $X\subseteq V$. We will use induction on $|X|$ to show that each canonical 
subset $S$ of $\sym_2(X)$ is a base of $\B|_{\sym_2(X)}$,
consisting of the edge set of a subgraph on $X$ in which each connected component is a tree with exactly one additional loop edge.
When $X=\{v\}$, $X$ is 
independent in $M$, and  $S=\{vv\}=\sym_2(X)$ is a  base of $\B|_{\sym_2(X)}$. Hence we may assume 
that $|X|\geq 2$. Then no element of $X$ is a coloop of $M|_X$ since $M= U_n^1$. Let $v$ be a pivot vertex for $S$. Then $S_v=S\cap \sym_2(X-v)$ is a canonical subset of $\sym_2(X-v)$ and $\lk(v,S)$ is a canonical subset of $\sym_1(X)=X$. By induction $S_v$ is  
the edge set of a subgraph with vertex set $X-v$ in which each connected component is a tree with exactly one additional loop edge.
%a spanning subgraph of the complete looped graph on $X-v$ in which each connected component is unicyclic. 
In addition  $|\lk(v,S)|=1$ since $\lk(v,S)$ is a base of $M|_X$. These observations imply that $S$ is  the edge set of  
a subgraph with vertex set $X$ in which each connected component is a tree with exactly one additional loop edge,
and hence $S$ is a base of 
$\B|_{\sym_2(X)}$. 
%\st{COMMENT: In this example, a canonical set is always a spanning tree with exactly one extra loop. Shall we mention this explicitly? Also, the lex-min canonical set given below is the star with a loop at the centre vertex in this case. This might be also worth mentioning.}
%and every  canonical subset $B$ of $\sym_2(X)$ can be obtained by choosing an arbitrary 
%vertex $v\in X$, a canonical subset  $B_v$ of $\sym_2(X-v)$, an arbitrary edge 
%$e\in \lk(v,X)$, and putting  $B=X+e$. Then $B_v$ is a base of $N|_{\sym_2(X-v)}$ 
%by induction, i.e.~$B$ is the edge set of a spanning subgraph on $X-v$ in which each connected %component is unicyclic, and hence $B$ is a base of 
%$N|_{\sym_2(X)}$.
\qed

\medskip

%X$ is independent in $M$, $\sym_1(X)=E(K^0(X))$ is the unique  canonical subset of  $\sym_k(X)$, and $\sym_1(X) is a base of $N|_{\sym_k(X)}$.

We next define a representative example of a canonical subset, which will be useful for checking the WCBP.
Given a total order $<$ on $V$, let $<_{\mathrm{lex}}$ denote the induced
lexicographic order on the words in $\sym_k(V)$. 
%In other words, if each element of $\sym_k(V)$ is regarded as a monomial in $|V|$ indeterminates, then $<_{\mathrm{lex}}$ is the lexicographic monomial order induced by $<$.

We can use the order  $<_{\mathrm{lex}}$ to define a lexicographic order on subsets of $\sym_k(V)$
of the same cardinality: for $E,F\subseteq  \sym_k(V)$ with
\[
E=\{\alpha_1<_{\mathrm{lex}}\alpha_2<_{\mathrm{lex}}\cdots<_{\mathrm{lex}}\alpha_t\}
\quad\text{and}\quad
F=\{\beta_1<_{\mathrm{lex}}\beta_2<_{\mathrm{lex}}\cdots<_{\mathrm{lex}}\beta_t\},
\]
we say that $E$ is lexicographically smaller than $F$ if there exists a
$j\leq t$ such that $\alpha_i=\beta_i$ for all $1\leq i<j$ and
$\alpha_j<_{\mathrm{lex}}\beta_j$.
For $X\subseteq V$, we define the \emph{lex-min canonical subset of
$\sym_k(X)$ with respect to $<$} to be the lexicographically smallest
canonical subset of $\sym_k(X)$ with respect to this 
%induced 
order.

\paragraph{Example (continued).} We saw above that, when $M=U_n^1$ with groundset $V$,  the  canonical subsets of $\sym_2(V)$ are the edge sets of graphs on $V$ in which each connected component is a tree with exactly one additional loop edge. The lex-min canonical subset of
$\sym_2(V)$ will be a star on $V$ centred on the first vertex in the chosen ordering for $V$ with exactly one extra loop edge on this central vertex.

For a set $X\subseteq V$, a total order $<$ on $V$ is said to be {\em $X$-first}
if the first $|X|$ elements in the total order are exactly the elements of $X$.
%A canonical subset of $\sym_k(V)$ is said to be {\em lex-min} if it is the lex-min canonical subset with respect to some total order $<$ on $V$.

\begin{lemma}\label{lem:lex_min}
Let $X\subseteq V$, $B$ be a base of $M|_X$, and $<$ be a $B$-first total order on $V$. Then, $S=B\cdot \sym_{k-1}(X)$ is the lex-min canonical subset of $\sym_k(X)$ with respect to $<$.
\end{lemma}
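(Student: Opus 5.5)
We prove two things: that $S=B\cdot\sym_{k-1}(X)$ is a canonical subset of $\sym_k(X)$, and that no canonical subset is lexicographically smaller. Both go by induction on $|X|+k$.

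\emph{Base cases.} If $k=1$, then $S=B$ is a base of $M|_X$. If $X$ is $M$-independent, then $B=X$ and $S=\sym_k(X)$. In both cases $S$ is canonical.

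\emph{Canonicity when $k\ge 2$ and $X$ is dependent.} Choose the pivot to be the $<$-largest element $v$ of $X$.
\begin{itemize}
\item Since the order is $B$-first and $B\subsetneq X$, we have $v\in X\setminus B$. Hence $v$ lies in the closure of $B\subseteq X-v$, so $v$ is not a coloop of $M|_X$.
\item We have $S\cap\sym_k(X-v)=B\cdot\sym_{k-1}(X-v)$. The set $B$ is still a base of $M|_{X-v}$, and $<$ is still $B$-first. So by induction this is the lex-min canonical subset of $\sym_k(X-v)$; in particular it is canonical.
\item Because $v\notin B$, a word $v\cdot\gamma$ lies in $S$ exactly when $\gamma$ contains a letter of $B$. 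Thus $\lk(v,S)=B\cdot\sym_{k-2}(X)$, which is canonical in $\sym_{k-1}(X)$ by induction on $k$.
\end{itemize}
So $S$ is canonical with pivot $v$.

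\emph{Minimality.} By Lemma~\ref{lem:canonical_size}, every canonical subset $T$ of $\sym_k(X)$ has the same size as $S$, so the lexicographic comparison makes sense. Write $\sym_k(X)=S\sqcup\sym_k(X\setminus B)$. Every word in $S$ contains a letter of $B$, and every word in $\sym_k(X\setminus B)$ uses only letters outside $B$. Since $B$ is an initial segment of $<$, each word of $S$ precedes each word of $\sym_k(X\setminus B)$ in $<_{\mathrm{lex}}$ (comparing smallest letters first). So $S$ is the set of the $|S|$ lex-smallest words of $\sym_k(X)$.

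Now let $T$ be any set of $|S|$ words in $\sym_k(X)$, with elements $\beta_1<_{\mathrm{lex}}\cdots<_{\mathrm{lex}}\beta_t$, and let the elements of $S$ be $\alpha_1<_{\mathrm{lex}}\cdots<_{\mathrm{lex}}\alpha_t$. Since $S$ is an initial segment of $\sym_k(X)$, we have $\alpha_i\le_{\mathrm{lex}}\beta_i$ for every $i$. Hence $S$ is lexicographically smallest among all sets of that size, and in particular among canonical subsets. Because $S$ is itself canonical, it is the lex-min canonical subset. Note that this step does not use the pivot structure of $T$ at all, only the size equality from Lemma~\ref{lem:canonical_size}.

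\emph{Main obstacle.} The only delicate point is fixing the convention for $<_{\mathrm{lex}}$ on words: a word is written in nondecreasing order and compared letter by letter. Under this convention, any word containing a letter of $B$ precedes any word avoiding $B$. To check it, let $\alpha$ contain a letter of $B$ and let $\beta$ avoid $B$. The smallest letter of $\alpha$ lies in $B$, while the smallest letter of $\beta$ does not, so the first letter of $\alpha$ is strictly smaller and $\alpha<_{\mathrm{lex}}\beta$. The remaining checks are routine recursive bookkeeping.
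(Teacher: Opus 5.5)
Your proof is correct and follows essentially the same route as the paper. You use the same pivot (the $<$-largest element $v$ of $X$) and the same decomposition of $S$ into $B\cdot\sym_{k-1}(X-v)$ and $v\cdot B\cdot\sym_{k-2}(X)$, each handled by induction. Your minimality argument is also the paper's: every word containing a letter of $B$ precedes every word avoiding $B$. You are a bit more careful than the paper in stating two things it leaves implicit: the equal-cardinality fact from Lemma~\ref{lem:canonical_size}, and the letter-by-letter convention for $<_{\mathrm{lex}}$ on which the minimality step depends.
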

\begin{proof}
Let $X=\{v_1, v_2,\dots, v_{|X|}\}$ where
$v_1< v_2<\dots< v_{|X|}$. 
Then $B=\{v_1,\dots, v_{r_M(X)}\}$ 
%is a base of $M|_X$ 
since the order $<$ is $B$-first.

We first show that $B\cdot \sym_{k-1}(X)$ is a canonical subset of $\sym_k(X)$ by induction on $k+|X|$.
The base cases when $k=1$ and when $X=B$ follow immediately from the definition of a canonical subset.

Hence we may assume that $k\geq 2$ and $B$ is a proper subset of $X$.
Then, $v_{|X|}$ is not a coloop in $M|_X$.
Consider the bipartition:
\begin{equation}\label{eq:lex_min1}
B\cdot  \sym_{k-1}(X)=(B\cdot  \sym_{k-1}(X-v_{|X|}))\sqcup (v_{|X|}\cdot B\cdot \sym_{k-2}(X)).
\end{equation}
By induction, $B\cdot  \sym_{k-1}(X-v_{|X|})$ is a canonical subset of $\sym_k(X-v_{|X|})$,
and $B\cdot  \sym_{k-2}(X)$ is a canonical subset of $\sym_{k-1}(X)$.
We can now deduce that $B\cdot \sym_{k-1}(X)$ is a canonical subset of $\sym_k(X)$ by choosing $v_{|X|}$ to be  a pivot vertex for $B\cdot \sym_{k-1}(X)$.

To see that $S=B\cdot  \sym_{k-1}(X)$ is the  lex-min canonical subset of $\sym_k(X)$, choose any other canonical subset $S'$ of $\sym_k(X)$.
Then, for any two words $e\in S\setminus S'$ and $f\in S'\setminus S$,
$e$ is smaller than $f$ since $e$ has a letter in $B$ while $f$ has no letter in $B$.
Thus, $S$ is indeed the lex-min canonical subset of $\sym_k(X)$.
\end{proof}

\subsection{The Canonical Base Property}\label{sec:contree}

Unfortunately, the WCBP defined in the previous subsection is not quite strong enough for our analysis of matroids on $\sym_k(V)$ 
%the hypergraph case, 
so we need to introduce a stronger, but more complicated,  property.
To do this, we use a graphical representation of the recursive construction of canonical subsets.
For a set $X\subseteq V$ and an integer $k$, we define a {\em construction  tree} for a canonical subset $S$ of  $\sym_k(X)$
 to be a rooted binary tree $T$ in which each node $\alpha$ is labelled with the following information:
\begin{itemize}
    \item a pair $(X_{\alpha},k_{\alpha})$ of a set $X_{\alpha}\subseteq V$ and a positive integer $k_\alpha$;
    \item a 
    %pivot 
    vertex $v_{\alpha}\in X_{\alpha}$ when $\alpha$ is an internal node of $T$ (i.e. a non-leaf node);
    \item a set $Y_{\alpha}\subseteq \sym_{k_\alpha}(X_{\alpha})$ when $\alpha$ is a leaf node of $T$.
\end{itemize}
In addition, $T$ should have the following properties.
\begin{itemize}
\item The root node  $\alpha_1$ of $T$ has $(X_{\alpha_1},k_{\alpha_1})=(X,k)$, 
%where 
%$X$ and $k$ are the given vertex set and integer 
and $v_{\alpha_1}$ equal to  the pivot vertex used in the first reduction step of the recursive construction of $S$.
\item At each internal node $\alpha$:
\begin{itemize}
\item $k_{\alpha}\geq 2$; 
\item $X_{\alpha}$ is $M$-dependent;
\item $v_{\alpha}$ is not a coloop in  $M|_{X_\alpha}$;
%\end{itemize}
\item 
%each internal node $\alpha$ has exactly two children:  
the left child  of $\alpha$, ${\rm left}(\alpha)$, satisfies $X_{{\rm left}(\alpha)}=X_{\alpha}\setminus \{v_{\alpha}\}$ and $k_{{\rm left}(\alpha)}=k_{\alpha}$
and the right child, ${\rm right}(\alpha)$, satisfies $X_{{\rm right}(\alpha)}=X_{\alpha}$
and $k_{{\rm right}(\alpha)}=k_{\alpha}-1$.
\end{itemize}
\item At each leaf node $\alpha$, either
\begin{itemize}
    \item  $k_{\alpha}=1$ and $Y_{\alpha}$ is a base of $M|_{X_{\alpha}}$, or
    %$Y_{\alpha}=\sym_1(B)$ where $B$ is a base of $M|_{X_{\alpha}}$;
    %\item[Type $\boxtimes$:] this is the case when $X_{\alpha}=\emptyset$, where $Y_{\alpha}=\emptyset$;
    \item $X_{\alpha}$ is $M$-independent and  $Y_{\alpha}=\sym_{k_{\alpha}}(X_{\alpha})$.
\end{itemize}
\end{itemize}

%The construction tree  encodes the recursive construction of a canonical subset of $\sym_k(X)$. More precisely,
Note that, for each internal node $\alpha$ of $T$, the subtree of $T$ rooted at $\alpha$ is the construction tree of a canonical subset of $\sym_{k_{\alpha}}(X_{\alpha})$ in which the first pivot vertex is $v_\alpha$,
since the left subtree encodes a canonical subset of $\sym_{k_{\alpha}}(X_{\alpha}-v_{\alpha})$ and the right subtree encodes a canonical subset of 
$\sym_{k_{\alpha}-1}(X_{\alpha})$.

\medskip

\noindent
\paragraph{Example.}
Figure~\ref{fig:construction_tree} shows the construction tree $T$ of a canonical subset of $\sym_3(V)$ in the case when $V=\{1,2,3,4,5\}$
and $M$ is the affine matroid of points $p_1,\dots, p_5$ in the plane
with $p_1=p_2$ and no other {nontrivial} affine dependencies. Then $n=5$ and $\rank M=3$. 
In this example, the subtree of $T$ rooted at $\alpha_5$ represents the canonical subset 
\[
\sym_2(\{3,4,5\})\sqcup (2\cdot \{2,3,5\})
\]
of 
$\sym_2(\{2,3,4,5\})$, and hence the subtree rooted at $\alpha_2$ represents the canonical subset 
\[
\sym_3(\{3,4,5\})\sqcup\big(2\cdot \big(\sym_2(\{3,4,5\})\sqcup (2\cdot \{2,3,5\})\big)\big)
\]
of $\sym_3(\{2,3,4,5\})$.
\qed

\begin{figure}[t]
\centering
\includegraphics[scale=0.8]{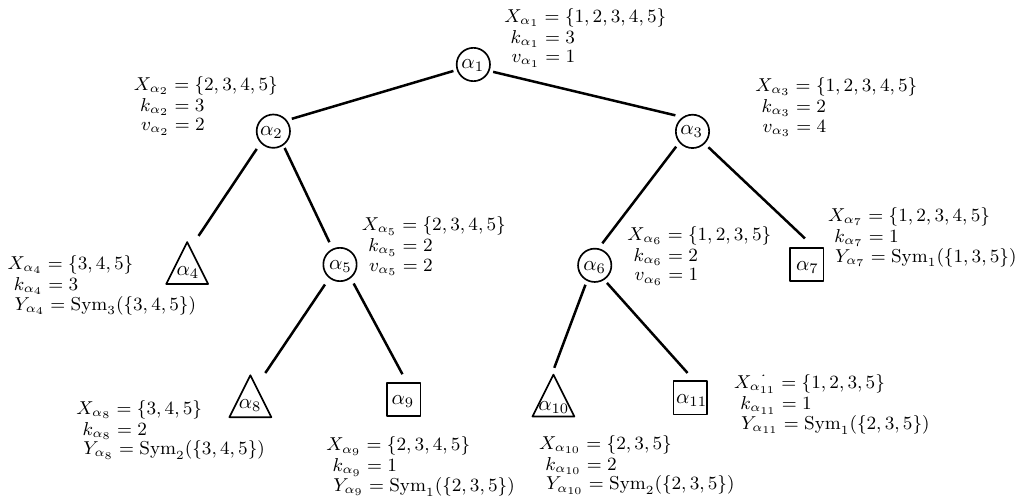}
\caption{A construction tree for a canonical subset of $\sym_3(\{1,2,3,4,5\})$. Internal nodes are denoted by circles. Leaf nodes $\alpha$ are denoted by squares when $k_\alpha=1$, and by triangles when $X_\alpha$ is independent. }
\label{fig:construction_tree}
\end{figure}

\medskip

Construction trees provide additional useful information.
Consider a construction tree $T$ of a canonical subset of $\sym_k(X)$.
For each node $\alpha$ in the tree $T$, define $\tau_{\alpha}\in \sym_{k-k_{\alpha}}(V)$ recursively from the root $\alpha_1$ by the following rule:
\begin{itemize}
\item $\tau_{\alpha_1}=\emptyset$;
\item for each internal node $\alpha$, $\tau_{{\rm left}(\alpha)}=\tau_{\alpha}$ and
%\item 
$\tau_{{\rm right}(\alpha)}=v_{\alpha}\cdot \tau_{\alpha}$,
where $v_{\alpha}$ denotes the pivot vertex at $\alpha$.
\end{itemize}
In addition, for each leaf node $\lambda$ of $T$, we define sets $E_{\lambda}, S_{\lambda}\subseteq \sym_k(X)$ 
%recursively from the leaf nodes of $T$ 
as follows:
\begin{itemize}
\item $E_{\lambda}=\tau_{\lambda}\cdot \sym_{k_{\lambda}}(X_{\lambda})$;
\item $S_{\lambda}=\tau_{\lambda}\cdot Y_{\lambda}$.
%if $\alpha$ is a leaf node;
%\item $S_{\alpha}=S_{{\rm left}(\alpha)}\sqcup S_{{\rm left}(\alpha)}$ if $\alpha$ is an internal node.
\end{itemize}
An easy induction based on these definitions implies that the sets $E_{\lambda},S_{\lambda}$ have the following properties: 
\begin{equation}\label{eq:leaf}
\sym_k(X)=\bigsqcup_{\lambda\in {\rm Lf}(T)} E_{\lambda}
\quad \text{ and } \quad 
S=\bigsqcup_{\lambda\in {\rm Lf}(T)} S_{\lambda},
\end{equation}
where ${\rm Lf}(T)$ is the set of leaf nodes of $T$ and
$S$ is the canonical subset of $\sym_k(X)$ encoded by  the construction tree $T$.
% \st{$L(\cdot)$ was used before denote the loop set of a matroid. There is no crash of notations since $L$ is used for a tree $T$ here. But, ${\rm {\rm Leafs}(T)}$ might be better?} \bill{I agree, but maybe ${\rm {\rm Lf}(T)}$ would be simpler?}

%\medskip

%\noindent
\paragraph{Example.}   In the previous  example,
the leaf nodes are $\alpha_4,\alpha_8,\alpha_9,\alpha_{10},\alpha_{11},\alpha_7$
and the corresponding $\tau_{\alpha}, E_{\alpha}, S_{\alpha}$ are as follows:
\begin{align*}
&\tau_{\alpha_4}=\emptyset,\, E_{\alpha_4}=S_{\alpha_4}=\sym_3(\{3,4,5\}); \\
&\tau_{\alpha_8}=2,\, E_{\alpha_8}= S_{\alpha_8}=2\cdot \sym_2(\{3,4,5\}); \\
&\tau_{\alpha_9}=22, E_{\alpha_9}= 22\cdot  \{2,3,4,5\},\, S_{\alpha_9}=22\cdot \{2,3,5\}; \\
&\tau_{\alpha_{10}}=1,\, E_{\alpha_{10}}= S_{\alpha_{10}}=1\cdot  \sym_2(\{2,3,5\}); \\
&\tau_{\alpha_{11}}=11, \,E_{\alpha_{11}}= 11\cdot  \{1,2,3,5\}, \, S_{\alpha_{11}}=11\cdot \{2,3,5\}; \\
&\tau_{\alpha_{7}}=14, \,E_{\alpha_{7}}= 14\cdot  \{1,2,3,4,5\}, \, S_{\alpha_{7}}=14\cdot \{1,3,5\}.
%&\tau_{\alpha_4}=\emptyset,\, E_{\alpha_4}=S_{\alpha_4}=\sym_3(\{3,4,5\}); \\
%&\tau_{\alpha_8}=\{2\},\, E_{\alpha_8}= S_{\alpha_8}=2\cdot \sym_2(\{3,4,5\}); \\
%&\tau_{\alpha_9}=\{2,2\}, E_{\alpha_9}= 2\cdot 2\cdot  \sym_1(\{2,3,4,5\}), S_{\alpha_9}=2\cdot 2\cdot \{2,3,5\}; \\
%&\tau_{\alpha_{10}}=\{1\},\, E_{\alpha_{10}}= S_{\alpha_{10}}=1\cdot  \sym_2(\{2,3,5\}); \\
%&\tau_{\alpha_{11}}=\{1,1\}, \,E_{\alpha_{11}}= 1\cdot 1\cdot  \sym_1(\{1,2,3,5\}), S_{\alpha_{11}}=1\cdot 1\cdot \{2,3,5\}; \\
%&\tau_{\alpha_{7}}=\{1,4\}, \,E_{\alpha_{7}}= 1\cdot 4\cdot  \sym_1(\{1,2,3,4,5\}), S_{\alpha_{7}}=1\cdot 4\cdot \{1,3,5\}; 
\end{align*}
 %\bill{maybe $E_{\alpha_9}=\{2,2\}\cdot  \sym_1(\{2,3,4,5\})$ would be better? Similarly for \\
 %$E_{\alpha_{11}},E_{\alpha_7},S_{\alpha_{9}},S_{\alpha_{11}}, S_{\alpha_7}$. }
% \st{In view of the new notation system using k-words, any of $2\cdot 2\cdot \{2,3,4,5\}$, $2\cdot 2\cdot \sym_1(\{2,3,4,5\})$ or $\{22\}\cdot \sym_1(\{2,3,4,5\})$ seems okay. What would be the most clear?}
\qed

The left-right ordering of children induces a total order $\leq_T$ on the set ${\rm Lf}(T)$ of leaf nodes  in any construction tree $T$. For any $\lambda,\lambda'\in {\rm Lf}(T)$ with $\lambda'<_T \lambda$, there exists an internal node $\alpha$ of $T$ such that $\lambda'$ is a left descendant of $\alpha$ and $\lambda$ is a right descendant of $\alpha$. Then the  multiplicity of $v_{\alpha}$ in each word of $E_{\lambda'}$ is strictly smaller than that in every word in $E_{\lambda}$, and hence
%(and hence no   in $E_{\lambda'}$ contains $v_\alpha$
%\st{I think this is not accurate because of multiplicity. 
%A word in $E_{\lambda'}$ may contain $v_{\alpha}$.
%The multiplicity of $v_{\alpha}$ in a word of $E_{\lambda'}$ is strictly smaller than that in a word in $E_{\lambda}$.}
%) and $\lambda$ is a right descendant of $\alpha$ (and hence \bill{$  t_\lambda$ contains $v_\alpha$}).  This 
%implies that no edge in $E_{\lambda'}$ contains $v_\alpha$, and that $v_\alpha| t_\lambda$.  Hence gives
\begin{equation}\label{eq:leaf1.5}
\mbox{$\tau_{\lambda}$ is not contained in any word in $E_{\lambda'}$ for all $\lambda,\lambda'\in {\rm Lf}(T)$ with $\lambda'<_T \lambda$.}
\end{equation}

% Since $\{I_{\lambda}: \lambda \text{ is a leaf}\}$ partitions $\sym_k(X)$, this total order induces a partial order on $\sym_k(X)$.
We can use the total order $\leq_T$ to define two new sets of 
%edges 
words $E_{\leq \lambda}$ 
and $S_{\leq \lambda}$ for each leaf node $\lambda$ of $T$:
\begin{equation}\label{eq:leaf2}
E_{\leq \lambda}:=\bigsqcup_{\lambda'\in {\rm Lf}(T)}
%\text{ with }  
^{\lambda'\leq_T \lambda}\, E_{\lambda'}
\quad \text{ and } \quad 
S_{\leq \lambda}:=\bigsqcup_{\lambda'\in {\rm Lf}(T)}
%\text{ with }  
^{\lambda'\leq_T \lambda} \,S_{\lambda'}.
\end{equation}

The following strengthening of  WCBP will play a crucial role in our analysis.

\begin{description}
\item [Canonical Base Property (CBP):] 
% A matroid $N$ on $\sym_k(V)$  has the CBP if, 
For every $X\subseteq V$, 
every construction tree $T$ of a canonical subset of $\sym_k(X)$, 
and every leaf node $\lambda$ in $T$,
$S_{\leq \lambda}$ is a base of $N|_{E_{\leq \lambda}}$.
\end{description}

We close this subsection by justifying our assertion that CBP implies WCBP, and showing that these two properties are equivalent when $k=2$.

\begin{lemma}\label{lem:weakstrongCBP}
Suppose $k$ is a positive integer, $M$ is a matroid with groundset $V$ and $N$ is a matroid on $\sym_k(V)$.\\
(a) If $N$ satisfies  CBP then $N$   satisfies WCBP. \\
(b) If $k=2$ and $N$ satisfies  WCBP, then $N$   satisfies CBP.
\end{lemma}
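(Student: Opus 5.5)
For (a), I would take the last leaf in the construction tree. Let $X\subseteq V$ and let $S$ be any canonical subset of $\sym_k(X)$. By the recursive definition, $S$ has a construction tree $T$; this is built inductively by choosing the pivot vertex at each dependent stage and stopping at $k_\alpha=1$ or at an $M$-independent $X_\alpha$. Let $\lambda$ be the $\leq_T$-maximal leaf of $T$. Then $E_{\leq\lambda}=\bigsqcup_{\lambda'\in{\rm Lf}(T)}E_{\lambda'}=\sym_k(X)$ and $S_{\leq\lambda}=S$ by (\ref{eq:leaf}). CBP therefore says that $S$ is a base of $N|_{\sym_k(X)}$, which is exactly WCBP.

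For (b), assume $k=2$ and WCBP, and fix $X$, a construction tree $T$ of a canonical subset $S$ of $\sym_2(X)$, and a leaf $\lambda$. I would first describe the shape of $T$ when $k=2$. A right child of a node with $k_\alpha=2$ has $k=1$, so it is a leaf. Hence $T$ is a left spine $\alpha_1,\dots,\alpha_m$. The spine has $X_{\alpha_{i+1}}=X_{\alpha_i}-v_{\alpha_i}$, with right leaves $\rho_i$ where $E_{\rho_i}=v_{\alpha_i}\cdot X_{\alpha_i}$ and $S_{\rho_i}=v_{\alpha_i}\cdot B_i$, $B_i$ a base of $M|_{X_{\alpha_i}}$. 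The bottom leaf $\mu$ is the left child of $\alpha_m$ and has $X_\mu$ $M$-independent (or is $\alpha_1$ itself if $X$ is independent, in which case there is nothing to prove). The leaf order puts $\mu$ first, then $\rho_m,\rho_{m-1},\dots,\rho_1$. So $\lambda=\mu$ or $\lambda=\rho_j$, and
\[
E_{\leq\rho_j}=\sym_2(X_{\alpha_j}),\qquad S_{\leq\rho_j}=S_\mu\sqcup\bigsqcup_{i\ge j}S_{\rho_i}.
\]
The key observation is that $S_{\leq\rho_j}$ is exactly the canonical subset of $\sym_2(X_{\alpha_j})$ encoded by the subtree of $T$ rooted at $\alpha_j$. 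That subtree is a construction tree with $(X_{\alpha_j},2)$ at its root, as remarked after the definition of construction trees, and $\tau_{\alpha_j}=\emptyset$ on the spine, so the $\tau$-labels agree.

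WCBP applied to $X_{\alpha_j}$ then gives that $S_{\leq\rho_j}$ is a base of $N|_{E_{\leq\rho_j}}$. For $\lambda=\mu$ we have $E_\mu=S_\mu=\sym_2(X_\mu)$, because $\tau_\mu=\emptyset$ and $X_\mu$ is independent. This set is the unique canonical subset of $\sym_2(X_\mu)$, so WCBP again gives that it is a base of $N|_{E_{\leq\mu}}$. This proves CBP.

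The main point to check carefully is the claim that for general $k$ a prefix $E_{\leq\lambda}$ need not be of the form $\sym_k(Y)$, whereas for $k=2$ every prefix ends at a spine node with $\tau=\emptyset$. I would verify this by tracking $\tau$ along the spine; the rest is bookkeeping with (\ref{eq:leaf}).
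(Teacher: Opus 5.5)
Your proof is correct and follows essentially the same route as the paper. In (a) you apply CBP at the rightmost leaf, and in (b) you use that for $k=2$ every leaf other than the leftmost one is the right child of a node $\alpha$ on the left spine, so that $E_{\leq\lambda}=\sym_2(X_\alpha)$ and $S_{\leq\lambda}$ is the canonical subset encoded by the subtree at $\alpha$. Your separate treatment of the leftmost leaf $\mu$, where $E_{\leq\mu}=S_\mu=\sym_2(X_\mu)$, is slightly more careful than the paper: its claim that $E_{\leq\lambda}=\sym_2(X_\alpha)$ for the parent $\alpha$ of $\lambda$ does not literally hold for that leaf.
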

\begin{proof}
(a) Suppose $N$ satisfies  CBP. Choose a set $X\subseteq V$ and a canonical subset $S$ of $\sym_k(X)$. Let $T$ be a construction tree for $S$ and $\lambda^*$ be the rightmost leaf node of $T$. Then  $E_{\leq \lambda^*}=\sym_k(X)$ and   $S_{\leq \lambda^*}=S$ by (\ref{eq:leaf}) and (\ref{eq:leaf2}). We can now apply CBP to deduce that $S$ is a base of $N|_{\sym_k(X)}$. Hence WCBP holds for $N$. 
\\[1mm]
(b) Suppose that $k=2$ and $N$ satisfies  WCBP. Choose a vertex set $X\subseteq V$, a construction tree $T$ of a canonical subset of $\sym_2(X)$, 
and a leaf node $\lambda$ of $T$. Let $P$ be the path in $T$ from its root vertex $\alpha_1$ to its leftmost leaf vertex $\lambda_0$. The assumption that $k=2$ implies that every leaf node of $T$ other than $\lambda_0$ is the right child  of a vertex of $P$. This tells us that the parent $\alpha$ of $\lambda$ in $T$ belongs to $P$, 
%We can now use (\ref{eq:leaf}) and (\ref{eq:leaf2}) to deduce  that 
%$S_{\leq \lambda}$ is a canonical subset of $E_{\leq \lambda}$, and  that $E_{\leq \lambda}=\sym_2(X')$ for some $X'\subseteq X$. 
%\tcolblue{Since $\tau_{\alpha}=\emptyset$, the subtree rooted at $\alpha$ is a representation tree of a canonical set of $\sym_2(X')$ for some $X'\subseteq X$. 
%Since $\tau_{\alpha}=\emptyset$, 
 and the subtree rooted at $\alpha$ is a representation tree of a canonical subset of $\sym_2(X_\alpha)$. 
%for some $X'\subseteq X$.
In addition,
$E_{\leq \lambda}=\sym_2(X_\alpha)$  and $S_{\leq \lambda}$ is a canonical subset of $E_{\leq \lambda}$.
The assumption that $N$ satisfies  WCBP now implies that $S_{\leq \lambda}$ is a base of $N|_{E_{\leq \lambda}}$ and hence CBP holds for $N$.
\end{proof}

\subsection{The 0-extension operation}
We will refer to the operation of extending a subset of $\sym_k(V)$ using the procedure described in  0-ExtP as an $M$-valid, 0-extension operation.
More precisely, for 
%$k\geq 1$ and 
$I\subseteq \sym_k(V)$,
an  {\em $M$-valid,  0-extension operation} chooses an $M$-independent vertex set $X\subseteq V$ and $\tau\in \sym_{k-1}(V)$ with  $\lk(\tau,I)=\emptyset$,
and then adds  each of the elements in $\tau\cdot X$ to $I$.
We will refer to the pair $(X,\tau)$ as the {\em base} of this 0-extension operation. Note that when $X=\emptyset$, we have $\tau\cdot X=\{\emptyset\}$.
%\bill{and the operation leaves $I$ unchanged. I THINK THIS IS FALSE WHEN $I=\emptyset$ SINCE WE HAVE $I\cup (\tau\cdot X)=\{\emptyset\}$.}
%\end{itemize}

% \noindent
% \paragraph{Example.}
% Consider the case when $V=\{1,2,3,4\}$ and $M$ is the uniform matroid on $V$ of rank two.
% For $I=\{111,112,122,222,133,333\}\subset \sym_3(V)$, 
% %we can apply the  
% %coning operation at vertex $4$ to $I$ to create
% %$I\sqcup (4\cdot \sym_2(\{1,2,3,4\}))$. 
% %And we can apply a $0$-extension operation with the base $23$  to add either $\{123,223\}$, $\{123,233\}$, or $\{223,233\}$ to $I$. And the 
% \bill{the $0$-extension operation with  base $(\{1,2\},23)$  adds the  
% edges $123$ and  $223$ to $I$.}

We will show that, for any $X\subseteq V$,  every canonical subset of $\sym_k(X)$ can be constructed from the empty set by a sequence of $M$-valid, 0-extension operations. We first consider the case when $X$ is $M$-independent.
\begin{lemma}\label{lem:canonical_construction_independent}
Let $X\subseteq V$ be $M$-independent and $k$ be a positive integer.
Then $\sym_k(X)$ can be constructed from the empty set by a sequence of $M$-valid, $0$-extension operations.
\end{lemma}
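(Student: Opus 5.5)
We will build $\sym_k(X)$ word by word (grouped into batches), choosing the order so that each batch has the form $\tau\cdot Y$ for some subset $Y\subseteq X$ (automatically $M$-independent, since $X$ is) and some $\tau\in\sym_{k-1}(V)$ with $\lk(\tau,I)=\emptyset$ for the current set $I$. We argue by induction on $|X|$. If $X=\emptyset$ then $\sym_k(X)=\emptyset$ when $k\geq 1$, and there is nothing to do. So suppose $X\neq\emptyset$, fix $v\in X$, and assume by induction that $\sym_k(X-v)$ has already been constructed from $\emptyset$ by $M$-valid $0$-extensions. (Here $X-v$ is $M$-independent as a subset of $X$.) It remains to add $\sym_k(X)\setminus\sym_k(X-v)=v\cdot\sym_{k-1}(X)$.

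We add the words of $v\cdot\sym_{k-1}(X)$ in batches indexed by $\tau\in\sym_{k-1}(X-v)$, together with the word $v^k$ handled separately. Words with a single $v$ are covered by taking $\tau$ itself, so the batches are the sets $\tau\cdot\{v\}$. More uniformly, we proceed as follows. Order the words $\beta\in\sym_{k-1}(X)$ by increasing multiplicity of $v$, and at each step perform the $0$-extension with base $(\{v\},\beta)$, adding the single word $v\cdot\beta$.

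For this to be valid we must check that $\lk(\beta,I)=\emptyset$ at the moment $v\cdot\beta$ is added. Suppose $\beta\cdot u\in I$ for some $u\in X$. Either $\beta\cdot u\in\sym_k(X-v)$, which is impossible when $v\in\beta$. Or $\beta\cdot u=v\cdot\beta'$ for an earlier $\beta'$, that is, $\beta'=\beta\cdot u\cdot v^{-1}$, whose $v$-multiplicity is at least that of $\beta$. Choosing the order by decreasing multiplicity of $v$ within $v\cdot\beta$ is therefore not right; the correct requirement is that $\beta'$ is never earlier. Concretely, if $u\neq v$ then $\beta'$ has one fewer $v$ than $\beta$ would have... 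This is where care is needed.

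The cleaner choice is to take $\tau$ free of $v$ and add larger batches. Let $X=\{v_1,\dots,v_m\}$ and use the base $(\{v_i\},\tau)$ only when every letter of $\tau$ lies in $\{v_i,\dots,v_m\}$. Equivalently, we build $\sym_k(X)$ as $\bigsqcup_i v_i\cdot\sym_{k-1}(\{v_i,\dots,v_m\})$, adding the blocks for $i=m,m-1,\dots,1$. Within block $i$ we add the words $v_i\cdot\tau$, for $\tau\in\sym_{k-1}(\{v_i,\dots,v_m\})$, in order of decreasing multiplicity of $v_i$ in $\tau$.

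The check goes as follows. Suppose $\tau\cdot u$ is already present for some $u$. The word $\tau\cdot u$ contains $v_i$ when $\tau$ does, so it lies in block $i$, where it equals $v_i\cdot\tau'$ with $\tau'=\tau\cdot u\cdot v_i^{-1}$. If $u\neq v_i$, then $\tau'$ has fewer copies of $v_i$ than $\tau$, so it comes later and is absent. If $u=v_i$, then $\tau'=\tau$, which is the word being added and so is not yet present. If $\tau$ contains no $v_i$, then $\tau\cdot u$ lies in a block $j\leq i$: it cannot lie in a block $j>i$ since those words avoid $v_i$... but then $u=v_i$ forces block $i$ with $\tau'=\tau$, not yet added, while for $u\neq v_i$ the word avoids $v_1,\dots,v_i$ and lies in a later-indexed block $j>i$, which is already present.

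That last case is the obstacle. To fix it, reverse the direction: add the blocks for $i=1,\dots,m$, so that block $i$ consists of $v_i\cdot\sym_{k-1}(\{v_i,\dots,v_m\})$ and the later blocks, which are not yet present, contain only words avoiding $v_i$. Then any present $\tau\cdot u$ containing some $v_j$ with $j<i$ is impossible whenever $\tau\subseteq\{v_i,\dots,v_m\}$ and $u\geq v_i$. Making this ordering airtight, including the treatment of the case $u\in\{v_1,\dots,v_{i-1}\}$ (whose word is in an earlier block), is the main step. If it resists, we fall back to adding the whole of $\tau\cdot X$ at once for suitable $\tau$, since $X$ itself is $M$-independent.
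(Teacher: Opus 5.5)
Your proposal does not reach a valid construction, and the obstacles you hit are structural rather than matters of bookkeeping. Every scheme you try adds one word at a time, with a base of the form $(\{x\},\tau)$, and no such scheme can work. The current set only grows, so once some word containing $\tau$ is present, $\lk(\tau,I)\neq\emptyset$ from then on. Hence each $\tau\in\sym_{k-1}(X)$ can be the base word of at most one operation (and every base word must lie in $\sym_{k-1}(X)$, since the added words lie in $\sym_k(X)$). Single-word operations therefore produce at most $\binom{|X|+k-2}{k-1}$ words, which is less than $|\sym_k(X)|=\binom{|X|+k-1}{k}$ whenever $|X|\geq 2$. Already for $X=\{1,2\}$ and $k=2$, no order of $11,12,22$ works.

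This is exactly what goes wrong in each of your attempts:
\begin{itemize}
\item In the first scheme, once $\sym_k(X-v)$ is present, every $\beta$ avoiding $v$ has $X-v\subseteq\lk(\beta,I)$.
\item In the block schemes, the same $\tau$ (e.g.\ $v_m^{k-1}$) is reused as a base word in every block.
\item In the reversed scheme, block $1$ is all of $v_1\cdot\sym_{k-1}(X)$. After it, $v_1\in\lk(\tau,I)$ for \emph{every} $\tau\in\sym_{k-1}(X)$, so nothing further can be added. The case $u\in\{v_1,\dots,v_{i-1}\}$ that you flag is simply false, not delicate.
\end{itemize}
Your fallback of adding whole sets $\tau\cdot X$ fails too. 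Batches are disjoint, and the three words of $\sym_2(\{1,2\})$ cannot be split into sets of the form $\tau\cdot X$, which have size two. You need batches $\tau\cdot Y$ with $Y\subseteq X$ of varying sizes, and you give no rule for choosing them.

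The missing idea is to induct on $|X|+k$ rather than on $|X|$ alone, and to make every base word in the second phase contain $v$. After building $\sym_k(X-v)$, take a valid sequence with bases $(X_i,\tau_i)$, $1\le i\le m$, that builds $\sym_{k-1}(X)$ from $\emptyset$. (For $k=1$, a single operation with base $(X,\emptyset)$ suffices.) Then replace each base by $(X_i,v\cdot\tau_i)$. No word of $\sym_k(X-v)$ contains $v$, and $v\cdot\beta$ contains $v\cdot\tau_i$ if and only if $\beta$ contains $\tau_i$. So at each step $\lk(v\cdot\tau_i,\sym_k(X-v)\sqcup v\cdot I')=\lk(\tau_i,I')=\emptyset$, where $I'$ is the corresponding intermediate set in the old sequence. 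The lifted sequence thus adds exactly $v\cdot\sym_{k-1}(X)$. This is the paper's argument, and it automatically produces batches $(v\cdot\tau_i)\cdot X_i$ of the required sizes.
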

\begin{proof}
We proceed by induction on $|X|+k$. 
The cases when $X=\emptyset$ or $k=1$ are trivial. Hence we may assume that $|X|\geq 1$ and $k\geq 2$.

Choose  $v\in X$ and consider the sets $\sym_k(X-v)$ and $\sym_{k-1}(X)$.
By induction, $\sym_k(X-v)$ and $\sym_{k-1}(X)$ can be constructed from $\emptyset$ by a sequence of $M$-valid, $0$-extension operations. Let the bases of the operations in the latter sequence be  $(X_i,\tau_i)$ for $1\leq i\leq m$.
%, and $(X_j',\tau_j')$ for $1\leq j\leq m'$, respectively. 
Since no word in $\sym_k(X-v)$ contains $v$, the sequence of $0$-extension operations for constructing $\sym_{k-1}(X)$ from $\emptyset$ can be converted to
a sequence of $0$-extension operations for constructing $\sym_{k}(X)$ from $\sym_k(X-v)$
by changing its bases to $(X_i,v\cdot \tau_i)$ for all $1\leq i\leq m$.
Then the concatenation of the sequence for $\sym_k(X-v)$ with the sequence for 
$\sym_{k}(X)$
%with bases 
%$(X_i,\tau_i)$ for $1\leq i\leq m$ and that with the bases for
%$(X_j', v\cdot \tau_j')$ for $1\leq j\leq m'$
 is  
a  sequence of $M$-valid, 0-extension operations for constructing $\sym_k(X)$ from $\emptyset$.
\end{proof}

%a sequence $0$-extensions for $v\cdot \sym_{k-1}(X)$ \bill{starting from $\sym_k(X-v)$
%with bases $X\tau_i$ of the $i$'th  0-extension in the sequence to $v\cdot \tau_i$.}
%Since no element in $\sym_k(X-v)$ contains $v$,
%the concatenation of the sequence for $\sym_k(X-v)$
%and that for $v\cdot \sym_{k-1}(X)$ is  
%a valid sequence of 0-extension operations for $\sym_k(X)$.
%\end{proof}

% The next Lemma~\ref{lem:canonical_construction} is a special case of Lemma~\ref{lem:canonical_construction_finer} below and can be skipped.
% We keep this lemma to explain a basic idea in the construction.
\begin{lemma}\label{lem:canonical_construction}
Let $X\subseteq V$, $k$ be a positive integer and $S$ be a canonical subset of $\sym_k(X)$.
Then $S$ can be constructed from the empty set by a sequence of $M$-valid, $0$-extension operations.
\end{lemma}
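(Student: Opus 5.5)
We argue by induction on $|X|+k$, following the recursive definition of a canonical subset and imitating the proof of Lemma~\ref{lem:canonical_construction_independent}. There are three base cases. When $X$ is $M$-independent, $S=\sym_k(X)$ and the claim is exactly Lemma~\ref{lem:canonical_construction_independent}. When $k=1$, $S$ is a base of $M|_X$. It is therefore $M$-independent, and a single $0$-extension with base $(S,\emptyset)$ builds it from $\emptyset$; this is valid since $\tau=\emptyset\in\sym_0(V)$ and $\lk(\emptyset,\emptyset)=\emptyset$.

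Now suppose $k\geq 2$ and $X$ is $M$-dependent, and let $v$ be a pivot vertex for $S$. Then $S=S'\sqcup(v\cdot S'')$, where $S'=S\cap\sym_k(X-v)$ is a canonical subset of $\sym_k(X-v)$ and $S''=\lk(v,S)$ is a canonical subset of $\sym_{k-1}(X)$. By induction, $S'$ is obtained from $\emptyset$ by a sequence of $M$-valid $0$-extensions. Also by induction, $S''$ is obtained from $\emptyset$ by a sequence with bases $(X_i,\tau_i)$, $1\leq i\leq m$, where each $\tau_i\in\sym_{k-2}(V)$.

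The plan is to run the sequence for $S'$ and then the modified sequence with bases $(X_i,v\cdot\tau_i)$. Each modified operation adds $v\cdot\tau_i\cdot X_i$, so the result is $S'\sqcup (v\cdot S'')=S$. Each $X_i$ is still $M$-independent, so the only thing to check is the link condition. Let $I_{i-1}$ be the set of words of $S''$ present after $i-1$ steps of the original sequence, so the current set is $I=S'\cup v\cdot I_{i-1}$. We need $\lk(v\cdot\tau_i, I)=\emptyset$.

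The part of the link coming from $v\cdot I_{i-1}$ equals $\lk(\tau_i,I_{i-1})$, which is empty by the validity of the original step. For the part coming from $S'$: every word of $S'$ lies in $\sym_k(X-v)$ and so does not contain $v$, whereas any word extending $v\cdot\tau_i$ contains $v$. Hence $\lk(v\cdot\tau_i,S')=\emptyset$.

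The main point needing care is exactly this link bookkeeping: the link of $v\cdot\tau_i$ in the union splits as above, and the factor $v$ cancels in the second part. The other subtlety is the degenerate case $X_i=\emptyset$, which adds $\{\emptyset\}$ when $k=1$ or nothing otherwise. This case is harmless, and in the $k=1$ base case we may simply avoid it by taking the single base $(S,\emptyset)$ (if $S=\emptyset$ we take the empty sequence).
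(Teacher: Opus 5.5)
Your proof is correct and follows the paper's argument: induction on $|X|+k$ with the same two base cases, and in the inductive step the sequence for $S\cap\sym_k(X-v)$ followed by the sequence for $\lk(v,S)$ with its bases shifted to $(X_i,v\cdot\tau_i)$. Your explicit check that $\lk(v\cdot\tau_i,\,S'\cup v\cdot I_{i-1})=\lk(\tau_i,I_{i-1})=\emptyset$ spells out what the paper compresses into the remark that no word of $\sym_k(X-v)$ contains $v$.
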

\begin{proof}
We proceed by induction on $|X|+k$.
We first consider the cases when $k=1$ or $X$ is $M$-independent.
If $k=1$, then $S$ is a base of $M|_X$ and hence $S$
%-independent and it 
can be constructed
 from the empty set by a single  $M$-valid, 0-extension with base $(S,\emptyset)$.
%(Note that, when $k=1$, a 0-extension means the addition of an $M$-independent set to the empty set.)
If $X$ is $M$-independent, then $S=\sym_k(X)$ and hence $S$ can be constructed
from the empty set by a sequence of  $M$-valid, 0-extensions by Lemma~\ref{lem:canonical_construction_independent}.

For the inductive step, we assume that $k\geq 2$ and $X$ is $M$-dependent.
Then, by the definition of a canonical subset, there is a pivot vertex $v\in X$ such that 
$S\cap \sym_k(X-v)$ is a canonical subset of $\sym_k(X-v)$
and $\lk(v,S)$ is a canonical subset of $\sym_{k-1}(X)$.
By induction $S\cap \sym_k(X-v)$ and $\lk(v,S)$ can both be constructed from the empty set by a sequence of  $M$-valid, 0-extension operations. 
Let the bases for the latter sequence 
%these operations 
be 
$(X_i,\tau_i)$ for $1\leq i\leq m$.
%\tcolblue{
Since no element in $\sym_k(X-v)$ contains $v$, the sequence 
for constructing $\lk(v,S)$ from $\emptyset$ can be converted to
a sequence 
for constructing $S$ from $S\cap \sym_k(X-v)$
by changing its bases to $(X_i,v\cdot \tau_i)$ for all $1\leq i\leq m$.
Then the concatenation of the sequence for $S\cap \sym_k(X-v)$ with 
the sequence with the bases $(X_i,v\cdot \tau_i)$ for $1\leq i\leq m$
 is  
a sequence of $M$-valid, 0-extension operations for constructing $S$ from $\emptyset$.
%}
% Since no element in $\sym_k(X-v)$ contains $v$, the sequence 
% %of $0$-extension operations 
% for constructing $S\cap \sym_{k}(X-v)$ from $\emptyset$ can be converted to
% a sequence 
% %of $0$-extension operations 
% for constructing $S$ from $\lk(v,S)$
% by changing its bases to $(X_i,v\cdot \tau_i)$ for all $1\leq i\leq m$.
% Then the concatenation of the sequence for $\lk(v,S)$ with 
% %the bases $(X_i,\tau_i)$ for $1\leq i\leq m$ and that with the bases 
% %$(X_j', v\cdot \tau_j')$ for $1\leq j\leq m'$
% the sequence for $S$
%  is  
% a sequence of $M$-valid, 0-extension operations for constructing $S$ from $\emptyset$.
% %Appending $v$ to the base of each operation in the construction of $\lk(v,S)$, we have a construction sequence of $v\cdot \lk(v,S)$.
% %The construction sequence of $S\cap \sym_k(X-v)$ followed by that of $v\cdot \lk(v,S)$ gives a construction sequence of $S$.
\end{proof}

A similar argument gives the following more detailed statement.
%concerning the sets  $ S_{\lambda}$ and $E_{\leq \lambda}$ defined in Section \ref{sec:contree} for each leaf node $\lambda$ of a construction tree.

\begin{lemma}\label{lem:canonical_construction_finer} 
Suppose $X\subseteq V$, $k$ is a positive integer, 
$T$ is a construction tree for a canonical subset $S$ of $\sym_k(X)$, 
 $\lambda_1\leq_T \lambda_2$ are two leaf nodes of $T$, and  $F\subseteq E_{\leq \lambda_1}$. Then
\[
J:=F\sqcup \bigsqcup_{\lambda_1\, <_T \,\lambda\,\leq_T\, \lambda_2} S_{\lambda}
\]
can be obtained from $F$ by a sequence of $M$-valid, 0-extension operations.
\end{lemma}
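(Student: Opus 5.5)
We will add the sets $S_\lambda$ with $\lambda_1<_T\lambda\leq_T\lambda_2$ one leaf at a time, in the order $\leq_T$. By induction on the number of such leaves, it suffices to prove the following single-leaf statement. Let $\lambda'<_T\lambda$ be consecutive leaves, let $F'\subseteq E_{\leq\lambda'}$, and put $S_\lambda=\tau_\lambda\cdot Y_\lambda$. Then $F'\sqcup S_\lambda$ can be obtained from $F'$ by a sequence of $M$-valid 0-extension operations. Moreover, $F'\sqcup S_\lambda\subseteq E_{\leq\lambda}$, so the hypothesis is preserved for the next step; this holds because $S_\lambda\subseteq E_\lambda$. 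Disjointness of the union follows from (\ref{eq:leaf}).

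For the single-leaf step, we use the structure of the leaf $\lambda$. By definition, either $k_\lambda=1$ and $Y_\lambda$ is a base of $M|_{X_\lambda}$, or $X_\lambda$ is $M$-independent and $Y_\lambda=\sym_{k_\lambda}(X_\lambda)$. In both cases, Lemma~\ref{lem:canonical_construction} (equivalently Lemma~\ref{lem:canonical_construction_independent} or a single operation) gives a sequence of $M$-valid 0-extensions with bases $(X_i,\tau_i)$, $1\le i\le m$, building $Y_\lambda$ from $\emptyset$ inside $\sym_{k_\lambda}(V)$. We then lift this sequence by replacing each base with $(X_i,\tau_\lambda\cdot\tau_i)$ and applying it to $F'$. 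Each $X_i$ is still $M$-independent, so the only thing to check is the link condition. We must show that, for each $i$, $\lk(\tau_\lambda\cdot\tau_i,\,F'\cup\tau_\lambda\cdot Y^{(i-1)})=\emptyset$, where $Y^{(i-1)}$ is the set built after $i-1$ steps.

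The link splits into two parts. The part coming from $\tau_\lambda\cdot Y^{(i-1)}$ equals $\lk(\tau_i,Y^{(i-1)})$, which is empty by the validity of the original sequence. The part coming from $F'$ is empty because $F'\subseteq E_{\leq\lambda'}$, and by (\ref{eq:leaf1.5}) no word of $E_{\lambda''}$ with $\lambda''<_T\lambda$ contains $\tau_\lambda$, hence none contains $\tau_\lambda\cdot\tau_i$. Note that this uses (\ref{eq:leaf1.5}) for all leaves up to $\lambda'$, not only for $\lambda'$ itself, which is exactly what $E_{\leq\lambda'}$ provides.

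The main obstacle is this link computation, specifically making sure that (\ref{eq:leaf1.5}) is applied to containment of the word $\tau_\lambda$ as a subword with multiplicity. The statement of (\ref{eq:leaf1.5}) is phrased in exactly those terms, since the multiplicity of $v_\alpha$ in words of $E_{\lambda''}$ is strictly smaller than in $\tau_\lambda$. Once this is settled, chaining the single-leaf steps from $\lambda_1$ to $\lambda_2$ gives $J$.
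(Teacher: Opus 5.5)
Your proposal is correct and follows essentially the same route as the paper. The paper inducts on the distance between $\lambda_1$ and $\lambda_2$, peeling off the last leaf $\lambda_2$ and using (\ref{eq:leaf1.5}) to see that no word of $J'\subseteq E_{\leq \lambda'}$ contains $\tau_{\lambda_2}$; it then prefixes $\tau_{\lambda_2}$ to the bases of a $0$-extension sequence building $Y_{\lambda_2}$, and your forward, leaf-by-leaf version with the explicit identity $\lk(\tau_\lambda\cdot\tau_i,\tau_\lambda\cdot Y^{(i-1)})=\lk(\tau_i,Y^{(i-1)})$ is the same argument spelled out in slightly more detail.
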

\begin{proof}
    We proceed by induction on 
    the difference between the ranks of $\lambda_2$ and $\lambda_1$ in the total order $<_T$.
    The base case when $\lambda_1=\lambda_2$ is trivial so we may assume that
$\lambda_1\neq \lambda_2$.

    Let $\lambda'$ be the leaf node immediately preceding $\lambda_2$ in the total order $<_T$.
    By induction, 
    \begin{equation}\label{eq:construction_from_F'}
    \text{
    $J':=F\sqcup \bigsqcup_{\lambda_1\, <_T \,\lambda\,\leq_T\, \lambda'} S_{\lambda}$
    can be obtained from $F$ by a sequence of $M$-valid, 0-extensions.}
    \end{equation}
     And since $
     %F\cup \bigcup_{\lambda_1\, <_T \,\lambda\,\leq_T\, \lambda'} S_{\lambda}
     J'\subseteq E_{\leq \lambda'}$, we can use (\ref{eq:leaf1.5}) to deduce that
    \begin{equation}\label{eq:construction_from_F_2'}
    \text{no word in 
    %$F\cup \bigcup_{\lambda_1\, <_T \,\lambda\,\leq_T\, \lambda'} S_{\lambda}$ 
    $J'$ contains $\tau_{\lambda_2}$.}
    \end{equation}
    In addition, the hypothesis that $\lambda_2$ is a leaf node of $T$ implies that $S_{\lambda_2}=\tau_{\lambda_2}\cdot Y_{\lambda_2}$, and either  $k_{\lambda_2}=1$ or $X_{\lambda_2}$ is independent in $M$.

    Suppose $k_{\lambda_2}=1$. Then $Y_{\lambda_2}$ is a base of $M|_{X_{\lambda_2}}$ and, by (\ref{eq:construction_from_F_2'}), we can obtain 
    %$F\cup \bigcup_{\lambda_1\, <_T \,\lambda\,\leq_T\, \lambda_2} S_{\lambda}$
    $J=J'\sqcup S_{\lambda_2}$ from $F$ by performing  the 0-extension sequence for $J'$ given by (\ref{eq:construction_from_F'})
    followed by a single 0-extension with  base $(Y_{\lambda_2},\tau_{\lambda_2})$.
    
    It remains to consider the case when $X_{\lambda_2}$ is $M$-independent.
Then $Y_{\lambda_2}=\sym_{k_{\lambda_2}}(X_{\lambda_2})$. 
%and $S_{\lambda_2}$ is in the form $ S_{\lambda_2}=\tau_{\lambda_2}\cdot Y_{\lambda_2}$.
    %for some$\tau_{\lambda_2}\in \sym_{k-k_{\lambda_2}}(X)$. 
    By Lemma~\ref{lem:canonical_construction_independent},
    $\sym_{k_{\lambda_2}}(X_{\lambda_2})$ can be constructed from the empty set 
    by a sequence of $M$-valid, 0-extensions. Let the bases of the operations in this sequence be  $(X_i,\tau_i)$ for $1\leq i\leq m$.  By (\ref{eq:construction_from_F_2'}), the sequence with bases $(X_i,\tau_{\lambda_2}\cdot \tau_i)$ will be an $M$-valid 0-extension sequence for 
constructing $J=J'\sqcup S_{\lambda_2}$ from $J'$. The concatenation of 
     the sequence of $J'$ given by  (\ref{eq:construction_from_F'})
    with the sequence for $J$ now gives the required $M$-valid, 0-extension sequence for $J$.
    \end{proof}

\subsection{Properties implied by  the Canonical Base Property}
We show that CBP implies all of the properties listed in the bullet points of Theorem~\ref{thm:dual_k_weak}.
We will need  the following two preliminary lemmas.

\begin{lemma}\label{lem:coning}
Suppose $N$ satisfies StrongRRP. Then  $N$ satisfies ConingP.
\end{lemma}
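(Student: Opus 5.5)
The plan is a direct rank computation. Fix $X\subseteq V$ and $v\in V\setminus{\rm cl}_M(X)$, and put $Y=X+v$, $n_X=|X|$ and $r_X=r_M(X)$. Since $v\notin{\rm cl}_M(X)$, we have $|Y|=n_X+1$ and $r_M(Y)=r_X+1$, so $|Y|-r_M(Y)=n_X-r_X$. Set $f(n,k,r)={n+k-1\choose k}-{n-r+k-1\choose k}$ as in the proof of Lemma~\ref{lem:canonical_size}. StrongRRP gives $r_N(\sym_k(Y))=f(n_X+1,k,r_X+1)$ and $r_N(\sym_k(X))=f(n_X,k,r_X)$.

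Subtracting, the second binomial terms coincide, because $(n_X+1)-(r_X+1)=n_X-r_X$. Hence
\[
r_N(\sym_k(Y))-r_N(\sym_k(X))={n_X+k\choose k}-{n_X+k-1\choose k}={n_X+k-1\choose k-1}.
\]
This is exactly $|\sym_{k-1}(Y)|=|v\cdot\sym_{k-1}(Y)|$, by Pascal's rule.

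Now use the partition $\sym_k(Y)=\sym_k(X)\sqcup (v\cdot\sym_{k-1}(Y))$. Take a base $I$ of $N|_{\sym_k(X)}$. The set $I\cup(v\cdot\sym_{k-1}(Y))$ spans $\sym_k(Y)$ and has size exactly $r_N(\sym_k(Y))$, so it is a base of $N|_{\sym_k(Y)}$. Therefore every element of $v\cdot\sym_{k-1}(Y)$ lies in every base of $N|_{\sym_k(Y)}$. Equivalently, submodularity gives $r_N(\sym_k(Y)-e)\ge r_N(\sym_k(Y))-1$, while $r_N(\sym_k(Y)-e)\le r_N(\sym_k(X))+|v\cdot\sym_{k-1}(Y)|-1$. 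Either way, each such $e$ is a coloop of $N|_{\sym_k(Y)}$, which is ConingP.

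There is no real obstacle here. The only point needing care is checking the binomial identity and the fact that $v\notin{\rm cl}_M(X)$ raises the rank of $X$ by exactly one. The degenerate case $X=\emptyset$ needs a glance: then $v$ is a non-loop, and $f(1,k,1)=1$ and $f(0,k,0)=0$ are consistent with the formula.
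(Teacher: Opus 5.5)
Your proof is correct and follows the paper's argument: use $r_M(X+v)=r_M(X)+1$ and StrongRRP to get $r_N(\sym_k(X+v))-r_N(\sym_k(X))=\binom{|X|+k-1}{k-1}=|v\cdot\sym_{k-1}(X+v)|$, then conclude that these words are coloops of $N|_{\sym_k(X+v)}$. Exhibiting one base that contains $v\cdot\sym_{k-1}(X+v)$ does not by itself make those words coloops, but your subadditivity bound $r_N(\sym_k(X+v)-e)\le r_N(\sym_k(X+v))-1$ closes that step, which the paper leaves implicit.
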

\begin{proof}
%Suppose that SRP holds for $N$. 
To verify ConingP, we choose a set $X\subseteq V$ and an element $v\in V\setminus \cl_M(X)$.
%any $N$-independent set of edges $I\subseteq \sym_k(V)$ and vertex $v\not\in {\rm cl}_M(V(I))$.
%Let $X=V(I)$.
Since $v\notin {\rm cl}_M(X)$, $r_M(X+v)=r_M(X)+1$.
Hence, by StrongRRP, 
\begin{align*}
    &r_N(\sym_k(X+v))-r_N(\sym_k(X))\\
    &={|X|+1+k-1\choose k}-{|X|+1-(r_M(X)+1)+k-1\choose k} 
      -{|X|+k-1\choose k}+{|X|-r_M(X)+k-1\choose k}\\
    &={|X|+1+(k-1)-1\choose k-1}=|v\cdot \sym_{k-1}(X+v)|.
\end{align*}
This implies that each edge in $v\cdot \sym_{k-1}(X+v)$ is a coloop of $N|_{\sym_k(X+v)}$
%This implies $I\sqcup v\cdot \sym_{k-1}(X+v)$ is independent in $N$ 
and hence ConingP holds for $N$.
\end{proof}

\begin{lemma}\label{lem:canonical_avoiding}
Suppose $C\subseteq V$ is a cyclic set in $M$ and $\tau\in \sym_k(C)$. Then there exists  a canonical subset of $\sym_k(C)$ that does not contain $\tau$.
\end{lemma}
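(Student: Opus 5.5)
The plan is to induct on $k$, keeping the cyclic set $C$ fixed. The idea is to place a letter of $\tau$ as the pivot vertex, so that the condition "avoid $\tau$" passes entirely to the link. If $C=\emptyset$ then $\sym_k(C)=\emptyset$ for $k\geq 1$ and there is nothing to prove, so I assume $C\neq\emptyset$. Since $C$ is cyclic in $M$, no element of $C$ is a coloop of $M|_C$. In particular $C$ is $M$-dependent, because in an independent nonempty set every element is a coloop of the restriction. This observation matters: it means the clause "$X$ is $M$-independent, so $S=\sym_k(X)$" in the definition of a canonical subset never forces $\tau$ into $S$.

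\emph{Base case $k=1$.} Here $\tau=v$ is a single letter of $C$. Since $v$ is not a coloop of $M|_C$, some base $B$ of $M|_C$ avoids $v$. By definition $B$ is a canonical subset of $\sym_1(C)$, and it does not contain $\tau$.

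\emph{Inductive step $k\geq 2$.} Choose $v\in{\rm supp}(\tau)$ and write $\tau=v\cdot\tau'$ with $\tau'\in\sym_{k-1}(C)$.
\begin{itemize}
\item By the induction hypothesis, applied to the same cyclic set $C$ with $k-1$, there is a canonical subset $S''$ of $\sym_{k-1}(C)$ with $\tau'\notin S''$.
\item By the Remark after the definition, there is some canonical subset $S'$ of $\sym_k(C-v)$.
\end{itemize}
Put $S=S'\sqcup(v\cdot S'')$. We check that $S$ is a canonical subset of $\sym_k(C)$ with pivot vertex $v$:
\begin{itemize}
\item $k\geq 2$ and $C$ is $M$-dependent.
\item $v$ is not a coloop of $M|_C$, because $C$ is cyclic.
\item $S\cap\sym_k(C-v)=S'$, since every word of $v\cdot S''$ contains $v$.
\item $\lk(v,S)=S''$, since no word of $S'$ contains $v$.
\end{itemize}

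Finally, $\tau\notin S$. Indeed, $\tau\notin S'$ because $\tau$ contains the letter $v$. Also $\tau\notin v\cdot S''$, because $\tau=v\cdot\beta$ forces $\beta=\tau'$, and $\tau'\notin S''$. This completes the induction.

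The only real subtlety is handling the case where $C$ might be independent, and that is exactly what the cyclicity observation in the first paragraph rules out. The rest is bookkeeping with the recursive definition of canonical subsets.
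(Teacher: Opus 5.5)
Your proof is correct and is essentially the paper's argument. Both induct on $k$, use a letter of $\tau$ as the pivot, apply the inductive hypothesis to the link, and take an arbitrary canonical subset of $\sym_k(C-v)$ for the other part. Your explicit check that a nonempty cyclic set is $M$-dependent, so that the pivot clause of the definition actually applies, fills in a step the paper leaves implicit.
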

\begin{proof}
Let $\tau=u_1u_2\dots u_k$.
We proceed by induction on $k$.
If $k=1$ then, since $C$ is cyclic in $M$,  there exists a base $B$ of $M|_C$ avoiding $u_1$. Then $B$ is a canonical subset of $C=\sym_1(C)$ which does not contain $u_1$.

Hence we may assume that $k\geq 2$.
By induction, there is a canonical subset $S'$ of $\sym_{k-1}(C)$ that avoids $u_1u_2\dots u_{k-1}$. 
Let $S''$ be a canonical subset of $\sym_k(C-u_k)$. (Note that $S''$ exists by the remark after the definition of canonical subsets.)
%WE SHOULD SHOW THAT WE CAN ALWAYS FIND A canonical subset. 
Since  $u_k\in C$ and $C$ is a cyclic set in $M$, $u_k$ is not a coloop in $M|_C$. Hence $S=(u_k\cdot S')\sqcup S''$ is the required  canonical subset of $\sym_k(C)$ that does not contain $\tau$.
%The union of $u_k\cdot S'$ and any canonical subset of $\sym_k(C-u_k)$ would be a canonical subset of $\sym_k(C)$ that does not contain $\tau$.
\end{proof}

\begin{lemma}\label{lem:weakCBP}
Suppose $N$ satisfies WCBP. Then $N$ satisfies RRP, StrongRRP, ConingP, and CycP.
%CP, and 1GP.
\end{lemma}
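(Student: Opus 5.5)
We derive StrongRRP first; RRP, ConingP and CycP then follow quickly.

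\emph{StrongRRP.} Fix $X\subseteq V$. By the Remark after the definition of canonical subsets, $\sym_k(X)$ has at least one canonical subset $S$. By WCBP, $S$ is a base of $N|_{\sym_k(X)}$, so $r_N(\sym_k(X))=|S|$. Lemma~\ref{lem:canonical_size} gives
\[
|S|={|X|+k-1\choose k}-{|X|-r_M(X)+k-1\choose k},
\]
which is exactly the StrongRRP formula.

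\emph{RRP and ConingP.} Taking $X=V$ in StrongRRP gives RRP, since $r_M(V)=\rank M$. ConingP follows directly from StrongRRP by Lemma~\ref{lem:coning}.

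\emph{CycP.} Let $C$ be a cyclic set of $M$. If $C=\emptyset$, then $\sym_k(C)=\emptyset$ is cyclic, so assume $C\neq\emptyset$. We must show that no element $\tau\in\sym_k(C)$ is a coloop of $N|_{\sym_k(C)}$.
\begin{itemize}
\item By Lemma~\ref{lem:canonical_avoiding}, there is a canonical subset $S$ of $\sym_k(C)$ with $\tau\notin S$.
\item By WCBP, $S$ is a base of $N|_{\sym_k(C)}$.
\item Hence $\sym_k(C)-\tau\supseteq S$ spans $N|_{\sym_k(C)}$, so $\tau$ is not a coloop.
\end{itemize}
Since $\tau$ was arbitrary, $\sym_k(C)$ is cyclic in $N$.

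Each step is a short application of an earlier lemma, so there is no real obstacle. The only point needing care is that canonical subsets exist for every $X$, including degenerate cases such as $X=\emptyset$, where the canonical subset is the empty set and both sides of the rank formula are $0$.
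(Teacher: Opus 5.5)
Your proof is correct and is essentially the paper's own argument: StrongRRP (and hence RRP) from WCBP together with Lemma~\ref{lem:canonical_size}, ConingP via Lemma~\ref{lem:coning}, and CycP via Lemma~\ref{lem:canonical_avoiding}. Your explicit appeal to the Remark guaranteeing that canonical subsets exist, and your separate treatment of $C=\emptyset$, are harmless details that the paper leaves implicit.
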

\begin{proof}
%\noindent{Proof of RP, StrongRRP, and ConingP:}
The assertions that RRP and StrongRRP hold for $N$ are direct consequences of Lemma~\ref{lem:canonical_size} and WCBP.
The assertion that ConingP holds for $N$ follows from StrongRRP and Lemma~\ref{lem:coning}.

%\medskip
%\noindent{Proof of CycP:}
To see that $N$ satisfies CycP, choose any cyclic set $C$ of $M$.
Lemma~\ref{lem:canonical_avoiding} implies that, for each $\tau\in \sym_k(C)$, there is a canonical subset $S$ of $\sym_k(C)$ that does not contain $\tau$.
Since $S$ is a base of $N|_{\sym_k(C)}$ by WCBP, this implies that $\sym_k(C)$ is a cyclic set of $N$.
%\medskip
%\noindent{Proof of CP:}
% To see that $N$ satisfies CP, choose a cycle $C$ of $M$.
% By CycP, $\sym_k(C)$ is cyclic.
% Also, by CBP and Lemma~\ref{lem:canonical_size}, $\sym_k(C)$ has rank equal to $|\sym_k(C)|-1$ in $N$.
% Hence, $\sym_k(C)$ is a circuit of $N$.
%\medskip
%\noindent{Proof of 1GP:}
% To see that $N$ satisfies 1GP, choose  $X\subseteq V$, $v\in {\rm cl}_M(X)$, and $Y\subseteq X+v$ such that ${\rm cl}_M(X)={\rm cl}_M(Y)$.
% Then ${\rm cl}_M(X+v)={\rm cl}_M(Y)$, and hence we can choose a base $B$ of $M|(X+v)$ with $B\subseteq Y$.
% Consider a $B$-first total order  $<$ on $V$,
% and let $S$ be the lex-min canonical subset of $\sym_k(X+v)$ with respect to $<$.
% Then Lemma~\ref{lem:lex_min} and the assumption that $B\subseteq Y\subseteq X+v$ tell us that %$S=\sym_1(B)\cdot \sym_{k-1}(X+v)$.
% %In addition,  
% $$S=\sym_1(B)\cdot \sym_{k-1}(X+v)\subseteq \sym_k(X)\cup (v\cdot Y\cdot \sym_{k-2}(X+v))).$$ 
% Since $S$ is a base of $\sym_k(X+v)$ by  CBP, this implies that $$\sym_k(X+v)={\rm cl}_N(S)\subseteq {\rm cl}_N(\sym_k(X)\cup (v\cdot Y\cdot \sym_{k-2}(X+v)))).$$
\end{proof}

\begin{lemma}\label{lem:strongCBP}
Suppose $N$ satisfies CBP. Then $N$ satisfies 0-ExtP.
\end{lemma}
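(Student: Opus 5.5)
We need to show: given an $M$-independent $X$, an $N$-independent $I$, and $\tau\in\sym_{k-1}(V)$ with $\lk(\tau,I)=\emptyset$, the set $I\cup(\tau\cdot X)$ is $N$-independent. The plan is to find a construction tree in which $\tau\cdot X$ lies inside a single leaf block $S_\lambda$ that comes \emph{last} among the leaves relevant to $I$. CBP then says that everything before that leaf is spanned in a controlled way, and that the leaf contributes exactly its own words as new rank.

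\emph{Reduction.} Since $I$ is independent, extend $I$ to a base $I'$ of $N$. This alone does not help, because $\lk(\tau,I')$ need not be empty. Instead we work with ranks directly. Put $E^\tau=\sym_k(V)\setminus(\tau\cdot V)$; then $I\subseteq E^\tau$. It suffices to show that $\tau\cdot X$ is independent in $N/E^\tau$. Indeed, that gives $r_N(E^\tau\cup\tau\cdot X)=r_N(E^\tau)+|X|$, and submodularity then yields $r_N(I\cup \tau\cdot X)=|I|+|X|$.

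\emph{Construction tree.} Extend $X$ to a base $B$ of $M$ and write $\tau=u_1\cdots u_{k-1}$. We build a construction tree for a canonical subset of $\sym_k(V)$ whose rightmost leaf $\lambda^*$ has $\tau_{\lambda^*}=\tau$, $k_{\lambda^*}=1$, $X_{\lambda^*}=V$ and $Y_{\lambda^*}=B$. Then $E_{\lambda^*}=\tau\cdot V$ and every earlier leaf block lies in $E^\tau$.

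To arrange this, at the root with $(V,k)$ use pivot $u_1$, recurse into the right child $(V,k-1)$ with pivot $u_2$, and so on down to $(V,1)$, where we choose $B$. Left children are completed arbitrarily, for instance via the remark after the definition of canonical subsets. This works whenever each $u_i$ is a non-coloop of $M$ and $V$ is $M$-dependent.

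Given the tree, apply CBP to $\lambda^*$ and to its predecessor $\lambda'$. We have $E_{\le\lambda'}=E^\tau$, so $r_N(E^\tau)=|S_{\le\lambda'}|$, while $r_N(\sym_k(V))=|S_{\le\lambda'}|+|B|$. Hence $\tau\cdot B$ is independent in $N/E^\tau$, and so is its subset $\tau\cdot X$.

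\emph{Main obstacle: coloops and independent ground sets.} If some $u_i$ is a coloop of $M$, or if $V$ (or $X_\alpha$ at some stage) is $M$-independent, the pivot cannot be used. In that case I would work inside $\sym_k(Z)$ for $Z=V$ and treat coloop letters separately. When $X_\alpha$ becomes independent, the leaf is $\sym_{k_\alpha}(X_\alpha)$. One then uses Lemma~\ref{lem:canonical_construction_finer}-style ordering within an independent leaf to argue, again by CBP applied to a suitable alternative tree, that $\tau\cdot V$ is a final segment.

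Alternatively, coloops $c$ of $M$ can be handled by ConingP (Lemma~\ref{lem:weakCBP}): all words containing $c$ are coloops in the relevant restriction. This lets us delete $c$ from the ground set, using that $N|_{\sym_k(V-c)}$ inherits CBP with respect to $M|_{V-c}$, and then reduce to the coloop-free case by induction on $|V|$. When $M$ has no coloops and is not independent, the chain of pivots $u_1,\dots,u_{k-1}$ is legitimate. The case where $M$ itself is independent is immediate, since then $\sym_k(V)$ is a base of $N$ by WCBP and everything is independent. Checking that these reductions preserve the needed rank identities is the delicate part.
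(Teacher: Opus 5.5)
Your argument is essentially the paper's: the same construction tree with pivots $u_1,\dots,u_{k-1}$ down the rightmost path and a base of $M$ containing $X$ at $\lambda^*$, followed by CBP at the predecessor leaf $\lambda'$. Your contraction-and-submodularity phrasing is equivalent to the paper's observation that $I\subseteq E_{\le\lambda'}$ is spanned by $S_{\le\lambda'}$ while $S$ is independent.

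The ``main obstacle'' is smaller than you suggest, and the paper handles it directly. Only coloops among the letters of $\tau$ matter. On the rightmost path $X_\alpha=V$, the left subtrees can be arbitrary canonical subsets, and for $k\ge 2$ an $M$-independent $V$ already forces $u_1$ to be a coloop. If some $u_i$ is an $M$-coloop, apply ConingP to the set $V-u_i$ and the element $u_i\notin\cl_M(V-u_i)$. This makes every word of $u_i\cdot\sym_{k-1}(V)\supseteq\tau\cdot X$ an $N$-coloop, so $I\cup(\tau\cdot X)$ is independent at once. No deletion, induction on $|V|$, or rank-identity checking is needed. Your deletion step could not be applied to such a $u_i$ anyway, since then $\tau\notin\sym_{k-1}(V-u_i)$.
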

\begin{proof}
Choose an $N$-independent set $I\subseteq \sym_k(V)$,
a word $\tau= u_1u_2\dots u_{k-1}\in \sym_{k-1}(V)$ with $\lk(\tau,I)=\emptyset$,
and an $M$-independent set  $X\subseteq V$.
We need to show that $I\sqcup (\tau\cdot X)$ is $N$-independent. 
We may assume, without loss of generality, that $X$ is  a base of $M$. 

Since $N$ satisfies CBP, $N$ satisfies WCBP by Lemma \ref{lem:weakstrongCBP} and hence also satisfies ConingP by Lemma \ref{lem:weakCBP}.

We first consider the case when  $u_i$ is a coloop of $M$ for some $1\leq i\leq k-1$. Relabelling if necessary, we may assume that $u_1$ is a coloop of $M$. Then $u_1\not\in\cl_M(V-u_1)$ and we can use ConingP
%Lemmas \ref{lem:weakstrongCBP} and  \ref{lem:coning} 
to deduce that every edge in $u_1\cdot \sym_{k-1}(V)$ is a coloop in $N$. Since $\tau\cdot X\subseteq u_1\cdot \sym_{k-1}(V)$, this implies that 
$I\sqcup (\tau\cdot X)$ is $N$-independent.

It remains to consider the case when $u_i$ is not a coloop of $M$ for all $1\leq i\leq k-1$.
Consider
%We shall look at a construction tree of a canonical subset of $\sym_k(U)$ using the notation given in Section~\ref{subsec:canonical}. Specifically, consider 
a construction tree $T$ for a canonical subset $S$ of $\sym_k(V)$. Adopting the notation given in Section~\ref{subsec:canonical}, we let $\alpha_1$ be the root vertex of $T$, $\lambda^*$ be the rightmost leaf node of $T$ and $\alpha_2,\alpha_3, \ldots, \alpha_{k-1}$ be the internal nodes  on the path in $T$ from $\alpha_1$ to $\lambda^*$. Then $X_{\alpha_i}=V$ for all $1\leq i\leq k-1$.  Since $u_i$ is not a coloop of $M$ for all $1\leq i\leq k-1$ and $X$ is  a base of $M$, we may assume that  
the pivot vertex $v_{\alpha_i}$ associated to ${\alpha_i}$ is $u_i$, and that 
the  set $Y_{\lambda^*}$ associated to $\lambda^*$ is $X$.
%and the path in $T$ from ${\alpha_1}$ to  $\lambda^*$ consists of nodes with pivot vertices $u_1, u_2,\dots, u_{k-1}$ in this order.
Then the sets $E_{\lambda^*},S_{\lambda^*}$ associated to $\lambda^*$ satisfy
%in terms of notation given in (\ref{eq:leaf}), 
\begin{equation}\label{eq:0ext1}
E_{\lambda^*}=\tau\cdot V \quad \text{ and } \quad S_{\lambda^*}=\tau\cdot X.
\end{equation}
%where  $\lambda^*$ denotes the rightmost leaf of the construction tree.

Let $\lambda'$ be the leaf node  immediately preceding $\lambda^*$ in $T$.
Then, $\sym_k(V)=E_{\leq \lambda'}\sqcup E_{\lambda^*}$ and 
$S=S_{\leq \lambda'}\sqcup S_{\lambda^*}$ by (\ref{eq:leaf}). In addition,
CBP implies that $S_{\leq \lambda'}$ is a base of $N|_{E_{\leq \lambda'}}$.
Since $I$ has no element that contains $\tau$, 
(\ref{eq:0ext1}) implies that
$I\cap E_{\lambda^*}=\emptyset$, and hence $I\subseteq E_{\leq  \lambda'}$.
In particular, $I$ is in the span of $S_{\leq \lambda'}$.
Since $S=S_{\leq \lambda'}\sqcup S_{\lambda^*}$
and $S$ is $N$-independent by CBP,
$I\sqcup S_{\lambda^*}$ is $N$-independent. 
We can now use (\ref{eq:0ext1}) to deduce that $I\sqcup (\tau\cdot X)$ is $N$-independent.
\end{proof}

\subsection{Further relations between properties}
We close this section with two additional lemmas which will play a significant role in  the proofs of our main results.

\begin{lemma}\label{lem:R_CBP}
Suppose $N$ satisfies RRP, 0-ExtP and ConingP.
Then $N$ satisfies CBP.
\end{lemma}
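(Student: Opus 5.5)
The plan is to prove CBP by combining independence, which comes from 0-ExtP via Lemma~\ref{lem:canonical_construction_finer}, with a counting argument that yields the matching upper bound on rank. I will first show that every canonical subset is $N$-independent. Then I will show that every canonical subset of $\sym_k(X)$ spans $N|_{\sym_k(X)}$, which gives WCBP. Finally I will pass from $\sym_k(X)$ to the prefixes $E_{\leq\lambda}$ by a second exchange-type counting argument.

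\emph{Step 1 (independence).} Fix $X$, a construction tree $T$ of a canonical subset $S$ of $\sym_k(X)$, and a leaf $\lambda$. Let $\lambda_0$ be the leftmost leaf. Apply Lemma~\ref{lem:canonical_construction_finer} with $F=\emptyset$ and $\lambda_1=\lambda_0$, and then prepend the 0-extensions building $S_{\lambda_0}$. Together these show that $S_{\leq\lambda}$ is obtained from $\emptyset$ by $M$-valid 0-extensions. Repeated use of 0-ExtP then gives that $S_{\leq\lambda}$ is $N$-independent.

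\emph{Step 2 (upper bound $r_N(\sym_k(X))\leq f(|X|,k,r_M(X))$, hence WCBP).} Let $I$ be a base of $N|_{\sym_k(X)}$. Enumerate $V\setminus X=\{w_1,\dots,w_m\}$ and put $X_j=X\cup\{w_1,\dots,w_j\}$. We grow $I$ to an independent set $I_m\subseteq\sym_k(V)$ with $I_j\subseteq\sym_k(X_j)$, as follows.
\begin{itemize}
\item If $w_j\notin\cl_M(X_{j-1})$, ConingP says all of $w_j\cdot\sym_{k-1}(X_j)$ are coloops of $N|_{\sym_k(X_j)}$. So we add all of them, gaining $\binom{|X_j|+k-2}{k-1}$ elements.
\item If $w_j\in\cl_M(X_{j-1})$, then $w_j$ is not a coloop of $M|_{X_j}$, so it can serve as a pivot. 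Take a construction tree whose root pivot is $w_j$ and whose left subtree encodes any canonical subset of $\sym_k(X_{j-1})$. The leaves of the right subtree contribute $w_j\cdot S''$, with $S''$ canonical in $\sym_{k-1}(X_j)$. Applying Lemma~\ref{lem:canonical_construction_finer} with $F=I_{j-1}\subseteq\sym_k(X_{j-1})=E_{\leq\lambda_1}$, where $\lambda_1$ is the last leaf of the left subtree, and then 0-ExtP, we may add $w_j\cdot S''$. This gains $f(|X_j|,k-1,r_M(X_j))$ elements by Lemma~\ref{lem:canonical_size}.
\end{itemize}
In both cases the gain equals $f(|X_j|,k,r_M(X_j))-f(|X_{j-1}|,k,r_M(X_{j-1}))$ by Pascal's rule, exactly as in the proofs of Lemma~\ref{lem:canonical_size} and Lemma~\ref{lem:coning}. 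Telescoping and using RRP for $r_N(\sym_k(V))$ gives
\[
r_N(\sym_k(X))+f(|V|,k,\rank M)-f(|X|,k,r_M(X))\leq r_N(\sym_k(V))=f(|V|,k,\rank M).
\]
Hence $r_N(\sym_k(X))\leq f(|X|,k,r_M(X))=|S|$. Combined with Step~1, every canonical subset is a base of $N|_{\sym_k(X)}$.

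\emph{Step 3 (prefixes).} Let $F$ be a base of $N|_{E_{\leq\lambda}}$. By Lemma~\ref{lem:canonical_construction_finer} with $\lambda_1=\lambda$ and $\lambda_2$ the rightmost leaf, the set $F\sqcup\bigsqcup_{\lambda'>_T\lambda}S_{\lambda'}$ is obtained from $F$ by valid 0-extensions, so it is $N$-independent by 0-ExtP. It lies in $\sym_k(X)$, so by Step~2
\[
r_N(E_{\leq\lambda})+|S\setminus S_{\leq\lambda}|\leq r_N(\sym_k(X))=|S|,
\]
that is, $r_N(E_{\leq\lambda})\leq|S_{\leq\lambda}|$. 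With Step~1 this shows that $S_{\leq\lambda}$ is a base of $N|_{E_{\leq\lambda}}$, which is CBP.

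The main obstacle is Step~2, namely obtaining the rank upper bound from the single global equation RRP. The delicate point is checking that the link condition $\lk(\tau,\cdot)=\emptyset$ holds when 0-extensions with $\tau$ containing $w_j$ are applied on top of the arbitrary independent set $I_{j-1}$. This is exactly what (\ref{eq:leaf1.5}) and Lemma~\ref{lem:canonical_construction_finer} provide, since $I_{j-1}$ contains no word with the letter $w_j$.
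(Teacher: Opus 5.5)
Your proof is correct and uses the same ingredients as the paper's proof: independence of canonical subsets from 0-ExtP via Lemma~\ref{lem:canonical_construction_finer}, augmentation by coloops via ConingP when the new letter lies outside the closure and by 0-extensions when it lies inside, comparison with RRP, and the same extension-to-$S$ argument for the prefixes $E_{\leq\lambda}$. The only difference is organizational. The paper runs two downward inductions, first for $M$-spanning $X$ and then for general $X$ via ConingP, and argues by adding a single element $e$ to get a contradiction. You unroll both inductions into one forward augmentation of a base of $N|_{\sym_k(X)}$ along an arbitrary ordering of $V\setminus X$, and replace the contradictions with direct counting.
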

\begin{proof}
Note that, by 0-ExtP and Lemma~\ref{lem:canonical_construction},
every canonical subset of $\sym_k(X)$ is independent in $N$ for all $X\subseteq V$.
We first prove two claims which will imply that StrongRRP holds for $N$.

\begin{claim}\label{claim:R_CBP1}
Suppose $X\subseteq V$ is a  spanning set of $M$. Then every canonical subset of $\sym_k(X)$ is a base of $N|_{\sym_k(X)}$.
\end{claim}
\begin{proof}
The claim is trivial if $k-1$, and we assume $k\geq 2$.

We  proceed by induction on $|V\setminus X|$.
%for $X\subseteq V$.
The base case (when $X=V$) follows by RRP and the fact that every canonical subset of $\sym_k(V)$ is $N$-independent. Hence we may assume that $X\neq V$.

Choose  a vertex  $v\in V\setminus X$. Since $X$ is $M$-spanning, $v\in {\rm cl}_M(X)$.
This implies that, for any canonical subset $S_{X,k}$ of $\sym_k(X)$ and any canonical subset $S_{X+v,k-1}$ of $\sym_{k-1}(X+v)$, we have
\[
S_{X+v,k}:=S_{X,k}\sqcup (v\cdot S_{X+v,k-1})
\]
is a canonical subset of $\sym_k(X+v)$.
By induction, $S_{X+v,k}$ is a base of $N|_{\sym_k(X+v)}$.

$S_{X+v,k}$ admits a construction tree $T$ such that the root has  the pivot vertex $v$, 
the left subtree $T_1$ of the root is a construction tree of  $S_{X,k}$ 
and the right subtree $T_2$ of the root is a construction tree of  $S_{X+v,k-1}$.
Let $\lambda_i$ be the rightmost leaf in $T_i$ for each $i=1,2$.
Then, in $T$, 
\begin{align}
S_{X,k}&=S_{\leq \lambda_1}\subseteq E_{\leq \lambda_1}=\sym_k(X) \label{eq:512_1}\\
v\cdot S_{X+v,k-1}&=\bigsqcup_{\lambda_1<\lambda\leq \lambda_2} S_{\lambda}.    \label{eq:512_2}
\end{align}

Our goal is to prove that $S_{X,k}$ is a base of $N_{\sym_k(X)}$.
Suppose, for a contradiction, that $S_{X,k}$ is not a base of $N_{\sym_k(X)}$. Since $S_{X,k}$ is a canonical subset, it is $N$-independent and hence 
there exists a word $e\in \sym_k(X)\setminus S_{X,k}$ such that $S_{X,k}+e$ is $N$-independent.
By (\ref{eq:512_1}), $S_{X,k}+e\subseteq E_{\leq \lambda_1}$.
We can now apply Lemma~\ref{lem:canonical_construction_finer} with $F=S_{X,k}+e$
and (\ref{eq:512_2}) to deduce that  $(S_{X,k}+e)\sqcup (v\cdot S_{X+v,k-1})$ can be
obtained from $S_{X,k}+e$ by a sequence of $M$-valid, 0-extension operations.
The hypothesis that $N$ satisfies 0-ExtP  now implies that  $(S_{X,k}+e)\sqcup (v\cdot S_{X+v,k-1})$ is $N$-independent. This contradicts the fact  that 
$S_{X,k}\sqcup (v\cdot S_{X+v,k-1})$ is a base of $N|_{\sym_k(X+v)}$.

%  Suppose, for a contradiction, that $S_{X,k}$ is not a base of $\sym_k(X)$. Since $S_{X,k}$ is a canonical subset, it is $N$-independent and hence 
% there exists a word $e\in \sym_k(X)\setminus S_{X,k}$ such that $S_{X,k}+e$ is $N$-independent.
% Since $(S_{X,k}+e)\sqcup (v\cdot S_{X,k-1})$ can be obtained from $S_{X,k}+e$
% by a sequence of $M$-valid, 0-extension by  Lemma~\ref{lem:canonical_construction_finer} \bill{ADD MORE EXPLANATION}\st{Agree. I will do it tomorrow.}, 0-ExtP  implies that  $(S_{X,K}+e)\sqcup (v\cdot S_{X,k-1})$ is $N$-independent. This contradicts the fact  that 
% $S_{X,k}\sqcup (v\cdot S_{X,k-1})$ is a base of $\sym_k(X+v)$.

% Hence $S_{X,k}$ is a base of $\sym_k(X)$ and the claim follows by induction.
\end{proof}

\begin{claim}\label{claim:R_CBP2}
$N$ satisfies StrongRRP. 
%\bill{DOES THIS FOLLOW IMMEDIATELY FROM CLAIM 5.12 AND LEMMA 5.1?}
%\st{Not sure. Claim 5.12 is proved only for a spanning set $X$ of $M$. As far as I understand, Claim 5.13 is exactly the place we use ConingP. In fact, the instance constructed in Theorem 3.5 is a counterexample to your guess: the instance of Theoreom 3.5 satisfies RRP and 0-Ext, but does not satisfy StrongRRP.}
\end{claim}
\begin{proof}
We use induction on $|V\setminus X|$ to show that the rank formula in StrongRRP holds for all $X\subseteq V$.
The case when $X$ is $M$-spanning follows from Lemma \ref{lem:canonical_size} and Claim~\ref{claim:R_CBP1}.

Suppose $X$ is not $M$-spanning. 
Then, there exists a vertex $v\in V\setminus  {\rm cl}_M(X)$.
Since every canonical subset of $\sym_k(X)$ is $N$-independent,
$r_N(\sym_k(X))\geq {|X|+k-1\choose k}-{|X|-r_M(X)+k-1\choose k}$.
We can now use ConingP to obtain 
\begin{align*}
r_N(\sym_k(X+v))&\geq r_N(\sym_k(X))+|v\cdot \sym_{k-1}(X+v)| 
%\quad (\text{by ConingP})
\\
&\geq 
{|X|+k-1\choose k}-{|X|-r_M(X)+k-1\choose k}+{|X+v|+k-2\choose k-1} \\
&={|X+v|+k-1\choose k}-{|X+v|-r_M(X+v)+k-1\choose k}.
\end{align*}
The induction hypothesis for $X+v$ implies that each inequality holds with equality.
In particular, $r_N(\sym_k(X))= {|X|+k-1\choose k}-{|X|-r_M(X)+k-1\choose k}$ holds.
\end{proof}

We can now show that CBP holds for $N$.
Choose $X\subseteq V$ and let $T$ be the construction tree of a canonical subset $S$ of $\sym_k(X)$.

Suppose, for a contradiction, that $S_{\leq \lambda}$ is not a base of $N|_{E_{\leq \lambda}}$ for some leaf node $\lambda$ of $T$. Since $S$ is a canonical subset of $\sym_k(X)$, $S$ is $N$-independent. Since $S_{\leq \lambda}\subseteq S$, it is also
%0-ExtP and Lemma~\ref{lem:canonical_construction} imply that 
$N$-independent.
Hence, $S_{\leq \lambda}$ is not a spanning set of $N|_{E_{\leq \lambda}}$
and we may choose an $e\in E_{\leq \lambda}\setminus S_{\leq \lambda}$ such that $S_{\leq \lambda}+e$ is $N$-independent.
By Lemma~\ref{lem:canonical_construction_finer} (with $\lambda_1$ equal to $\lambda$ and $\lambda_2$ equal to the rightmost leaf of $T$),
$S+e$ can be obtained from $S_{\leq \lambda}+e$ by a sequence of 0-extensions.
We can now use 0-ExtP to deduce that $S+e$ is $N$-independent
and hence $r_N(\sym_k(X))$ is larger than the size of the canonical subset $S$ for $\sym_k(X)$.
This contradicts   Claim~\ref{claim:R_CBP2} and completes the proof of the lemma.
\end{proof}

\begin{lemma}\label{lem:ext_star}
Suppose $N$ satisfies ExtP and RRP. Then $N$ satisfies Dual Multilinearity. \end{lemma}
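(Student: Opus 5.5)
The plan is to combine the strong-order statement that ExtP already gives us with a rank count coming from canonical subsets. When two matroids satisfy $M\preceq_{\rm s} L$ and have the same rank, Proposition~\ref{prop:strong} forces $M=L$; here $L$ will be a contraction of $N$.

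\textbf{Step 1: consequences of ExtP.} Suppose $N$ satisfies ExtP with respect to $M$. The proof of Lemma~\ref{lem:duality} (the part establishing (\ref{eq:ExtP})) shows two things. First, WConingP holds for $N$. Second, for every $\tau\in\sym_{k-1}(V\setminus CL_M)$ we have
\[M\preceq_{\rm s} N/(\tau\cdot V)^c\]
via $v\mapsto\tau\cdot v$, where $(\tau\cdot V)^c=\sym_k(V)\setminus(\tau\cdot V)$. WConingP is exactly the coloop clause of DM. So it remains to show that, for each such $\tau$, $N/(\tau\cdot V)^c$ has rank $\rank M$. Since the strong order gives $\rank N/(\tau\cdot V)^c\ge\rank M$, it suffices to prove
\[r_N((\tau\cdot V)^c)\ge \rank N-\rank M.\]

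\textbf{Step 2: upgrade to CBP.} By Lemma~\ref{lem:extension_coning}, ExtP implies ConingP. By (\ref{eq:dual_biliearity_implication}), ExtP implies 0-ExtP. Together with RRP, Lemma~\ref{lem:R_CBP} then shows that $N$ satisfies CBP.

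\textbf{Step 3: a suitable construction tree.} We may assume $k\ge2$, since for $k=1$ the contraction in question is $N$ itself. If $V$ is $M$-independent then $CL_M=V$, so $\sym_{k-1}(V\setminus CL_M)$ is empty and only the coloop clause remains, which Step 1 already gives. So assume $V$ is $M$-dependent, write $\tau=u_1\cdots u_{k-1}$, and note that no $u_i$ is a coloop of $M$. Following the proof of Lemma~\ref{lem:strongCBP}, build a construction tree $T$ for a canonical subset $S$ of $\sym_k(V)$ as follows:
\begin{itemize}
\item along the rightmost path from the root, the internal nodes $\alpha_1,\dots,\alpha_{k-1}$ all have $X_{\alpha_i}=V$, which is dependent, and use pivots $u_1,\dots,u_{k-1}$;
\item the rightmost leaf $\lambda^*$ carries $Y_{\lambda^*}=B$ for a base $B$ of $M$;
\item the left subtrees are chosen arbitrarily, which is possible by the remark after the definition of canonical subsets.
\end{itemize}
Then $E_{\lambda^*}=\tau\cdot V$ and $S_{\lambda^*}=\tau\cdot B$. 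Let $\lambda'$ be the leaf immediately preceding $\lambda^*$. By (\ref{eq:leaf}), $E_{\le\lambda'}=(\tau\cdot V)^c$. By CBP, $S_{\le\lambda'}$ is a base of $N|_{(\tau\cdot V)^c}$. Using Lemma~\ref{lem:canonical_size} and RRP,
\[|S_{\le\lambda'}|=|S|-\rank M=\rank N-\rank M.\]

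\textbf{Step 4: conclusion.} Step 3 gives the inequality required in Step 1, so $N/(\tau\cdot V)^c$ and $M$ have equal rank. Proposition~\ref{prop:strong} then yields $M= N/(\tau\cdot V)^c$ under $v\mapsto\tau\cdot v$. This is exactly the isomorphism clause of DM, and DM follows.

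The only delicate point is Step 3: checking that the pivots can really be forced to be the letters of $\tau$ with $X_{\alpha_i}=V$ at each internal node on the rightmost path. This needs $V$ to be dependent and each $u_i$ to be a non-coloop, and both hold by the assumptions on $\tau$ together with the separate treatment of the independent case.
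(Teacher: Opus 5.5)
Your proof is correct, but it uses ExtP differently from the paper.

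The paper compares independent sets of $N/E$, with $E=\sym_k(V)\setminus(\tau\cdot V)$, by hand. First, 0-ExtP shows that $\tau\cdot X$ is independent in $N/E$ for every $M$-independent $X$. For the converse, it supposes $\tau\cdot X$ is independent in $N/E$ for some $M$-dependent $X$. It then uses ExtP repeatedly to grow $X$ into an $M$-spanning $Y$ with $B\sqcup(\tau\cdot Y)$ independent and $|Y|>\rank M$, where $B$ is a base of $N|_E$. A contradiction follows by counting against the canonical base $S=B'\sqcup(\tau\cdot Z)$, built from exactly your construction tree.

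You instead reuse the relation $M\preceq_{\rm s}N/E$ and WConingP that the proof of Lemma~\ref{lem:duality} already extracts from ExtP. Both directions then collapse into the single equality $\rank(N/E)=\rank M$, which you read off from the same canonical subset. The equal-rank clause of Proposition~\ref{prop:strong} finishes the argument.

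Your route is more modular: it shows the lemma is really ``Strong Order Multilinearity plus the right rank for each $N/E$''. The paper's route is more self-contained, using only the definitions of ExtP and 0-ExtP rather than the flat characterisation of the strong order.

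Two minor points:
\begin{itemize}
\item For $k=1$ you should say explicitly that RRP gives $\rank N=\rank M$; that is what makes ``the contraction is $N$ itself'' sufficient.
\item In either argument the rank count needs only that $S$ is $N$-independent (by 0-ExtP and Lemma~\ref{lem:canonical_construction}) and has size $\rank N$ (by Lemma~\ref{lem:canonical_size} and RRP). Then $S_{\le\lambda'}\subseteq E$ is independent of size $\rank N-\rank M$, so the appeal to CBP via Lemma~\ref{lem:R_CBP} is stronger than necessary.
\end{itemize}
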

\begin{proof}
We first note that the hypothesis that $N$ satisfies ExtP implies that $N$  satisfies both ConingP and 0-ExtP by Lemma~\ref{lem:extension_coning} and (\ref{eq:dual_biliearity_implication}), respectively.

To verify Dual Multilinearity, choose $\tau\in \sym_{k-1}(V)$.
If some letter $v$ of $\tau$ is a coloop of $M$ then,
%,by Lemma~\ref{lem:extension_coning}, 
since $N$ satisfies ConingP, all edges in $v\cdot \sym_{k-1}(V)$ are coloops in $N$.
%In particular, all edges in $\tau \cdot V$ are $N$-coloops,
%i.e. $N^*|(\tau\cdot V)$ is the rank zero matroid.
Thus, we may assume that no letter in $\tau$ is a coloop in $M$.
Let $$E:=\sym_k(V)\setminus (\tau\cdot V),$$
and let $B$ be a base of $N|_E$.
We need to show that $N/E$ is isomorphic to $M$ via the bijection $\tau\cdot v \mapsto v$ between  $\tau\cdot V$ and $V$.
%Let $B$ be a base of $N|E$.

Let $X$ be an independent set of $M$. 
Since 
%$N$ satisfies ExtP and  
no word in $E$  contains $\tau$, $\lk(\tau,B\sqcup (\tau\cdot X))= X$, and hence we can 
%iteratively 
apply 0-ExtP to $B$ to deduce that
%, for each $M$-independent set $X$, 
%we can augment from $B$ to 
$B\sqcup(\tau \cdot X)$ 
%keeping $N$-independence  by ExtP.
is independent in $N$. This implies that  $\tau\cdot X$ is independent in $N/E$, for every $M$-independent set $X\subseteq V$.

To verify the converse direction, we suppose for a contradiction that some $X\subseteq V$ is $M$-dependent, and $\tau\cdot X$ is independent in $N/E$.
Then, $B\sqcup (\tau\cdot X)$ is independent in $N$.
We can extend $X$ to an $M$-spanning set $Y$ by adding $t:=r_M(V)-r_M(X)$ elements $y_1,y_2,\ldots, y_t$ of $V$ to $X$.
Since 
%$X=\lk(\tau,B+\tau\cdot X)$,
$\lk(\tau,B\sqcup (\tau\cdot Y))= Y$,
we can iteratively apply ExtP to $B\sqcup (\tau\cdot (X\cup \{y_1,y_2,\ldots,y_i\}))$ for $0\leq i\leq t-1$,   to deduce that $B\sqcup (\tau\cdot Y)$ is independent in $N$.
%\st{COMMENT: $\uparrow$ This is where ExtP is needed.}
%by ExtP.
In addition, since $X$ is $M$-dependent, we have
\begin{equation}\label{eq:ext_star}
|Y|\geq r_M(V)+1.
\end{equation}

On the other hand, we can construct a canonical subset of $\sym_k(V)$ which has the form $S=B'\cup(\tau\cdot Z)$
for some $B'\subseteq E$ and some base $Z$ of $M$ as follows. We recursively construct  $S$ starting from the root vertex $\alpha_1$ in its construction tree by putting $X_{\alpha_1}=V$, $k_{\alpha_1}=k$, and  choosing the first letter in $\tau$ as our first pivot vertex for $S$. The recursive construction  proceeds by  choosing the next unused letter in $\tau$ as the pivot vertex at the rightmost leaf of the current tree and making an arbitrary choice for the pivot vertices at the other leaves. The rightmost leaf $\lambda^*$ in the final construction tree $T$ for $S$ will have $\tau_{\lambda^*}=\tau$ and $X_{\lambda^*}=V$. We can now put $Z=Y_{\lambda^*}$ and $B'=S\sm (\tau\cdot Z)$. 

Since $N$ satisfies 0-ExtP, ConingP, and RRP,  it also satisfies CBP by Lemma~\ref{lem:R_CBP}.
Hence, $S$ is a base of $N$.
Since $B$ is a base of $N|_{E}$ and $B'$ is $N$-independent with $B'\subseteq E$,
$|B|\geq |B'|$.
Also, by (\ref{eq:ext_star}), $|Y|>|Z|$.
Therefore, $|B\sqcup(\tau\cdot Y)|>|B'\sqcup(\tau\cdot Z)|=|S|$. This contradicts the facts that
$B\sqcup (\tau\cdot Y)$ is $N$-independent and $S$ is a base of $N$
and completes the proof of the lemma.
\end{proof}

%\section{Proofs of Theorems~\ref{thm:2} to \ref{thm:dual_k_weak}}
\section{Proofs of Main Results}
\label{sec:positive}
We have already verified Theorems~\ref{thm:2_weak}(b), \ref{thm:k}(b),  %\ref{thm:k_weak}(b) 
and  \ref{thm:mason_k_3}(b) by proving   Theorems~\ref{thm:example}, \ref{thm:rank_zero}, 
%\ref{thm:example1}, 
and \ref{thm:counterexample}, respectively. We will verify the remaining parts of Theorems~\ref{thm:2} to 
%\ref{thm:k_weak} 
\ref{thm:mason_k_3} by proving their dual statements, Theorems~\ref{thm:dual_2} to %\ref{thm:dual_k_weak}
\ref{thm:dual_mason_k_3}, and then applying  Lemma \ref{lem:duality}. 
%We will also show that ...

%\subsection{General case}
\subsection{Proof of Theorem~\ref{thm:dual_k_weak}}
%\begin{proof}[Proof of Theorem~\ref{thm:dual_k_weak}]
To verify the equivalence of the three bullet points in the statement of Theorem~\ref{thm:dual_k_weak}
we first note that, if CBP holds for $N$, then 0-ExtP, RRP, StrongRRP and ConingP all hold for $N$ by
Lemmas~\ref{lem:weakstrongCBP},  \ref{lem:weakCBP} and \ref{lem:strongCBP}. %\bill{first reference should give lemma 5.3 but instead it comes out as lemma 5.2???}
 In addition,
Lemma~\ref{lem:R_CBP} tells us that, if $N$ satisfies RRP, 0-ExtP, and ConingP, then $N$ satisfies CBP.
It remains to consider the case when  $N$ satisfies StrongRRP and 0-ExtP.
Then $N$ satisfies  ConingP by Lemma~\ref{lem:coning}, and
hence $N$ satisfies RRP, 0-ExtP, and ConingP.

The final part of the statement of Theorem~\ref{thm:dual_k_weak} now follows by combining the equivalence between the first and third bullet points of this theorem with Lemmas~\ref{lem:weakstrongCBP} and \ref{lem:weakCBP}.
%This completes the proof of the theorem.
%\end{proof}
\qed

\subsection{Proof of Theorem~\ref{thm:dual_k}}
%\begin{proof}[Proof of Theorem~\ref{thm:dual_k}]
%We first note that if $N$ satisfies $DM$ then $N$ satisfies CoCP and ExtP by  
%DB $\Rightarrow$ CoCP $\Rightarrow$ ExtP
%by the matroid duality from Lemma~\ref{lem:duality} and %(\ref{eq:biliearity_implication}).
Suppose $N$ satisfies RRP and ExtP.
Then $N$ satisfies DM  by Lemma~\ref{lem:ext_star}.
We can now use (\ref{eq:dual_biliearity_implication}) to deduce that $N$ satisfies CoCP, ExtP and 0-ExtP.
Lemma \ref{lem:extension_coning} now tells us that $N$ satisfies ConingP and hence we can use Lemma \ref{lem:R_CBP} to deduce that $N$ satisfies CBP. Lemmas~\ref{lem:weakstrongCBP} and \ref{lem:weakCBP} now give StrongRRP and CycP for $N$.
\qed

%\subsection{$k=2$}
\subsection{Proofs of Theorems~\ref{thm:dual_2}, \ref{thm:dual_2_weak} and \ref{thm:dual_mason_k_3}}
%The counterexamples of Mason's conjecture means that 
%CycP and DB does not imply RRP in general.
Both Theorems~\ref{thm:dual_2} and \ref{thm:dual_2_weak} concern the special case when $N$ is a matroid on $\sym_2(V)$.  We will need the following result which tells us that CycP implies a stronger property in this special case.

\begin{lemma}\label{lem:Mason_k=2}
Let $M=(V,r_M)$ be a matroid and $N$ be a matroid on $\sym_2(V)$ which satisfies CycP with respect to $M$.
Suppose $X\subseteq V$, and $S$ is a  canonical subset of $\sym_2(X)$.  
Then $S$ is a spanning set of $N|_{\sym_2(X)}$.
\end{lemma}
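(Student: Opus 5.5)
We argue by induction on $|X|$, using the recursive structure of canonical subsets of $\sym_2(X)$. If $X$ is $M$-independent then $S=\sym_2(X)$ and there is nothing to prove. So assume $X$ is $M$-dependent. Let $v$ be a pivot vertex for $S$, so that $v$ is not a coloop of $M|_X$, $S'=S\cap\sym_2(X-v)$ is a canonical subset of $\sym_2(X-v)$, and $\lk(v,S)$ is a base of $M|_X$. By induction $S'$ spans $N|_{\sym_2(X-v)}$. Since $S=S'\sqcup (v\cdot \lk(v,S))$, it remains to show that every word $v\cdot u$ with $u\in X\setminus \lk(v,S)$ lies in ${\rm cl}_N(S)$.

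Fix such a $u$ and write $B=\lk(v,S)$. Since $B$ is a base of $M|_X$, the set $B+u$ contains a unique $M$-circuit $C$, and $u\in C$. We will use the cyclic set $C\cup\{v\}$ or $C$, depending on whether $v\in C$. The intended mechanism is that CycP, applied to the cyclic set $C$ (or to a cyclic set containing $C$ and $v$), tells us that no element of $\sym_2(\cdot)$ is a coloop there. Hence $v u$ is spanned by the other words of that $\sym_2$.

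\emph{Case $v\notin C$.} Here $C\subseteq X-v$, and $C+v$ need not be cyclic. Instead we use $D=C\cup\{v\}\cup C_v$, where $C_v$ is an $M$-circuit in $X$ through $v$; this exists because $v$ is not a coloop of $M|_X$. Then $D$ is cyclic, so $\sym_2(D)$ is $N$-cyclic. The words of $\sym_2(D)$ either avoid $v$, and so lie in $\sym_2(X-v)\subseteq {\rm cl}_N(S')$, or have the form $v w$ with $w\in D$. We therefore want to order the words $v w$ with $w\in D\setminus B$ and peel them off one at a time.

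This is the main obstacle. A single application of CycP only shows that $vu$ lies in the closure of all other words of $\sym_2(D)$, and these may include further words $vw$ with $w\notin B$. So we need a counting version. Choose $D$ minimal, and compare the rank of $N|_{\sym_2(D)}$ with the ranks of $\sym_2(D-v)$ plus the words $v\cdot(B\cap D)$. The natural route is to show that the $v$-words outside $B$ form a set contained in a cocircuit-free region, so that CycP forces each of them into the closure. Concretely, we would instead pick $D=C$ when $v\in C$. Then $\sym_2(C)$ is cyclic, and among the words containing $v$, only $vu$ and $v w$ for $w\in C\setminus B$ are missing from ${\rm cl}_N(S)$. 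Since $C\subseteq B+u$, only $vu$ is missing, so CycP directly gives $vu\in{\rm cl}_N(S)$.

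\emph{Case $v\notin C$, handled by a choice of basis.} To reduce to the previous situation, we first replace $S$ by exploiting the freedom in the choice of $B$ through basis exchange. If $v\notin C$, pick an $M$-circuit $C'\subseteq B+v$; this is possible when $v\notin B$, and otherwise $v\in B$ gives a different configuration. Then apply CycP to $\sym_2(C')$ together with the already-spanned words to add words $v w$ gradually, proceeding by induction on the number of words $vw$ ($w\in X$) not yet in ${\rm cl}_N(S)$. At each step, choose a circuit containing exactly one unspanned word. The existence of such a circuit, guaranteed by the fact that $\{w: vw\in {\rm cl}_N(S)\}$ is $M$-closed, is the key claim to verify.
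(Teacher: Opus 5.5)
\paragraph{There is a genuine gap in the case $v\notin C$.} Your setup matches the paper's: you induct via the pivot $v$, use that $S\cap\sym_2(X-v)$ spans $N|_{\sym_2(X-v)}$, and reduce to showing $vu\in{\rm cl}_N(S)$ for each $u\in X\setminus B$, where $B=\lk(v,S)$. Your case $v\in C$ (with $C$ the fundamental circuit of $B+u$) is correct. The case $v\notin C$, however, is never proved.

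\begin{itemize}
\item You end by deferring to a ``key claim'' that $\{w: vw\in{\rm cl}_N(S)\}$ is $M$-closed. That claim is circular. This set contains the base $B$ of $M|_X$, so its being closed in $M|_X$ is equivalent to the conclusion of the lemma.
\item The mechanism you propose there cannot work in general. You want, at each step, a single $M$-circuit containing exactly one unspanned word. Such a circuit must contain both $v$ and $u$. If $v$ and $u$ lie in different connected components of $M|_X$, no circuit does. You need a cyclic set that is a union of circuits.
\item Your first attempt, $D=C\cup\{v\}\cup C_v$, was on the right track. It fails only because an arbitrary circuit $C_v$ through $v$ may contain several elements outside $B$.
\end{itemize}

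\paragraph{The missing idea.} Choose the circuit through $v$ inside $B$ plus a \emph{single} extra element, and handle that element first with your first case.
\begin{itemize}
\item Since $v$ is not a coloop of $M|_X$, there is $x\in X\setminus B$ whose fundamental circuit $C_x\subseteq B+x$ contains $v$. If $v\notin B$, take $x=v$. If $v\in B$ and no such $x$ existed, then $X-v\subseteq{\rm cl}_M(B-v)$, and $v$ would be a coloop.
\item Your first case then gives $vx\in{\rm cl}_N(S)$.
\item Now take $u$ with $v\notin C_u$. The set $C_u\cup C_x$ is $M$-cyclic, contains $v$, and lies in $B+u+x$. Every word of $\sym_2(C_u\cup C_x)$ other than $vu$ therefore lies in $\sym_2(X-v)\cup(v\cdot B)\cup\{vx\}\subseteq{\rm cl}_N(S)$.
\item CycP then gives $vu\in{\rm cl}_N(S)$.
\end{itemize}
This is exactly the paper's argument: its circuit $C$ and vertex $v_i$ are your $C_x$ and $x$, and it applies CycP to $\sym_2(C_w\cup C)$ for every $w\in X_j\setminus Y_j$.
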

\begin{proof}
Let 
%$X=\{x_1,x_2,\ldots,x_m\}$ and 
$r_X=r_M(X)$. We will adopt  the notation and terminology given in Section \ref{sec:contree}. Let $T$ be a construction tree for $S$ and $P$ be the path in $T$ from its root $\alpha_1$ to its leftmost leaf node $\lambda_0$. We saw in the proof of Lemma \ref{lem:weakstrongCBP}(b) that every leaf node other than $\lambda_0$ is the right child of a node of $P$.
%In addition, we can 
Let $\lambda_0,\lambda_1, \ldots,\lambda_{m}$ be the enumeration of the leaf nodes of $T$ using the total order $\leq_T$.  To simplify notation we put
$X_i=X_{\lambda_i}$, $Y_i=Y_{\lambda_i}$,  $\tau_i=\tau_{\lambda_i}$ and $S_i=S_{\lambda_i}$ for all $0\leq i\leq m$. Then (\ref{eq:leaf}) gives $S=\bigsqcup_{i=0}^{m}S_{i}$. Relabelling the vertices in $X$, we may assume that $(v_1,v_2,\ldots,v_{m})$ is the sequence of  pivot vertices we encounter when traversing $P$ from $\alpha_1$ to $\lambda_0$ (so $m=|X|-r_X$).
Then: 
\begin{itemize}
    \item $X_{i}=X\setminus\{v_{1},\ldots,v_{m-i}\}$; 
    \item $S_{0}=\sym_2(X_0)$; 
    \item for $1\leq i\leq m$, $Y_{i}$ is a base of $M|_{X_{i}}$,
    %$Y_{i}=\sym_1(B_i)$ where $B_i$ is a base of $M|_{X_{i}}$,
    $\tau_{i}=v_{m-i+1}$ and $S_{i}=\tau_{i}\cdot Y_{i}$.
\end{itemize}
%$\sym(v_i, Y_i)$,
%where   $X=\{v_1, v_2, \dots, v_k\}$ and
%$Y_i\subseteq \{v_1,\dots, v_i\}$ is an $M$-independent set
%of size $i$ if $i\leq r_X$ and of size $r_X$ if $i>r_X$.
% Since $|B|=r_X|X|-{r_X\choose 2}$,
% we further have that 
% $Y_i=\{v_1,\dots, v_{i-1}\}$ if $i\leq d_X+1$
% and $|Y_i|=d_X$ if $i\geq d_X+2$.
%In particular, $\sym_2(\{v_1,\dots, v_{r_X}\})\subseteq B$.

Let $S_{\leq j}=\bigsqcup_{i=0}^j S_{i}$ for  $j=0,\dots, m$.
%and $B_0=\emptyset$.
We will use induction on $j$ to show that 
\begin{equation}\label{eq:Bj}
\text{$S_{\leq j}$ is a spanning set of $N|_{\sym_2(X_{j})}$ for all $0\leq j\leq m$.}
\end{equation}
Since $S_0=\sym_2(X_{0})$,
(\ref{eq:Bj}) trivially holds when $j=0$.
    Hence, we may assume inductively that  $S_{\leq j-1}$ is a spanning set of 
    $N|_{\sym_2(X_{j-1})}$ for some $1\leq  j\leq m$.

Since $v_{m-j+1}$ is an element of $X_j$ which is not a coloop in $M|_{X_j}$ and $Y_j$ is a base of $M|_{X_j}$, 
we can choose a circuit $C$ of $M|_{X_j}$ and a vertex $v_{i}\in X_j$  such that  $v_{i}, v_{m-j+1}\in C\subseteq Y_j+v_{i}$. 
%\bj{changed $i'$ to $i$.}
(If $v_{m-j+1}\notin Y_j$, then we choose $C$ to be the fundamental circuit of $Y_j+v_{m-j+1}$  and put $v_{i}=v_{m-j+1}$.
If $v_{m-j+1}\in Y_j$, then we choose $C$ to be a circuit of $M|_{X_j}$ with $v_{m-j+1}\in C$ and $|C\setminus Y_j|$ as small as possible. The circuit elimination axiom then implies that $|C\setminus Y_j|=1$ and we take $v_i$ to be the unique vertex in $C\setminus Y_j$.)
%Since $j> r_X$, $v_j$ is not a coloop in $M|_X$.  Since $Y_j$ is a base of $M|_X$, 
%\bj{there exists a circuit $C$ of $M|_X$ with $v_j\in C\subseteq \{v_1,v_2,\ldots,v_j\}$}
%we can choose a circuit $C$ of $M|_X$ and a vertex $v_{i}\in \{v_1,v_2,\dots,v_j\} $  such that  $v_{i}, v_j\in C\subseteq Y_j+v_{i}$. \bj{changed $i'$ to $i$.}
%(If $v_j\notin Y_j$, then we can choose $v_{i}=v_j$;
%otherwise, choose a circuit $C$ of $M|_X$ with $v_j\in C$ such that $|C\setminus Y_j|$ is as small as possible.)
By CycP, $\sym_2(C)$ is cyclic in $N$. 
Since all edges of $\sym_2(C)$ except $v_{i}v_{m-j+1}$ are contained in $\sym_2(X_{j-1})\cup (v_{m-j+1}\cdot Y_j)$, we have
$v_{i}v_{m-j+1}\in {\rm cl}_N(S_{\leq j})$.

We next choose an arbitrary vertex $w\in X_j\setminus Y_j$.
Let $C_w$ be the fundamental circuit of $Y_j+w$.
%Since $Y_j$ is a base of $M|_X$, 
%$Y_j+w$ contains an $M$-circuit $C_w$ with $w\in C_w$.
Then, $C_w\cup C$ is cyclic in $M$.  Since $N$ satisfies  CycP, $\sym_2(C_w\cup C)$ is cyclic in $N$.
Since all edges of $\sym_2(C_w\cup C)$ except $wv_{m-j+1}$ are contained in $\sym_2(X_{j-1})\cup (v_{m-j+1}\cdot Y_j)\cup\{v_{i}v_{m-j+1}\}$, we have
${w}v_{m-j+1}\in {\rm cl}_N(S_{\leq j}+v_{i}v_{m-j+1})={\rm cl}_N(S_{\leq j})$. %\bj{Changed from $v_iv_j\in ...$.}
Since this holds for every $w\in X_j\setminus Y_j$,
 %$S_{\leq j}=S_{\leq j-1}\cup (v_j\cdot Y_j)$
 $S_{\leq j}=S_{\leq j-1}\cup (v_{m-j+1}\cdot Y_j)$ is a spanning set of $N|_{\sym_2(X_j)}$ and (\ref{eq:Bj}) holds.  The   lemma now follows by taking $j=m$.
\end{proof}

%\paragraph{Proof of Theorem \ref{thm:Mason_k=2}}
%\begin{proof}[Proof of Theorem \ref{thm:dual_2_weak}]
%By Lemma~\ref{lem:uniform_CP}, CP implies CyCP. Also, 
\subsubsection*{Proof of Theorem \ref{thm:dual_2_weak}}
Theorem~\ref{thm:dual_k_weak} implies that each of the last two bullet points of Theorem \ref{thm:dual_2_weak} is equivalent to the statement that $N$ satisfies CBP, and Lemma~\ref{lem:weakstrongCBP}(b) tells us that CBP is equivalent to WCBP.
Theorem~\ref{thm:dual_k_weak} also implies that, if $N$ satisfies CBP, then $N$ satisfies CycP, so each of the last two bullet points of Theorem \ref{thm:dual_2_weak} implies the first two bullet points.
It remains to show that each of the first two bullet points 
%CycP and RRP or CycP and 0-ExtP 
implies WCBP.

We first suppose that $N$ satisfies RRP and CycP. 
%Let $X\subseteq V$.
%By Lemma~\ref{lem:Mason_k=2}, every canonical subset $S$ of $X$
%is a spanning set of $N|_{\sym_2(X)}$.
%In addition,  Lemma~\ref{lem:canonical_size} implies that $S$ has  size equal to the rank value in RRP. \bill{IT SEEMS TO ME WE NEED StrongRRP HERE}
%Hence, $S$ is a base of $N|_{\sym_2(X)}$, and WCBP follows.
By Lemma~\ref{lem:Mason_k=2}, each canonical subset $S$ of $\sym_2(V)$
is a spanning set of $N$. 
In addition,  Lemma~\ref{lem:canonical_size} implies that $S$ has  size equal to the rank of $N$ by RRP. Hence, $S$ is a base of $N$.  
To verify WCBP, we need to show that the corresponding property holds for $N|_{\sym_2(X)}$ for every $X\subseteq V$. %consider any 
We will accomplish this by first showing that $N$ satisfies StrongRRP. 

Choose a set $X\subseteq V$ and a base $B_X$ of $M|_X$, and extend $B_X$ to a base $B_V$ of $M$.
Let $<$ be a total order on $V$ which is both $B_X$-first and $B_V$-first.
By Lemma~\ref{lem:lex_min}, $B_X\cdot \sym_{1}(X)$ is the lex-min canonical subset of $\sym_2(X)$
and $B_V\cdot \sym_{1}(V)$ is the lex-min canonical subset of $\sym_2(V)$.
Then $B_V\cdot \sym_{1}(V)$ is a base of $N$ by the previous paragraph. Since $B_X\cdot \sym_{1}(X)\subseteq B_V\cdot \sym_{1}(V)$, 
$B_X\cdot \sym_{1}(X)$ is independent in $N$.
Lemma~\ref{lem:Mason_k=2} now implies that $B_X\cdot \sym_{1}(X)$  is a base of $N|_{\sym_2(X)}$.
Since $B_X\cdot \sym_{1}(X)$ is a canonical set of $\sym_k(X)$, 
Lemma~\ref{lem:canonical_size} tells us that $N|_{\sym_2(X)}$ satisfies the rank condition in  StrongRRP.

To verify WCBP,  choose $X\subseteq V$ and consider a canonical subset $S$ of $\sym_2(X)$.
%By Lemmas~\ref{lem:canonical_construction}, 
By Lemma~\ref{lem:canonical_size}, Lemma~\ref{lem:Mason_k=2}, and StrongRRP,
$S$ is a base of $N|_{\sym_2(X)}$.
Thus, WCBP holds.

\smallskip

Suppose, on the other hand, that $N$ satisfies CycP and 0-ExtP. Choose $X\subseteq V$ and let $S$ be a canonical subset of $\sym_k(X)$.
 Lemma~\ref{lem:Mason_k=2} implies that $S$ is a spanning set for $N|_{\sym_2(X)}$. In addition, Lemma~\ref{lem:canonical_construction} and the assumption that $N$ satisfies 0-ExtP imply that $S$ is $N$-independent. Hence $S$ is a base of $N|_{\sym_2(X)}$, and WCBP follows.
%\end{proof}
\qed

%\begin{proof}[Proof of Theorem~\ref{thm:dual_2}]
\subsubsection*{Proof of Theorem~\ref{thm:dual_2}} 
We first note that the first two bullet points in  the statement of Theorem~\ref{thm:dual_2} are equivalent since $k=2$. In addition, if the last bullet point holds, then $N$ satisfies the properties in all the other bullet points by  Theorems~\ref{thm:dual_k} and \ref{thm:dual_2_weak}, and the fact that DM implies CoCP by (\ref{eq:dual_biliearity_implication}). It remains to show that each of the second, third, and fourth bullet points imply the last bullet point. This follows immediately from Theorem~\ref{thm:dual_2_weak} and (\ref{eq:dual_biliearity_implication}).
%
%Suppose that the fourth bullet point holds i.e. $N$ satisfies RRP and ExtP. Then $N$ satisfies ConingP and 0-ExtP by Lemmas~\ref{lem:extension_coning} and \ref{lem:ExtPto0ExtP}, and we can now apply Theorems~\ref{thm:dual_k} to deduce that $N$ satisfies CycP so the last bullet point holds.
%
%Suppose that the third bullet point holds i.e.~$N$ satisfies RRP and CoCP. Then the fourth bullet point holds since CoCP implies ExtP (by (\ref{eq:biliearity_implication}) and Lemma \ref{lem:duality}). Hence the last paragraph holds by the previous paragraph.
%
%Suppose that the second bullet point holds i.e.~$N$ satisfies RRP and DM.
%\end{proof}
\qed

\medskip
%\subsection{Further special case}
%We will show that Mason's conjecture holds when $k=3$ and $M$ is a non-free %uniform matroid.  More precisely, we will show that the dual counterpart holds %when $k=3$ and $M=(V,r_M)$ is a uniform matroid of rank greater than zero. 
%This is in contrast to the result of Subsection~\ref{subsec:counterexample4}, %which provides a negative example in the same setting for $k=4$.

We next turn  to the proof of Theorem \ref{thm:dual_mason_k_3}. We will need the following weakened version of Lemma \ref{lem:Mason_k=2} for  matroids on $\sym_3(V)$ in the special case when $M$ is a uniform matroid.

\begin{lemma}\label{lem:CycP3}
Suppose $M=(V,r_M)$ is a copy of the uniform matroid $U_n^r$ with $r\geq 1$ and $N$ is a matroid on $\sym_3(V)$ which satisfies CycP with respect to $M$. Then, for each $X\subseteq V$, 
there exists a canonical subset of $\sym_3(X)$ which is a spanning set in $N|_{\sym_3(X)}$.
\end{lemma}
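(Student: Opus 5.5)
We will argue by induction on $|X|$, building the required canonical subset with a single pivot step, $S=S'\sqcup(v\cdot S'')$, and then imitating the closure argument from the proof of Lemma~\ref{lem:Mason_k=2}.

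\emph{Base case.} If $|X|\leq r$, then $X$ is $M$-independent because $M\cong U_n^r$. The only canonical subset of $\sym_3(X)$ is then $\sym_3(X)$ itself, which trivially spans $N|_{\sym_3(X)}$.

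\emph{Choice of pivot and of $S''$.} Suppose $|X|>r$. Every element of $X$ is a non-coloop of $M|_X$, since $M|_X\cong U_{|X|}^r$ with $|X|>r$. Fix any $v\in X$. By induction there is a canonical subset $S'$ of $\sym_3(X-v)$ that spans $N|_{\sym_3(X-v)}$. Because $r\geq 1$, we can choose a base $B$ of $M|_X$ with $v\in B$, so $|B|=r$. Take a $B$-first order. By Lemma~\ref{lem:lex_min}, $S''=B\cdot\sym_1(X)$ is a canonical subset of $\sym_2(X)$. Then $S=S'\sqcup(v\cdot S'')$ is a canonical subset of $\sym_3(X)$ with pivot vertex $v$.

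\emph{Reduction.} Let $K={\rm cl}_N(S)$. Then $\sym_3(X-v)\subseteq K$ and $v\cdot S''\subseteq K$. Since $\sym_3(X)=\sym_3(X-v)\sqcup(v\cdot\sym_2(X))$, it remains to show $v\cdot ww'\in K$ for every word $ww'\in\sym_2(X)\setminus S''$, that is, for all $w,w'\in X\setminus B$. This is the main step, and we handle it with cyclic sets containing $v$, in two stages.

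\emph{Stage 1: $w=w'$.} For $w\in X\setminus B$, the set $C=B+w$ is a circuit of $M$ (size $r+1$), hence $M$-cyclic, and it contains $v$. By CycP, $\sym_3(C)$ is cyclic in $N$. Every word of $\sym_3(C)$ avoiding $v$ lies in $\sym_3(X-v)\subseteq K$. Every word containing $v$ has the form $v\cdot\beta$ with $\beta\in\sym_2(C)$, and $\beta\in S''$ unless $\beta=ww$. So all words of $\sym_3(C)$ except $vww$ lie in $K$. Since $vww$ is not a coloop of $N|_{\sym_3(C)}$, we get $vww\in K$.

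\emph{Stage 2: $w\neq w'$.} For distinct $w,w'\in X\setminus B$, the set $C=B\cup\{w,w'\}$ is a union of two circuits, hence $M$-cyclic, and it contains $v$. The words $v\cdot\beta$ with $\beta\in\sym_2(C)\setminus S''$ are exactly $vww$, $vw'w'$ and $vww'$. The first two lie in $K$ by Stage 1. So again every word of the cyclic set $\sym_3(C)$ except $vww'$ lies in $K$, and hence $vww'\in K$.

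Together these show $v\cdot\sym_2(X)\subseteq K$, so $S$ spans $N|_{\sym_3(X)}$, completing the induction.

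The only point needing care is that ordering: the pure words $vww$ must be obtained before the mixed words $vww'$, so that each cyclic set used has exactly one word outside the current closure. Placing $v$ inside $B$ is what makes this work, and it is exactly where the hypothesis $r\geq 1$ is used.
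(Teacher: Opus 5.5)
Your proposal is correct and follows the paper's proof essentially step for step. The paper likewise inducts on $|X|$ and takes the pivot $v$ inside a base $B$ of $M|_X$, with link the lex-min canonical subset $B\cdot X$ of $\sym_2(X)$. It then uses CycP first on the circuits $B+w$ to capture the words $vww$, and then on the cyclic sets $B\cup\{w,w'\}$ to capture the mixed words $vww'$.
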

\begin{proof}
We proceed by induction on $|X|$.
If $X$ is $M$-independent, then $\sym_3(X)$ is a canonical subset of itself and the lemma holds.
Hence, we may assume that $X$ is dependent in $M$. This implies that $|X|\geq r+1\geq 2$.
%Since $M$ has rank at least one, this implies that $X$ is nonempty.

Choose $v\in X$. Since $M=U_n^r$ and  $|X|\geq r+1$, 
$v$ is not a loop or a coloop in $M|_X$.
By induction, there exists a canonical subset $S_{X-v}$ of $\sym_3(X-v)$ which is a spanning set in $N|_{\sym_3(X-v)}$.
Since $v$ is not a loop in $M$, there is a base $B_X$ of $M|_X$ with $v\in B_X$.
Then $B_X\cdot X$ is a canonical subset of $\sym_2(X)$. (More precisely, it is the lex-min canonical subset of $\sym_2(X)$ with respect to any $B_X$-first total order of $X$ by Lemma \ref{lem:lex_min}.)
Hence
\begin{equation}\label{eq:CycP30}
S_X:=S_{X-v}\sqcup (v\cdot B_X\cdot X)
\end{equation}
is a canonical subset of $\sym_3(X)$ by the definition of a canonical set.
Our goal is to show that $S_X$ is a spanning set of $N|_{\sym_3(X)}$.
To accomplish this, we consider the following partition of $\sym_3(X)$.
\begin{equation}\label{eq:CycP3}
\begin{split}
\sym_3(X)=&\sym_3(X-v)\sqcup (v\cdot B_X\cdot X) \,
\sqcup \\
&\{vuu:  u\in X\setminus B_X\} 
\sqcup \{vu_1u_2:  u_1, u_2\in X\setminus B_X, u_1\neq u_2\}. 
\end{split}
\end{equation}
We will show that each set on the right hand side of (\ref{eq:CycP3}) is contained in ${\rm cl}_N(S_X)$.

Since $S_{X-v}$ spans $\sym_3(X-v)$, we have
\begin{equation}\label{eq:CycP31}
   \sym_3(X-v)\subseteq {\rm cl}_N(S_{X-v})\subseteq {\rm cl}_N(S_X).
\end{equation}

To see that
\begin{equation}\label{eq:CycP32}
    vuu\in {\rm cl}_N(S_X) \quad \text{for each } u\in X\setminus B_X,
\end{equation}
we first observe that, since $M$ is uniform, $B_X+u$ is a circuit of $M$ for each $u\in X\setminus B_X$.
In addition,
the only edge in $\sym_3(B_X+u)$ which is missing from $\sym_3(X-v)\sqcup (v\cdot B_X\cdot X)$ is $vuu$.
(Note that $vvu\in v\cdot B_X\cdot X$ since $v\in B_X$.)
Hence, by (\ref{eq:CycP30}), (\ref{eq:CycP31}), and CycP (applied to the $M$-circuit  $B_X+u$), 
$vuu$ is spanned by $S_X$, and (\ref{eq:CycP32}) follows.

We will use a similar argument to show that
\begin{equation}\label{eq:CycP33}
    vu_1u_2\in {\rm cl}_N(S_X) \quad \text{for all distinct } u_1, u_2\in X\setminus B_X.
\end{equation}
 Observe that the only edge in $\sym_3(B_X+u_1+u_2)$ which is missing from   $$\sym_3(X-v)\sqcup (v\cdot B_X\cdot X)\sqcup \bigcup_{u\in X\sm B_X} \sym_3(B_X+u)$$ is $vu_1u_2$.
Since $\bigcup_{u\in X\sm B_X} \sym_3(B_X+u)\subseteq {\rm cl}_N(S_X)$ by the previous paragraph and    $B_X+u_1+u_2$ is $M$-cyclic, we can apply CycP to $\sym_3(B_X+u_1+u_2)$ to deduce (\ref{eq:CycP33}).

By (\ref{eq:CycP3}), (\ref{eq:CycP31}), (\ref{eq:CycP32}), and (\ref{eq:CycP33}),
${\rm cl}_N(S_X)=\sym_3(X)$.
\end{proof}

%Using the extra implications given in the lemmas above,
%we have the following theorem, which gives a positive answer to Mason's conjecture for uniform matroids $M$ and $k=3$.
% \begin{theorem}\label{thm:abstract_rigidity_k_3}
% Let $M$ be a uniform matroid on $V$ with rank at least one 
% and $N$ be a matroid on $\sym_3(V)$.
% Then, any two pair of the properties in 
% $\{$RP, CP, CycP, 1GP, CoCP, ExtP, 1-ExtP, StarP$\}$,
% except for the pairs in $\{$CoCP, ExtP, 1-ExtP, StarP$\}$,
% implies all the properties.
% \end{theorem}

Theorem \ref{thm:dual_mason_k_3} will follow immediately from:

\begin{theorem}\label{thm:abstract_rigidity_k_3}
Let $M$ be a uniform matroid with rank at least one. 
Suppose $N$ is a matroid on $\sym_3(V)$ which satisfies at least one
property in $\{$RRP,  CycP$\}$
and at least one  property in $\{$ExtP, CoCP, DM$\}$. Then $N$ satisfies 
RRP,  CycP, ExtP, CoCP and DM.
% \begin{itemize}
% %\item $N$ satisfies 1GP and 0-ExtP.
% %\item $N$ satisfies RP, CP, and DCP.
% %\item $N$ satisfies RP and CoCP.
% \item $N$ satisfies RP and ExtP.
% \item $N$ satisfies RP and CoCP.
% \item $N$ satisfies RP and StarP.
% %\item $N$ satisfies CP, DCP and 0-ExtP.
% \item $N$ satisfies CycP and ExtP.
% \item $N$ satisfies  CycP and CoCP.
% \item $N$ satisfies CycP and StarP.
% \end{itemize}
\end{theorem}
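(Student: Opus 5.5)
By (\ref{eq:dual_biliearity_implication}), DM $\Rightarrow$ CoCP $\Rightarrow$ ExtP. So any one of the three properties in the second set gives ExtP, and hence also 0-ExtP and ConingP (Lemma~\ref{lem:extension_coning}). The plan is to reduce everything to proving RRP from this information. Once RRP holds alongside ExtP, Theorem~\ref{thm:dual_k} gives DM, CycP, StrongRRP and ConingP. DM then gives CoCP and ExtP again by (\ref{eq:dual_biliearity_implication}), so all five properties follow. It therefore suffices to treat two cases: (i) RRP and ExtP, which is exactly Theorem~\ref{thm:dual_k}, and (ii) CycP and ExtP, where we must derive RRP.

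For case (ii), take $X=V$ in Lemma~\ref{lem:CycP3}. This yields a canonical subset $S$ of $\sym_3(V)$ that spans $N$. Since $N$ satisfies 0-ExtP, Lemma~\ref{lem:canonical_construction} shows that $S$, being obtainable from $\emptyset$ by $M$-valid 0-extensions, is $N$-independent. Hence $S$ is a base of $N$. By Lemma~\ref{lem:canonical_size}, $|S|=\binom{n+2}{3}-\binom{n-r+2}{3}$, which is precisely the rank value in RRP. So RRP holds, and we are back in case (i).

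The only real content is Lemma~\ref{lem:CycP3}, and that is where the hypothesis that $M$ is uniform of positive rank and $k=3$ is used; it is already proved. The point I would double-check is the case where $M$ is free or $V$ is independent. There Lemma~\ref{lem:CycP3} gives $S=\sym_3(V)$, and the argument still goes through, since 0-ExtP then makes $\sym_3(V)$ independent, matching RRP.

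For the converse direction, where RRP or CycP is combined with CoCP or DM instead of ExtP, no separate argument is needed: the implications to ExtP reduce these to cases (i) and (ii). I therefore expect no significant obstacle beyond correctly chaining the cited results.
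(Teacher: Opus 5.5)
Your proposal is correct and matches the paper's proof. Like the paper, you use Theorem~\ref{thm:dual_k} and (\ref{eq:dual_biliearity_implication}) to reduce everything to showing that CycP and ExtP imply RRP. You then get RRP the same way: Lemma~\ref{lem:CycP3} (with $X=V$) gives a spanning canonical subset, Lemma~\ref{lem:canonical_construction} with 0-ExtP makes it $N$-independent, and Lemma~\ref{lem:canonical_size} shows it has exactly the RRP size.
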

\begin{proof}
By Theorem~\ref{thm:dual_k} and (\ref{eq:dual_biliearity_implication}), it will suffice to prove that 
CycP and ExtP imply RRP.

Suppose $N$ satisfies CycP and ExtP.
Then, $N$ satisfies 0-ExtP by (\ref{eq:dual_biliearity_implication}).  
Lemmas~\ref{lem:canonical_size}, \ref{lem:canonical_construction}, and~\ref{lem:CycP3} now imply that
$N$ satisfies RRP.
\end{proof}

% Theorem \ref{thm:abstract_rigidity_k_3} tells us that, if $N$ satisfies StarP and CycP, then $N$ satisfies $RP$. This, in turn, implies that Conjecture \ref{conj:dual} holds when $k=3$ and $M$ is a uniform matroid of rank greater than zero, and hence, by matroid duality, Conjecture \ref{conj:And} holds when $k=3$ and $M$ is a uniform matroid other than the free matroid. 
\section{Concluding Remarks}

\subsection{Third symmetric powers}
Theorem~\ref{thm:rank_zero} implies that Mason's rank conjecture is false for third symmetric powers of the free matroid. We can construct other counterexamples by taking the direct sum of any matroid which has a third symmetric power, with the free matroid. On the other hand,  Theorem~\ref{thm:dual_mason_k_3}  tells us that Mason's rank conjecture holds for third symmetric powers of uniform matroids, other than the free matroid. It may be true that there exist other families of matroids whose third symmetric powers satisfy  Mason's rank conjecture. It  is even conceivable that third symmetric powers  of every connected matroid satisfy Mason's rank conjecture. 

\subsection{Unique maximality of symmetric powers}
We close by considering a second question of Mason. He says in the introduction to his paper~\cite[Pages 519,520]{M}
\begin{quote}
    The outstanding questions are whether there is always a free-est matroid having the geometrical properties of a tensor product, exterior or symmetric powers, and even if there is not a single free-est such in each case, how can at least one maximally free matroid be constructed. 
\end{quote}

A more detailed version of the special case of this question for symmetric powers is given on \cite[Page 554]{M}. In the terminology of our paper, it  can be restated  as: 
\begin{quote}
    The outstanding questions for symmetric powers [of an arbitrary matroid $M$] are whether every $k$-th quasi-symmetric power  of $M$ which satisfies the Flat Property has 
    rank $\binom{r(M)+k-1}{k}$, and if so, whether there is  a unique maximal $k$-th quasi-power of $M$ with respect to the weak order of matroids.
\end{quote}

%\st{COMMENT: Mason posed this maximality question for QUASI-powers in page 554, but there is a more general question in page 514, which seems asking the maximality within the class of symmetric powers. The reason why I am pointing this is that the maximality question for the class of quasi-powers is known to be false. This was shown by Las Vergnas~\cite{LV81} for tensor quasi-products of a uniform matroid. His example should also give a counterexample for symmetric quasi-powers by looking at the direct sum of two uniform matroids. Our unpublished paper on $K_{1,t}$-matroids also gives a counterexample to the case when the underlying matroid is uniform.}

We have seen that the `rank part' of Mason's question on \cite[Page 554]{M} has a positive answer when $k=2$ and a negative answer for $k\geq 3$. The `unique maximal' part of this question is false even for $k=2$. This was shown by Las Vergnas~\cite{LV81} for quasi-products of  uniform matroids, and his result implies that the direct sum of two uniform matroids will not have a unique maximal second  symmetric quasi-power.
On the other hand, the version of Mason's unique maximality question given on 
~\cite[Pages 519,520]{M} is still wide open. 

\begin{question}[Mason's Maximality Question] Suppose $k\geq 2$ is an integer. Does every matroid which has a $k$-th symmetric power, have a unique maximal 
$k$-th symmetric power with respect to the weak order of matroids?
\end{question}

This question seems to be difficult even for the special case of second symmetric powers of the uniform matroid $U_n^d$. A plausible construction for a unique maximal second power $N$ of $U_n^d$ would be to choose a set $V$ of $n$ generic points $p_1,p_2,\ldots p_n$ in $\R^d$ and take $N$ to be the linear matroid on $\sym_2(V)$ represented by the symmetric tensor product of $p_i$ and $p_j$.
%A combination of  Lemma \ref{lem:weak_construction} and \cite[Lemma 6 and Theorem 3]{JTfields} implies  
We can use \cite[Lemmas 3.1, 3.2]{JTmax} to deduce that $N$ is indeed the unique maximal second power of $U_n^d$ when $d\in \{1,2\}$,
but we show in \cite[Theorem 5.28]{cruickshank2025rigidity} that this is  false when $d\geq 7$.

%\st{The negative part is okay, but I am not sure about the positive side. 
%There are two points I want to share: 
%\begin{itemize}
 %   \item For a general matroid, Lemma 3.2 is valid only for constructing matroids in Lovasz's class. As far as I checked before, the corresponding statement for Anderson's class works if $M$ is uniform. However, at least, we need a proof for this.
   % \item Even if an analogue of Lemma 3.2 is valid for Andrson's class, there is still subtlety. 
  %  The combination of Lemma 3.2 (for Anderson's class) and \cite[Theorem 3]{JTfields} shows that 
 %   $({\cal B}\vee N)^*$ is unique maximal in Anderson's class, where $N$ is the generic rigidity matroid of an appropriate dimension.
 %   It seems to me that $({\cal B}\vee N)^*$ is distinct from the geometric symmetric-tensor-product matroid.
 %   The dual of the geometric symmetric-tensor-product matroid is the symmetric-%matrix-completion matroid, which seems weaker than ${\cal B}\vee N$...
%\end{itemize}
%}

\section*{Acknowledgements}

This work was supported by the Japan Science and Technology Agency (JST) as part of Adopting Sustainable Partnerships for Innovative Research Ecosystem (ASPIRE), Grant Number JPMJAP2520.

\bibliographystyle{amsplain}
\bibliography{product_ref}

\end{document}